\numberwithin{equation}{section}
\newtheorem{thm}{Theorem}[section]
\newtheorem{lem}[thm]{Lemma}
\newtheorem{prop}[thm]{Proposition}
\newtheorem{cor}[thm]{Corollary}
\newtheorem{remark}[thm]{Remark}
\theoremstyle{definition}
\newtheorem{definition}[thm]{Definition}
\newtheorem{example}[thm]{Example}
\newenvironment{rem}{\begin{remark}\rm}{\end{remark}}
\def\address#1#2{\begingroup
\noindent\parbox[t]{12cm}{%
\small{\scshape\ignorespaces#1}\par\vskip1ex
\noindent\small{\itshape E-mail address}%
\/: #2\par\vskip4ex}\hfill%
\endgroup}%
\title{Deformations of homogeneous associative submanifolds in nearly parallel $G_{2}$-manifolds}
\author{Kotaro Kawai
\footnote{The author is supported by Grant-in-Aid for JSPS fellows (26-7067).}}
\date{}
\begin{document}

\maketitle

\begin{abstract}
A nearly parallel $G_{2}$-manifold $Y$ is 
a Riemannian 7-manifold 
whose cone $C(Y) = \mathbb{R}_{>0} \times Y$ has the holonomy group contained in ${\rm Spin(7)}$. 
In other words, it is a spin 7-manifold with a real Killing spinor. 

We have a special class of calibrated submanifolds 
called Cayley submanifolds in $C(Y)$. 
An associative submanifold in $Y$
is a minimal 3-submanifold whose cone is Cayley. 
We study its deformations, namely, Cayley cone deformations, 
explicitly 
when it is homogeneous in the 7-sphere $S^{7}$. 
\end{abstract}

\section{Introduction}
For any Riemannian manifold $(Y, g)$, consider 
its Riemannian cone $(C(Y), \overline{g}) = (\mathbb{R}_{>0} \times Y, dr^{2} + r^{2} g)$. 
A Riemannian 7-manifold $(Y, g)$ is 
called a nearly parallel $G_{2}$-manifold  
if the holonomy group of $\overline{g}$ is contained in ${\rm Spin(7)}$. 
The existence of a nearly parallel $G_{2}$-structure 
is equivalent to that of a spin structure with 
a real Killing spinor (\cite{Bar}), 
which is also used 
in supergravity and superstring theory in physics.

We have a canonical closed 4-form $\Phi$ on $C(Y)$, 
which defines a calibration. 
A 3-submanifold $M$ in $Y$ is called associative if 
its cone $C(M)$ is calibrated by $\Phi$. 
In other words, $C(M)$ is a Cayley submanifold in $C(Y)$. 
For example, 
special Legendrian submanifolds in Sasaki-Einstein manifolds are associative (Lemma \ref{sLeg_cpx_asso}), 
and Lagrangian submanifolds in the sine cones of nearly K\"{a}hler 6-manifolds 
are associative (Lemma \ref{submfd_sine cone NK}). 
Here, 
Lagrangian submanifolds are defined in terms of the vanishing of a non-closed 2-form 
which characterizes nearly K\"{a}hler geometry. 
These are also called totally real submanifolds.

The deformation of compact calibrated submanifolds was studied by Mclean \cite{Mclean}. 
Joyce \cite{Joyce1, Joyce2, Joyce3, Joyce4, Joyce5} 
introduced the notion of the stability index of a special Lagrangian cone 
to study deformations of a special Lagrangian submanifold with a conical singularity. 
Lotay \cite{Lotay_stab} generalized it to the coassociative case. 
Associative and Cayley submanifolds 
behave differently from 
special Lagrangian and coassociative submanifolds, 
and hence it is difficult to generalize it directly to the associative or Cayley case. 
Thus in this paper, 
we focus on the Cayley case and study the deformations of homogeneous Cayley cones explicitly. 
It may help to develop the general deformation theory of a Cayley submanifold with a conical singularity. 
Our approach is based on the representation theory. 
This is an analogue of Ohnita's approach to special Legendrian submanifolds in \cite{Ohnita_def}.

The homogeneous associative submanifolds in $S^{7}$ 
are classified by Lotay \cite{Lotay3}
into 8 types: 
$A_{1}, A_{2}$ and $A_{3}$ not lying in a totally geodesic nearly K\"{a}hler $S^{6}$, 
Lagrangian submanifolds $L_{1}, L_{2}, L_{3},$ and $L_{4}$ in $S^{6}$, 
and the totally geodesic $S^{3}$ 
(Proposition \ref{Lotay_classification_asso}).  
Infinitesimal Lagrangian deformations in $S^{6}$  
are studied in \cite{Lotay_stab}, 
and hence 
we study the infinitesimal deformations of the others and obtain the following.

\begin{thm} \label{rigid_summary}
As an associative submanifold, 
$A_{1}$ is rigid, while $A_{2}$ and $A_{3}$ are not rigid. 
The deformation space of $A_{2}$ is unobstructed, and 
all non-trivial associative deformations of $A_{2}$ are induced by 
the ${\rm PGL}(4, \mathbb{C})$-action on $\mathbb{C}P^{3}$ via the Hopf lift. 
\end{thm}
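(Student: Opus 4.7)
The plan is to apply McLean's deformation theorem: for a compact associative submanifold $M$ in a nearly parallel $G_{2}$-manifold, the space of infinitesimal associative deformations is the kernel of a first-order elliptic operator $\mathcal{D}: \Gamma(\nu) \to \Gamma(\nu)$ of Dirac type on the normal bundle $\nu$. Each $A_{i} = G_{i}/H_{i}$ is homogeneous under a compact subgroup $G_{i} \subset \mathrm{Spin}(7)$ preserving the $G_{2}$-structure on $S^{7}$, so $\nu$ is the associated bundle $G_{i} \times_{H_{i}} \nu_{o}$ and $\mathcal{D}$ is $G_{i}$-equivariant. By Peter--Weyl and Frobenius reciprocity,
\[
\Gamma(\nu) \cong \bigoplus_{\lambda \in \widehat{G_{i}}} V_{\lambda} \otimes \mathrm{Hom}_{H_{i}}(V_{\lambda}, \nu_{o}),
\]
and $\mathcal{D}$ preserves each summand, acting on it as a finite-dimensional endomorphism of the multiplicity space $\mathrm{Hom}_{H_{i}}(V_{\lambda}, \nu_{o})$. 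This reduces the kernel computation to a sequence of finite-dimensional linear algebra problems indexed by $\lambda$.

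The core computation is, for each $A_{i}$, to identify those $\lambda$ whose multiplicity space contains a kernel element. First I would decompose $\nu_{o}$ into $H_{i}$-irreducibles; only $\lambda$ whose restriction to $H_{i}$ contains one of these summands contributes. Then, writing $\mathcal{D}$ in a left-invariant frame descended from $G_{i}$, the induced endomorphism on each multiplicity space is expressible in terms of Casimir eigenvalues, branching data, and Clebsch--Gordan coefficients for the $G_{2}$ cross product. Since $\mathcal{D}$ is elliptic with eigenvalues growing with $|\lambda|$, only finitely many representations can contribute to the kernel, and these are checked directly. The expected output is: for $A_{1}$ the kernel is zero, so $A_{1}$ is rigid, while for $A_{2}$ and $A_{3}$ the kernel is nontrivial of explicit dimension.

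For unobstructedness and the geometric description of deformations of $A_{2}$, I would exhibit an explicit moduli family realizing the entire Zariski tangent space. By Lemma \ref{sLeg_cpx_asso}, $A_{2}$ is the Hopf lift of a homogeneous complex curve $C \subset \mathbb{C}P^{3}$; since $\mathrm{PGL}(4,\mathbb{C})$ acts on $\mathbb{C}P^{3}$ by biholomorphisms, each $g \cdot C$ is again a complex curve, and its Hopf lift is special Legendrian, hence associative. This yields a smooth family of associative submanifolds whose tangent space at $A_{2}$ embeds into $\ker \mathcal{D}$. If the dimension of this family, modulo the stabilizer of $C$ and the $\mathrm{SU}(4)$-motions that already act on $A_{2}$, equals $\dim \ker \mathcal{D}$ computed above, then the family realizes every infinitesimal deformation, and standard Kuranishi theory forces the obstruction map to vanish and the local moduli space to be smooth.

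The main obstacle will be the representation-theoretic kernel computation itself: pinning down the correct branchings $\mathrm{Res}^{G_{i}}_{H_{i}} V_{\lambda}$ for the relevant low weight $\lambda$, writing down the Dirac matrix in terms of the explicit $G_{2}$ cross product on $S^{7}$, and verifying the final dimension matches that of the $\mathrm{PGL}(4,\mathbb{C})$-family for $A_{2}$. Any miscount here would vitiate either the rigidity of $A_{1}$ or the unobstructedness of $A_{2}$, so this step needs careful execution via Schur's lemma together with the structure equations of each $G_{i}$.
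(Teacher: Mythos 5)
Your overall strategy is the one the paper actually follows: realize each orbit as $G_i/H_i$, identify the infinitesimal associative deformations with an eigenspace of the equivariant Dirac-type operator on the homogeneous normal bundle, use Peter--Weyl/Frobenius reciprocity to reduce to finite-dimensional multiplicity spaces indexed by $\widehat{G_i}$, cut the sum down to finitely many representations via the Casimir/Laplacian eigenvalue, and for $A_2$ exhibit the $\mathrm{PGL}(4,\mathbb{C})$-family of Hopf lifts and match its dimension against the infinitesimal count to get unobstructedness. (The paper organizes the computation slightly differently for $A_1$ and $A_2$, exploiting that they are special Legendrian up to $\mathrm{Spin}(7)$-congruence to split the problem into a scalar Laplacian eigenvalue problem $\Delta_+ f = 8f$ and a vector equation $\mathrm{rot}(v)=-2v$, and only treats $A_3$ by writing out the full $4\times 4$ Dirac system; but this is the same method.)

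There is, however, one genuine conceptual error that would derail the execution: your expected output ``for $A_1$ the kernel is zero, so $A_1$ is rigid'' cannot be right. The kernel of the deformation operator always contains the normal projections of the Killing fields generated by $\mathfrak{spin}(7)$, since $\mathrm{Spin}(7)$ preserves the $G_2$-structure on $S^7$ and hence moves associatives to associatives; for $A_1\cong T^3$ this trivial part is already $\dim(\mathrm{Spin}(7)/T^3)=18$-dimensional. Rigidity in this paper means the eigenspace equals the space of these trivial deformations, not that it vanishes: the paper finds $12+6=18$ for $A_1$ (rigid), $19+11=30>17=\dim(\mathrm{Spin}(7)/\mathrm{U}(2))$ for $A_2$, and $34>18$ for $A_3$ (both not rigid). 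If you run your computation and get $0$ for $A_1$, you have made an error; and for $A_2,A_3$ non-rigidity requires the comparison with the trivial part, not mere nontriviality of the kernel. A second, smaller slip: the Hopf lift of a holomorphic curve in $\mathbb{C}P^3$ is not special Legendrian (it contains the Hopf fibre direction, so its cone is not Lagrangian); it is associative because its cone is a complex surface in $\mathbb{C}^4$, which is the other clause of Lemma \ref{sLeg_cpx_asso}. The conclusion you need still holds, but via that clause. With these two corrections your argument matches the paper's.
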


\begin{thm} \label{Lag_asso_rigid}
All the associative and non-Lagrangian 
deformations of 
the totally geodesic $S^{3}$, 
$L_{1}, L_{2}, L_{3},$ and $L_{4}$ are trivial. 
In other words, such deformations are induced from ${\rm Spin}(7) \setminus G_{2}$. 
\end{thm}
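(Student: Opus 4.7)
The plan is a case-by-case representation-theoretic analysis in the spirit of Ohnita, as outlined in the introduction. Each of the five submanifolds $M$ lies in a totally geodesic nearly K\"ahler $S^{6} \subset S^{7}$, so the normal bundle of $M$ in $S^{7}$ splits as
\[
\nu M \;=\; \nu' M \oplus \nu'' M,
\]
where $\nu' M$ is the rank-one normal direction to $S^{6}$ and $\nu'' M$ is the normal bundle of $M$ inside $S^{6}$. An infinitesimal associative deformation is \emph{Lagrangian} precisely when its $\nu' M$-component vanishes; the task is therefore to show that after projection to $\nu' M$ the space of non-Lagrangian associative deformations is exactly accounted for by the ambient ${\rm Spin}(7)$-action on $S^{7}$.

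First I would write down the first-order elliptic operator $D \colon \Gamma(\nu M) \to \Gamma(E)$ whose kernel is the space of infinitesimal associative deformations (the twisted Dirac-type operator for the nearly parallel $G_{2}$-structure, constructed in the earlier sections of the paper). Writing $M = G/H$ with $G$ a compact subgroup of ${\rm Spin}(7)$ acting transitively on $M$, both $\nu' M$ and $\nu'' M$ become $G$-homogeneous vector bundles and the Peter--Weyl theorem gives
\[
L^{2}\bigl(\Gamma(\nu M)\bigr) \;\cong\; \bigoplus_{\rho \in \widehat{G}} V_{\rho} \otimes {\rm Hom}_{H}\bigl(V_{\rho}, V_{\nu}\bigr).
\]
Since $D$ is $G$-equivariant, it acts between multiplicity spaces as finite-dimensional $H$-equivariant linear maps, so $\ker D$ is computable on each irreducible summand using branching rules and Casimir eigenvalue comparisons.

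Next I would produce the trivial non-Lagrangian deformations explicitly. The ${\rm Spin}(7)$-action on $S^{7}$ preserves associativity because it preserves the Cayley calibration on the cone, while the subgroup $G_{2}$ preserves the nearly parallel $G_{2}$-structure; consequently the 7-dimensional module $\mathfrak{spin}(7)/\mathfrak{g}_{2}$ embeds in $\Gamma(\nu M)$ as a space of trivial infinitesimal associative deformations, and I would verify that its projection to $\nu' M$ is nonzero for each of the five $M$. The theorem then reduces to the statement that the projection of the full $\ker D$ onto the $\nu' M$-summand is exhausted by the projection of this 7-dimensional trivial subspace, which is a dimension count at the level of $G$-irreducibles: one identifies the trivial subspace as a specific finite sum of $G$-modules inside $\Gamma(\nu M)$ and compares it against the Peter--Weyl output for $\ker D$ restricted to the $\nu' M$-component.

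The main obstacle is the explicit branching and eigenvalue calculation in each of the five cases. The totally geodesic $S^{3}$ is the cleanest, with comparatively large isotropy and small multiplicity spaces. The Lagrangians $L_{1}, \dots, L_{4}$ are harder: Lotay \cite{Lotay3} realises them as orbits of various subgroups of $G_{2}$ acting on $S^{6}$, some with only a torus or discrete stabilizer, so the multiplicity spaces are large and one must use the explicit $G_{2}$-embedding of $H$ to diagonalise $D$ on each irreducible summand and rule out extraneous non-Lagrangian kernel elements. Once each case is verified, matching against the 7-dimensional ${\rm Spin}(7)/G_{2}$-subspace completes the proof.
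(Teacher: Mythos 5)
Your overall strategy --- split $\nu M$ into the rank-one normal direction of $S^{6}\subset S^{7}$ plus the normal bundle inside $S^{6}$, identify the non-Lagrangian deformations with the first summand, expand by Peter--Weyl over $G/H$, and compare against the trivial deformations --- is the same as the paper's. One simplification you are missing: you do not need to diagonalise the full twisted Dirac operator on $\nu M$ and then project. Realising $S^{7}$ as the sine cone over the nearly K\"ahler $S^{6}$, the paper shows (Proposition \ref{deform_asso_sine cone NK}, Corollary \ref{diff_asso_Lag}) that in the splitting $\nu M\cong J(TM)\oplus\mathbb{R}$ the associative deformation equation decouples modulo the Lagrangian solutions $\{v:{\rm rot}(v)=3v\}$ of Lemma \ref{deform_Lag}: the non-Lagrangian part is parametrised exactly by the scalar eigenspace $\{f\in C^{\infty}(M):\Delta_{+}f=3f\}$. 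So only the eigenvalue-$3$ eigenspace of the ordinary Laplacian on ${\rm SU}(2)/\Gamma$ is needed, read off from the matrix coefficients of the $V_{n}$ together with the $\Gamma$-invariance conditions; no vector-valued branching is required for this theorem. (Also note that in the nearly parallel setting the deformations form the $-1$-eigenspace of $D=\sum_{i}e_{i}\times\nabla^{\perp}_{e_{i}}$, not its kernel; this is cosmetic.)

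The genuine gap is your final step, ``matching against the $7$-dimensional ${\rm Spin}(7)/G_{2}$-subspace.'' The space of trivial non-Lagrangian deformations is the image of $\mathfrak{spin}(7)$ in the deformation space modulo the image of $\mathfrak{g}_{2}$, and this is \emph{not} $7$-dimensional in two of the five cases, because the stabiliser of $M$ in ${\rm Spin}(7)$ need not be contained in $G_{2}$. For the totally geodesic $S^{3}$ the stabiliser is $K\cong{\rm SU}(2)^{3}/\mathbb{Z}_{2}$, so ${\rm Spin}(7)$ induces only $21-9=12$ associative deformations, of which $14-6=8$ are Lagrangian; the trivial non-Lagrangian space is $4$-dimensional, matching $\dim\{f:\Delta_{+}f=3f\}=4$ (Proposition \ref{sol_lap_S3}). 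For $L_{2}$ the circle $\{{\rm diag}(e^{-3it},e^{it},e^{it},e^{it})\}\subset{\rm Spin}(7)\setminus G_{2}$ preserves $L_{2}$, so the trivial non-Lagrangian space is $6$-dimensional, matching the eigenspace dimension $6$ (Proposition \ref{sol_lap_L2}). If you compare against a fixed $7$-dimensional module you will find a spurious $3$-dimensional (resp.\ $1$-dimensional) deficit in these two cases and be unable to conclude triviality. You must therefore compute, case by case, $\dim({\rm Spin}(7)\cdot M)-\dim(G_{2}\cdot M)$ --- equivalently, quotient the $7$-dimensional module by the contribution of the part of the stabiliser of $M$ not lying in $G_{2}$ --- before making the dimension comparison; with that correction the argument closes in all five cases.
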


This paper is organized as follows. In Section 2, we review the fundamental
facts of $G_{2}$, ${\rm Spin}(7)$, Sasakian, and nearly K\"{a}hler geometry.

In Section 3, 
we characterize 
the space of all infinitesimal associative deformations 
as an eigenspace of a twisted Dirac operator $D$ (Proposition \ref{deform_asso_NP}). 
In Section 4 (5), 
we compute 
the difference of the dimension between infinitesimal associative
and special Legendrian (Lagrangian) deformations. 
These computations are useful to prove Theorem \ref{rigid_summary}  and \ref{Lag_asso_rigid} 
and give the geometrical meanings of some eigenspaces of some differential operators 
such as the Laplacian.

In Section 6, 
according to Lotay's classification, 
we calculate the dimensions of eigenspaces of homogeneous associative submanifolds 
by the representation theoretical method in Appendix B, 
and 
prove Theorem \ref{rigid_summary}  and \ref{Lag_asso_rigid}.

{\bf Notation}: 
Let $M$ be a manifold and $E$ be a vector bundle over $M$. 
We denote by $C(M, E)$ the space of all continuous sections of $E \rightarrow M$, 
and by $C^{\infty}(M, E)$ the space of all smooth sections of $E \rightarrow M$. 
Especially, we write $\mathfrak{X}(M) = C^{\infty}(M, TM)$. 

If a Lie group $G$ acts on $M$, 
we denote by $X^{*}$ the vector field generated by $X \in \mathfrak{g} = Lie(G)$.

\noindent{{\bf Acknowledgements}}: 
The author would like to thank the referee for 
the careful reading of an earlier version of this paper
and useful comments on it.

\section{Preliminaries}

\subsection{$G_{2}$ and ${\rm Spin}(7)$ geometry}

\begin{definition} \label{def on R7}
Define a $3$-form $\varphi_{0}$ on $\mathbb{R}^{7}$ by
\begin{eqnarray*}
\varphi_{0} = dx_{123} +dx_{1} (dx_{45} +dx_{67}) +dx_{2}(dx_{46} - dx_{57}) - dx_{3}(dx_{47} + dx_{56}), 
\end{eqnarray*}
where  $(x_{1}, \cdots, x_{7})$ is the standard coordinate system 
on $\mathbb{R}^{7}$ 
and wedge signs are omitted. 
The Hodge dual of $\varphi_{0}$ is given by 
\begin{eqnarray*}
*\varphi_{0} = dx_{4567} +dx_{23} (dx_{67} + dx_{45}) +dx_{13}(dx_{57} - dx_{46}) - dx_{12}(dx_{56} + dx_{47}). 
\end{eqnarray*}

Decompose $\mathbb{R}^{8} = \mathbb{R} \oplus \mathbb{R}^{7}$
and denote by $x_{0}$ the coordinate on $\mathbb{R}$. 
Define a self-dual $4$-form $\Phi_{0}$ on $\mathbb{R}^{8}$ by
\begin{align*}
\Phi_{0} = dx_{0} \wedge \varphi_{0} + * \varphi_{0}. 
\end{align*}
If we identify $\mathbb{R}^{8} \cong \mathbb{C}^{4}$ via 
$\mathbb{R}^{8} \ni (x_{0}, \cdots, x_{7}) 
\mapsto 
(x_{0} + i x_{1}, x_{2} + i x_{3}, x_{4} + i x_{5}, x_{6} + i x_{7}) 
=: (z_{1}, z_{2}, z_{3}, z_{4}) \in \mathbb{C}^{4}
$, 
then $\Phi_{0}$ is described as 
\begin{align*}
\Phi_{0} = \frac{1}{2} \omega_{0} \wedge \omega_{0} + {\rm Re} \Omega_{0}, 
\end{align*}
where $\omega_{0} = \frac{i}{2} \sum_{j = 1}^{4} dz_{j \overline{j}}$ and 
$\Omega_{0} = dz_{1234}$ are the standard 
K\"{a}hler form and the holomorphic volume form on $\mathbb{C}^{4}$, respectively. 
\end{definition}

The stabilizers of $\varphi_{0}$ and $\Phi_{0}$ are 
the exceptional Lie group $G_{2}$ and ${\rm Spin}(7)$, respectively: 
\begin{eqnarray*}
G_{2} = \{ g \in GL(7, \mathbb{R})  ;  g^{*}\varphi_{0} = \varphi_{0} \}, \qquad 
{\rm Spin}(7) = \{ g \in GL(8, \mathbb{R})  ;  g^{*}\Phi_{0} = \Phi_{0} \}. 
\end{eqnarray*}

The Lie group $G_{2}$ fixes 
the standard metric $g_{0} = \sum^7_{i=1} (dx_{i})^{2}$ and the orientation on $\mathbb{R}^{7}$. 
They are uniquely determined by $\varphi_{0}$ via 
\begin{eqnarray}\label{g varphi}
6 g_{0}(v_{1}, v_{2}) {\rm vol}_{g_{0}} = i(v_{1})\varphi_{0} \wedge i(v_{2})\varphi_{0} \wedge \varphi_{0}, 
\end{eqnarray}
where ${\rm vol}_{g_{0}}$ is a volume form of $g_{0}$, 
$i( \cdot )$ is the interior product, and $v_{i} \in T(\mathbb{R}^{7})$.

Similarly, 
${\rm Spin}(7)$ fixes 
the standard metric $h_{0} = \sum^7_{i=0} (dx_{i})^{2}$ and the orientation on $\mathbb{R}^{8}$. 
We have the following identities: 
\begin{eqnarray} \label{g Phi}
\Phi_{0} ^{2} = 14 {\rm vol}_{h_{0}}, \qquad
(i(w_{2}) i(w_{1}) \Phi_{0})^{2} \wedge \Phi_{0} = 6 
\|  w_{1} \wedge w_{2} \|_{h_{0}}^{2} 
{\rm vol}_{h_{0}}, 
\end{eqnarray}
where ${\rm vol}_{h_{0}}$ is a volume form of $h_{0}$ 
and $w_{i} \in T(\mathbb{R}^{8})$.

\begin{definition}
Let $Y$ be an oriented 7-manifold and 
$\varphi$ a 3-form on $Y$. 
A 3-form $\varphi$ is called a {\bf $G_{2}$-structure} on $Y$ if 
for each $y \in Y$, there exists an oriented isomorphism between $T_{y}Y$ and $\mathbb{R}^{7}$ 
identifying $\varphi_{y}$ with $\varphi_{0}$. 
From (\ref{g varphi}), $\varphi$ induces the metric $g$ 
and the volume form on $Y$.
A  $G_{2}$-structure $\varphi$ is said to be {\bf nearly parallel} 
if $d \varphi = 4 *  \varphi$. 
We call a manifold with a nearly parallel $G_{2}$-structure 
a {\bf nearly parallel $G_{2}$-manifold} for short. 
A  $G_{2}$-structure $\varphi$ is called {\bf torsion-free} if 
$ d\varphi = d*\varphi = 0$.

Let $X$ be an oriented 8-manifold and 
$\Phi$ a 4-form on $X$. 
A 4-form $\Phi$ is called a {\bf ${\rm Spin}(7)$-structure} on $X$ if 
for each $x \in X$, there exists an oriented isomorphism between $T_{x}X$ and $\mathbb{R}^{8}$ 
identifying $\Phi_{x}$ with $\Phi_{0}$. 
From (\ref{g Phi}), $\Phi$ induces the metric $h$ and 
the volume form on $X$. 
A ${\rm Spin}(7)$-structure $\Phi$ is called {\bf torsion-free} if 
$d\Phi = 0$. 
\end{definition}

\begin{lem} \cite{Salamon}
A  $G_{2}$-structure $\varphi$ is torsion-free if and 
only if {\rm Hol($g$)} $\subset G_{2}$.
A ${\rm Spin}(7)$-structure $\Phi$ is torsion-free if and 
only if {\rm Hol($h$)} $\subset {\rm Spin}(7)$.  
\end{lem}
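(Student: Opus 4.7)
The plan is to handle the two equivalences in parallel, treating the easy direction via the holonomy principle and the hard direction via the representation-theoretic analysis of the intrinsic torsion.

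First I would dispatch the easy direction. Suppose $\mathrm{Hol}(g) \subset G_{2}$. Fix a point $y_{0} \in Y$ and an oriented isomorphism $T_{y_{0}}Y \cong \mathbb{R}^{7}$ sending $\varphi_{y_{0}}$ to $\varphi_{0}$. Since parallel transport with respect to the Levi-Civita connection preserves $\varphi_{0}$ by hypothesis, the parallel translates of $\varphi_{y_{0}}$ define a globally parallel $3$-form which, by the uniqueness part of the $G_{2}$-structure definition, agrees with $\varphi$. Hence $\nabla \varphi = 0$, and since $d$ is the skew-symmetrization of $\nabla$ and $\ast$ commutes with $\nabla$, both $d\varphi = 0$ and $d\ast\varphi = 0$. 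The same argument gives $d\Phi = 0$ in the $\mathrm{Spin}(7)$ case.

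Next comes the hard direction. The obstruction to $\nabla\varphi = 0$ is the intrinsic torsion, a section of $T^{*}Y \otimes \mathfrak{g}_{2}^{\perp}$, where $\mathfrak{g}_{2}^{\perp} \subset \mathfrak{so}(7) \cong \Lambda^{2}$ denotes the orthogonal complement of $\mathfrak{g}_{2}$. Under the pointwise $G_{2}$-action this bundle decomposes into four irreducible summands $W_{1} \oplus W_{2} \oplus W_{3} \oplus W_{4}$ (the Fernandez--Gray classes). The crucial step is to write down a $G_{2}$-equivariant linear map
\begin{equation*}
T^{*}Y \otimes \mathfrak{g}_{2}^{\perp} \;\longrightarrow\; \Lambda^{4}T^{*}Y \oplus \Lambda^{5}T^{*}Y, \qquad \nabla\varphi \;\longmapsto\; (d\varphi,\, d\ast\varphi),
\end{equation*}
and to verify that it is an isomorphism onto its image, so that the vanishing of $(d\varphi, d\ast\varphi)$ forces each component $W_{i}$ of the intrinsic torsion to vanish. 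Once $\nabla\varphi = 0$, the holonomy principle in the reverse direction gives $\mathrm{Hol}(g) \subset G_{2}$. For $\mathrm{Spin}(7)$ the analogous decomposition has only two irreducible components in $T^{*}X \otimes \mathfrak{spin}(7)^{\perp}$, and $d\Phi$ alone detects both, so $d\Phi = 0$ suffices.

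The main obstacle is precisely this representation-theoretic verification: one must show non-vanishing on each isotypic summand, which is most efficiently done by choosing a representative in each $W_{i}$ and computing $(d\varphi, d\ast\varphi)$ on it, then invoking Schur's lemma. An alternative route I would flag is the spinorial one: a $G_{2}$- or $\mathrm{Spin}(7)$-structure is equivalent to a unit real spinor $\psi$, the torsion-free condition is equivalent to $\nabla\psi = 0$, and the stabilizer of $\psi$ under the spin representation is exactly $G_{2}$ or $\mathrm{Spin}(7)$ respectively, so the holonomy reduction is automatic. This bypasses the Fernandez--Gray analysis at the cost of importing the equivalence between torsion-freeness and the existence of a parallel spinor.
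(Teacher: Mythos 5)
The paper offers no proof of this lemma; it is quoted from Salamon, and your outline of the hard direction ($d\varphi = d*\varphi = 0 \Rightarrow \nabla\varphi = 0$ via the Fernandez--Gray decomposition of the intrinsic torsion in $T^{*}Y\otimes\mathfrak{g}_{2}^{\perp}$ into $W_{1}\oplus W_{2}\oplus W_{3}\oplus W_{4}$, with $d\varphi$ detecting $W_{1},W_{3},W_{4}$ and $d*\varphi$ detecting $W_{2},W_{4}$, and the two-component analogue for ${\rm Spin}(7)$) is exactly the standard argument from that reference. Be aware, though, that you have only stated the plan: the injectivity of $\nabla\varphi\mapsto(d\varphi,d*\varphi)$ on each isotypic summand is the entire content of that direction and still has to be verified summand by summand.

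The genuine flaw is in your ``easy'' direction. From ${\rm Hol}(g)\subset G_{2}$ you parallel-transport $\varphi_{y_{0}}$ to obtain a parallel $3$-form $\tilde{\varphi}$ and then assert $\tilde{\varphi}=\varphi$ ``by uniqueness.'' There is no such uniqueness: at each point the set of $G_{2}$-structures inducing a fixed metric and orientation is a copy of ${\rm SO}(7)/G_{2}\cong\mathbb{RP}^{7}$, so a metric with holonomy in $G_{2}$ carries a large family of compatible $G_{2}$-structures, almost none of them parallel. Concretely, on a flat $T^{7}$ any smoothly varying non-constant choice of compatible $\varphi$ gives ${\rm Hol}(g)=\{1\}\subset G_{2}$ but $d\varphi\neq 0$. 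Moreover, if ${\rm Hol}(g)$ is merely conjugate into $G_{2}$ it need not stabilize $\varphi_{y_{0}}$ itself, so $\tilde{\varphi}$ is not even well defined from your data. The lemma must be read with the usual convention that the holonomy reduction is the one determined by $\varphi$: the honest chain of equivalences is $d\varphi=d*\varphi=0\iff\nabla\varphi=0\iff$ the $G_{2}$-subbundle of the frame bundle cut out by $\varphi$ is preserved by the Levi-Civita connection. With that reading the forward implication is the trivial one ($\nabla\varphi=0$ gives $d\varphi=d*\varphi=0$ by your skew-symmetrization remark) and no uniqueness claim is needed; the same caveat applies verbatim to the ${\rm Spin}(7)$ case. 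Your spinorial alternative is a legitimate shortcut, but it likewise establishes only the corrected statement, not the literal one.
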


\begin{lem} \cite{AleSem} \label{cha_NP}
The following are equivalent:
\begin{enumerate}
\item $d \varphi = 4 * \varphi$ (i.e. The 3-form $\varphi$ is a nearly parallel $G_{2}$-structure.), 
\item $\nabla \varphi = \frac{1}{4} d \varphi$, where $\nabla$ is the Levi-Civita connection of $g$, 
\item $\nabla \varphi = * \varphi$, 
\item $\nabla_{X} (* \varphi) = - g(X, \cdot) \wedge \varphi$ for any $X \in TY$, 
\item $i(X) \nabla_{X} \varphi = 0$ for any $X \in TY$, 
\item The Riemannian cone $C(Y) = \mathbb{R}_{>0} \times Y$ 
         admits a torsion-free {\rm Spin(7)}-structure                                                 
        $\Phi = r^{3} dr \wedge \varphi + r^{4} * \varphi$ with the induced cone metric 
        $\overline{g} = dr^{2} + r^{2} g$.      
\end{enumerate}
\end{lem}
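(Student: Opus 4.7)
My approach is to use the Fern\'andez--Gray classification of the intrinsic torsion of a $G_2$-structure. For any $G_2$-structure $\varphi$ on $Y$,
\begin{align*}
d\varphi &= \tau_0\, {*\varphi} + 3\tau_1 \wedge \varphi + {*\tau_3}, \\
d{*\varphi} &= 4\tau_1 \wedge {*\varphi} + \tau_2 \wedge \varphi,
\end{align*}
where $\tau_0$ is a function, $\tau_1$ a 1-form, $\tau_2$ a 2-form in the $\mathbf{14}$-component, $\tau_3$ a 3-form in the $\mathbf{27}$-component, and $\nabla\varphi$ is recovered from $(\tau_0,\dots,\tau_3)$ via an explicit $G_2$-equivariant formula. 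Condition (1) is precisely $\tau_0 = 4$ with $\tau_1 = \tau_3 = 0$; the identity $d^2\varphi = 0$ then forces $d{*\varphi}=0$, and combined with $\tau_1=0$ this gives $\tau_2=0$ as well.

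For (1)$\Leftrightarrow$(2)$\Leftrightarrow$(3), I would exploit the fact that when only $\tau_0$-torsion is present the formula for $\nabla\varphi$ collapses to $\nabla_X\varphi = \frac{\tau_0}{4}\,i(X){*\varphi}$. Setting $\tau_0=4$ yields (3), and (3) implies (2) since $d\varphi$ equals $4$ times the alternation of $\nabla\varphi$. Each of (2) and (3) is a $G_2$-equivariant linear constraint on $\nabla\varphi$ that isolates exactly its $\tau_0$-isotypic part with the stated normalization, so it returns (1). For (3)$\Rightarrow$(4), the Hodge star commutes with $\nabla$, hence $\nabla_X{*\varphi}=*\nabla_X\varphi=*\,i(X){*\varphi}$, and the pointwise identity $*\,i(X){*\alpha} = -X^\flat\wedge\alpha$ for 3-forms on an oriented 7-manifold gives (4); applying $*$ again reverses the argument. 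For (3)$\Rightarrow$(5), $i(X)^2=0$ is immediate; for the converse, polarizing $i(X)\nabla_X\varphi=0$ to $i(X)\nabla_Y\varphi + i(Y)\nabla_X\varphi = 0$ yields a $G_2$-equivariant constraint that again cuts out the $\tau_0$-component, and a Bianchi-type argument then forces $\tau_0$ to be constant and normalized to $4$.

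For (1)$\Leftrightarrow$(6), I would compute directly: with $\Phi = r^{3}\,dr\wedge\varphi + r^{4}\,{*\varphi}$,
\begin{align*}
d\Phi = r^{3}\, dr\wedge(4\,{*\varphi} - d\varphi) + r^{4}\, d({*\varphi}).
\end{align*}
So (1) gives $d({*\varphi})=\frac{1}{4}d^2\varphi=0$ and hence $d\Phi=0$; conversely, $d\Phi=0$ separates into vanishing of the $dr$-containing and $dr$-free parts, which returns (1). A pointwise comparison with the model $\Phi_0 = dx_0\wedge\varphi_0 + {*\varphi_0}$ of Definition \ref{def on R7} shows $\Phi$ is a $\mathrm{Spin}(7)$-structure inducing $\bar g = dr^{2}+r^{2}g$, and the preceding lemma then yields $\mathrm{Hol}(\bar g)\subset \mathrm{Spin}(7)$.

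The main obstacle is the representation-theoretic content behind (1)--(5): one must establish the explicit $G_2$-equivariant formula for $\nabla\varphi$ in terms of $(\tau_0,\dots,\tau_3)$ and identify the pointwise algebraic conditions (2)--(5) with projections onto the $\tau_0$-isotypic component of $T^{*}Y \otimes \Lambda^{3}T^{*}Y$. This is standard but technical; a streamlined alternative is to carry out (1)$\Leftrightarrow$(6) in full and cite the Fern\'andez--Gray / Alexandrov--Semmelmann theory for the remaining implications.
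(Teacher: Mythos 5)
The paper offers no proof of this lemma at all---it is quoted directly from \cite{AleSem}---so there is nothing internal to compare against and your argument has to stand on its own. Most of it does: the Fern\'andez--Gray torsion framework is the standard (and essentially the cited) route; the computation $d\Phi = r^{3}\,dr\wedge(4\,{*\varphi}-d\varphi)+r^{4}\,d({*\varphi})$ is correct and, together with the pointwise comparison with $\Phi_{0}$ and the preceding lemma identifying torsion-free ${\rm Spin}(7)$-structures with $d\Phi=0$, settles (1)$\Leftrightarrow$(6); the identity ${*\,}i(X){*\alpha}=-X^{\flat}\wedge\alpha$ for $3$-forms in dimension $7$ is correct (using $**=\mathrm{id}$) and gives (3)$\Leftrightarrow$(4); and (3)$\Rightarrow$(1), (3)$\Rightarrow$(2), (3)$\Rightarrow$(5) all check out, as does the deduction of $\tau_{2}=0$ from $d^{2}\varphi=0$.

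The genuine gap is in the reverse implications (2)$\Rightarrow$(1) and especially (5)$\Rightarrow$(1). Both conditions are satisfied by every torsion-free $G_{2}$-structure: if $\nabla\varphi=0$ then $d\varphi=0$ and (2) reads $0=0$, while (5) is vacuous. More generally, your own analysis shows that (2) and (5) only force the intrinsic torsion to be of pure $\tau_{0}$-type with $\tau_{0}$ locally constant; neither condition is sensitive to the value of that constant, so no ``Bianchi-type argument'' can produce the normalization $\tau_{0}=4$, nor even exclude $\tau_{0}=0$. The constancy argument ($d\tau_{0}\wedge{*\varphi}=0$, hence $d\tau_{0}=0$ by injectivity of $\wedge\,{*\varphi}$ on $1$-forms) is the most that can be extracted. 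To make the equivalence literally true one must either impose a nondegeneracy hypothesis $\nabla\varphi\neq 0$---exactly as this paper does in the nearly K\"ahler analogue quoted from \cite{But}, which requires both $(\nabla_{X}J)X=0$ and $\nabla_{X}J\neq 0$---and then absorb the resulting nonzero constant by a homothety of $g$, or read (2) and (5) under the standing normalization of \cite{AleSem}. You should flag this explicitly rather than assert that the normalization follows; as written, that step fails.
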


Next, we give a summary of the facts 
about submanifolds. 
Let $Y$ be a manifold with a $G_{2}$-structure $\varphi$ and the induced metric $g$.

\begin{lem} \cite{Harvey Lawson} \label{def_asso_coasso}
For 
every oriented $k$-dimensional subspace $V^{k} \subset T_{p}Y$ 
where $p \in Y$ and  $k = 3, 4,$ 
we have
$
\varphi|_{V^{3}} \leq {\rm vol}_{V^{3}}, \ 
*\varphi|_{V^{4}} \leq {\rm vol}_{V^{4}}.
$
An oriented 3-submanifold $L^{3} \subset Y$ is called {\bf associative} 
if $\varphi|_{TL^{3}} = {\rm vol}_{L^{3}}$.
An oriented 4-submanifold $L^{4}$ is called {\bf coassociative} 
if $*\varphi|_{TL^{4}} = {\rm vol}_{L^{4}}$.
\end{lem}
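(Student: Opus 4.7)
The plan is to reduce both inequalities to pointwise algebraic statements. Since a $G_{2}$-structure by definition identifies $(T_{p}Y, \varphi_{p})$ with $(\mathbb{R}^{7}, \varphi_{0})$ as oriented vector spaces, and since the metric is determined by $\varphi$ via (\ref{g varphi}), this isomorphism is also isometric. So it suffices to verify the comass bounds for $\varphi_{0}$ and $*\varphi_{0}$ on $\mathbb{R}^{7}$.

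For the associative bound, pick an oriented orthonormal basis $e_{1}, e_{2}, e_{3}$ of $V^{3}$, so that $\varphi|_{V^{3}} = \varphi_{0}(e_{1}, e_{2}, e_{3})\,\mathrm{vol}_{V^{3}}$; the task is to show $|\varphi_{0}(e_{1}, e_{2}, e_{3})| \le 1$. My preferred route is to introduce the $G_{2}$ cross product $\times$ on $\mathbb{R}^{7}$ by $g_{0}(u \times v, w) = \varphi_{0}(u, v, w)$ and verify the octonion identity $|u \times v|^{2} + g_{0}(u, v)^{2} = |u|^{2}|v|^{2}$ directly from the explicit formula for $\varphi_{0}$ in Definition \ref{def on R7}. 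Applied to the orthonormal pair $e_{1}, e_{2}$ this gives $|e_{1} \times e_{2}| = 1$, whence Cauchy--Schwarz yields $|\varphi_{0}(e_{1}, e_{2}, e_{3})| = |g_{0}(e_{1} \times e_{2}, e_{3})| \le 1$, with equality iff $e_{3} = \pm e_{1} \times e_{2}$. As an alternative, I could exploit transitivity of $G_{2}$ on oriented orthonormal 2-frames in $\mathbb{R}^{7}$ to normalize $e_{1} = \partial_{x_{1}}, e_{2} = \partial_{x_{2}}$; then only the $dx_{123}$ term of $\varphi_{0}$ contributes and $\varphi_{0}(\partial_{x_{1}}, \partial_{x_{2}}, v) = v_{3}$, after which $|v_{3}| \le |v| = 1$ finishes the argument.

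The coassociative bound I would deduce from the associative one by Hodge duality. For an oriented 4-plane $V^{4} \subset \mathbb{R}^{7}$ with unit decomposable 4-vector $\xi$, one has $(*\varphi_{0})(\xi) = \varphi_{0}(*\xi)$, where $*\xi$ is the unit decomposable 3-vector representing the orthogonal complement $(V^{4})^{\perp}$ with the induced orientation. Applying the just-established associative estimate to $*\xi$ yields $|(*\varphi_{0})(\xi)| \le 1$, with equality precisely when $(V^{4})^{\perp}$ is an associative 3-plane (this is the useful characterization of coassociative planes as those whose orthogonal complement is associative).

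There is no serious obstacle. The only mildly fiddly points are (i) the direct expansion verifying the octonion identity for $\times$ from Definition \ref{def on R7}, and (ii) keeping orientation signs consistent when using Hodge duality to pass between the 3-plane and 4-plane statements.
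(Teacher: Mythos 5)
Your argument is correct. The paper itself gives no proof of this lemma --- it is quoted directly from Harvey--Lawson --- so there is nothing internal to compare against; what you have written is essentially the standard Harvey--Lawson argument, and it is sound: the identity $|u\times v|^{2}+g_{0}(u,v)^{2}=|u|^{2}|v|^{2}$ does hold for the cross product defined by $g_{0}(u\times v,w)=\varphi_{0}(u,v,w)$ (it is the norm multiplicativity of octonions restricted to imaginary parts, and can be checked from the coordinate expression of $\varphi_{0}$), so Cauchy--Schwarz gives the comass bound for $\varphi_{0}$, and the coassociative bound follows by Hodge duality exactly as you say, since $(*\varphi_{0})(\xi)=\varphi_{0}(*\xi)$ and $*\xi$ represents the oriented orthogonal $3$-plane. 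Your alternative via transitivity of $G_{2}$ on orthonormal $2$-frames also works, since $i(\partial_{x_{2}})i(\partial_{x_{1}})\varphi_{0}=dx_{3}$. The only point worth flagging is that Harvey--Lawson actually prove the sharper associator identity $\varphi_{0}(x,y,z)^{2}+\tfrac{1}{4}|[x,y,z]|^{2}=|x\wedge y\wedge z|^{2}$, which yields the equality case ($\chi=0$, as in Lemma 2.6 of the paper) in one stroke; your Cauchy--Schwarz route gives the inequality more cheaply but characterizes equality only as $e_{3}=e_{1}\times e_{2}$, which is of course equivalent.
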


\begin{lem} \cite{Harvey Lawson}  \label{def_chi}
Define a tangent bundle valued 3-form $\chi \in C^{\infty} (Y, \wedge^{3}T^{*}Y \otimes TY)$ by 
\begin{eqnarray*}
g(v_{1}, \chi(v_{2}, v_{3}, v_{4})) = *\varphi(v_{1}, v_{2}, v_{3}, v_{4})
\end{eqnarray*}
for $v_{i} \in TY$. 
If $L^{k} \subset Y$ is an oriented $k$-submanifold where $k = 3, 4$, 
then 
\begin{align*}
L^{3} \mbox{: associative} &\Leftrightarrow \chi|_{TL^{3}} = 0 \mbox{ and } \varphi|_{TL^{3}} > 0,\\ 
L^{4} \mbox{: coassociative} &\Leftrightarrow 
\varphi|_{TL^{4}} = 0 \mbox{ and } *\varphi|_{TL^{4}} > 0. 
\end{align*}
\end{lem}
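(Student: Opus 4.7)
The plan is to exploit the pointwise, $G_{2}$-invariant nature of both characterizations. Since the associative and coassociative conditions are defined on individual tangent planes and $\chi$ is constructed pointwise from $\varphi$ and $g$, it suffices to work fiberwise on $(\mathbb{R}^{7},\varphi_{0})$. Moreover $G_{2}$ acts transitively on the Grassmannian of oriented associative 3-planes (and on oriented coassociative 4-planes), so in principle one can reduce to a single model plane, though the cleaner route is to check everything on an orthonormal frame adapted to a general 3-plane.

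For the associative direction, I would first observe the structural fact that $\chi(v_{2},v_{3},v_{4})$ is automatically normal to the 3-plane $V = \mathrm{span}(v_{2},v_{3},v_{4})$: for any $v_{1}\in V$ the 4 vectors $v_{1},v_{2},v_{3},v_{4}$ are linearly dependent, hence $*\varphi(v_{1},v_{2},v_{3},v_{4}) = 0$ by definition of $\chi$. Consequently $\chi|_{TL^{3}} = 0$ is equivalent to the vanishing of the full vector $\chi(v_{2},v_{3},v_{4})$, i.e.\ to $|\chi(v_{2},v_{3},v_{4})|^{2} = 0$. The heart of the argument is then the pointwise identity
\begin{equation*}
\|v_{2}\wedge v_{3}\wedge v_{4}\|_{g}^{2} \;=\; \varphi(v_{2},v_{3},v_{4})^{2} + |\chi(v_{2},v_{3},v_{4})|_{g}^{2},
\end{equation*}
which is a routine check on an orthonormal basis adapted to $V$ using the explicit expressions of $\varphi_{0}$ and $*\varphi_{0}$ from Definition \ref{def on R7}. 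This single identity simultaneously reproves the inequality $\varphi|_{V}\le \mathrm{vol}_{V}$ of Lemma \ref{def_asso_coasso} and shows that equality in absolute value holds iff $\chi|_{V} = 0$. The sign condition $\varphi|_{TL^{3}} > 0$ then selects the associative orientation over the reversed (anti-associative) one.

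For the coassociative direction, I would use the classical linear-algebra fact, again verifiable directly from the model $\varphi_{0},*\varphi_{0}$, that a 4-plane $V^{4}\subset \mathbb{R}^{7}$ is coassociative up to orientation if and only if $\varphi|_{V^{4}} = 0$; equivalently $V^{4}$ is coassociative iff $(V^{4})^{\perp}$ is associative, a fact one can derive from the associative case applied to the orthogonal 3-plane together with the interior-product relation between $\varphi$ and $*\varphi$. Given this, $\varphi|_{TL^{4}} = 0$ forces $TL^{4}$ to be coassociative for some orientation, on which $*\varphi$ evaluates to $\pm\,\mathrm{vol}_{L^{4}}$; the condition $*\varphi|_{TL^{4}} > 0$ then picks out the correct (coassociative) orientation rather than its reverse.

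The only technical step with real content is the pointwise identity in the associative part; everything else is bookkeeping in $(\mathbb{R}^{7},\varphi_{0})$ followed by a sign discussion, so I do not expect a genuine obstacle—this is essentially the Harvey–Lawson characterization and all pieces can be extracted by direct computation in the standard frame.
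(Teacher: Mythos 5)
The paper offers no proof of this lemma at all --- it is quoted from Harvey--Lawson --- so there is nothing internal to compare against; I can only judge your argument on its own terms. Your associative half is the standard and correct route: $\chi(v_{2},v_{3},v_{4})\perp\mathrm{span}(v_{2},v_{3},v_{4})$ because $*\varphi$ is alternating, and the associator equality
\[
\|v_{2}\wedge v_{3}\wedge v_{4}\|_{g}^{2}=\varphi(v_{2},v_{3},v_{4})^{2}+\|\chi(v_{2},v_{3},v_{4})\|_{g}^{2}
\]
does the rest, with $\varphi|_{TL^{3}}>0$ selecting the orientation. Both sides depend only on the simple $3$-vector $v_{2}\wedge v_{3}\wedge v_{4}$ and are quadratic in it, so the verification reduces to unit simple $3$-vectors and is a finite (if tedious) computation; this half is sound.

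The coassociative half has a genuine gap. The entire content of that equivalence is the implication $\varphi|_{V^{4}}=0\Rightarrow *\varphi|_{V^{4}}=\pm\,\mathrm{vol}_{V^{4}}$, and you only assert it as ``the classical linear-algebra fact''; the reduction you sketch to the associative case on $(V^{4})^{\perp}$ is not mere bookkeeping. To carry it out you need at least: (i) for $x,y\in V^{4}$, $x\times y\in(V^{4})^{\perp}$ (this part does follow, since $g(x\times y,z)=\varphi(x,y,z)=0$ for $z\in V^{4}$ by hypothesis and $x\times y\perp x,y$ always); (ii) the cross-product identities $\|u\times v\|^{2}=\|u\|^{2}\|v\|^{2}-g(u,v)^{2}$ and $g(u\times v,u\times w)=\|u\|^{2}g(v,w)-g(u,v)g(u,w)$, so that for an orthonormal basis $\{x,y,z,w\}$ of $V^{4}$ the vectors $x\times y,\,x\times z,\,x\times w$ form an orthonormal basis of $(V^{4})^{\perp}$; and (iii) a further octonionic computation showing that this basis satisfies $e_{1}\times e_{2}=\pm e_{3}$, i.e.\ that $(V^{4})^{\perp}$ is actually associative, before you can invoke the duality between coassociative $4$-planes and associative orthogonal complements and then pin down the sign of $*\varphi|_{V^{4}}$. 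Step (iii) in particular is where the real work lives, and your proposal neither performs it nor points to an identity that would replace it; as written, the key implication is assumed rather than derived.
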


\begin{definition} \label{def of cross product}
Define the cross product $\times : TY \times TY \rightarrow TY$ by 
\begin{align*}
g(u \times v, w) = \varphi (u, v, w)
\end{align*}
for $u, v, w \in TY$. 
This satisfies the following relation:
\begin{align} \label{chi_cross}
\chi (u, v, w) = u \times (v \times w) + g(u, v) w - g(u, w) v. 
\end{align}
\end{definition}

\begin{rem}
When $L^{3}$ is associative, 
there exists an orthonormal basis $\{ e_{1}, e_{2}, e_{3} \}$ 
satisfying $e_{3} = e_{1} \times e_{2}$ 
at any point in $L^{3}$.
\end{rem}


\begin{definition}
Let $X$ be a manifold with a ${\rm Spin}(7)$-structure $\Phi$.  
Then for 
every oriented $4$-dimensional subspace $W \subset T_{x}X$ 
where $x \in X,$ 
we have
$
\Phi|_{W} \leq {\rm vol}_{W}. 
$
An oriented 4-submanifold $N \subset X$ is called {\bf Cayley} 
if $\Phi|_{TN} = {\rm vol}_{N}$.
\end{definition}

\begin{lem} \label{equiv asso Cayley}
Let $(Y, \varphi, g)$ be a nearly parallel $G_{2}$-manifold  
and $L \subset Y$ be an oriented 3-submanifold. 
By Lemma \ref{cha_NP}, 
$C(Y)$ is a manifold with a torsion-free ${\rm Spin}(7)$-structure $\Phi$. 
Then $L \subset Y$ is associative if and only if 
$C(L) \subset C(Y)$ is Cayley. 
\end{lem}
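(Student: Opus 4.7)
The plan is to reduce the equivalence to a direct pointwise computation using the explicit formula $\Phi = r^{3}dr\wedge\varphi + r^{4}\!*\!\varphi$ given by Lemma \ref{cha_NP}, and compare $\Phi|_{C(L)}$ with the induced volume form on $C(L)$.

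First I would describe the tangent spaces. At a point $(r,p)\in C(L)=\mathbb{R}_{>0}\times L$, the tangent space splits as $T_{(r,p)}C(L)=\mathbb{R}\partial_{r}\oplus T_{p}L$, so any $4$-form on $C(Y)$ restricted to $C(L)$ is determined by its values on $(\partial_{r},v_{1},v_{2},v_{3})$ with $v_{i}\in T_{p}L$. Next I would restrict the two summands of $\Phi$ separately. Since $*\varphi$ is the pullback of a $4$-form on $Y$ and $L$ is only $3$-dimensional, $(*\varphi)|_{C(L)}$ vanishes on any tuple containing $\partial_{r}$ (because $*\varphi$ has no $dr$-component) and also on any tuple of four tangent vectors to $L$ (as $\dim L=3$); hence the second summand of $\Phi$ restricts to zero. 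The first summand restricts to $r^{3}\,dr\wedge(\varphi|_{L})$, which is a $4$-form on $C(L)$.

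Second, I would compute the induced Riemannian volume form. Since $\overline{g}|_{C(L)}=dr^{2}+r^{2}g|_{L}$, choosing an oriented orthonormal frame $\{e_{1},e_{2},e_{3}\}$ of $(L,g|_{L})$ gives the oriented orthonormal frame $\{\partial_{r},r^{-1}e_{1},r^{-1}e_{2},r^{-1}e_{3}\}$ of $(C(L),\overline{g}|_{C(L)})$, whence $\mathrm{vol}_{C(L)} = r^{3}\,dr\wedge\mathrm{vol}_{L}$, where $\mathrm{vol}_{L}$ is the Riemannian volume form of $(L,g|_{L})$ with the orientation induced from $C(L)$ by pairing with $\partial_{r}$.

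Finally I would conclude: the identity $\Phi|_{C(L)}=\mathrm{vol}_{C(L)}$ is equivalent to $r^{3}\,dr\wedge(\varphi|_{L}) = r^{3}\,dr\wedge\mathrm{vol}_{L}$, which holds if and only if $\varphi|_{L}=\mathrm{vol}_{L}$, i.e.\ $L$ is associative in the sense of Lemma \ref{def_asso_coasso}. There is essentially no hard step here; the only subtlety worth writing out carefully is the orientation bookkeeping, namely that the convention making $C(L)$ oriented by $(\partial_{r},\text{oriented basis of }L)$ is consistent with the canonical orientation coming from the Cayley calibration, so that the inequality $\Phi|_{T\,C(L)}\le\mathrm{vol}_{C(L)}$ becomes equality exactly under the associative condition.
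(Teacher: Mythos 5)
Your argument is correct: restricting $\Phi = r^{3}dr\wedge\varphi + r^{4}\!*\!\varphi$ to $T\,C(L)$ kills the $*\varphi$ term (no $dr$-component, and four vectors in a $3$-dimensional space are dependent), and comparing $r^{3}dr\wedge(\varphi|_{TL})$ with $\mathrm{vol}_{C(L)}=r^{3}dr\wedge\mathrm{vol}_{L}$ reduces the Cayley condition exactly to $\varphi|_{TL}=\mathrm{vol}_{L}$. The paper states this lemma without proof, and your computation is the standard direct verification, including the correct handling of the orientation convention $\partial_{r}\wedge(\text{oriented basis of }L)$.
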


\begin{lem} \cite{Lotay3}
There are no coassociative submanifolds of a nearly parallel $G_{2}$-manifold $(Y, \varphi, g)$. 
\end{lem}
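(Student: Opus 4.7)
The plan is to derive a contradiction from the nearly parallel condition $d\varphi = 4 * \varphi$ together with the characterization of coassociative submanifolds given in Lemma \ref{def_chi}.

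Suppose $L^{4} \subset Y$ is a coassociative submanifold. By Lemma \ref{def_chi}, this means $\varphi|_{TL^{4}} = 0$ and $*\varphi|_{TL^{4}} = \mathrm{vol}_{L^{4}} > 0$. Let $i : L^{4} \hookrightarrow Y$ denote the inclusion; then $i^{*}\varphi = 0$ identically on $L^{4}$.

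Taking the exterior derivative on $L^{4}$, and using that pullback commutes with $d$, I would then write
\begin{equation*}
0 = d(i^{*} \varphi) = i^{*} (d\varphi) = 4 \, i^{*}(*\varphi) = 4 \, \mathrm{vol}_{L^{4}},
\end{equation*}
where the third equality uses the nearly parallel condition $d\varphi = 4 * \varphi$ from Lemma \ref{cha_NP}, and the last equality uses the coassociative calibration condition. This is a contradiction since $\mathrm{vol}_{L^{4}}$ is nowhere zero on $L^{4}$.

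There is no real obstacle here; the only thing to check is the sign/factor in the nearly parallel identity, which is already fixed by the statement $d\varphi = 4 * \varphi$ in the definition. The key conceptual point is simply that in the torsion-free case $d\varphi = 0$, so pulling back $\varphi$ to vanish on a 4-submanifold is consistent, whereas in the nearly parallel case the closedness of $\varphi|_{L^{4}}$ forces the volume form of $L^{4}$ to vanish, which is impossible.
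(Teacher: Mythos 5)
Your proof is correct and is essentially the same argument as the paper's: both use $\varphi|_{TL}=0$ together with the nearly parallel identity $d\varphi = 4*\varphi$ to force $4\,\mathrm{vol}_{L}= d\varphi|_{TL} = d(\varphi|_{TL}) = 0$, a contradiction. Your version just spells out the pullback-commutes-with-$d$ step more explicitly.
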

\begin{proof}
If $L$ is a coassociative submanifold, we have $\varphi|_{TL} = 0$, 
which implies that $4 {\rm vol}_{L} = 4 * \varphi|_{TL} = d \varphi|_{TL} = 0$. This is a contradiction. 
\end{proof}

\subsection{Sasakian geometry}

\begin{definition}
An odd dimensional Riemannian manifold 
$(S,g)$ is a {\bf Sasakian manifold} if 
its Riemannian cone 
$(C(S), \overline{g}) = (\mathbb{R}_{>0} \times S, dr^{2} + r^{2} g)$ 
is a K\"ahler manifold with respect to some integrable complex structure $J$ over $C(S)$. 
\end{definition}
Here, $r$ is a standard coordinate of $\mathbb{R}_{>0}$ and we regard $r$ as
the function on $C(S)$. 
We identify $S$ with the submanifold $\{1\} \times S \subset C(S)$.

\begin{lem}
Let $(S,g)$ be a Sasakian $(2m + 1)$-manifold. 
If $g$ is Einstein, 
the cone $(C(S), \overline{g})$ is Ricci-flat. 
In addition, if 
there exists a holomorphic volume form $\Omega \in \Omega^{(m+1, 0)}(C(S))$ such that 
\begin{align} \label{CYcondition}
\omega^{m+1}/(m+1)! = (-1)^{m(m+1)/2} (i/2)^{m+1} \Omega \wedge \overline{\Omega}, 
\end{align}
where $\omega = \overline{g}(J \cdot, \cdot)$ is the associated K\"ahler form on $C(S)$, 
we call $(C(S), \overline{g}, J, \omega, \Omega)$ a Calabi-Yau manifold. 
\end{lem}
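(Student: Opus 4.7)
The second sentence of the statement only introduces the term ``Calabi-Yau,'' so I focus on the assertion that an Einstein Sasakian metric produces a Ricci-flat cone.

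\textbf{Plan.} The approach is to compute the Ricci tensor of the cone directly from its warped-product structure, and then exploit a standard Sasakian identity to pin down the Einstein constant. Writing $\overline{g} = dr^{2} + r^{2} g$ as the warped product $\mathbb{R}_{>0} \times_{r} S$ with $\dim S = 2m+1$ and warping function $f(r) = r$, I would apply the O'Neill/Besse formulas for warped-product Ricci tensors. Since $f'' = 0$ and $|\nabla f|^{2} = 1$, these yield $\overline{\mathrm{Ric}}(\partial_{r}, \cdot) = 0$ and $\overline{\mathrm{Ric}}(X, Y) = \mathrm{Ric}_{g}(X, Y) - 2m\, g(X, Y)$ for $X, Y \in TS$. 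Consequently, Ricci-flatness of the cone is equivalent to $\mathrm{Ric}_{g} = 2m\, g$.

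If $g$ is Einstein, then $\mathrm{Ric}_{g} = \lambda g$ for some constant $\lambda$, and it remains to identify $\lambda$. Here I would invoke the Sasakian identity $\mathrm{Ric}_{g}(X, \xi) = 2m\, \eta(X)$, where $\xi$ is the unit Reeb field (the restriction of $J(r\partial_{r})$ to $\{1\} \times S \subset C(S)$) and $\eta = g(\xi, \cdot)$. Setting $X = \xi$ and using $|\xi|_{g} = 1$ forces $\lambda = 2m$, so $\overline{\mathrm{Ric}} = 0$ on $C(S)$.

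The hard part is not conceptual but computational: verifying $\mathrm{Ric}_{g}(\cdot, \xi) = 2m\, \eta$. The cleanest derivation uses the Sasakian relations $\nabla^{S}_{X} \xi = -\phi X$, where $\phi$ is the $(1,1)$-tensor on $S$ induced by the complex structure $J$ on $C(S)$, together with the characteristic curvature identity $R^{S}(X, Y)\xi = \eta(Y) X - \eta(X) Y$; tracing the latter against an orthonormal frame immediately yields the claim. Both ingredients are standard facts of Sasakian geometry, so the proof amounts to assembling them and plugging into the warped-product formula above.
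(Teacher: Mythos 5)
Your argument is correct. Note that the paper gives no proof of this lemma at all---it is recorded as a standard fact of Sasaki--Einstein geometry (in the spirit of \cite{Boyer Galicki})---so there is no internal argument to compare against; your derivation is the standard textbook one. The two ingredients check out: the warped-product Ricci formulas for $\overline{g}=dr^{2}+r^{2}g$ with warping function $f(r)=r$ and $\dim S = 2m+1$ give $\overline{\mathrm{Ric}}(\partial_{r},\cdot)=0$ and $\overline{\mathrm{Ric}}|_{TS}=\mathrm{Ric}_{g}-2m\,g$, so Ricci-flatness of the cone is exactly the statement $\mathrm{Ric}_{g}=2m\,g$; and tracing the Sasakian curvature identity $R(X,Y)\xi=\eta(Y)X-\eta(X)Y$ over an orthonormal frame yields $\mathrm{Ric}_{g}(\cdot,\xi)=2m\,\eta$, which pins the Einstein constant to $2m$ (here Schur's lemma guarantees the constant is global since $2m+1\geq 3$ in the relevant cases). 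The proof is complete as assembled.
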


\begin{lem}[{\cite[Corollary 11.1.8]{Boyer Galicki}}]
If $S$ is a compact simply-connected Sasaki-Einstein manifold, 
$C(S)$ is a Calabi-Yau manifold. 
\end{lem}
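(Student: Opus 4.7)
The plan is to construct a holomorphic $(m+1,0)$-form $\Omega$ on $C(S)$ satisfying the normalization (\ref{CYcondition}); the K\"ahler structure and Ricci-flatness of $(C(S),\overline{g})$ are supplied by the preceding lemma, so the task reduces to producing $\Omega$.

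First, I would observe that $C(S)=\mathbb{R}_{>0}\times S$ is simply-connected, since it deformation retracts onto $S$. The K\"ahler metric $\overline{g}$ induces a Hermitian metric on the canonical line bundle $K_{C(S)}$, whose Chern connection has curvature equal, up to sign, to the Ricci form $\rho$ of $\overline{g}$. Ricci-flatness thus makes this connection flat, so the monodromy representation $\pi_{1}(C(S))\to U(1)$ is trivial, and $K_{C(S)}$ admits a nowhere vanishing global holomorphic section $\Omega_{0}$.

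Next, I would rescale $\Omega_{0}$ to meet the normalization. Define a smooth positive function $h$ on $C(S)$ by
\[
(-1)^{m(m+1)/2}(i/2)^{m+1}\,\Omega_{0}\wedge\overline{\Omega_{0}} = h\cdot \omega^{m+1}/(m+1)!.
\]
A standard K\"ahler identity gives $-i\,\partial\bar\partial \log h = \rho = 0$, so $\log h$ is pluriharmonic; by simple connectivity it is the real part of a global holomorphic function $2F$, and $\Omega := e^{-F}\Omega_{0}$ is then a holomorphic volume form satisfying (\ref{CYcondition}).

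The main obstacle is that flatness of the Chern connection on $K_{C(S)}$ and pluriharmonicity of $\log h$ are a priori only local statements; converting them into a single global holomorphic trivialization and a global normalization depends critically on $\pi_{1}(C(S))=0$, which is precisely where the compactness and simple connectivity hypotheses on $S$ enter. Without those hypotheses one would only obtain a Calabi-Yau structure after passing to a finite cover.
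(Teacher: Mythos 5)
Your argument is correct, but note that the paper does not prove this lemma at all: it is quoted verbatim as \cite[Corollary 11.1.8]{Boyer Galicki}, so there is no in-paper proof to compare against. What you have written is the standard self-contained argument, and it is essentially the one found in the cited reference: the preceding lemma gives that $(C(S),\overline{g},J)$ is K\"ahler and Ricci-flat; the Ricci form is (up to sign) the curvature of the Chern connection on $K_{C(S)}$, which is therefore flat; since $C(S)\simeq S$ is simply connected, the flat unitary connection has a global parallel section, which is holomorphic because the $(0,1)$-part of the Chern connection is $\bar\partial$; and one normalizes to get (\ref{CYcondition}). One small simplification you could make: the parallel section already has \emph{constant} pointwise norm (the Chern connection is metric-compatible), so your function $h$ is constant for that choice of $\Omega_{0}$ and the normalization is achieved by multiplying by a constant --- the $\partial\bar\partial\log h=\rho$ and pluriharmonicity argument is only needed if one starts from an arbitrary holomorphic trivialization. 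Two minor caveats: compactness of $S$ is not actually used anywhere in your argument (simple connectivity is what matters; completeness/compactness plays a role elsewhere in the Sasaki--Einstein theory), and your closing remark that one ``only'' needs a finite cover in general is not accurate when $\pi_{1}(S)$ is infinite, though neither point affects the validity of the proof.
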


\begin{rem}
The holomorphic volume form $\Omega$ is not unique. 
For any $\theta \in \mathbb{R}$, 
$e^{i \theta} \Omega$ also satisfies (\ref{CYcondition}). 
\end{rem}

Let $(S, g)$ be a Sasaki-Einstein 7-manifold 
with a Calabi-Yau structure on $C(S)$. 

\begin{lem} \label{NPstr_onSE}
There exists a 3-form $\varphi \in \Omega^{3}(S)$ such that  $(S, \varphi, g)$ is a nearly parallel
$G_{2}$-manifold. 
\end{lem}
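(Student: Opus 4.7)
The plan is to invoke Lemma \ref{cha_NP}(6), which reduces the construction of a nearly parallel $G_2$-structure on $S$ to producing a torsion-free ${\rm Spin}(7)$-structure of the cone form $\Phi = r^{3}dr \wedge \varphi + r^{4}*\varphi$ on $C(S)$. The starting observation is essentially contained in Definition \ref{def on R7}: the model ${\rm Spin}(7)$ 4-form on $\mathbb{R}^{8} \cong \mathbb{C}^{4}$ equals $\Phi_{0} = \tfrac{1}{2}\omega_{0} \wedge \omega_{0} + {\rm Re}\,\Omega_{0}$, which exhibits the inclusion ${\rm SU}(4) \subset {\rm Spin}(7)$. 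Hence on the Calabi-Yau cone $(C(S), \overline{g}, J, \omega, \Omega)$, the global 4-form
\begin{equation*}
\Phi = \tfrac{1}{2}\,\omega \wedge \omega + {\rm Re}\,\Omega
\end{equation*}
is a ${\rm Spin}(7)$-structure compatible with $\overline{g}$, and it is torsion-free since $d\omega = 0$ and $d\Omega = 0$.

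The main step is to verify that $\Phi$ decomposes in the required cone shape. I would use the standard Sasakian identity $\omega = \tfrac{1}{2}d(r^{2}\eta) = r\,dr \wedge \eta + \tfrac{r^{2}}{2} d\eta$, where $\eta$ is the contact 1-form dual to the Reeb field, so that
\begin{equation*}
\tfrac{1}{2}\omega \wedge \omega = \tfrac{r^{3}}{2}\, dr \wedge \eta \wedge d\eta + \tfrac{r^{4}}{8} (d\eta)^{2}.
\end{equation*}
For $\Omega$, the key fact is that the holomorphic volume form on a Sasaki-Einstein cone is homogeneous of degree $4$ under $r\partial_{r}$ and satisfies $\mathcal{L}_{r\partial_{r}}\Omega = 4\Omega$ and $\iota_{r\partial_{r}}\Omega = r^{4}\cdot(\text{3-form pulled back from }S)$, because $r\partial_{r} - i\xi$ is $(1,0)$-type and $\xi$ is the Reeb field. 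Writing $\Omega = r^{4}\,(dr/r + i\eta)\wedge\sigma$ for an appropriate complex 3-form $\sigma$ on $S$, one finds that ${\rm Re}\,\Omega$ also splits cleanly into a $dr$-part of weight $r^{3}$ and a pure $S$-part of weight $r^{4}$. Collecting terms defines
\begin{equation*}
\varphi := \iota_{\partial_{r}}\Phi\big|_{\{1\}\times S} \in \Omega^{3}(S),
\end{equation*}
and a direct comparison yields $\Phi = r^{3}dr \wedge \varphi + r^{4}*_{g}\varphi$, where $*_{g}\varphi$ emerges as the $dr$-free part by uniqueness of the decomposition and the relation (\ref{g Phi}) between $\Phi$ and its induced metric.

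With $\Phi$ in this form, Lemma \ref{cha_NP}(6) immediately gives that $\varphi$ is a nearly parallel $G_{2}$-structure inducing $g$. The main obstacle is the second step, namely the bookkeeping that shows $\Omega$ on a Sasaki-Einstein cone has exactly the radial weight and splitting described above; this is the content that genuinely uses the Einstein condition (ensuring the cone is Ricci-flat K\"ahler) together with the Calabi-Yau normalization (\ref{CYcondition}). Everything else is a formal consequence of the inclusion ${\rm SU}(4) \subset {\rm Spin}(7)$ and the cone identities already recorded in Section 2.
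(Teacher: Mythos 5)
Your proposal is correct and follows essentially the same route as the paper: take $\Phi = \tfrac{1}{2}\omega\wedge\omega + \mathrm{Re}\,\Omega$ as the torsion-free ${\rm Spin}(7)$-structure on the Calabi--Yau cone, extract $\varphi$ from the cone decomposition $\Phi = r^{3}dr\wedge\varphi + r^{4}*\varphi$, and conclude via Lemma \ref{cha_NP}(6). The paper simply asserts this decomposition, whereas you supply the (correct) radial-homogeneity bookkeeping for $\omega$ and $\Omega$ that justifies it.
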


\begin{proof}
Fix a holomorphic volume form $\Omega$. 
Then a 4-form  
\begin{align}\label{4-form on cone}
\Phi = \frac{1}{2} \omega \wedge \omega + {\rm Re} \Omega \in \Omega^{4}(C(S))
\end{align}
gives a torsion-free ${\rm Spin}(7)$-structure on $C(S)$. 
A 3-form $\varphi \in \Omega^{3}(S)$ defined by 
\begin{align*}
\Phi_{(r, p)} = r^{3} dr \wedge \varphi_{p} + r^{4} * \varphi_{p}, \qquad
\mbox{ where } (r, p) \in \mathbb{R}_{>0} \times S,
\end{align*}
gives the nearly parallel $G_{2}$-structure on $S$. 
\end{proof}


Next, we summarize the facts about submanifolds in Sasakian manifolds.

\begin{definition}
An $m$-submanifold $L \subset S$ is called {\bf Legendrian} if 
$C(L) \subset C(S)$ is Lagrangian: $\omega|_{TC(L)} = 0$. 
Fix a holomorphic volume form $\Omega$ on $C(S)$. 
An $m$-submanifold $L \subset S$ is called {\bf special Legendrian} if 
$C(L) \subset C(S)$ is special Lagrangian: ${\rm Re} \Omega|_{TC(L)} = {\rm vol}_{C(L)} 
\Leftrightarrow  
\omega|_{TC(L)} = 0, {\rm Im} \Omega|_{TC(L)} = 0$ and 
${\rm Re} \Omega|_{TC(L)} > 0$.
\end{definition}

The following is a well-known fact. 
For example, see \cite[Proposition 4.5]{Moriyama}.

\begin{lem}
Let $L \subset S$ be a Legendrian submanifold. 
Then 
$L$ is minimal if and only if ${\rm Im} (e^{i \theta} \Omega) = 0$ for some $\theta \in \mathbb{R}$. 
\end{lem}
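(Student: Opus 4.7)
My plan is to translate the problem to the cone and then invoke the standard Calabi--Yau Lagrangian-angle characterization of minimality. The first step is to reduce minimality of $L \subset S$ to minimality of the cone $C(L) \subset C(S)$. Extending a tangent frame of $L$ radially to $C(L)$ and using the cone metric $\overline{g} = dr^{2} + r^{2}g$, a short Koszul-formula computation gives $\overline{\nabla}_{\partial_r}\partial_r = 0$ and shows that the radial direction contributes nothing to the second fundamental form, while the tangential contributions rescale by $1/r$. Hence the mean curvature vector of $C(L)$ at $(r,p)$ equals $r^{-1}$ times that of $L$ at $p$, so $L$ is minimal in $S$ if and only if $C(L)$ is minimal in $C(S)$.

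Next, I would introduce the Lagrangian angle. By (\ref{CYcondition}), $\Omega$ has pointwise unit norm on any Lagrangian $4$-plane, so on the Lagrangian submanifold $C(L)$ one may write
\begin{equation*}
\Omega|_{C(L)} = e^{i\beta}\,{\rm vol}_{C(L)}
\end{equation*}
for a smooth function $\beta : C(L) \to \mathbb{R}/2\pi\mathbb{Z}$. The classical Lagrangian-angle identity asserts that the mean curvature vector $H$ of $C(L)$ satisfies $\omega(H,\,\cdot\,)|_{C(L)} = d\beta$, so minimality of $C(L)$ is equivalent to $\beta$ being constant. Combining with the reduction above, $L$ is minimal if and only if $\beta \equiv \beta_{0}$ for some constant $\beta_{0} \in \mathbb{R}$; the choice $\theta = -\beta_{0}$ then yields ${\rm Im}(e^{i\theta}\Omega)|_{TC(L)} = 0$, and conversely the condition ${\rm Im}(e^{i\theta}\Omega)|_{TC(L)} = 0$ forces $e^{i\theta}\Omega|_{C(L)}$ to be a real multiple of ${\rm vol}_{C(L)}$, so $\beta \equiv -\theta \pmod{\pi}$ is constant.

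The main technical input is the Lagrangian-angle--mean-curvature formula $\omega(H,\,\cdot\,)|_{C(L)} = d\beta$; this is a standard result in Calabi--Yau Lagrangian geometry (due to Dazord and Harvey--Lawson) that I would either cite or derive in a short computation from $\nabla J = 0$ together with the Calabi--Yau normalization (\ref{CYcondition}). A subsidiary consistency check is the automatic $r$-invariance of $\beta$: since $r\partial_r$ is a holomorphic homothety of $C(S)$, both $\Omega$ and ${\rm vol}_{C(L)}$ have weight $4$ under $\mathcal{L}_{r\partial_r}$, so their ratio $e^{i\beta}$ has weight $0$, meaning $\beta$ depends only on the $S$-coordinate and constancy on $C(L)$ reduces to constancy on the slice $L$.
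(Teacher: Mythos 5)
Your argument is correct and is the standard one: the paper itself gives no proof of this lemma, citing instead Moriyama's Proposition 4.5, which proceeds exactly as you do --- reduce to minimality of the Lagrangian cone $C(L)$ in the Calabi--Yau cone $C(S)$ and apply the Lagrangian-angle identity $\omega(H,\cdot)|_{TC(L)} = d\beta$ of Dazord and Harvey--Lawson. The only cosmetic points are that the mean curvature of the cone at $(r,p)$ scales as $r^{-2}H_{L}(p)$ under the natural identification (not $r^{-1}$; irrelevant for the equivalence), and that the statement should be read as ${\rm Im}(e^{i\theta}\Omega)|_{TC(L)} = 0$, as you correctly interpreted.
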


By definition, we obtain the following result. 

\begin{lem} \label{sLeg_cpx_asso}
Let $L \subset S$ be an oriented 3-submanifold.  
If $L$ is special Legendrian 
or 
if the cone $C(L)$ is a complex submanifold in $C(S)$, 
$L$ is associative. 
\end{lem}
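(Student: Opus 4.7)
The plan is to apply Lemma \ref{equiv asso Cayley} to reduce the statement to showing that in both situations the cone $C(L)$ is a Cayley submanifold of $C(S)$, and then to exploit the explicit formula $\Phi = \frac{1}{2}\omega\wedge\omega + \operatorname{Re}\Omega$ from (\ref{4-form on cone}) together with the fact that $C(L)$ has real dimension $4$ while $C(S)$ is a Calabi-Yau $4$-fold.

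First I would handle the special Legendrian case. By definition $C(L)$ is special Lagrangian, so $\omega|_{TC(L)} = 0$ and $\operatorname{Re}\Omega|_{TC(L)} = \mathrm{vol}_{C(L)}$. Plugging into (\ref{4-form on cone}) immediately gives $\Phi|_{TC(L)} = 0 + \mathrm{vol}_{C(L)} = \mathrm{vol}_{C(L)}$, so $C(L)$ is Cayley.

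Next I would handle the complex case. Here $C(L)$ is a $2$-complex-dimensional Kähler submanifold of the Calabi-Yau $4$-fold $C(S)$. The Wirtinger equality for Kähler calibrations yields $\tfrac{1}{2}\omega\wedge\omega|_{TC(L)} = \mathrm{vol}_{C(L)}$. For the remaining term, $\Omega$ is a form of type $(4,0)$ on $C(S)$; its pull-back to the $2$-complex-dimensional submanifold $C(L)$ vanishes for type reasons, since any holomorphic $(p,0)$-form with $p$ greater than the complex dimension of the submanifold restricts to zero. Hence again $\Phi|_{TC(L)} = \mathrm{vol}_{C(L)}$ and $C(L)$ is Cayley.

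There is no real obstacle here; the statement is essentially a bookkeeping computation once one has the decomposition $\Phi = \frac{1}{2}\omega\wedge\omega + \operatorname{Re}\Omega$ and the equivalence \textquotedblleft associative in $S$ $\Leftrightarrow$ Cayley in $C(S)$.\textquotedblright\ The only mild subtlety to flag is the type argument $\Omega|_{TC(L)} = 0$ in the complex case, which relies on $\dim_{\mathbb{C}} C(L) = 2 < 4 = \dim_{\mathbb{C}} C(S)$.
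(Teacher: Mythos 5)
Your proof is correct and follows essentially the same route as the paper: reduce to the Cayley condition via Lemma \ref{equiv asso Cayley}, then evaluate the two summands of $\Phi = \frac{1}{2}\omega\wedge\omega + \operatorname{Re}\Omega$ on $TC(L)$ in each case. The paper states the two evaluations without comment, while you supply the (correct) justifications via the Wirtinger equality and the type argument for $\Omega|_{TC(L)}=0$.
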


\begin{proof}
If $L$  is special Legendrian, we have
$\frac{1}{2} \omega \wedge \omega |_{TC(L)} = 0$ and 
${\rm Re} \Omega|_{TC(L)} = {\rm vol}_{C(L)}. $
If $C(L)$ is a complex submanifold, 
we have 
$\frac{1}{2} \omega \wedge \omega |_{TC(L)} = {\rm vol}_{C(L)}$ and
${\rm Re} \Omega|_{TC(L)} = 0$. 
By (\ref{4-form on cone}) and Lemma \ref{equiv asso Cayley}, we see that $L$ is associative 
in both cases. 
\end{proof}

\subsection{Infinitesimal deformation of special Legendrian submanifolds}

Let $(S, g)$ be a Sasaki-Einstein $(2m+1)$-manifold 
with a Calabi-Yau structure on $C(S)$. 
Fix a holomorphic volume form $\Omega$ 
and let $L \subset S$ be a special Legendrian submanifold. 

\begin{lem}[\cite{Ohnita_def}]  \label{deform_sLeg}
The vector space of all infinitesimal special Legendrian deformations of $L$ 
is identified with 
\begin{align} \label{sLeg_deform1}
\left \{ f \in C^{\infty}(L) ; \Delta_{+}f = (2m + 2) f \right \}, 
\end{align}
where $\Delta_{+}$ is the Hodge Laplacian for functions on $L$.
\end{lem}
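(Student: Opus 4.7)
The plan is to identify infinitesimal special Legendrian deformations of $L \subset S$ with infinitesimal cone-equivariant special Lagrangian deformations of $C(L) \subset C(S)$, and then apply the McLean-type description of the latter as harmonic $1$-forms on $C(L)$ of a specific homogeneous type. This follows Ohnita's scheme \cite{Ohnita_def}.

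First I would parametrize Legendrian deformations of $L$ by functions. Writing the contact form as $\eta = g(\xi,\cdot)$ with Reeb field $\xi$ and associated tensor $\phi$, and decomposing a normal vector field as $V = \phi X + f\xi$ with $X \in C^\infty(L, TL)$ and $f \in C^\infty(L)$, the infinitesimal Legendrian condition $(\mathcal{L}_V \eta)|_{TL} = 0$ expands, via $\mathcal{L}_V\eta = d(\eta(V)) + \iota_V d\eta$ and $d\eta(U,W) = 2g(\phi U, W)$, to $df + 2g(X,\cdot) = 0$ on $TL$. Hence $X = -\tfrac{1}{2}\nabla_L f$, and the space of infinitesimal Legendrian deformations is bijective with $C^\infty(L)$ via
\begin{equation*}
f \longmapsto V_f := -\tfrac{1}{2}\phi(\nabla_L f) + f\xi.
\end{equation*}

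Next I would pass to the Calabi--Yau cone. Extending by the conical scaling, $\widetilde{V}_f(r,p) := r\, V_f(p)$ gives a normal vector field along $C(L) \subset C(S)$, and this assignment is a bijection between infinitesimal special Legendrian deformations of $L$ and infinitesimal cone-equivariant special Lagrangian deformations of $C(L)$. McLean's theorem then identifies infinitesimal special Lagrangian deformations of $C(L)$ with harmonic $1$-forms on $C(L)$ via $\widetilde{V}_f \mapsto \alpha_f := \iota_{\widetilde{V}_f}\omega|_{TC(L)}$, where $\omega = d(\tfrac{1}{2}r^2\eta)$.

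Finally I would compute $\alpha_f$ explicitly and translate the harmonic condition. Using $\omega = r\,dr \wedge \eta + \tfrac{1}{2}r^2 d\eta$ and the Sasakian identities, $\alpha_f$ splits into a component proportional to $r^2\, \tfrac{1}{2}df$ along $TL$ and a component $r f \, dr$ transverse to $L$; in other words $\alpha_f$ is a rate-$2$ homogeneous $1$-form on the cone assembled from $f$ and $df$. The cone Hodge Laplacian applied to such a homogeneous form separates into a multiple of $\Delta_+ f$ plus a constant multiple of $f$ coming from the warped-product structure $\bar g = dr^2 + r^2 g$ and the dimension $m = \dim L$. Working out the coefficients, the equations $d\alpha_f = 0$ and $d^*\alpha_f = 0$ both reduce to the single scalar equation $\Delta_+ f = (2m+2) f$, giving the claimed identification.

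The main obstacle is the last step: keeping track of the correct weights in the cone decomposition of forms, verifying that the closedness and coclosedness of $\alpha_f$ both collapse to the same eigenvalue equation (rather than imposing an extra constraint), and extracting the specific constant $2m+2$ from the interplay of the cone dimension with the Sasakian relation $d\eta = 2g(\phi\cdot,\cdot)$. Everything else is a routine translation between the contact picture on $S$ and the symplectic picture on $C(S)$.
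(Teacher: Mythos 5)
Your proposal is correct in outline, but it takes a genuinely different route from the paper's own proof. The paper never leaves $S$: using the isomorphism $\nu \ni v \mapsto \bigl( g(v, J(r\tfrac{\partial}{\partial r})|_{r=1}), -g(Jv,\cdot) \bigr) \in \mathbb{R}\oplus T^{*}L$, it writes the two linearized conditions as $L_{V}\bigl( i(r\tfrac{\partial}{\partial r})\omega \bigr)|_{TL} = -2\alpha + df = 0$ and $L_{V}\bigl( i(r\tfrac{\partial}{\partial r})\,{\rm Im}\,\Omega \bigr)|_{TL} = d*\alpha + (m+1)f\,{\rm vol}_{L} = 0$, and substituting $\alpha = \tfrac12 df$ into the second equation gives $\Delta_{+}f = (2m+2)f$ with no cone analysis at all. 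Your route --- pass to $C(L)\subset C(S)$ and apply McLean --- is precisely the alternative the paper attributes to \cite{FHY} in the remark immediately following its proof, so it is a legitimate and recognized path. What it buys you is that the constant comes for free from separation of variables on the cone: with $\iota_{V}d\eta|_{TL} = -df$ one finds $\alpha_{f} = d\bigl(-\tfrac12 r^{2}f\bigr)$, a homogeneity-$2$ exact form, and $\Delta_{C(L)}(r^{\lambda}f) = r^{\lambda-2}\bigl(\Delta_{+}f - \lambda(\lambda+m-1)f\bigr)$ yields $2(m+1)=2m+2$ at $\lambda=2$; what it costs is exactly the weight bookkeeping you flag (your stated weights $rf\,dr$ and $r^{2}\cdot\tfrac12 df$ are consistent once the conical extension is normalized correctly). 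One imprecision worth fixing: it is not the case that $d\alpha_{f}=0$ and $d^{*}\alpha_{f}=0$ ``both collapse to the same eigenvalue equation.'' Closedness of $\alpha_{f}$ is automatic, since $\alpha_{f}$ is exact by construction --- it encodes the Legendrian/Lagrangian condition you already imposed when solving for $X$ in terms of $f$ --- and it is only coclosedness (equivalently, the vanishing of $L_{V}\,{\rm Im}\,\Omega$ on $TC(L)$) that produces $\Delta_{+}f=(2m+2)f$. This does not affect the conclusion, but the sentence as written suggests a redundancy that is not there.
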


We write the subscript $+$ of $\Delta_{+}$ 
since every eigenvalue of this Laplacian is non-negative 
if $L$ is compact. 

\begin{proof}
Let $\nu$ be the normal bundle of $L$ in $S$. 
Since $L$ is Legendrian, there is a canonical isomorphism 
$
\nu \ni v \mapsto 
\left( g(v, J(r \frac{\partial}{\partial r})|_{r = 1}), - g(Jv, \cdot) \right) \in \mathbb{R} \oplus T^{*}L. 
$
Via this identification, 
suppose that $V \in C^{\infty}(L, \nu)$ corresponds to 
$(f, \alpha) \in C^{\infty}(L) \oplus \Omega^{1} (L)$. 
Then we have 
\begin{align} 
0 = L_{V} \left. \left( i \left(r \frac{\partial}{\partial r} \right) \omega \right) \right|_{TL}
 &= - 2\alpha + df,  \label{sLeg_deform2} \\
0 = L_{V} \left. \left( i \left(r \frac{\partial}{\partial r} \right) {\rm Im} \Omega \right) \right|_{TL}
 &= d* \alpha + (m+1)f {\rm vol}_{L},
\end{align}
which implies the proof.
\end{proof}

The same result is obtained in \cite{FHY} 
by using the fact that a cone $C(L)$ of $L$ is 
special Lagrangian in $C(S)$ 
and applying the deformation theory of 
special Lagrangian submanifolds in \cite{Mclean}.

\subsection{Nearly K\"{a}hler geometry}

\begin{definition}

Let $(N, k, J,\sigma)$ be  a real 6-dimensional 
almost Hermitian manifold 
with a Hermitian metric $k$, an almost complex structure $J$ and 
an associated K\"{a}hler form $\sigma$.
Let $\psi^{\pm} \in \Omega^{3}(N)$ be 3-forms on $N$. 
A quintuple $(k, J, \sigma, \psi^{\pm})$ is called an 
 {\bf $SU(3)$-structure} if 
we have $\| \psi^{\pm} \| = 2$ and 
      $\Psi := \psi^{+} + \sqrt{-1} \psi^{-}$ is a $(3,0)$-form with respect to $J$.
\end{definition}

\begin{rem}
The $SU(3)$-structure with a K\"{a}hler structure 
and a holomorphic (3, 0)-form $\Psi$ is a Calabi-Yau structure.
In fact, we can prove 
\begin{align*}
\sigma \wedge \psi^{\pm} = 0, \ \ 
\sigma^{3}/3!={(-1)^{\frac{3(3-1)}{2}} (i/2 )^{3}} \Psi \wedge \bar{\Psi}. 
\end{align*}
\end{rem}

\begin{definition}
An $SU(3)$-structure satisfying $d \sigma = 3 \psi^{+}$ and  $d \psi^{-} = -2\sigma^{2}$ 
is called {\bf nearly K\"{a}hler}.
\end{definition}
\begin{lem}[\cite{But}]
Let $(N, k, J, \sigma)$ be a real 6-dimensional almost Hermitian manifold. 
It admits a nearly K\"{a}hler structure if and only if 
 $(\nabla_{X} J) X = 0$ for every vector field $X$ on $N$ 
      and $\nabla_{X} J \neq 0$ for every $0 \neq X \in TN$, 
      where $\nabla$ is the Levi-Civita connection of $k$. 
\end{lem}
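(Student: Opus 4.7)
The plan is to prove the two directions separately using the identity $\sigma(X,Y) = k(JX,Y)$, which translates between $\nabla \sigma$ and $\nabla J$ via $(\nabla_X \sigma)(Y,Z) = k((\nabla_X J)Y, Z)$, together with the standard expression
\[
d\sigma(X,Y,Z) = (\nabla_X \sigma)(Y,Z) + (\nabla_Y \sigma)(Z,X) + (\nabla_Z \sigma)(X,Y)
\]
valid on any Riemannian manifold (since the Levi-Civita connection is torsion-free).

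For the forward direction, suppose $(k,J,\sigma,\psi^{\pm})$ is nearly K\"ahler in the sense $d\sigma = 3\psi^{+}$. Because $\Psi = \psi^{+} + i\psi^{-}$ is of type $(3,0)$, the real form $\psi^{+}$ lies in the $(3,0)+(0,3)$ component and in particular satisfies $\psi^{+}(X,JX,\cdot)=0$. Substituting this into the identity above and using $d\sigma=3\psi^{+}$, I would show that $\nabla\sigma$ is totally skew-symmetric and equal to $\psi^{+}$. Polarizing the resulting relation $(\nabla_X\sigma)(Y,Z)+(\nabla_Y\sigma)(X,Z)=0$ and applying $\sigma=k(J\cdot,\cdot)$ yields $(\nabla_X J)Y+(\nabla_Y J)X=0$, i.e.\ $(\nabla_X J)X=0$. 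The non-vanishing of $\nabla J$ at each point follows from $\|\psi^{+}\|=2$.

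For the converse, assume $(\nabla_X J)X=0$ and $\nabla J\neq 0$ everywhere. Polarization immediately gives that $\nabla\sigma$ is skew in its first two arguments; combined with the universal identity $(\nabla_X J)\circ J + J\circ(\nabla_X J)=0$ on an almost Hermitian manifold, this forces $\nabla\sigma$ to be a real 3-form lying in the $(3,0)+(0,3)$ component. The boxed identity then yields $d\sigma = 3\nabla\sigma$, so I would set $\psi^{+}:=\nabla\sigma$ (after a normalizing rescaling so that $\|\psi^{+}\|=2$, which is possible because the pointwise norm of $\nabla\sigma$ is constant by Gray's theorem) and $\psi^{-}(X,Y,Z):=-\psi^{+}(JX,Y,Z)$. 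A short dimension count in real dimension $6$ (where the real part of $\Lambda^{3,0}T^{*}N$ is one-complex-dimensional at each point) shows $\Psi:=\psi^{+}+i\psi^{-}$ is genuinely of type $(3,0)$, giving an $SU(3)$-structure. The remaining equation $d\psi^{-}=-2\sigma^{2}$ I would extract from the second-order Gray identity relating $\nabla^{2}J$ to the curvature of $k$, which in dimension $6$ together with the Einstein property (also due to Gray) pins down $\nabla\psi^{-}$ in terms of $\sigma\wedge\sigma$.

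The main obstacle is the second step of the converse: the relation $d\psi^{-}=-2\sigma^{2}$ is not algebraic in $\nabla J$ but requires one further derivative, and the constants $3$ and $-2$ must be matched exactly against the chosen normalization $\|\psi^{\pm}\|=2$. All of this is done via Gray's curvature identities for manifolds satisfying $(\nabla_X J)X=0$, and the cleanest route is to cite Butruille's proof rather than redo these computations by hand.
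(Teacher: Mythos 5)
The paper offers no proof of this lemma at all: it is quoted verbatim from Butruille's survey \cite{But}, so there is no in-text argument to compare yours against. Judged on its own terms, your outline follows the standard route (translate $\nabla J$ into $\nabla\sigma$, use torsion-freeness to relate $\nabla\sigma$ to $d\sigma$, identify $\nabla\sigma$ with $\psi^{+}$), but it has a genuine gap in the forward direction. From $d\sigma(X,Y,Z)=\sum_{\mathrm{cyc}}(\nabla_X\sigma)(Y,Z)$, the equation $d\sigma=3\psi^{+}$, and the fact that $\psi^{+}(X,JX,\cdot)=0$, you cannot conclude that $\nabla\sigma$ is totally skew: substituting $Y=X$ into the cyclic identity gives $0=0$ and carries no information, and more structurally the Gray--Hervella component $W_{2}$ of the intrinsic torsion (equivalently, the part of $\nabla J$ detected by the Nijenhuis tensor but not by $d\sigma$) is completely invisible to $d\sigma$. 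There are almost Hermitian structures with $d\sigma$ of the prescribed form but nonzero $W_{2}$, for which $(\nabla_X J)X\neq 0$. To kill $W_{2}$ (and $W_{5}$) you must use the second structure equation $d\psi^{-}=-2\sigma^{2}$ (together with $d\psi^{+}=\tfrac13\,dd\sigma=0$), which your forward argument never invokes. So the sentence ``I would show that $\nabla\sigma$ is totally skew-symmetric and equal to $\psi^{+}$'' is precisely where the real work lives, and the tools you list are not sufficient to do it.

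In the converse direction you correctly identify the hard step, namely deriving $d\psi^{-}=-2\sigma^{2}$ from the first-order hypothesis $(\nabla_X J)X=0$ via Gray's second-order identities, but you then explicitly defer it to a citation; combined with the unresolved normalization issue (with $k$ fixed, $\|\nabla\sigma\|$ is whatever it is, and matching the constants $3$ and $-2$ together with $\|\psi^{\pm}\|=2$ in general requires a homothety of the metric, which the statement of the lemma does not obviously permit), the converse is also only a plan rather than a proof. None of this makes your strategy wrong --- it is the correct skeleton of Butruille's argument --- but as written both implications rest on the steps you have omitted.
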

\begin{lem}  \label{G2 str on NK cone}
Let $(N, k, J, \sigma, \psi^{\pm})$ be a nearly K\"{a}hler manifold. 
Then $C(N) = \mathbb{R}_{>0} \times N$ admits a torsion-free $G_{2}$-structure 
$(\varphi, \overline{k})$ with 
\begin{align*}
\overline{k}        &=  dr^{2} + r^{2}k, \\ 
\varphi             &= r^{2} dr \wedge \sigma + r^{3} \psi^{+} 
                         = \tfrac{1}{3}d( r^{3} \sigma), \\ 
* \varphi          &= r^{3} \psi^{-} \wedge dr + \tfrac{1}{2} r^{4} \sigma^{2}
                         = -\tfrac{1}{4} d ( r^{4} \psi^{-}). 
\end{align*}
\end{lem}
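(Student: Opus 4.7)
The plan is to (i) verify pointwise that $\varphi := r^{2}dr\wedge\sigma + r^{3}\psi^{+}$ is a $G_{2}$-form on $C(N)$ whose induced metric (via (\ref{g varphi})) equals the cone metric $\overline{k}$, and then (ii) show that $d\varphi=0$ and $d*\varphi=0$ using the nearly K\"ahler identities $d\sigma = 3\psi^{+}$ and $d\psi^{-}=-2\sigma^{2}$. By the definition of torsion-freeness, (i) and (ii) together yield the statement.

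For (i), at each $(r,p)\in C(N)$ I would choose a local oriented orthonormal coframe $\{e^{1},\dots,e^{6}\}$ on $N$ adapted to the $SU(3)$-structure, in which $\sigma$ and $\psi^{\pm}$ take their standard model forms on $\mathbb{R}^{6}\cong\mathbb{C}^{3}$. The rescaled coframe $\{dr,\,re^{1},\dots,re^{6}\}$ is then orthonormal for $\overline{k}$, and in it the expression $r^{2}dr\wedge\sigma + r^{3}\psi^{+}$ becomes $dr\wedge\sigma_{\mathrm{std}} + \psi^{+}_{\mathrm{std}}$. After a relabeling of indices identifying $\mathbb{R}\oplus\mathbb{C}^{3}$ with $\mathbb{R}^{7}$, this coincides with the model $\varphi_{0}$ of Definition \ref{def on R7}. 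The analogous pointwise check using the same coframe shows that the claimed expression $r^{3}\psi^{-}\wedge dr + \tfrac{1}{2}r^{4}\sigma^{2}$ agrees with the Hodge dual $*\varphi$ computed from the standard $*\varphi_{0}$ of that definition.

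Given (i), step (ii) reduces to two one-line computations. From the nearly K\"ahler identities,
\begin{align*}
\tfrac{1}{3}d(r^{3}\sigma) &= r^{2}dr\wedge\sigma + \tfrac{1}{3}r^{3}\,d\sigma = r^{2}dr\wedge\sigma + r^{3}\psi^{+} = \varphi,\\
-\tfrac{1}{4}d(r^{4}\psi^{-}) &= -r^{3}dr\wedge\psi^{-} + \tfrac{1}{2}r^{4}\sigma^{2} = r^{3}\psi^{-}\wedge dr + \tfrac{1}{2}r^{4}\sigma^{2} = *\varphi.
\end{align*}
Since $\varphi$ and $*\varphi$ are each exact, they are closed, so the induced $G_{2}$-structure is torsion-free.

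The main obstacle is the bookkeeping in step (i): one must pin down the identification $\mathbb{R}\oplus\mathbb{C}^{3}\hookrightarrow\mathbb{R}^{7}$ consistently with the sign conventions used for $\varphi_{0}$ in Definition \ref{def on R7} and for $\Psi=\psi^{+}+i\psi^{-}$ in the $SU(3)$-structure, so that $dr\wedge\sigma_{\mathrm{std}} + \psi^{+}_{\mathrm{std}}$ literally equals $\varphi_{0}$ after relabeling. Once this identification is fixed, and noting that the cone metric rescales the tangential coframe by $r$ while leaving $dr$ alone (which accounts for the powers of $r$ appearing in $\varphi$ and $*\varphi$), the rest of the verification is routine.
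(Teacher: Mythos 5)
Your argument is correct. The paper states Lemma \ref{G2 str on NK cone} as a standard fact with no proof, so there is nothing to compare against; your verification --- pointwise identification of $dr\wedge\sigma+\psi^{+}$ and $\tfrac{1}{2}\sigma^{2}+\psi^{-}\wedge dr$ with the model pair $(\varphi_{0},*\varphi_{0})$ on $\mathbb{R}\oplus\mathbb{C}^{3}$ in an $SU(3)$-adapted coframe (which exists pointwise even though $J$ is non-integrable), followed by the exactness computation $\varphi=\tfrac{1}{3}d(r^{3}\sigma)$ and $*\varphi=-\tfrac{1}{4}d(r^{4}\psi^{-})$ from the structure equations $d\sigma=3\psi^{+}$, $d\psi^{-}=-2\sigma^{2}$ --- is exactly the standard proof and is complete.
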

\begin{lem}[\cite{Boyer Galicki}] \label{G2 str on NK sine cone}
Let $(N, k, J, \sigma, \psi^{\pm})$ be a nearly K\"{a}hler manifold. 
Then $C_{s}(N) = (0, \pi) \times N$ (a sine cone of $N$) admits a nearly parallel $G_{2}$-structure $(\tilde{\varphi}, \tilde{k})$ with 
\begin{align*}
\tilde{k}        &=  dt^{2} + (\sin^{2}t) k, \\ 
\tilde{\varphi}  &= (\sin^{2}t)  dt \wedge \sigma + (\cos t \sin^{3} t)  \psi^{+} 
- (\sin^{4} t) \psi^{-}, \\ 
*\tilde{\varphi} &= \tfrac{1}{2} (\sin^{4} t) \sigma^{2} + (\sin^{3} t \cos t) \psi^{-} \wedge dt 
                  - (\sin^{4} t) dt \wedge \psi^{+}. 
\end{align*}
\end{lem}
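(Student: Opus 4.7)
The plan is to recognize that, on each slice $\{t\}\times N$, the 3-form $\tilde{\varphi}$ encodes a rescaled and phase-rotated $SU(3)$-structure, and then to reduce both the verification of the $G_2$-structure and the computation of $*\tilde{\varphi}$ to the standard formula for $G_2$-structures on a product $\mathbb{R}\times N^{6}$ equipped with an $SU(3)$-structure. Concretely, writing $s=\sin t$, $c=\cos t$, one rewrites
\begin{align*}
\tilde{\varphi} = dt\wedge(s^{2}\sigma) + c(s^{3}\psi^{+}) - s(s^{3}\psi^{-})
= dt\wedge \tilde{\sigma} + \mathrm{Re}\bigl(e^{it}\tilde{\Psi}\bigr),
\end{align*}
where $\tilde{\sigma}=s^{2}\sigma$ and $\tilde{\Psi}=s^{3}(\psi^{+}+\sqrt{-1}\psi^{-})$. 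The pair $(\tilde{\sigma},\tilde{\Psi})$ is the $SU(3)$-structure on $N$ obtained from $(\sigma,\Psi)$ by rescaling the metric by $s^{2}$ (so 2-forms scale by $s^{2}$, 3-forms by $s^{3}$), and multiplying $\tilde{\Psi}$ by the unit complex number $e^{it}$ leaves the $SU(3)$ condition intact. This identifies $\tilde{\varphi}$ pointwise with the standard $G_2$-form built from an $SU(3)$-structure on $\mathbb{R}\times N^6$, hence it is a $G_2$-structure whose induced metric is precisely $\tilde{k}=dt^{2}+s^{2}k$.

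Next I would read off $*\tilde{\varphi}$ from the general identity $*(dt\wedge\sigma'+\mathrm{Re}\,\Psi')=\tfrac12(\sigma')^{2}-dt\wedge \mathrm{Im}\,\Psi'$ valid for any $SU(3)$-structure $(\sigma',\Psi')$. With $\sigma'=\tilde{\sigma}$ and $\Psi'=e^{it}\tilde{\Psi}$ this gives
\begin{align*}
*\tilde{\varphi}=\tfrac12 s^{4}\sigma^{2}-dt\wedge(s^{4}\psi^{+}+cs^{3}\psi^{-}),
\end{align*}
which is exactly the formula stated in the lemma (after rewriting $-dt\wedge\psi^{-}=\psi^{-}\wedge dt$).

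For the nearly parallel condition I would compute $d\tilde{\varphi}$ directly from the defining relations $d\sigma=3\psi^{+}$ and $d\psi^{-}=-2\sigma^{2}$, together with the consequence $d\psi^{+}=0$ that follows from $d^{2}\sigma=0$. A short calculation gives
\begin{align*}
d\tilde{\varphi}= -3s^{2}\,dt\wedge\psi^{+}+(3s^{2}-4s^{4})\,dt\wedge\psi^{+}-4s^{3}c\,dt\wedge\psi^{-}+2s^{4}\sigma^{2},
\end{align*}
in which the terms linear in $d\sigma$ cancel against those arising from differentiating $cs^{3}$, leaving $d\tilde{\varphi}=2s^{4}\sigma^{2}-4s^{4}dt\wedge\psi^{+}-4s^{3}c\,dt\wedge\psi^{-}=4*\tilde{\varphi}$.

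The only subtle step is the first one: one has to see that the trigonometric coefficients in $\tilde{\varphi}$ are precisely arranged so that the three-form really comes from a genuine $SU(3)$-structure on each slice, rather than just being some 3-form with the right algebraic type. Once the phase-rotation/rescaling interpretation is spotted the rest is straightforward bookkeeping; without it one would have to verify the $G_2$ normal form by hand, which is possible but cumbersome. The derivation of $d\psi^{+}=0$ from $d\sigma=3\psi^{+}$ should also be noted explicitly since it is used in the cancellation that produces the desired factor $4$.
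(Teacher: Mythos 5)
Your argument is correct, and all the coefficient bookkeeping checks out: the identification $\tilde{\varphi}=dt\wedge\tilde{\sigma}+\mathrm{Re}(e^{it}\tilde{\Psi})$ with $\tilde{\sigma}=\sin^{2}t\,\sigma$, $\tilde{\Psi}=\sin^{3}t\,(\psi^{+}+\sqrt{-1}\psi^{-})$ is a genuine rescaled-and-phase-rotated $SU(3)$-structure (the scalings $\sin^2 t$ on $2$-forms and $\sin^3 t$ on $3$-forms are exactly what the metric rescaling $k\mapsto\sin^{2}t\,k$ demands, and the unit phase is harmless), your Hodge star matches the paper's orientation convention as used in Lemma \ref{G2 str on NK cone}, and the cancellation $-3\sin^{2}t+3\sin^{2}t\cos^{2}t=-3\sin^{4}t$ does produce $d\tilde{\varphi}=4*\tilde{\varphi}$. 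However, your route is genuinely different from the one the paper indicates. The paper cites the result and, in the Remark following the lemma, derives the structure extrinsically: the cone $C(C_{s}(N))$ is identified with $\mathbb{R}\times C(N)$ via $(r,t,x)\mapsto(r\cos t,r\sin t,x)$, the latter carries the torsion-free ${\rm Spin}(7)$-form $dx_{0}\wedge\varphi+*\varphi$ built from the torsion-free $G_{2}$-structure of Lemma \ref{G2 str on NK cone}, and $\tilde{\varphi}$, $*\tilde{\varphi}$ are read off from $\Phi=r^{3}dr\wedge\tilde{\varphi}+r^{4}*\tilde{\varphi}$ as in Lemma \ref{cha_NP}(6). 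That approach gets the nearly parallel equation for free from $d\Phi=0$ but requires pulling back $\Phi$ through the polar-coordinate change; yours requires the structure equations $d\sigma=3\psi^{+}$, $d\psi^{+}=0$, $d\psi^{-}=-2\sigma^{2}$ and the pointwise normal form $*(dt\wedge\sigma'+\mathrm{Re}\,\Psi')=\tfrac12(\sigma')^{2}-dt\wedge\mathrm{Im}\,\Psi'$, but stays entirely on the sine cone and makes the slicewise $SU(3)$-geometry (hence the Lagrangian/associative correspondence of Lemma \ref{submfd_sine cone NK}) more transparent. Both are complete proofs; yours is self-contained where the paper defers to the cone picture.
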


We canonically identify $N$ with the submanifold $N \times \{ \frac{\pi}{2} \} \subset C_{s}(N)$. 

\begin{rem}
Since $C(N)$ admits a torsion-free $G_{2}$-structure, 
$\mathbb{R} \times C(N)$ admits a torsion-free Spin(7)-structure. 
The nearly parallel $G_{2}$-structure on $C_{s}(N)$ is induced 
via the identification 
$C(C_{s}(N)) = \mathbb{R}_{>0}  \times (0, \pi) \times N 
\ni (r, t, x) \mapsto (r \cos t, r \sin t, x)  
\in \mathbb{R} \times \mathbb{R}_{>0} \times N   = \mathbb{R} \times C(N)$. 
\end{rem}

\begin{lem} [\cite{MNS}] \label{cross_NK}
Let $(N, k, J, \sigma, \psi^{\pm})$ be a nearly K\"{a}hler manifold. 
Define a map $G : TN \times TN \rightarrow TN$ by 
$
k( G(u, v), w) = \psi^{+} (u, v, w)
$
for $u, v, w \in TN$. Then we have 
\begin{align*}
(\nabla_{X} J)(Y) = G(X, Y), \qquad
\nabla_{X} \psi^{+} = - k(X, \cdot) \wedge \sigma, 
\end{align*}
where $\nabla$ is the Levi-Civita connection of $k$ 
and $X, Y \in \mathfrak{X}(N)$. 
\end{lem}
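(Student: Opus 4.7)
The plan is to handle the two identities separately. The first is purely algebraic and follows from the skew-symmetry of $\nabla J$ combined with the defining relation $d\sigma = 3\psi^+$. The second is most naturally extracted from the torsion-freeness of the $G_2$-form on the cone $C(N)$ furnished by Lemma \ref{G2 str on NK cone}, which supplies a parallel 3-form incorporating both $\sigma$ and $\psi^+$.

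For the first identity, I would begin by showing that the $(0,3)$-tensor $T(X, Y, Z) := k((\nabla_X J) Y, Z)$ is totally skew-symmetric. Polarizing the nearly K\"ahler condition $(\nabla_X J) X = 0$ gives $T(X, Y, Z) = -T(Y, X, Z)$. Covariantly differentiating the Hermitian compatibility $k(JY, Z) + k(Y, JZ) = 0$ gives $T(X, Y, Z) = -T(X, Z, Y)$. Combining these, $T$ is totally skew and defines a 3-form. Next, the standard formula $d\sigma(X, Y, Z) = \sum_{\mathrm{cyc}} (\nabla_X \sigma)(Y, Z)$ for the torsion-free Levi-Civita connection, together with $(\nabla_X \sigma)(Y, Z) = k((\nabla_X J) Y, Z) = T(X, Y, Z)$, yields $d\sigma = 3T$ by total skew-symmetry. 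Comparing with $d\sigma = 3\psi^+$ and the definition of $G$ gives $(\nabla_X J) Y = G(X, Y)$.

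For the second identity, I would invoke Lemma \ref{G2 str on NK cone}: the cone $(C(N), \overline{k})$ carries a torsion-free $G_2$-structure $\varphi = r^2 dr \wedge \sigma + r^3 \psi^+$, hence $\nabla^{C(N)} \varphi = 0$. Identify $N$ with $\{1\} \times N \subset C(N)$ and, for a vector field $X$ on $N$, extend it to a $\partial_r$-invariant vector field on $C(N)$ tangent to the $N$-factor. A short Koszul-formula computation on the warped product $dr^2 + r^2 k$ gives
\[
\nabla^{C(N)}_X Y = \nabla^N_X Y - r\, k(X, Y)\, \partial_r
\]
for $Y$ also tangent to $N$. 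Evaluating $(\nabla^{C(N)}_X \varphi)(Y_1, Y_2, Y_3) = 0$ at $r = 1$ on $X, Y_1, Y_2, Y_3$ tangent to $N$: the $r^3 \psi^+$ piece of $\varphi$ reassembles, via the tangential parts of $\nabla^{C(N)}_X Y_i$, into $(\nabla^N_X \psi^+)(Y_1, Y_2, Y_3)$; meanwhile the normal contributions $-k(X, Y_i) \partial_r$ pair with the $dr$ in the $r^2 dr \wedge \sigma$ factor, and antisymmetrizing over $i = 1,2,3$ produces exactly $(k(X,\cdot) \wedge \sigma)(Y_1, Y_2, Y_3)$. Setting the sum to zero yields $\nabla_X \psi^+ = -k(X,\cdot) \wedge \sigma$.

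The main obstacle is purely bookkeeping in the cone step: one must check that the three sign-alternating terms arising from inserting $\partial_r$ into the $r^2 dr \wedge \sigma$ component combine precisely as the wedge product $k(X,\cdot) \wedge \sigma$, and that no contribution survives from $\nabla^{C(N)}_X(r^2 dr)$ other than the expected one. Beyond that, no technical difficulty is expected.
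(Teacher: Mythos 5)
Your proof is correct. Note that the paper does not actually prove Lemma \ref{cross_NK}: it is quoted from \cite{MNS} with no argument supplied, so there is no in-paper proof to compare against; your write-up fills that gap with the standard derivation. Both halves check out. For the first identity, polarizing $(\nabla_X J)X=0$ gives skewness of $T(X,Y,Z)=k((\nabla_XJ)Y,Z)$ in the first two slots, differentiating $k(JY,Z)+k(Y,JZ)=0$ gives skewness in the last two, and the Levi-Civita formula $d\sigma(X,Y,Z)=(\nabla_X\sigma)(Y,Z)-(\nabla_Y\sigma)(X,Z)+(\nabla_Z\sigma)(X,Y)$ then collapses to $3T$ by total skewness, so $d\sigma=3\psi^+$ forces $T=\psi^+$, i.e.\ $(\nabla_XJ)Y=G(X,Y)$. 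For the second identity, the cone computation is sound: with $\nabla^{C(N)}_XY=\nabla^N_XY-rk(X,Y)\partial_r$, the Leibniz expansion of $(\nabla^{C(N)}_X\varphi)(Y_1,Y_2,Y_3)=0$ at $r=1$ gives $(\nabla^N_X\psi^+)(Y_1,Y_2,Y_3)+\sum_i(-1)^{i-1}k(X,Y_i)\,\sigma(Y_j,Y_l)=0$, and the sum is exactly $(k(X,\cdot)\wedge\sigma)(Y_1,Y_2,Y_3)$ in the convention in which $d\sigma=3\psi^+$ is stated, so the signs close up as you anticipated. The only caveat worth recording explicitly is that both halves depend on the sign/normalization conventions for $\sigma=k(J\cdot,\cdot)$ and for $d$ of a $2$-form; with the conventions the paper uses elsewhere (e.g.\ $\omega=\overline{g}(J\cdot,\cdot)$ in the Sasakian section) everything is consistent.
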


\begin{lem} \label{submfd_cone NK}
Let $(N, k, J, \sigma, \psi^{\pm})$ be a nearly K\"{a}hler manifold. 
From Lemma \ref{G2 str on NK cone}, the cone $C(N) = \mathbb{R}_{>0} \times N$ 
admits a torsion-free $G_{2}$ structure. 
Let $\Sigma \subset N$ ($L \subset N$)
be an oriented 2(3)-submanifold. 
Then we have 
\begin{itemize}
\item $C(\Sigma) \subset C(N)$ 
      is associative if and only if 
      $\Sigma$ is a $J$-holomorphic curve.
\item $C(L^{3}) \subset C(N)$ 
      is a coassociative 4-fold if and only if 
      $L$ is Lagrangian: $\sigma|_{TL} = 0$. 
\end{itemize} 
\end{lem}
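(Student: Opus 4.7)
The approach is a direct pointwise calibration computation based on the explicit cone formulas of Lemma \ref{G2 str on NK cone}, namely
$\varphi = r^{2}\, dr \wedge \sigma + r^{3}\, \psi^{+}$ and $*\varphi = r^{3}\, \psi^{-} \wedge dr + \tfrac{1}{2} r^{4}\, \sigma^{2}$. In each case the tangent space to $C(M)$ is spanned by $\partial_{r}$ together with $TM$, the induced cone metric gives $\mathrm{vol}_{C(M)} = r^{\dim M}\, dr \wedge \mathrm{vol}_{M}$, and the restriction of $\varphi$ naturally splits into a $dr$-summand coming from $\sigma$ and a purely $N$-horizontal summand coming from $\psi^{+}$. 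I will verify the associative and coassociative criteria of Lemmas \ref{def_asso_coasso} and \ref{def_chi} on this splitting.

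The first claim is the easier of the two. Because $\psi^{+}$ is a $3$-form on $N$ and $\dim \Sigma = 2$, its pullback to $\Sigma$ vanishes for trivial dimension reasons, so $\varphi|_{TC(\Sigma)} = r^{2}\, dr \wedge \sigma|_{T\Sigma}$. The calibration condition $\varphi|_{TC(\Sigma)} = \mathrm{vol}_{C(\Sigma)}$ then collapses to $\sigma|_{T\Sigma} = \mathrm{vol}_{\Sigma}$, which is the standard Wirtinger characterisation of $J$-holomorphic curves (with the induced orientation); the reverse implication is the same computation read in reverse.

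For the second claim the $\psi^{+}$-term no longer vanishes by dimension. Writing $\varphi|_{TC(L)} = r^{2}\, dr \wedge \sigma|_{TL} + r^{3}\, \psi^{+}|_{TL}$ and separating the $dr$-components, the condition $\varphi|_{TC(L)} = 0$ is equivalent to $\sigma|_{TL} = 0$ \emph{together with} $\psi^{+}|_{TL} = 0$. The key observation, and the only step that genuinely uses the nearly K\"{a}hler torsion, is that these two conditions are not independent: applying $d$ to $\sigma|_{TL} = 0$ and invoking the defining relation $d\sigma = 3\psi^{+}$ gives $3\psi^{+}|_{TL} = d(\sigma|_{TL}) = 0$ automatically. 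Thus the purely symplectic Lagrangian condition already forces $\varphi|_{TC(L)} = 0$, while the converse comes from isolating the $dr$-component of that vanishing. To upgrade this to coassociativity in the sense of Lemma \ref{def_chi}, one checks that $*\varphi|_{TC(L)} = -\, r^{3}\, dr \wedge \psi^{-}|_{TL}$ (the $\sigma^{2}$-term drops because $\dim L = 3$), and uses the pointwise $SU(3)$-algebraic fact that $\Psi = \psi^{+} + i\psi^{-}$ restricts to any Lagrangian $3$-plane as a unit-modulus multiple of the Lagrangian volume form; combined with $\psi^{+}|_{TL} = 0$ this forces $\psi^{-}|_{TL} = \pm\, \mathrm{vol}_{L}$, which yields the required nonvanishing (after fixing the orientation of $L$ so the sign is negative). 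The substantive point I expect to take the most care is precisely this use of $d\sigma = 3\psi^{+}$, since it is the NK feature that makes the Lagrangian condition suffice without imposing a separate ``special'' phase condition.
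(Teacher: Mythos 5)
Your proof is correct. The paper states this lemma without proof (it is treated as standard), and your argument is exactly the expected one: the calibration computation splits $\varphi|_{TC(M)}$ into its $dr$- and horizontal components, the associative case reduces to Wirtinger's characterisation of $J$-holomorphic curves, and the coassociative case hinges on the observation that $d\sigma = 3\psi^{+}$ makes $\psi^{+}|_{TL}=0$ automatic for Lagrangian $L$ --- precisely the point the paper itself highlights in the remark following Lemma \ref{submfd_sine cone NK} --- with the nonvanishing of $*\varphi|_{TC(L)}$ supplied by the $SU(3)$ linear algebra giving $\psi^{-}|_{TL}=\pm\,\mathrm{vol}_{L}$.
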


\begin{lem} \label{submfd_sine cone NK}
Let $(N, k, J, \sigma, \psi^{\pm})$ be a nearly K\"{a}hler manifold. 
From Lemma \ref{G2 str on NK sine cone}, the sine cone $C_{s}(N) = N \times (0, \pi)$ 
admits a nearly parallel $G_{2}$ structure. 
Let $\Sigma \subset N$ 
($L \subset N$)
be an oriented 2(3)-submanifold. 
Then it follows that 
\begin{itemize}
\item $C_{s}(\Sigma)  \subset C_{s}(N)$ 
      is associative if and only if 
      $\Sigma$ is a $J$-holomorphic curve.
\item $L \times \{ \frac{\pi}{2} \} \subset C_{s}(N)$ 
      is associative if and only if 
      $L$ is Lagrangian: $\sigma|_{TL} = 0$. 
\end{itemize} 
\end{lem}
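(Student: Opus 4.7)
The plan is to restrict the nearly parallel $G_{2}$-form $\tilde{\varphi}$ from Lemma \ref{G2 str on NK sine cone} directly to the tangent space of each candidate submanifold and compare with the induced volume form; both assertions are pointwise in nature, so it suffices to work at a single point and check the equality $\tilde{\varphi}|_{TM} = \mathrm{vol}_{M}$ from Lemma \ref{def_asso_coasso}.

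For the first part, at a point $(p,t) \in C_{s}(\Sigma)$ the tangent space splits as $T_{p}\Sigma \oplus \mathbb{R}\partial_{t}$. Because $\psi^{\pm}$ are pulled back from $N$, they vanish on any triple containing $\partial_{t}$; the remaining slots span the $2$-plane $T_{p}\Sigma$, on which every $3$-form is zero. Hence only the $(\sin^{2}t)\, dt \wedge \sigma$ piece of $\tilde{\varphi}$ survives the restriction. Evaluating on an oriented basis $(v_{1},v_{2},\partial_{t})$ with $v_{1},v_{2}$ a $k$-orthonormal frame of $T_{p}\Sigma$ gives $(\sin^{2}t)\,\sigma(v_{1},v_{2})$, while the induced volume form from $\tilde{k} = dt^{2} + (\sin^{2}t)\,k$ on the same basis equals $\sin^{2}t$. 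Associativity therefore reduces to $\sigma(v_{1},v_{2}) = 1$ at each point, which by Wirtinger's inequality is exactly the condition that $T_{p}\Sigma$ be a positively oriented $J$-invariant plane, i.e.\ that $\Sigma$ be a $J$-holomorphic curve.

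For the second part I would specialize to $t = \pi/2$, where $\sin t = 1$ and $\cos t = 0$. The tangent space of $L \times \{\pi/2\}$ is $T_{p}L$ and does not contain $\partial_{t}$, so the $dt \wedge \sigma$ summand of $\tilde{\varphi}$ drops out, the $(\cos t \sin^{3}t)\,\psi^{+}$ summand vanishes for $\cos(\pi/2)=0$, and the induced metric is simply $k|_{TL}$. What survives is $\tilde{\varphi}|_{TL} = -\psi^{-}|_{TL}$, so associativity becomes the pointwise identity $-\psi^{-}|_{TL} = \mathrm{vol}_{L}$. This identity is exactly the one characterizing coassociativity of $C(L) \subset C(N)$ against the $G_{2}$-structure of Lemma \ref{G2 str on NK cone}: restricting $*\varphi = r^{3}\psi^{-} \wedge dr + \tfrac{1}{2}r^{4}\sigma^{2}$ to $TC(L)$ kills the $\sigma^{2}$ piece for dimensional reasons and leaves $-r^{3}\,dr \wedge \psi^{-}|_{TL}$, which matches $\mathrm{vol}_{C(L)} = r^{3}\,dr \wedge \mathrm{vol}_{L}$ precisely when $-\psi^{-}|_{TL} = \mathrm{vol}_{L}$. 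Invoking Lemma \ref{submfd_cone NK} then identifies this condition with $\sigma|_{TL} = 0$, which is the Lagrangian condition.

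The main obstacle here is not conceptual but bookkeeping: making sure the orientation on $C_{s}(\Sigma)$ matches the one induced by $\tilde{\varphi}$ (so that Wirtinger's inequality is saturated with the correct sign) and verifying the cancellations at $t = \pi/2$ carefully enough to read off the sign in $\tilde{\varphi}|_{TL} = -\psi^{-}|_{TL}$. Once those signs are handled, both claims follow immediately from the explicit formula in Lemma \ref{G2 str on NK sine cone} together with Wirtinger's inequality and the cone version already recorded in Lemma \ref{submfd_cone NK}.
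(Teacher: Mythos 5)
Your argument is correct. The paper states this lemma without proof, and what you give is the natural direct verification from the explicit formulas of Lemma \ref{G2 str on NK sine cone}: both restrictions are computed correctly, the surviving terms are identified correctly, and the sign bookkeeping you flag is the only genuine care point. One remark on the second bullet: the equivalence $-\psi^{-}|_{TL}=\mathrm{vol}_{L}\Leftrightarrow\sigma|_{TL}=0$ is not purely pointwise linear algebra. The ``only if'' direction is (it is the equality case of the special Lagrangian calibration bound for the phase $-i$ form $\mathrm{Re}(i\Psi)=-\psi^{-}$), but the ``if'' direction needs $\psi^{+}|_{TL}=0$ as well, which follows from $\sigma|_{TL}=0$ only via the structure equation $d\sigma=3\psi^{+}$ (so $3\psi^{+}|_{TL}=d(\sigma|_{TL})=0$) --- exactly the point made in the Remark following the lemma. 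Your appeal to Lemma \ref{submfd_cone NK} legitimately absorbs this step, since that lemma is stated in the paper and its coassociativity condition $\varphi|_{TC(L)}=0$ already packages $\sigma|_{TL}=0$ together with $\psi^{+}|_{TL}=0$; just be aware that this is where the non-pointwise input enters, so the second bullet, unlike the first, is not literally ``pointwise in nature'' for arbitrary $3$-planes but only for tangent spaces of submanifolds.
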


\begin{rem}
On a nearly K\"{a}hler manifold, we know that $d \sigma = 3 \psi^{+}$, 
which implies that a Lagrangian submanifold $L$ satisfies $\psi^{+}|_{TL} = 0$.
Thus Lagrangian submanifolds in a nearly K\"{a}hler manifold are regarded as 
``special Lagrangian"(with phase $-i$). 
\end{rem}

We know the following as Lemma \ref{deform_sLeg}. 

\begin{lem} \label{deform_Lag}
The vector space of all infinitesimal Lagrangian deformations of $L$ 
in a nearly K\"{a}hler manifold 
is identified with 
\begin{align} 
\{ v \in \mathfrak{X}(L); {\rm rot}(v) = 3v \}, 
\end{align}
where ${\rm rot}(v) = \sum_{i = 1}^{3} e_{i} \times \nabla_{e_{i}}^{\top} v$, 
$\nabla^{\top}$ is the Levi-Civita connection of the metric $k_{L}$ on $L$ induced from $(N, k)$  
and $\{ e_{i} \}_{i = 1, 2, 3}$ is the local orthonormal frame of $TL$. 
\end{lem}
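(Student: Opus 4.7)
The plan is to mirror the argument for Lemma \ref{deform_sLeg}, working directly with the defining condition $\sigma|_{TL}=0$ of a Lagrangian and converting the resulting first-order equation into the stated eigenvalue equation. First, since $L^{3}$ is Lagrangian in the 6-dimensional almost Hermitian $(N,k,J)$, the almost complex structure furnishes a canonical isomorphism $\nu(L)\cong TL$, given by $V=Jv$ for $v\in\mathfrak{X}(L)$. A normal vector field $V$ is an infinitesimal Lagrangian deformation precisely when $L_{V}\sigma|_{TL}=0$; by the remark following Lemma \ref{submfd_sine cone NK} the additional ``phase $-i$'' condition is automatic once $L$ stays Lagrangian, so this single equation is what one must analyse.

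Next I would expand $L_{V}\sigma$ via Cartan's formula together with the nearly K\"{a}hler identity $d\sigma=3\psi^{+}$:
\begin{align*}
L_{Jv}\sigma \;=\; d(i_{Jv}\sigma) \;+\; 3\, i_{Jv}\psi^{+}.
\end{align*}
A direct computation from $\sigma=k(J\cdot,\cdot)$ gives $i_{Jv}\sigma=-v^{\flat}$, where $v^{\flat}:=k_{L}(v,\cdot)$ is regarded as a $1$-form on $L$, so the infinitesimal Lagrangian condition reduces to the equation of $2$-forms on $L$,
\begin{align*}
dv^{\flat} \;=\; 3\, i_{Jv}\psi^{+}|_{TL}.
\end{align*}

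To put this in the form ${\rm rot}(v)=3v$, I would apply the Hodge star $*_{L}$ on $L$ and interpret both sides as vector fields. By Lemma \ref{submfd_sine cone NK}, $L\times\{\pi/2\}\subset C_{s}(N)$ is associative, so the ambient $G_{2}$ cross product restricts on $TL$ to the intrinsic cross product of the oriented Riemannian $3$-manifold $L$; a short computation in a local orthonormal frame $\{e_{1},e_{2},e_{3}\}$ then shows $(*_{L}dv^{\flat})^{\sharp}={\rm rot}(v)$. For the right-hand side I would use the $(3,0)$-property of $\Psi=\psi^{+}+i\psi^{-}$, namely $\Psi(J\cdot,\cdot,\cdot)=i\Psi$, to obtain $\psi^{+}(Je_{k},e_{i},e_{j})=-\psi^{-}(e_{k},e_{i},e_{j})$, and then combine with $\psi^{-}|_{TL}=-{\rm vol}_{L}$ (which follows from the associativity of $L\times\{\pi/2\}$ applied to $\tilde{\varphi}|_{t=\pi/2}=dt\wedge\sigma-\psi^{-}$) to conclude $(*_{L}\, i_{Jv}\psi^{+}|_{TL})^{\sharp}=v$. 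Assembling both pieces gives ${\rm rot}(v)=3v$, and the identification is one-to-one because $V\mapsto v$ is.

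The only step that is not a routine manipulation is the identification $(*_{L}dv^{\flat})^{\sharp}={\rm rot}(v)$, since ${\rm rot}$ is \emph{a priori} defined using the extrinsic $G_{2}$ cross product on $C_{s}(N)$; its agreement with the intrinsic curl on the $3$-manifold $L$ rests on the associativity of $L\times\{\pi/2\}$ together with the fact that the cross product of two tangent vectors to an associative $3$-fold is again tangent (the remark after Definition \ref{def of cross product}). Everything else is a standard application of Cartan's formula, the nearly K\"{a}hler structure equations, and the $(3,0)$-property of $\Psi$.
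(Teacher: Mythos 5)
Your proposal is correct and follows essentially the same route as the paper: the identification $V=Jv$, the computation $0=L_{Jv}\sigma|_{TL}=3\,i(v)\,{\rm vol}_{L}-d(k_{L}(v,\cdot))$, and the Hodge-star identities $*(i(v){\rm vol}_{L})=k_{L}(v,\cdot)$ and $*d(k_{L}(v,\cdot))=k_{L}({\rm rot}(v),\cdot)$. You merely spell out the intermediate steps (Cartan's formula with $d\sigma=3\psi^{+}$, the $(3,0)$-property of $\Psi$, and $\psi^{-}|_{TL}=-{\rm vol}_{L}$) that the paper states in a single line, and your care about the cross product on $TL$ being the intrinsic one is consistent with the paper's convention $g_{M}(v\times w,\cdot)={\rm vol}_{M}(v,w,\cdot)$.
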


\begin{proof}
Since $L$ is Lagrangian, there is a canonical isomorphism 
between the tangent bundle $TL$ 
and the normal bundle of $L$ in $N$ via 
$v \mapsto J v$. 
Then a vector field $v \in \mathfrak{X}(L)$ on $L$ corresponds to 
an infinitesimal Lagrangian deformation of $L$ if and only if 
\begin{align*} 
0 = \left. L_{J v} \sigma \right|_{TL}
= 3 i(v)  {\rm vol}_{L} - d (k_{L}(v, \cdot)).
\end{align*}
Note that $\psi^{-}|_{TL} = -  {\rm vol}_{L}$. 
Then the equations 
$* (i(v) {\rm vol}_{L}) = k_{L}(v, \cdot)$ and 
$* d (k_{L}(v, \cdot)) = k_{L}({\rm rot}(v), \cdot)$ 
imply the proof.
\end{proof}

\section{Associative deformations in nearly parallel $G_{2}$-manifolds}

Let $(Y, \varphi, g)$ be a nearly parallel $G_{2}$-manifold, 
$\iota : M^{3} \hookrightarrow Y$ be an associative immersion, 
and $\{ \iota_{t} : M \hookrightarrow Y  \}_{t \in (- \epsilon, \epsilon )}$ be a smooth family of immersions 
with $\iota_{0} = \iota$.

\begin{definition}
A family $\{ \iota_{t} \}$ is called an {\bf associative deformation} of $\iota$ 
if $\iota_{t}$ is an associative immersion for each $t$. 
An associative deformation  $\{ \iota_{t} \}$ is called {\bf trivial} 
if  $\{ \iota_{t} \}$ is induced by a one-parameter family of 
automorphisms of $(Y, \varphi, g)$. 
If all infinitesimal associative deformations of $M$ 
come from 
trivial deformations, $M$ is called {\bf rigid}.
\end{definition}

First, we characterize the space of infinitesimal associative deformations of $M$.

\begin{prop} \label{deform_asso_NP}
Let $(Y, \varphi, g)$ be a nearly parallel $G_{2}$-manifold, and 
$M^{3} \subset Y$ be an associative submanifold. 
Denote by $\nu$  the normal bundle of $M$ in $Y$ 
and by $\nabla^{\perp}$ the connection on $\nu$ induced 
by the Levi-Civita connection $\nabla$ of $(Y, g)$.  

Taking any local orthonormal frame $\{ e_{1}, e_{2}, e_{3} \}$ of $TM$, 
define the operator 
$D : C^{\infty}(M, \nu) \rightarrow C^{\infty}(M, \nu)$ by 
\begin{align*}
D \psi := \sum_{i = 1}^{3} e_{i} \times \nabla^{\perp}_{e_{i}} \psi. 
\end{align*}
Then the vector space of all infinitesimal associative deformations 
of $M^{3} \hookrightarrow Y$ is identified with 
\begin{align*}
\{ \psi \in C^{\infty}(M, \nu) ; D \psi = -\psi \}. 
\end{align*}
\end{prop}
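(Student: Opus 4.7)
The plan is to linearize the pointwise associative condition $\chi|_{TM}=0$ from Lemma \ref{def_chi}. Choose a local orthonormal frame $\{e_{1},e_{2},e_{3}\}$ on $M$ with $e_{3}=e_{1}\times e_{2}$ and a smooth family of immersions $\{F_{t}\}_{t\in(-\epsilon,\epsilon)}$ with $F_{0}=\iota$ and $\partial_{t}F_{t}|_{t=0}=\psi\in C^{\infty}(M,\nu)$. Differentiating $\chi(dF_{t}(e_{1}),dF_{t}(e_{2}),dF_{t}(e_{3}))=0$ at $t=0$, and using torsion-freeness of $\nabla$ to rewrite $\partial_{t}|_{t=0}dF_{t}(e_{i})$ as $\nabla_{e_{i}}\psi$, yields
\begin{align*}
0=(\nabla_{\psi}\chi)(e_{1},e_{2},e_{3})+\sum_{i=1}^{3}\chi(e_{1},\dots,\nabla_{e_{i}}\psi,\dots,e_{3}).
\end{align*}
The rest of the argument evaluates each piece separately.

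For the frame-variation sum, note that $\chi|_{TM^{\otimes 3}}$ is a section of the rank-one bundle $\wedge^{3}T^{*}M\otimes TY$ generated by $\chi(e_{1},e_{2},e_{3})=0$, so the tangential part of $\nabla_{e_{i}}\psi$ contributes nothing and one may replace $\nabla_{e_{i}}\psi$ by $\nabla^{\perp}_{e_{i}}\psi$. Combining (\ref{chi_cross}) with the associator identity
\begin{align*}
u\times(v\times w)+w\times(v\times u)=-g(u,v)w+2g(u,w)v-g(v,w)u,
\end{align*}
obtained by antisymmetrizing (\ref{chi_cross}) in $u\leftrightarrow w$, together with the cyclic relations $e_{1}\times e_{2}=e_{3},\ e_{2}\times e_{3}=e_{1},\ e_{3}\times e_{1}=e_{2}$, a short pointwise calculation gives
\begin{align*}
\chi(\psi',e_{2},e_{3})=-e_{1}\times\psi',\quad\chi(e_{1},\psi',e_{3})=-e_{2}\times\psi',\quad\chi(e_{1},e_{2},\psi')=-e_{3}\times\psi'
\end{align*}
for every $\psi'\in\nu$. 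Summing with $\psi'=\nabla^{\perp}_{e_{i}}\psi$ produces $-D\psi$.

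For the $\nabla_{\psi}\chi$ term I would use Lemma \ref{cha_NP}(4), which reformulates the nearly parallel condition as $\nabla_{X}(*\varphi)=-g(X,\cdot)\wedge\varphi$. Pairing against a test vector $v_{0}\in T_{p}Y$ via $g(v_{0},(\nabla_{\psi}\chi)(e_{1},e_{2},e_{3}))=(\nabla_{\psi}*\varphi)(v_{0},e_{1},e_{2},e_{3})$, the wedge $g(\psi,\cdot)\wedge\varphi$ collapses to the single non-vanishing term $-g(\psi,v_{0})\varphi(e_{1},e_{2},e_{3})=-g(\psi,v_{0})$, since $\psi\perp e_{i}$ and $\varphi(e_{1},e_{2},e_{3})=1$ on an associative frame. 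Hence $(\nabla_{\psi}\chi)(e_{1},e_{2},e_{3})=-\psi$, and this is the only place where the nearly parallel torsion enters; it is precisely what shifts the spectrum from the torsion-free Dirac kernel to the eigenspace at $-1$. Combining the two contributions, the linearized equation is $-\psi-D\psi=0$, i.e.\ $D\psi=-\psi$. The only real obstacle is the sign-bookkeeping in the three algebraic identities above, a pointwise $G_{2}$ linear-algebra verification that is routine but easy to mishandle.
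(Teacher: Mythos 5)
Your argument is correct and follows essentially the same route as the paper: linearize the condition $\chi|_{TM}=0$, identify the frame-variation terms with $-D\psi$, and use Lemma \ref{cha_NP}(4) to evaluate $(\nabla_{\psi}\chi)(e_{1},e_{2},e_{3})=-\psi$. The only difference is presentational — the paper quotes the general Gayet--McLean linearization $\tilde{D}\psi=-D\psi+\sum_{k}(\nabla_{\psi}*\varphi)(\eta_{k},\omega)\eta_{k}$ and then disposes of the curvature term in a one-line lemma, whereas you carry out both computations directly (and your three pointwise identities $\chi(\psi',e_{2},e_{3})=-e_{1}\times\psi'$, etc., do check out).
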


\begin{rem} \cite{Mclean}
There exists a rank 4 vector bundle $E \rightarrow M$ 
satisfying $\nu \cong \mathbb{S} \otimes_{\mathbb{H}} E$, 
where $\mathbb{S} \rightarrow M$ is a spinor bundle. 
Then $D$ is a twisted Dirac operator. 
\end{rem}

The proof of Proposition \ref{deform_asso_NP} 
comes from the following general theory of 
associative deformations. 

\begin{prop} [\cite{Gayet, Mclean}]
Let $(Y, \varphi, g)$ be a manifold with a $G_{2}$-structure and  
$M^{3} \subset Y$ be an associative submanifold. 
Then the vector space of all infinitesimal associative deformations 
of $M^{3} \hookrightarrow Y$ is identified with $\ker \tilde{D}$, 
where $\tilde{D} : C^{\infty}(M, \nu) \rightarrow C^{\infty}(M, \nu)$ is defined by 
\begin{align*}
\tilde{D} \psi := - \sum_{i = 1}^{3} e_{i} \times \nabla^{\perp}_{e_{i}} \psi 
                     + \sum_{k=1}^{4} (\nabla_{\psi} * \varphi) (\eta_{k}, \omega) \eta_{k}. 
\end{align*}
Here $\{ e_{1}, e_{2}, e_{3} \}$ is an oriented local orthonormal frame of $TM$, 
$\omega = e_{1} \wedge e_{2} \wedge e_{3}$, and 
$\{ \eta_{1}, \eta_{2}, \eta_{3}, \eta_{4} \}$ is 
a local orthonormal frame of $\nu$. 
\end{prop}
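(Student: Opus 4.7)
My plan is to combine the general linearization formula of Gayet--McLean (stated right after this proposition) with the nearly parallel identity $\nabla_X(*\varphi) = -g(X,\cdot)\wedge\varphi$ from Lemma \ref{cha_NP}(4), and in particular to identify infinitesimal deformations with the kernel of the operator $\tilde D$ specialized to the nearly parallel case.

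First I would set up the deformation problem. Nearby 3-submanifolds of $M$ are parametrized by small normal sections $\psi \in C^\infty(M, \nu)$ via $M_\psi := \{\exp_p \psi(p) : p \in M\}$, and by Lemma \ref{def_chi}, $M_\psi$ is associative precisely when $\chi|_{TM_\psi} \equiv 0$. Parallel-transporting this $TY$-valued alternating expression back to $M$ along the radial geodesics $t \mapsto \exp_p t\psi(p)$ produces a smooth map $F : C^\infty(M,\nu) \to C^\infty(M,\nu)$ whose linearization at the origin governs infinitesimal associative deformations.

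Next I would invoke the Gayet--McLean computation to identify $F'(0) = \tilde D$. Differentiating $\chi|_{TM_{t\psi}}$ at $t = 0$ splits into two contributions: the variation of the frame of $TM_{t\psi}$, which using \eqref{chi_cross} on the associative triple $(e_1, e_2, e_3)$ produces the cross-product Dirac term $-\sum_i e_i \times \nabla^\perp_{e_i}\psi = -D\psi$; and the parallel-transported variation of $\chi$ itself, which via the defining identity $g(v_1, \chi(v_2,v_3,v_4)) = *\varphi(v_1,\ldots,v_4)$ produces the zeroth-order correction $Z(\psi) := \sum_k (\nabla_\psi *\varphi)(\eta_k, e_1, e_2, e_3)\eta_k$. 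Hence $F'(0)\psi = -D\psi + Z(\psi)$.

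Finally I would evaluate $Z(\psi)$ using the nearly parallel condition. By Lemma \ref{cha_NP}(4),
\[
(\nabla_\psi *\varphi)(\eta_k, e_1, e_2, e_3) = -\bigl(g(\psi,\cdot)\wedge \varphi\bigr)(\eta_k, e_1, e_2, e_3).
\]
Expanding the wedge as an alternating sum over the four slots, the three terms proportional to $g(\psi, e_i)$ vanish because $\psi$ is normal to $M$; the remaining term equals $-g(\psi, \eta_k)\varphi(e_1, e_2, e_3) = -g(\psi, \eta_k)$ since $(e_1, e_2, e_3)$ is an oriented orthonormal associative frame. Summing against $\eta_k$ gives $Z(\psi) = -\psi$, so $F'(0)\psi = -D\psi - \psi$ and $\ker F'(0) = \{\psi \in C^\infty(M, \nu) : D\psi = -\psi\}$, which is exactly the stated characterization.

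The main obstacle is the middle step: verifying the tangent-bundle-valued Gayet--McLean linearization formula requires careful bookkeeping of how the frame of $TM_{t\psi}$ and the normal identification vary simultaneously, which is why the paper cites \cite{Gayet, Mclean} rather than re-deriving it. By contrast, the nearly parallel simplification in the last step is essentially automatic once the identity $\nabla(*\varphi) = -g\wedge \varphi$ is substituted, provided one correctly tracks the orthogonality $\psi \perp TM$ and the normalization $\varphi(e_1, e_2, e_3) = 1$.
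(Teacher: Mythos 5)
Your proposal matches the paper's proof: both define $F(\sigma)=\exp_\sigma^*\chi(\omega)$, characterize associativity of $\exp_\sigma(M)$ by the vanishing of $F$ via Lemma \ref{def_chi}, and identify $(dF)_0=\tilde D$ by deferring the actual linearization computation to \cite{Gayet, Mclean}, exactly as the paper's outline does. Your final paragraph (substituting $\nabla_X(*\varphi)=-g(X,\cdot)\wedge\varphi$ to get $Z(\psi)=-\psi$) is correct but belongs to the subsequent lemma and Proposition \ref{deform_asso_NP} rather than to this general $G_2$-structure statement.
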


\begin{proof}
We give an outline of the proof. 
Define a map 
$F: C^{\infty}(M,\nu) \rightarrow C^{\infty}(M, TY|_{M}) $ as 
$F(\sigma) = \exp_{\sigma}^{*} \chi(\omega)$, 
where $\chi$ is defined in  Lemma \ref{def_chi}. 
We know that 
$\exp_{\sigma}(M)$ is associative 
if and only if $F(\sigma)$ vanishes. 
For any $\psi \in C^{\infty}(M, \nu)$, 
we may consider 
\begin{align*}
(dF)_{0} (\psi) = 0. 
\end{align*}
By a direct computation, 
the left hand side is equal to 
$
- \sum_{i = 1}^{3} e_{i} \times \nabla^{\perp}_{e_{i}} \psi 
+ \sum_{k=1}^{4} (\nabla_{\psi} * \varphi) (\eta_{k}, \omega) \eta_{k}, 
$ and hence the statement is proved. 
\end{proof}

By Lemma \ref{cha_NP}, we see the following lemma, 
which implies Proposition \ref{deform_asso_NP}. 

\begin{lem}
If $(Y, \varphi, g)$ is nearly parallel, then 
$\sum_{k=1}^{4} (\nabla_{\psi} * \varphi) (\eta_{k}, \omega) \eta_{k} = - \psi. $
\end{lem}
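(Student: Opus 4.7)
The plan is to reduce the claimed identity to a direct pointwise computation using the characterization of the nearly parallel condition in Lemma \ref{cha_NP}(4), together with the associative condition on $M$.

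First, I would invoke Lemma \ref{cha_NP}(4), which says that $\nabla_{X}(*\varphi) = -g(X,\cdot)\wedge \varphi$ for any $X \in TY$. Applying this with $X = \psi$ gives, for each $k$,
\begin{align*}
(\nabla_{\psi} * \varphi)(\eta_{k}, e_{1}, e_{2}, e_{3})
  = -(g(\psi,\cdot)\wedge \varphi)(\eta_{k}, e_{1}, e_{2}, e_{3}).
\end{align*}
Next I would expand the 4-form $g(\psi,\cdot)\wedge\varphi$ on the basis $(\eta_{k}, e_{1}, e_{2}, e_{3})$ using the standard wedge formula. Of the four terms, the three in which the 1-form $g(\psi,\cdot)$ is paired with some $e_{i}$ vanish because $\psi$ is a section of the normal bundle $\nu$ and $\{e_{i}\}\subset TM$, so $g(\psi,e_{i})=0$. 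The remaining term is $g(\psi,\eta_{k})\,\varphi(e_{1},e_{2},e_{3})$, and the associative condition (Lemma \ref{def_asso_coasso}) together with the fact that $\{e_{1},e_{2},e_{3}\}$ is an oriented orthonormal frame of $TM$ gives $\varphi(e_{1},e_{2},e_{3})=\mathrm{vol}_{M}(e_{1},e_{2},e_{3})=1$. Hence
\begin{align*}
(\nabla_{\psi} * \varphi)(\eta_{k}, \omega) = -g(\psi,\eta_{k}).
\end{align*}

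Finally, summing over $k$ and using that $\{\eta_{1},\ldots,\eta_{4}\}$ is an orthonormal frame of $\nu$ (so $\psi = \sum_{k} g(\psi,\eta_{k})\,\eta_{k}$) yields
\begin{align*}
\sum_{k=1}^{4} (\nabla_{\psi} * \varphi)(\eta_{k}, \omega)\,\eta_{k}
  = -\sum_{k=1}^{4} g(\psi,\eta_{k})\,\eta_{k} = -\psi,
\end{align*}
which is the required identity. There is no real obstacle here: the whole argument is a one-line application of Lemma \ref{cha_NP}(4) followed by the elementary expansion of a wedge product, where the associative condition collapses everything to a single nontrivial term and normality of $\psi$ kills the other three. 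The only thing to be careful about is the sign and ordering conventions in Lemma \ref{cha_NP}(4) versus the definition of $\tilde D$ in the preceding proposition, so I would double-check the sign at the end.
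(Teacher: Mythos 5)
Your proof is correct and is exactly the argument the paper intends: the paper gives no explicit proof of this lemma, merely citing Lemma \ref{cha_NP}, and your computation (apply item (4) with $X=\psi$, expand the wedge product, use $g(\psi,e_i)=0$ and $\varphi(e_1,e_2,e_3)=1$ on an associative $M$, then sum over the orthonormal frame of $\nu$) is the intended one-line verification spelled out. The sign also checks out against the convention in Lemma \ref{cha_NP}(4).
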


\begin{rem}
We can prove Proposition \ref{deform_asso_NP} 
by using the the fact 
that a cone $C(M)$ of $M$ is 
a Cayley submanifold 
in $C(Y)$ with a torsion-free ${\rm Spin(7)}$-structure. 
Applying the deformation theory of 
Cayley submanifolds in \cite{Mclean}, 
we consider the  Cayley cone deformation of $C(M)$. 
This is
an analogue of the proof of 
Lemma \ref{deform_sLeg} given by \cite{FHY}. 
\end{rem}

The operator $D$ has the following properties. 

\begin{lem}
The operator $D$ is elliptic. 
There exists a vector field $X \in \mathfrak{X}(M)$ on $M$ satisfying
\begin{align} \label{formal adjoint D}
g(D \psi, \psi') = - {\rm div}(X) + g(\psi, D \psi')
\end{align}
for any $\psi, \psi' \in C^{\infty}(M, \nu).$ 
In particular, when $M$ is compact, 
$D$ is self-adjoint. 
\end{lem}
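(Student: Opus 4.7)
The plan is to verify ellipticity by a direct symbol computation and to prove the divergence identity by constructing a concrete 1-form on $M$ whose divergence captures the failure of $D$ to be pointwise symmetric. Self-adjointness on compact $M$ will then follow immediately by Stokes.

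\textbf{Ellipticity.} For $\xi \in T^{*}M$ let $v = \xi^{\sharp} \in TM$. The principal symbol is $\sigma_{D}(\xi)(\psi) = v \times \psi$. First I would verify that cross product by a tangent vector preserves the normal bundle: for $V, W \in TM$ and $\psi \in \nu$, associativity of $M$ gives $V \times W \in TM$, so $g(V \times \psi, W) = \varphi(V, \psi, W) = -g(V \times W, \psi) = 0$. Then the $G_{2}$ identity $|v \times \psi|^{2} = |v|^{2}|\psi|^{2} - g(v, \psi)^{2}$ reduces to $|v|^{2}|\psi|^{2}$ since $v \in TM$ is orthogonal to $\psi \in \nu$, showing that $\sigma_{D}(\xi)$ is an isomorphism of $\nu_{p}$ whenever $\xi \neq 0$; hence $D$ is elliptic.

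\textbf{Divergence identity.} Given $\psi, \psi' \in C^{\infty}(M, \nu)$, define $\alpha \in \Omega^{1}(M)$ by $\alpha(V) = -\varphi(V, \psi, \psi')$ and set $X = \alpha^{\sharp}$. To compute $\mathrm{div}(X)$ at an arbitrary point $p \in M$, I would work in local orthonormal frames $\{e_{i}\}$ of $TM$ and $\{n_{a}\}$ of $\nu$ chosen so that $\nabla^{M}_{e_{i}} e_{j}|_{p} = 0$ and $\nabla^{\perp}_{e_{i}} n_{a}|_{p} = 0$. Writing $\psi = \sum f_{a} n_{a}$, $\psi' = \sum g_{a} n_{a}$ and applying Leibniz to $-\sum_{i} e_{i}(\varphi(e_{i}, \psi, \psi'))$ produces three kinds of terms. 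Derivatives landing on the coefficients give exactly $-g(D\psi, \psi') + g(\psi, D\psi')$, read off from the definition of $D$ together with the antisymmetry $\varphi(e_{i}, n_{a}, n_{b}) = -\varphi(e_{i}, n_{b}, n_{a})$. Derivatives landing on $\varphi$ produce $-\sum_{i}(\nabla_{e_{i}}\varphi)(e_{i}, \psi, \psi')$, which by Lemma~\ref{cha_NP}(3) equals $-\sum_{i} *\varphi(e_{i}, e_{i}, \psi, \psi') = 0$ by antisymmetry. Derivatives landing on the frames split into shape operator pieces $\varphi(e_{i}, A_{n_{a}}(e_{i}), n_{b})$, which vanish because $e_{i}$ and $A_{n_{a}}(e_{i})$ are both tangent to the associative $M$ (so $e_{i} \times A_{n_{a}}(e_{i}) \in TM$ is orthogonal to $n_{b}$), and a mean curvature piece $-\sum_{i}\varphi(B(e_{i},e_{i}), \psi, \psi')$, which vanishes because an associative submanifold in a nearly parallel $G_{2}$-manifold is minimal (its cone is Cayley in $C(Y)$, hence calibrated). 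Collecting the surviving terms gives $g(D\psi, \psi') = -\mathrm{div}(X) + g(\psi, D\psi')$, and when $M$ is compact Stokes kills $\int_{M} \mathrm{div}(X)$, proving formal self-adjointness.

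\textbf{Main obstacle.} The real work is the divergence expansion: three distinct geometric inputs (the nearly parallel identity $\nabla\varphi = *\varphi$, closedness of $TM$ under the $G_{2}$ cross product, and minimality of $M$) each kill one of the three unwanted terms. The cleanest way to make these cancellations visible is to adapt the normal frame to $\nabla^{\perp}$ rather than to the ambient connection, so that the shape operator contribution and the torsion contribution decouple and can be inspected one at a time.
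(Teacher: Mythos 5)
Your argument is correct and follows essentially the same route as the paper: the $1$-form $\alpha(V) = -\varphi(V,\psi,\psi')$ gives exactly the paper's vector field $X$ (defined there by $g(X,v) = g(\psi, v\times\psi')$), and your divergence expansion is the same integration by parts, with the paper packaging the input $\nabla\varphi = *\varphi$ into its Leibniz formula for $\nabla_X(u\times v)$ instead of invoking it directly. The only substantive difference is that the paper cites Gayet for ellipticity while you supply a short self-contained symbol computation (using $TM\times\nu\subset\nu$ and $|v\times\psi|^2=|v|^2|\psi|^2$ for $v\perp\psi$), which is correct.
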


\begin{proof}
The ellipticity of $D$ is shown in \cite{Gayet}. 
For any $\psi, \psi' \in C^{\infty}(M, \nu)$, we compute by 
Definition \ref{def of cross product} and Lemma \ref{diff_cross}
\begin{align*}
g(D \psi, \psi') &= g \left(\sum_{i=1}^{3} e_{i} \times \nabla_{e_{i}} \psi, \psi' \right)\\
&=
- \sum_{i=1}^{3} g(\nabla_{e_{i}} \psi, e_{i} \times \psi')\\
&=
\sum_{i=1}^{3} \left( - e_{i} (g(\psi, e_{i} \times \psi')) + g(\psi, \nabla_{e_{i}} e_{i} \times \psi') \right)
+
g(\psi, D \psi').
\end{align*}
Define the vector field $X \in \mathfrak{X}(M)$ on $M$ by 
$g(X,v)= g(\psi, v \times \psi')$ for $v \in TM$. Then we obtain (\ref{formal adjoint D}). 
\end{proof}

Since $D$ is a twisted Dirac operator, 
there is a close relation between $D^{2}$ and the Laplacian.  
Choose a local orthonormal frame $\{ e_{1}, e_{2}, e_{3} \}$ of $TM$ and 
define the operators 
$\nabla^{\perp *} \nabla^{\perp}, \mathcal{R}, \mathcal{A} : 
C^{\infty}(M, \nu) \rightarrow C^{\infty}(M, \nu)$ by 
\begin{align*}
\nabla^{\perp *} \nabla^{\perp} 
= \sum_{i = 1}^{3} (-\nabla_{e_{i}}^{\perp} \nabla_{e_{i}}^{\perp} + \nabla_{\nabla_{e_{i}}^{\top} e_{i}}^{\perp}), \qquad
\mathcal{R} = \pi_{\mathcal{V}} (\sum_{i = 1}^{3} R(e_{i}, \cdot) e_{i}),  
\mathcal{A} = {}^t\! A \circ A, 
\end{align*}
where 
$\nabla^{\perp}$ is the connection on the normal bundle $\nu$ induced 
by the Levi-Civita connection $\nabla$ of $(Y, g)$,  
$\nabla^{\top}$ is the orthogonal projection of $\nabla$ to $TM$, 
$R$ is the curvature tensor of $g$, 
$\pi_{\mathcal{V}}$ is the orthogonal projection to $\nu$, 
$A : \nu \ni \psi \mapsto (u \mapsto - \nabla_{u}^{\top} \psi) \in 
SM := \{ T : TM \rightarrow TM ; {}^t\! T = T \} $
(the second fundamental form), 
and 
${}^t\! A$ is the transpose of $A$.

\begin{prop} \label{Dirac_Lap}
Let $(Y, \varphi, g)$ be a nearly parallel $G_{2}$-manifold and 
$M^{3} \subset Y$ be an associative submanifold.  
Then we have 
\begin{align*}
(D - 3 id_{\nu}) (D + id_{\nu}) = \nabla^{\perp *} \nabla^{\perp} + \mathcal{R} - \mathcal{A}. 
\end{align*}
\end{prop}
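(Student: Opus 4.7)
The plan is to expand $D^{2}\psi$ directly in a suitably chosen frame and then collect the resulting terms into four groups: the rough Laplacian, the ambient curvature contribution, the shape‐operator contribution, and a torsion contribution which must equal $2D\psi+3\psi$ so as to realize the factorization $(D-3\,\mathrm{id})(D+\mathrm{id})=D^{2}-2D-3$.

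Fix $p \in M$ and choose a local orthonormal frame $\{e_{1},e_{2},e_{3}\}$ of $TM$ with $\nabla^{\top}_{e_{i}}e_{j}(p)=0$. Two input identities drive the computation. First, differentiating $g(u\times v,w)=\varphi(u,v,w)$ and using the nearly parallel identity $\nabla\varphi=*\varphi$ from Lemma \ref{cha_NP}, together with the definition of $\chi$, gives after a short antisymmetry calculation
\[
\nabla_{X}(u\times v)=(\nabla_{X}u)\times v+u\times(\nabla_{X}v)-\chi(X,u,v).
\]
Second, rearranging (\ref{chi_cross}) yields the associator identity
\[
u\times(v\times w)=\chi(u,v,w)-g(u,v)\,w+g(u,w)\,v.
\]
These are combined with the Gauss--Weingarten formulas $\nabla_{X}Y=\nabla^{\top}_{X}Y+h(X,Y)$ for $X,Y\in TM$ and $\nabla_{X}\xi=\nabla^{\perp}_{X}\xi-A_{\xi}(X)$ for $\xi\in\nu$, and with three pointwise facts forced by the associative condition: $\chi|_{TM}=0$, the splittings $TM\times TM\subset TM$, $TM\times\nu\subset\nu$, $\nu\times\nu\subset TM$, and $\sum_{i}e_{i}\times(e_{i}\times\eta)=-3\eta$ for $\eta\in\nu$.

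Setting $\xi_{j}:=\nabla^{\perp}_{e_{j}}\psi$, I first compute $\nabla^{\perp}_{e_{i}}(e_{j}\times\xi_{j})$: expanding with the cross product derivative identity and projecting to $\nu$, the tangential pieces $h(e_{i},e_{j})\times\xi_{j}$ and $e_{j}\times A_{\xi_{j}}(e_{i})$ drop out, leaving $e_{j}\times\nabla^{\perp}_{e_{i}}\xi_{j}-\pi_{\nu}\chi(e_{i},e_{j},\xi_{j})$. Applying $e_{i}\times(\cdot)$ and invoking the associator identity, the $i=j$ diagonal collapses to $-\sum_{i}\nabla^{\perp}_{e_{i}}\nabla^{\perp}_{e_{i}}\psi$, which at $p$ is exactly $-\nabla^{\perp *}\nabla^{\perp}\psi$. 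The off-diagonal second derivatives symmetrize into $-\tfrac12\sum_{i\neq j}e_{i}\times(e_{j}\times R^{\perp}(e_{i},e_{j})\psi)$ together with a purely symmetric $\chi$-piece; the Gauss--Ricci equations convert $R^{\perp}$ plus the Weingarten contribution coming from $-A_{\xi_{j}}(e_{i})$ into precisely $\mathcal{R}\psi-\mathcal{A}\psi$.

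The remaining $\chi$-corrections, arising both from the non-parallelism of $\times$ and from the associator identity, must now be assembled into $2D\psi+3\psi$. This is the main obstacle: one has to check that the two $\chi$-contributions, restricted to pairs where one argument is tangent and one is normal and evaluated by means of $\nabla\varphi=*\varphi$, combine cleanly with no leftover tangential piece, and that the numerical coefficients come out to be exactly $2$ and $3$. Here the fact that $\chi$ vanishes on $TM$ eliminates spurious terms, and the identity $\sum_{i}e_{i}\times(e_{i}\times\psi)=-3\psi$ produces the constant $3$; the first-order torsion piece, which is linear in $\nabla^{\perp}\psi$ and manifestly of the form $\sum_{i}e_{i}\times\nabla^{\perp}_{e_{i}}\psi$ after reindexing, yields the coefficient $2$ in front of $D\psi$. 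Rearranging $D^{2}\psi-2D\psi-3\psi=\nabla^{\perp *}\nabla^{\perp}\psi+\mathcal{R}\psi-\mathcal{A}\psi$ then gives the asserted factorization.
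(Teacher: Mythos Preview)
Your overall strategy—expand $D^{2}\psi$ using the product rule for $\times$ with the $\chi$–correction, apply the associator identity, and sort the pieces into rough Laplacian, curvature, shape operator, and torsion—is exactly the paper's. But the bookkeeping in the last paragraph contains a genuine error: the zeroth-order term $3\psi$ does \emph{not} come from the $\chi$–corrections.

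Run the first-order $\chi$–piece honestly. With $\xi_{j}=\nabla^{\perp}_{e_{j}}\psi$ one has $\chi(e_{i},e_{j},\xi_{j})=e_{i}\times(e_{j}\times\xi_{j})+\delta_{ij}\xi_{j}$, hence
\[
-\sum_{i,j}e_{i}\times\chi(e_{i},e_{j},\xi_{j})
=-\sum_{j}\Bigl(\sum_{i}e_{i}\times\bigl(e_{i}\times(e_{j}\times\xi_{j})\bigr)\Bigr)-\sum_{i}e_{i}\times\xi_{i}
=3D\psi-D\psi=2D\psi,
\]
using $\sum_{i}e_{i}\times(e_{i}\times\eta)=-3\eta$ for $\eta\in\nu$. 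There is no zeroth-order remainder here; the identity you cite is consumed in producing the coefficient $2$, not $3$.

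The term $3\psi$ actually arises in the curvature step, and this is precisely where your sketch is too optimistic. After antisymmetrizing and invoking the Ricci equation you land on $-\tfrac{1}{2}\sum_{i,j}(e_{i}\times e_{j})\times R(e_{i},e_{j})\psi$ with the \emph{ambient} $R$. This contraction is not $\mathcal{R}\psi=\pi_{\nu}\sum_{i}R(e_{i},\psi)e_{i}$; to convert one to the other you need the Bianchi identity together with a formula for $R(w,z)(u\times v)$. In a nearly parallel $G_{2}$-manifold that formula is
\[
R(w,z)(u\times v)=(R(w,z)u)\times v+u\times(R(w,z)v)+\varphi(z,u,v)w-\varphi(w,u,v)z+\text{(metric terms)},
\]
obtained by differentiating the product rule once more and using $\nabla_{X}*\varphi=-g(X,\cdot)\wedge\varphi$. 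The extra $\varphi$–terms, contracted over an orthonormal basis of an associative $3$-plane (so that $\varphi(e_{1},e_{2},e_{3})=1$), are exactly what produce $+3\psi$. In the torsion-free case these extra terms vanish and one gets $D^{2}=\nabla^{\perp*}\nabla^{\perp}+\mathcal{R}-\mathcal{A}$ with no shift, so the $3\psi$ is a curvature-of-the-nearly-parallel-structure effect, not a pointwise algebraic one. Your argument is missing this second differentiated identity (the paper's Lemma~\ref{diff_cross}); once you insert it into the curvature step the computation closes.
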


The proof is given in the appendix. 
The right hand side $\mathcal{J}:= \nabla^{\perp *} \nabla^{\perp} + \mathcal{R} - \mathcal{A}$ 
is called a Jacobi operator, and $\ker \mathcal{J}$ is known to be 
the space of infinitesimal minimal deformations (\cite{Simons}). 
By this formula, 
$D \psi = -\psi$ implies $\mathcal{J} \psi = 0$, 
which ensures that associative deformations are minimal deformations.

\begin{rem}
When $M$ is compact, 
the space of all infinitesimal minimal and non-associative deformations of $M$ 
is identified with $\{ \psi \in C^{\infty}(M, \nu) ; D \psi = 3 \psi \}$. 
\end{rem}

\begin{proof}
Since $D$ is elliptic self-adjoint, 
there is an orthonormal basis $\{ \psi_{i} \}_{i=1}^{\infty} \subset C^{\infty}(M, \nu)$ 
of  $L^{2}(M, \nu)$ 
consisting of eigensections of $D$. 
The set of eigenvalues is discrete and 
the each eigenspace is finite dimensional.  
We may assume that $D \psi_{i} = \lambda_{i} \psi_{i}$ for 
$\lambda_{i} \in \mathbb{R}$. 
For any $\psi = \sum_{i=1}^{\infty} 
(\psi, \psi_{i})_{L^{2}} \psi_{i} \in C^{\infty}(M, \nu)$ where 
$(\cdot, \cdot)_{L^{2}}$ is the $L^{2}$ inner product, 
we have 
\begin{align*}
(D - 3 id_{\nu}) (D + id_{\nu}) \psi 
=& \sum_{i=1}^{\infty} ((D - 3 id_{\nu}) (D + id_{\nu}) \psi , \psi_{i})_{L_{2}} \psi_{i}\\
=& \sum_{i=1}^{\infty} (\psi , (D - 3 id_{\nu}) (D + id_{\nu}) \psi_{i})_{L_{2}} \psi_{i}\\
=& \sum_{i=1}^{\infty} (\lambda_{i}-3) (\lambda_{i}+1) (\psi, \psi_{i})_{L^{2}} \psi_{i}.
\end{align*} 
Since $\{ \psi_{i} \}_{i=1}^{\infty}$ is an orthonormal basis, 
we see that 
$(D - 3 id_{\nu}) (D + id_{\nu}) \psi = 0$ 
if and only if $(\lambda_{i}-3) (\lambda_{i}+1) (\psi, \psi_{i})_{L^{2}} = 0$ for each $i$.
Thus elements of $\ker (D - 3 id_{\nu}) (D + id_{\nu})$ are 
linear combinations of elements of $\ker (D - 3 id_{\nu})$ and $\ker(D + id_{\nu})$.
\end{proof}

\section{Associative deformations of special Legendrian submanifolds in 
Sasaki-Einstein manifolds}

Let $(S, g)$ be a Sasaki-Einstein 7-manifold with 
a Calabi-Yau structure $(\overline{g}, J, \omega, \Omega)$ on $C(S)$. 
Let $M \subset S$ be a special Legendrian submanifold. 
By Lemmas \ref{NPstr_onSE} and \ref{sLeg_cpx_asso}, 
$(S, \varphi, g)$ admits a nearly parallel $G_{2}$-structure 
for some $\varphi \in \Omega^{3}(S)$ and $M$ is associative. 
We study the infinitesimal associative deformations of $M$.

\subsection{Associative deformations of special Legendrians}

Let $\nu \rightarrow M$ be the normal bundle of $M$. 
First, we rewrite the operator $D : C^{\infty}(M, \nu) \rightarrow C^{\infty}(M, \nu)$
in Proposition \ref{deform_asso_NP}
in the special Legendrian case. 
Since $M$ is special Legendrian, there exists canonical isomorphism
$TM \oplus \mathbb{R} \ni (v, x) \mapsto Jv + x J(r \frac{\partial}{\partial r})|_{r = 1} \in \nu$. 
Via this identification, we obtain the following.

\begin{prop} \label{deform_asso_SE}
The corresponding operator $D : \mathfrak{X}(M) \oplus C^{\infty}(M) \rightarrow \mathfrak{X}(M) \oplus C^{\infty}(M)$ is described as 
\begin{align*}
D(v, f) = \left( - {\rm grad}(f) + {\rm rot}(v) + v, {\rm div}(v) + 3f \right), 
\end{align*}
where 
$g_{M}({\rm grad}(f),  \cdot) = df$, ${\rm div}(v) = {\rm tr} (\nabla^{\top} v)$, and 
${\rm rot}(v) = \sum_{i = 1}^{3} e_{i} \times \nabla_{e_{i}}^{\top} v$. 
Here, 
we denote by $g_{M}$ the metric on $M$ induced from $(Y, g)$, 
by $\nabla^{\top}$ the Levi-Civita connection of $g_{M}$, 
by $\{ e_{i} \}_{i = 1, 2, 3}$ the local orthonormal frame of $TM$, 
and 
$g_{M}(v \times w, \cdot) = \varphi(v, w, \cdot) = {\rm vol}_{M}(v, w, \cdot)$ $(v, w \in TM)$. 
\end{prop}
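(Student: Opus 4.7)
The plan is to unwind the definition $D\psi = \sum_i e_i \times \nabla^{\perp}_{e_i}\psi$ under the identification $\psi = Jv + f\xi$, where $\xi := J(r\partial/\partial r)|_{r=1}$ is the Reeb vector field. So I need two ingredients: (i) explicit formulas for the Levi-Civita derivatives $\nabla^S_{e_i}\xi$ and $\nabla^S_{e_i}(Jv)$ on the Sasakian base, and (ii) explicit formulas for the $G_2$ cross products $e_i \times \xi$ and $e_i \times (Jw)$ in terms of the Sasakian data and the $3$-dimensional cross product on $M$. Once these are in hand, the computation collapses to a short algebraic manipulation.

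For (i), I would exploit that the cone $(C(S),\bar g,J)$ is Kähler, so $\bar\nabla J = 0$, and that the warped product structure gives $\bar\nabla_X\partial_r = \frac{1}{r}X$ and $\bar\nabla_X Y = \nabla^S_X Y - g(X,Y)\partial_r$ at $r=1$ for $X,Y$ horizontal. Applying $J$ and using $J\partial_r=\xi$ yields, on the contact distribution $\mathcal{D}$, the identities
\begin{equation*}
\nabla^S_X \xi = \Phi X, \qquad \nabla^S_X(Jv) = J(\nabla^S_X v) - g(X,v)\xi,
\end{equation*}
(or the analogous formulas with the opposite sign, depending on one's Sasakian convention). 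For (ii), I would work in the local model of Definition~\ref{def on R7} identified with $\mathbb{C}^4$ so that $\xi \leftrightarrow \partial_{x_1}$, $e_i \leftrightarrow \partial_{x_{2i}}$, and $Je_i \leftrightarrow \partial_{x_{2i+1}}$, and then read off directly from $\varphi_0$ that $e_i \times \xi = -Je_i$, that $e_i \times e_j$ is the usual oriented cross product on $M$, and that the key relation
\begin{equation*}
e_i \times (Jw) = g(e_i,w)\xi - J(e_i \times w) \qquad (w \in TM)
\end{equation*}
holds. The Legendrian condition $TM \perp J(TM)$ and $TM \perp \xi$ guarantees that $\nu = J(TM) \oplus \mathbb{R}\xi$, and a short check shows the second fundamental form has no $\xi$-component, so the normal projection of $\nabla^S_{e_i}(Jv + f\xi)$ is simply $J(\nabla^\top_{e_i}v - fe_i) + (g(e_i,v) + e_i f)\xi$.

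Substituting into $D\psi = \sum_i e_i \times \nabla^\perp_{e_i}\psi$ and applying the cross product identities, the $\xi$-component assembles into $(\operatorname{div}(v) + 3f)\xi$ using $\sum_i g(e_i,\nabla^\top_{e_i}v) = \operatorname{div}(v)$ and $\sum_i e_i \times (Je_i) = 3\xi$, while the $J(TM)$-component assembles into $J(\operatorname{rot}(v) + v - \operatorname{grad}f)$ using $\sum_i g(e_i,v)e_i = v$ and the definition of $\operatorname{rot}$. The main obstacle I expect is bookkeeping signs: the sign of $\nabla^S_X \xi$, the orientation of the tangential cross product, and the consistency between the Sasakian $J$ and the $G_2$ cross product $\times$ all interact, and a single convention error propagates through every term. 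To keep things honest, I would verify the cross-product identities against the explicit $\varphi_0$ in one concrete basis before assembling the final formula, and cross-check the result by noting that $D(0, 1) = (0, 3)$ is consistent with Proposition~\ref{Dirac_Lap}, since $\xi$ should be a minimal but not associative deformation.
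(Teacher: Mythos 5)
Your strategy is exactly the paper's: trivialize $\nu$ by $\{Je_{1},Je_{2},Je_{3},\xi\}$, compute the normal connection coefficients and the products $e_{i}\times Je_{j}$, $e_{i}\times\xi$ in that frame, and assemble. The paper simply records the resulting tables ($\tilde{\Gamma}_{ij}^{k}=\Gamma_{ij}^{k}$, $\tilde{\Gamma}_{ij}^{4}=-\delta_{ij}$, $\tilde{\Gamma}_{i4}^{k}=\delta_{ik}$, and the $3\times 4$ matrix of $e_{i}\times\eta_{a}$) as ``a direct computation,'' whereas you derive them from $\overline{\nabla}J=0$ plus the warped-product formulas and from $\varphi_{0}$ in the flat model; that derivation is a legitimate and arguably cleaner route to the same tables, and your sanity check $D(0,1)=(0,3)$ against Proposition \ref{Dirac_Lap} is a good one.

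The genuine problem is that the signs you commit to do not reproduce the stated formula, and the discrepancy is not merely cosmetic. First, your normal projection $J(\nabla^{\top}_{e_i}v-fe_i)+(g(e_i,v)+e_if)\xi$ has the opposite signs on the $fe_i$ and $g(e_i,v)$ terms from what the Kähler-cone computation (and the paper's table $\tilde{\Gamma}_{ij}^{4}=-\delta_{ij}$, $\tilde{\Gamma}_{i4}^{k}=\delta_{ik}$) gives, namely $J(\nabla^{\top}_{e_i}v+fe_i)+(e_if-g(e_i,v))\xi$; you hedge on this, but with your signs the assembly produces $\mathrm{div}(v)-3f$ and $-v$ rather than $\mathrm{div}(v)+3f$ and $+v$. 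Second, and more seriously, your cross-product identity $e_i\times(Jw)=g(e_i,w)\xi-J(e_i\times w)$ forces the term $\sum_i e_i\times J(\nabla^{\top}_{e_i}v)$ to contribute $-J(\mathrm{rot}(v))$, so it is internally inconsistent with your claimed conclusion that the $J(TM)$-component assembles into $J(\mathrm{rot}(v)+v-\mathrm{grad}f)$; the paper's table amounts to $e_i\times(Je_j)=J(e_i\times e_j)+\delta_{ij}\xi$, i.e.\ the opposite relative sign, and that sign is precisely what produces $+\mathrm{rot}(v)$. This sign cannot be waved away by convention: both cross products in $D$ and in $\mathrm{rot}$ are built from the same $\varphi$ and the same orientation of $M$ (fixed by $\varphi|_{TM}=\mathrm{vol}_M$), and the downstream Corollary \ref{diff_asso_sL} counts the eigenspace $\{\mathrm{rot}(v)=-2v\}$, whose dimension is not invariant under $\mathrm{rot}\mapsto-\mathrm{rot}$. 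Note also that the consistency check you do propose, and even the stronger one that $(\tfrac12\mathrm{grad}f,f)$ with $\Delta_{+}f=8f$ must lie in the $(-1)$-eigenspace (Lemma \ref{deform_sLeg}), are blind to this particular sign because $\mathrm{rot}(\mathrm{grad}f)=0$. So to complete the proof you must actually carry out the verification of the table $e_i\times\eta_a$ in a frame whose orientation is the one induced by the calibration, and make it consistent with the connection signs; as written, substituting your stated identities into your stated assembly yields $\bigl(-\mathrm{grad}(f)-\mathrm{rot}(v)-v,\ \mathrm{div}(v)-3f\bigr)$, not the formula of the Proposition.
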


We first give all the statements in this section and then prove them.

\begin{cor} \label{diff_asso_sL}
We have  
\begin{align*}
  &\dim \{  \mbox{the infinitesimal associative deformations of } M \} \\
=&
\dim \{ f \in C^{\infty}(M); \Delta_{+}f = 8f \} 
+
\dim \{ v \in \mathfrak{X}(M); {\rm rot}(v) = -2v \}. 
\end{align*}
\end{cor}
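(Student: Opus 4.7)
The plan is to substitute the explicit formula for $D$ from Proposition \ref{deform_asso_SE} into the eigenvalue equation $D(v,f)=-(v,f)$ and to exhibit an explicit direct-sum decomposition of its solution space. Unpacking the equation gives the coupled system
\begin{align*}
{\rm rot}(v) + 2v &= {\rm grad}(f),\\
{\rm div}(v) + 4f &= 0.
\end{align*}
Since $M$ is an oriented Riemannian 3-manifold and the cross product of two vectors tangent to the associative submanifold remains tangent to $M$ (cf.\ the remark after Definition \ref{def of cross product}), the operator ${\rm rot}$ coincides with the usual curl on $M$, so the standard three-dimensional identities ${\rm div}\circ{\rm rot}=0$ and ${\rm rot}\circ{\rm grad}=0$ apply.

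Applying ${\rm div}$ to the first equation and using ${\rm div}({\rm grad}(f))=-\Delta_+ f$ together with the second equation yields the necessary condition $\Delta_+ f = 8f$. With this in hand, for any solution $(v,f)$ I would form the splitting
\[
(v,f) = \bigl(\tfrac{1}{2}{\rm grad}(f),\, f\bigr) + (w,\,0),\qquad w := v - \tfrac{1}{2}{\rm grad}(f).
\]
The first summand lies in the $-1$-eigenspace of $D$ precisely because $\Delta_+ f = 8f$ and ${\rm rot}({\rm grad}(f))=0$; subtracting, the pair $(w,0)$ satisfies the system if and only if ${\rm rot}(w)=-2w$, in which case ${\rm div}(w)=0$ is automatic from ${\rm div}\circ{\rm rot}=0$. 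Conversely, any such pair reassembles to a solution, so the assignment $(v,f)\mapsto (w,f)$ gives a linear isomorphism
\[
\{(v,f) \in \mathfrak{X}(M)\oplus C^\infty(M): D(v,f)=-(v,f)\} \;\cong\; \{v\in\mathfrak{X}(M):{\rm rot}(v)=-2v\} \oplus \{f\in C^\infty(M):\Delta_+ f = 8f\},
\]
from which the claimed dimension identity follows.

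The main routine work is verifying the three-dimensional vector calculus identities in the Riemannian setting and carefully tracking the sign conventions for $\Delta_+$, ${\rm div}$, ${\rm grad}$, and ${\rm rot}$ as induced from the $G_2$-cross product on $M$; no deeper geometric input beyond Propositions \ref{deform_asso_NP} and \ref{deform_asso_SE} is required.
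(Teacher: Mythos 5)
Your proposal is correct and follows essentially the same route as the paper: reduce $D(v,f)=-(v,f)$ to the system ${\rm rot}(v)+2v={\rm grad}(f)$, ${\rm div}(v)=-4f$, take the divergence to force $\Delta_+f=8f$, and observe that $(\tfrac{1}{2}{\rm grad}(f),f)$ is a particular solution so that the residual part $(w,0)$ satisfies ${\rm rot}(w)=-2w$. The only difference is presentational — you spell out the direct-sum isomorphism that the paper leaves implicit — and the vector-calculus identities you invoke are exactly those of Lemma \ref{eq_vec_an}.
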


\begin{rem}
From Lemma \ref{deform_sLeg}, 
$\dim \{ v \in \mathfrak{X}(M); {\rm rot}(v) = -2v \}$ 
gives the dimension of 
infinitesimal associative and non-special Legendrian deformations. 
\end{rem}

We have the same equations as in the vector analysis.

\begin{lem} \label{eq_vec_an}
For any $f \in C^{\infty}(M)$ and $v \in \mathfrak{X}(M)$, we have 
\begin{align*}
{\rm rot} ({\rm grad} (f)) &= 0, \qquad
{\rm div} ({\rm rot} (v))  = 0, \\
{\rm rot}({\rm rot}(v))    &= \nabla^{\top *} \nabla^{\top} v 
                                  + {\rm grad} ({\rm div}(v)) + \sum_{i = 1}^{3} R(v, e_{i}) e_{i}, 
\end{align*}
where 
$\{ e_{i} \}_{i = 1, 2, 3}$ is the local orthonormal frame of $TM$, 
$R$ is the curvature tensor, 
and 
$\nabla^{\top *} \nabla^{\top} 
= \sum_{i = 1}^{3} (-\nabla_{e_{i}}^{\top} \nabla_{e_{i}}^{\top} + \nabla_{\nabla_{e_{i}}^{\top} e_{i}}^{\top})$ 
is the rough Laplacian. 
\end{lem}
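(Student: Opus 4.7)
The plan is to treat Lemma \ref{eq_vec_an} as the standard vector-calculus identities on an oriented Riemannian 3-manifold $(M, g_M)$ and to verify them by pointwise computation in a normal frame. At each $p \in M$ I would choose an orthonormal frame $\{e_1, e_2, e_3\}$ with $\nabla^\top_{e_i} e_j|_p = 0$, so that at $p$ all Christoffel contributions drop out and $\nabla^{\top*}\nabla^\top v|_p = -\sum_i \nabla^\top_{e_i}\nabla^\top_{e_i} v|_p$. The algebraic inputs are the total antisymmetry of $g_M(u \times w, z) = \mathrm{vol}_M(u, w, z)$, together with the triple-product identity $a \times (b \times c) = g_M(a,c)\,b - g_M(a,b)\,c$, valid on any oriented 3-dimensional inner product space.

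For the first identity I would pair $\mathrm{rot}(\mathrm{grad} f)$ with an arbitrary $e_k$, write the result as $\sum_{i,j}\mathrm{Hess}(f)(e_i, e_j)\,\mathrm{vol}_M(e_i, e_j, e_k)$, and observe that this vanishes because the Hessian is symmetric in $(i,j)$ while the volume form is antisymmetric. For the second identity, in the normal frame at $p$ one gets $\mathrm{div}(\mathrm{rot}(v))|_p = \sum_{i,k}\mathrm{vol}_M(e_i,\nabla^\top_{e_k}\nabla^\top_{e_i}v, e_k)$; decomposing $\nabla^\top_{e_k}\nabla^\top_{e_i}$ into its $(i,k)$-symmetric part, which cancels against the antisymmetry of $\mathrm{vol}_M(e_i,\cdot,e_k)$ in $(i,k)$, and its antisymmetric part, which equals $\tfrac12 R(e_k, e_i)$, reduces the claim to $\sum_{i,k}\mathrm{vol}_M(e_i, R(e_k, e_i)v, e_k) = 0$, which follows from the first Bianchi identity.

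For the third and main identity, I would expand
\[
\mathrm{rot}(\mathrm{rot}(v))|_p = \sum_{i,k} e_k \times \bigl(e_i \times \nabla^\top_{e_k}\nabla^\top_{e_i} v\bigr)
\]
and apply the triple-product identity to rewrite $e_k \times (e_i \times W) = g_M(e_k, W)\,e_i - \delta_{ik}\,W$. The $\delta_{ik}$ contribution produces $-\sum_i \nabla^\top_{e_i}\nabla^\top_{e_i} v = \nabla^{\top*}\nabla^\top v$, and the remaining sum $\sum_{i,k} g_M(e_k, \nabla^\top_{e_k}\nabla^\top_{e_i}v)\,e_i$ I would compare with $\mathrm{grad}(\mathrm{div}(v))|_p = \sum_{i,k} g_M(e_k, \nabla^\top_{e_i}\nabla^\top_{e_k}v)\,e_i$. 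Their difference is the commutator sum $\sum_{i,k} g_M(e_k, R(e_k, e_i)v)\,e_i$, which by the pair-symmetry $R(e_k, e_i, v, e_k) = R(v, e_k, e_k, e_i)$ of the Riemann tensor collapses to $\sum_i R(v, e_i) e_i$.

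The main obstacle is bookkeeping rather than anything conceptual: in the third identity one must carefully track sign conventions for $R$, for $\nabla^{\top*}\nabla^\top$, and for the triple product, and verify that the commutator term produces $+\sum_i R(v, e_i) e_i$ with precisely the sign stated in the lemma. As a sanity check I would specialize to flat $\mathbb{R}^3$, where the curvature vanishes and the third identity recovers the classical $\mathrm{curl}\,\mathrm{curl} = \mathrm{grad}\,\mathrm{div} - \Delta$.
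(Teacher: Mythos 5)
Your proposal is correct and follows essentially the same route as the paper: expand ${\rm rot}({\rm rot}(v))$ with the product rule for the cross product (which is $\nabla^{\top}$-parallel on $TM$ since it coincides with the intrinsic one given by $g_{M}$ and ${\rm vol}_{M}$ on an associative submanifold), apply the triple-product identity $u \times (v \times w) = -g(u,v)w + g(u,w)v$, and compare the remaining second-derivative term with ${\rm grad}({\rm div}(v))$ to produce the curvature term via the symmetries of $R$. The only difference is that you work in a normal frame at a point whereas the paper carries the Christoffel symbols through using $\Gamma^{j}_{ii} = -\Gamma^{i}_{ij}$; this is a cosmetic simplification, and your checks of the first two identities (Hessian symmetry and the first Bianchi identity) are also fine.
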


This lemma implies the following, which corresponds to Proposition \ref{Dirac_Lap}. 

\begin{cor} \label{D2_SE}
\begin{align*}
D^{2}(v, f) = \left ( -4 {\rm grad}(f) + v + {\rm rot}(v) + \nabla^{\top *} \nabla^{\top} v 
                    + \sum_{i = 1}^{3} R(v, e_{i}) e_{i}, \ 
                   \Delta_{+} f + 4 {\rm div}(v) + 9f \right). 
\end{align*}
\end{cor}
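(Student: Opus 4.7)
The plan is a direct two-step computation: apply the explicit formula for $D$ from Proposition~\ref{deform_asso_SE} once, apply it again to the resulting pair, and then simplify with the vector-analysis identities of Lemma~\ref{eq_vec_an}. Since the whole geometric content has already been packaged into those two results, the corollary becomes pure bookkeeping.

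First I would set
\begin{align*}
(V, F) := D(v,f) = \bigl(-\mathrm{grad}(f) + \mathrm{rot}(v) + v,\; \mathrm{div}(v) + 3f\bigr),
\end{align*}
so that $D^2(v,f) = \bigl(-\mathrm{grad}(F) + \mathrm{rot}(V) + V,\; \mathrm{div}(V) + 3F\bigr)$. For the vector component, $\mathrm{grad}(F) = \mathrm{grad}(\mathrm{div}(v)) + 3\,\mathrm{grad}(f)$ is immediate, and $\mathrm{rot}(V)$ breaks up by linearity into three pieces. The piece $\mathrm{rot}(\mathrm{grad}(f))$ vanishes by the first identity of Lemma~\ref{eq_vec_an}, and $\mathrm{rot}(\mathrm{rot}(v))$ is replaced by $\nabla^{\top*}\nabla^{\top} v + \mathrm{grad}(\mathrm{div}(v)) + \sum_i R(v, e_i) e_i$ via the third identity. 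The two copies of $\mathrm{grad}(\mathrm{div}(v))$ then cancel, and collecting the remaining coefficients of $\mathrm{grad}(f)$, $v$, and $\mathrm{rot}(v)$ yields the stated first component.

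For the scalar component I would expand
\begin{align*}
\mathrm{div}(V) + 3F = -\mathrm{div}(\mathrm{grad}(f)) + \mathrm{div}(\mathrm{rot}(v)) + \mathrm{div}(v) + 3\,\mathrm{div}(v) + 9f.
\end{align*}
The middle term vanishes by the second identity of Lemma~\ref{eq_vec_an}, and the Hodge-Laplacian sign convention $\Delta_{+} f = -\mathrm{div}(\mathrm{grad}(f))$ on functions produces the claimed $\Delta_{+} f + 4\,\mathrm{div}(v) + 9f$.

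I do not anticipate a serious obstacle here, as the argument is routine algebraic manipulation. The only care needed is in tracking sign conventions — specifically, the Hodge versus rough Laplacian on functions and the orientation of the cross product used to define $\mathrm{rot}$ — so that the coefficients of $\mathrm{grad}(f)$, $\mathrm{rot}(v)$, $v$, $\mathrm{div}(v)$ and $f$ in the final expression match those asserted in the statement.
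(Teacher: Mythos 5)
Your strategy is exactly the intended one --- the paper omits the proof as ``straightforward,'' meaning precisely the composition of Proposition~\ref{deform_asso_SE} with itself followed by the identities of Lemma~\ref{eq_vec_an} --- and your handling of the scalar component, of the $-4\,\mathrm{grad}(f)$ term, and of the cancellation of the two copies of $\mathrm{grad}(\mathrm{div}(v))$ is correct. The problem is the final step, where you assert that ``collecting the remaining coefficients \ldots yields the stated first component'' without exhibiting them. If you actually collect them, the coefficient of $\mathrm{rot}(v)$ comes out as $2$, not $1$: one copy comes from $\mathrm{rot}(V)$ applied to the summand $v$ of $V=-\mathrm{grad}(f)+\mathrm{rot}(v)+v$, and a second copy from the bare $+V$ term in $D(V,F)$. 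What your (correct) computation proves is
\begin{align*}
D^{2}(v,f)=\Bigl(-4\,\mathrm{grad}(f)+v+2\,\mathrm{rot}(v)+\nabla^{\top *}\nabla^{\top}v+\sum_{i=1}^{3}R(v,e_{i})e_{i},\ \Delta_{+}f+4\,\mathrm{div}(v)+9f\Bigr),
\end{align*}
so the statement as printed is not what the method delivers; the coefficient $1$ on $\mathrm{rot}(v)$ in the corollary is a typo, and your write-up papers over the discrepancy rather than resolving it.

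That the correct coefficient is $2$ can be confirmed independently. Take $f=0$ and $v$ with $\mathrm{rot}(v)=\lambda v$, $\lambda\neq 0$ (so $\mathrm{div}(v)=0$, since $\mathrm{div}\circ\mathrm{rot}=0$); then $D(v,0)=((\lambda+1)v,0)$ and hence $D^{2}(v,0)=((\lambda+1)^{2}v,0)=((\lambda^{2}+2\lambda+1)v,0)$, whereas the printed formula, combined with the third identity of Lemma~\ref{eq_vec_an}, would give $(\lambda^{2}+\lambda+1)v$ in the first slot. Alternatively, in $(D-3\,id)(D+id)=D^{2}-2D-3\,id$ of Proposition~\ref{Dirac_Lap}, the term $-2D$ contributes $-2\,\mathrm{rot}(v)$, and this must cancel against the $\mathrm{rot}$-term of $D^{2}$ because $\nabla^{\perp *}\nabla^{\perp}+\mathcal{R}-\mathcal{A}$ contains no $\mathrm{rot}$; this again forces the coefficient $2$. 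So the fix is not in your method but in your conclusion: state and prove the corrected identity (or flag the misprint explicitly) instead of claiming the coefficients match.
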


Now, we give proofs.

\begin{proof}[Proof of Proposition \ref{deform_asso_SE}]

Let $\{ e_{1}, e_{2}, e_{3}\} \subset TM$ be a local oriented orthonormal frame.  
Set $e_{4} := r \frac{\partial}{\partial r} |_{r = 1} $ and 
$\eta_{j} := J(e_{j})$ for $1 \leq j \leq 4$. 
Then $\{ \eta_{j} \}_{1 \leq j \leq 4}$ is a local oriented orthonormal frame of $\nu$. 
Let 
$\{e^{1}, \cdots, e^{4}, \eta^{1}, \cdots, \eta^{4} \}$ be the dual coframe, then 
we have 
\begin{align*}
\omega = \sum_{i = 1}^{4} e^{i} \wedge \eta^{i}, \qquad
\Omega = (e^{1} + i \eta^{1}) \wedge \cdots \wedge (e^{4} + i \eta^{4}). 
\end{align*}

Denoting 
$
\nabla_{e_{i}}^{\top} e_{j} = \sum_{k = 1}^{3} \Gamma_{i j}^{k} e_{k}
$
and 
$
\nabla_{e_{i}}^{\perp} \eta_{a} = \sum_{b = 1}^{4} \tilde{\Gamma}_{i a}^{b} \eta_{b}  
$
for 
$
1 \leq i, j \leq 3
$ 
and 
$1 \leq a \leq 4$, 
we see the following by a direct computation. 

\begin{lem}
We have 
\begin{align*}
(e_{i} \times \eta_{a}) 
= 
\left( 
\begin{array}{cccc}
\eta_{4}  & \eta_{3}   & -\eta_{2} & -\eta_{1} \\
-\eta_{3} & \eta_{4}  & \eta_{1}   & -\eta_{2} \\
\eta_{2}  & -\eta_{1} & \eta_{4}   & -\eta_{3} 
\end{array} 
\right), \\
\tilde{\Gamma}_{i j}^{k} = \Gamma_{i j}^{k}, \qquad
\tilde{\Gamma}_{i j}^{4} = - \delta_{i j}, \qquad
\tilde{\Gamma}_{i 4}^{k} = \delta_{i k}, \qquad
\tilde{\Gamma}_{i 4}^{4} = 0, 
\end{align*}
for $1 \leq i, j, k \leq 3, 1 \leq a \leq 4$.
\end{lem}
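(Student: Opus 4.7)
The lemma has two parts — the cross-product table and the Christoffel-symbol identities — which I would handle separately.

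\textbf{Cross products.} By Definition \ref{def of cross product}, $g(e_{i}\times\eta_{a},w)=\varphi(e_{i},\eta_{a},w)$, so I would compute $\varphi(e_{i},\eta_{a},w)$ for $w$ running over the orthonormal basis $\{e_{j},\eta_{b}\}$ of $TS$. Lemma \ref{cha_NP} gives $\Phi = r^{3}dr\wedge\varphi + r^{4}*\varphi$, so at $r=1$ one has $\varphi(X,Y,Z)=\Phi(e_{4},X,Y,Z)$ for $X,Y,Z\in TS$. Substituting (\ref{4-form on cone}) with $\omega=\sum_{k=1}^{4}e^{k}\wedge\eta^{k}$ and $\Omega=\bigwedge_{k=1}^{4}(e^{k}+i\eta^{k})$ reduces each entry to a $4\times 4$ determinant. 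Since $\omega\wedge\omega = 2\sum_{k<l}e^{k}\wedge\eta^{k}\wedge e^{l}\wedge\eta^{l}$ contributes only when the four slot indices split into two $(e_{m},\eta_{m})$ pairs, and since $\mathrm{Re}\,\Omega$ has no term with an odd number of $\eta$'s, a parity check shows that every $w=e_{j}$ evaluation vanishes; the nonzero $w=\eta_{b}$ entries come from $\omega\wedge\omega$ precisely when $\{a,b\}=\{i,4\}$ and from $\mathrm{Re}\,\Omega$ precisely when $\{i,a,b\}\subset\{1,2,3\}$ has three distinct elements. Running through the twelve nonzero entries confirms the table.

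\textbf{Christoffel symbols.} The plan is to compute $\overline{\nabla}_{e_{i}}\eta_{a}$ on $C(S)$ using $\overline{\nabla}J=0$, descend to $\nabla_{e_{i}}\eta_{a}$ via the cone Gauss formula $\overline{\nabla}_{X}Y=\nabla_{X}Y-g(X,Y)\partial_{r}$ for $X,Y\in TS$ at $r=1$, and project to $\nu$. When $a=4$, the identity $\eta_{4}=J(r\partial_{r})$ together with the Euler-vector-field identity $\overline{\nabla}_{X}(r\partial_{r})=X$ yields $\overline{\nabla}_{e_{i}}\eta_{4}=Je_{i}=\eta_{i}\in TS$, with $g(e_{i},\eta_{4})=0$ killing the Gauss correction; thus $\nabla_{e_{i}}\eta_{4}=\eta_{i}\in\nu$, giving $\tilde{\Gamma}_{i4}^{k}=\delta_{ik}$ and $\tilde{\Gamma}_{i4}^{4}=0$. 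When $a\in\{1,2,3\}$, decompose $\nabla_{e_{i}}e_{a}=\sum_{k=1}^{3}\Gamma_{ia}^{k}e_{k}+\alpha(e_{i},e_{a})$ with $\alpha$ the second fundamental form of $M\subset S$, and apply $J$ to obtain
\[
\overline{\nabla}_{e_{i}}\eta_{a}=\sum_{k=1}^{3}\Gamma_{ia}^{k}\eta_{k}+J\alpha(e_{i},e_{a})-\delta_{ia}\eta_{4}.
\]
The Legendrian condition combined with the $a=4$ case forces $g(\alpha(e_{i},e_{a}),\eta_{4})=-g(e_{a},\nabla_{e_{i}}\eta_{4})=-g(e_{a},\eta_{i})=0$, so $\alpha(e_{i},e_{a})\in\mathrm{span}\{\eta_{1},\eta_{2},\eta_{3}\}$ and $J\alpha(e_{i},e_{a})\in\mathrm{span}\{e_{1},e_{2},e_{3}\}$, which the projection to $\nu$ kills. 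This gives $\nabla^{\perp}_{e_{i}}\eta_{a}=\sum_{k}\Gamma_{ia}^{k}\eta_{k}-\delta_{ia}\eta_{4}$, i.e.\ $\tilde{\Gamma}_{ia}^{k}=\Gamma_{ia}^{k}$ and $\tilde{\Gamma}_{ia}^{4}=-\delta_{ia}$.

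The main obstacle will be the sign bookkeeping in the cross-product table: twelve nonzero entries drawn from two qualitatively different sources ($\omega\wedge\omega$ and $\mathrm{Re}\,\Omega$), where a single determinant-sign error propagates. The Christoffel computation is comparatively clean once one notices that the Legendrian condition forces $\alpha(e_{i},e_{a})\perp\eta_{4}$ and that the Euler-vector-field identity pins down the $\eta_{4}$ direction.
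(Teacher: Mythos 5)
Your argument is correct, and it is exactly the "direct computation" that the paper invokes without writing out: the cross-product table follows from $\varphi = i(\partial_r)\Phi|_{r=1}$ with $\Phi = \tfrac{1}{2}\omega\wedge\omega + \mathrm{Re}\,\Omega$, and the Christoffel identities follow from $\overline{\nabla}J=0$, the cone formulas, and the observation that the Legendrian condition forces $\alpha(e_i,e_a)\in J(TM)$ so that $J\alpha(e_i,e_a)$ is tangential. Spot-checking several entries of the table against your parity/determinant bookkeeping confirms the signs, so the proposal is a complete and faithful filling-in of the paper's omitted verification.
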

Then via the identification 
$\mathfrak{X}(M) \oplus C^{\infty}(M) 
\ni 
(\sum_{j = 1}^{3} v_{j} e_{j}, f) 
\mapsto
\sum_{j = 1}^{3} v_{j} \eta_{j} + f \eta_{4}
\in C^{\infty}(M, \nu)$
where $v_{j}, f \in C^{\infty}(M)$, 
we have 
\begin{align*}
&D \left(\sum_{j = 1}^{3} v_{j} \eta_{j} + f \eta_{4} \right)\\
=&
\sum_{i = 1}^{3} e_{i} \times \nabla_{e_{i}}^{\perp} 
\left(\sum_{j = 1}^{3} v_{j} \eta_{j} + f \eta_{4} \right) \\
=&
\sum_{i, j = 1}^{3} e_{i}(v_{j}) e_{i} \times \eta_{j} 
+
\sum_{i, j = 1}^{3} v_{j} e_{i} \times 
\left(\sum_{k = 1}^{3} \Gamma_{i j}^{k} \eta_{k} - \delta_{i j} \eta_{4} \right) 
+
\sum_{i = 1}^{3} e_{i} (f) e_{i} \times \eta_{4}
+ 
\sum_{i = 1}^{3} f e_{i} \times \eta_{i}\\
=&
\left\{ e_{2}(v_{3}) - e_{3}(v_{2}) + \sum_{j = 1}^{3} v_{j} (\Gamma_{2 j}^{3} - \Gamma_{3 j}^{2}) \right\} \eta_{1} 
+
\left\{ e_{3}(v_{1}) - e_{1}(v_{3}) + \sum_{j = 1}^{3} v_{j} (\Gamma_{3 j}^{1} - \Gamma_{1 j}^{3}) \right\} \eta_{2}\\ 
&+
\left\{ e_{1}(v_{2}) - e_{2}(v_{1}) + \sum_{j = 1}^{3} v_{j} (\Gamma_{1 j}^{2} - \Gamma_{2 j}^{1}) \right\} \eta_{3} 
+
\left\{
\sum_{i = 1}^{3} e_{i}(v_{i}) + \sum_{i, j = 1}^{3} v_{j} \Gamma_{i j}^{i} \right\} \eta_{4} \\
&+
\sum_{i = 1}^{3} v_{i} \eta_{i} 
- 
\sum_{i = 1}^{3} e_{i}(f) \eta_{i} 
+
3 f \eta_{4}, 
\end{align*}
which gives the proof. 
\end{proof}

\begin{proof}[Proof of Lemma \ref{eq_vec_an}]
The first two equations are easy to prove. 
We only prove the third equation. 
By Lemma \ref{diff_cross} 
and the fact that $M$ is associative, 
it follows that 
\begin{align*}
{\rm rot}({\rm rot}(v)) 
&=
\sum_{i, j = 1}^{3} e_{i} \times 
(\nabla_{e_{i}}^{\top} e_{j} \times \nabla_{e_{j}}^{\top} v + e_{j} \times \nabla_{e_{i}}^{\top} \nabla_{e_{j}}^{\top} v). 
\end{align*}
From (\ref{chi_cross}) , we have 
$u \times (v \times w) = -g(u, v)w + g(u, w)v$ for $u, v, w \in TM$
on an associative submanifold $M$. Hence we have 
\begin{align*}
&{\rm rot}({\rm rot}(v)) \\
=& 
\sum_{i, j = 1}^{3} 
\left (
\Gamma_{i i}^{j} \nabla_{e_{j}}^{\top} v + g(e_{i}, \nabla_{e_{j}}^{\top} v) \nabla_{e_{i}}^{\top} e_{j} 
-
\delta_{i j} \nabla_{e_{i}}^{\top} \nabla_{e_{j}}^{\top} v 
+
g(e_{i}, \nabla_{e_{i}}^{\top} \nabla_{e_{j}}^{\top} v) e_{j} 
\right )\\
=&
\nabla^{\top *} \nabla^{\top} v 
+
\sum_{i, j = 1}^{3} 
g((\nabla_{e_{i}}^{\top} \nabla_{e_{j}}^{\top} - \nabla_{\nabla_{e_{i}}^{\top} e_{j}}^{\top})v, e_{i}) e_{j}, 
\end{align*}
where we use the fact that $\Gamma_{i i}^{j} = - \Gamma_{i j}^{i}$ 
since 
$\{ e_{i} \}_{i = 1, 2, 3}$ is the local orthonormal frame of $TM$. 
On the other hand, we have 
\begin{align*}
{\rm grad} ({\rm div}(v))
=
\sum_{i, j = 1}^{3}
e_{i} (g(\nabla_{e_{j}}^{\top}v, e_{j})) e_{i} 
=
\sum_{i, j = 1}^{3} 
g((\nabla_{e_{i}}^{\top} \nabla_{e_{j}}^{\top} - \nabla_{\nabla_{e_{i}}^{\top} e_{j}}^{\top})v, e_{j}) e_{i}, 
\end{align*}
which implies the proof. 
\end{proof}

The proof of Corollary \ref{D2_SE} is straightforward and we omit it.

\begin{proof}[Proof of Corollary \ref{diff_asso_sL}]

By Proposition \ref{deform_asso_SE}, 
$D(v, f) = -(v, f)$ is equivalent to 
$ 
{\rm rot}(v) + 2v = {\rm grad}(f), 
{\rm div}(v)        = -4f. 
$
Considering the divergence of the first equation, we have
$\Delta_{+}f = -2 {\rm div}(v)$, 
which implies that 
$D(v, f) = -(v, f)$ 
is equivalent to 
\begin{align} \label{-1esp_sL}
\left\{
\begin{aligned}
{\rm rot}(v) + 2v &= {\rm grad}(f), \\
\Delta_{+} f &= 8f.
\end{aligned}
\right.
\end{align}
The second equation is given in (\ref{sLeg_deform1}). 
For any $f \in C^{\infty}(M)$ satisfying $\Delta_{+}f = 8f$, 
$(v, f) = (\frac{1}{2} {\rm grad}(f), f)$ is the solution of (\ref{-1esp_sL}), 
which corresponds to the infinitesimal special Legendrian deformations of $M$.  (See (\ref{sLeg_deform2})). 
\end{proof}

\subsection{Associative deformation of homogeneous special Legendrians}

The method and the notation in this subsection are summarized in the appendix. 
We give an explicit description of the operator ${\rm rot}$ 
when 
$M $ is the reductive homogeneous space $G/K$, 
where $G \subset {\rm Aut}(S, \varphi, g)$ and 
$K \subset G$ is a closed subgroup.  
Take an ${\rm Ad}(K)$-invariant vector subspace of $\mathfrak{p} \subset \mathfrak{g}$ 
satisfying $\mathfrak{g} = \mathfrak{k} \oplus \mathfrak{p}$.

It is well-known that 
there is an one-to-one correspondence 
between 
${\rm Ad}(K)$-invariant inner products on $\mathfrak{p}$ and 
$G$-invariant metrics on $M = G/K$. 
Since $G \subset {\rm Aut}(S, \varphi, g)$, 
there exists a $G$-invariant metric $g_{M}$ on $M$ 
induced from $(S, g)$. 
Denote by $\langle \cdot, \cdot \rangle$ the corresponding 
${\rm Ad}(K)$-invariant inner product 
and 
by $\{ e_{1}, e_{2}, e_{3} \} \subset \mathfrak{p}$ 
an oriented orthonormal basis of $\mathfrak{p}$. 
Then we have the following.

\begin{lem}
The map 
$
G \times_{{\rm Ad}} \mathfrak{p} 
\ni [g, X] \mapsto 
\frac{d}{dt} g \cdot \exp(tX) K |_{t = 0} 
\in TM$
is an isomorphism. 
Thus  
the tangent bundle $TM$ of $M$ is a homogeneous vector bundle. 
\end{lem}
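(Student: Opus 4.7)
The plan is to verify that the map $\Phi : G \times_{\mathrm{Ad}} \mathfrak{p} \to TM$ defined by $\Phi([g, X]) = \frac{d}{dt} g \exp(tX) K |_{t = 0}$ is a well-defined smooth vector bundle isomorphism. I would proceed in three steps: well-definedness on equivalence classes, covering the identity with fiberwise linearity, and fiberwise bijectivity.

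The first step, well-definedness, is the only non-routine one. Any element of $G \times_{\mathrm{Ad}} \mathfrak{p}$ is an equivalence class under $(g, X) \sim (gk, \mathrm{Ad}(k^{-1}) X)$ for $k \in K$, so I must check the two representatives yield the same tangent vector. Using the identity $\exp(t \, \mathrm{Ad}(k^{-1}) X) = k^{-1} \exp(tX) k$, I compute
\begin{align*}
\frac{d}{dt} (gk) \exp(t \, \mathrm{Ad}(k^{-1}) X) K |_{t=0}
&= \frac{d}{dt} gk \cdot k^{-1} \exp(tX) k \cdot K |_{t=0} \\
&= \frac{d}{dt} g \exp(tX) \cdot k K |_{t=0} \\
&= \frac{d}{dt} g \exp(tX) K |_{t=0},
\end{align*}
since $kK = K$ in $G/K$. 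Hence $\Phi$ descends to the associated bundle.

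Next, $\Phi$ covers the identity on $M$: the class $[g, X]$ sits in the fiber over $gK$, and its image is tangent to $M$ at $gK = g \cdot eK$. Fiberwise linearity in $X$ follows from the chain rule applied to $t \mapsto \exp(tX)$. For fiberwise bijectivity, I reduce to the fiber over the base point $eK$, where $\Phi([e, X]) = X^{*}_{eK}$, the value at $eK$ of the fundamental vector field of $X \in \mathfrak{g}$. The differential $d\pi_{e} : \mathfrak{g} \to T_{eK}M$ of the projection $\pi : G \to G/K$ has kernel exactly $\mathfrak{k}$, so its restriction to the complement $\mathfrak{p}$ is a linear isomorphism onto $T_{eK}M$. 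The $G$-equivariance of $\Phi$, namely that left translation by $g' \in G$ on $G \times_{\mathrm{Ad}} \mathfrak{p}$ intertwines with the pushforward $(g')_{*}$ on $TM$, together with the transitivity of the $G$-action on $M$, propagates this isomorphism to every fiber. Since $\Phi$ is smooth, covers the identity, and is a fiberwise linear isomorphism between vector bundles of equal rank $\dim \mathfrak{p} = \dim M$, it is a vector bundle isomorphism, exhibiting $TM$ as the homogeneous bundle associated to the adjoint representation of $K$ on $\mathfrak{p}$. The main obstacle, the well-definedness identity, is dispatched by the conjugation trick above; all other steps are standard unwinding of the reductive decomposition.
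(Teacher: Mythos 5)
Your proof is correct: the well-definedness via $\exp(t\,\mathrm{Ad}(k^{-1})X)=k^{-1}\exp(tX)k$ (using the $\mathrm{Ad}(K)$-invariance of $\mathfrak{p}$ so that the second representative again lies in $G\times\mathfrak{p}$), the identification of the fiber map over $eK$ with $d\pi_{e}|_{\mathfrak{p}}$ whose kernel-complement argument gives bijectivity, and the propagation by $G$-equivariance are exactly the standard steps. The paper states this lemma without proof, treating it as well known, so your argument simply supplies the expected details and there is nothing in the paper to diverge from.
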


\begin{prop} \label{rot_homog_prop}
The operator ${\rm rot} : \mathfrak{X}(M) \rightarrow \mathfrak{X}(M)$ 
is a homogeneous differential operator 
and induces the map 
$\widetilde{{\rm rot}} : C^{\infty}(G, \mathfrak{p})^{(K, {\rm Ad})} 
\rightarrow C^{\infty}(G, \mathfrak{p})^{(K, {\rm Ad})}$. 
If we define 
$\overline{{\rm rot}} \in ({\rm End}(\mathfrak{p}) \otimes U(\mathfrak{g}))^{K}$ by 
\begin{align*} 
\overline{{\rm rot}} = 
\sum_{i \in \mathbb{Z}/3} e_{i}^{*} \otimes (e_{i+1} \wedge e_{i+2})
-
\sum_{i \in \mathbb{Z}/3}
\langle [e_{i+1}, e_{i+2}]_{\mathfrak{p}}, \cdot \rangle e_{i} \otimes 1, 
\end{align*}
where $\{ e_{i}^{*} \}_{i = 1, 2, 3}$ is the dual basis of $\{ e_{i} \}_{i = 1, 2, 3}$, 
we have 
\begin{align}\label{overline_rot_homog tilde_rot}
\overline{{\rm rot}} |_{C^{\infty}(G, \mathfrak{p})^{(K, {\rm Ad})}} = \widetilde{{\rm rot}}.
\end{align} 
\end{prop}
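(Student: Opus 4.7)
The plan is to exploit that $\operatorname{rot}$ is built from $G$-invariant data and then transfer everything to the base point $eK$ via the standard dictionary between invariant differential operators on a reductive homogeneous space and $K$-invariant elements of $\operatorname{End}(\mathfrak{p})\otimes U(\mathfrak{g})$ (the formalism of Appendix B). First I would check the homogeneity statement: the cross product $\times$ is determined by $\varphi$ and $g$, and the Levi-Civita connection $\nabla^{\top}$ of $g_{M}$ is determined by $g$; since $G\subset\operatorname{Aut}(S,\varphi,g)$, both pieces are $G$-invariant, hence $\operatorname{rot}(g_{*}v)=g_{*}\operatorname{rot}(v)$. Combined with the identification $\mathfrak{X}(M)\cong C^{\infty}(G,\mathfrak{p})^{(K,\operatorname{Ad})}$ coming from $TM=G\times_{\operatorname{Ad}}\mathfrak{p}$, this immediately yields a well-defined induced operator $\widetilde{\operatorname{rot}}$ on $C^{\infty}(G,\mathfrak{p})^{(K,\operatorname{Ad})}$ which is a first-order $G$-invariant differential operator, and therefore given by some element of $(\operatorname{End}(\mathfrak{p})\otimes U(\mathfrak{g}))^{K}$ acting in the natural way.

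Next I would compute $\widetilde{\operatorname{rot}}(\tilde{v})$ at $e\in G$, where $\tilde{v}\in C^{\infty}(G,\mathfrak{p})^{(K,\operatorname{Ad})}$. Using the Nomizu-type formula for the Levi-Civita connection of an invariant metric on a reductive $G/K$, I can write
\[
(\nabla^{\top}_{e_{i}}v)(eK)=(\widetilde{e_{i}}\tilde{v})(e)+\Lambda(e_{i})\tilde{v}(e),
\]
where $\widetilde{e_{i}}$ is the left-invariant vector field of $e_{i}$ and $\Lambda(e_{i})\in\operatorname{End}(\mathfrak{p})$ is the connection correction determined from $\langle\cdot,\cdot\rangle$ by the Koszul identity. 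Applying $e_{i}\times\cdot$, which as an element of $\operatorname{End}(\mathfrak{p})$ is precisely the skew endomorphism $e_{i+1}\wedge e_{i+2}$ (since $\{e_{1},e_{2},e_{3}\}$ is an oriented orthonormal basis of an associative $T_{eK}M$), and summing over $i$, one splits $\widetilde{\operatorname{rot}}$ at the origin into a differentiation part and a purely algebraic part. The differentiation part gives the first summand $\sum_{i}e_{i}^{*}\otimes(e_{i+1}\wedge e_{i+2})$ of $\overline{\operatorname{rot}}$.

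For the algebraic part I would use the associative cross-product identity $u\times(v\times w)=-\langle u,v\rangle w+\langle u,w\rangle v$ from (\ref{chi_cross}) (valid because $\chi|_{TM}=0$) to collapse the double cross products that arise from $\Lambda(e_{i})$. After a cyclic relabelling over $i\in\mathbb{Z}/3$, the surviving zeroth-order contribution is exactly $-\sum_{i}\langle[e_{i+1},e_{i+2}]_{\mathfrak{p}},\cdot\rangle e_{i}\otimes 1$, matching the second summand of $\overline{\operatorname{rot}}$. Thus $\widetilde{\operatorname{rot}}=\overline{\operatorname{rot}}|_{C^{\infty}(G,\mathfrak{p})^{(K,\operatorname{Ad})}}$ at the base point, and by $G$-equivariance at every point, which proves (\ref{overline_rot_homog tilde_rot}).

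The main obstacle will be the sign bookkeeping in combining the Nomizu correction with the cross product, and in verifying that the resulting element indeed lies in $(\operatorname{End}(\mathfrak{p})\otimes U(\mathfrak{g}))^{K}$. The $K$-invariance is guaranteed conceptually, since $\operatorname{Ad}(K)$ preserves $\mathfrak{p}$, the inner product $\langle\cdot,\cdot\rangle$, the orientation, and the associative cross-product structure on $T_{eK}M$; but checking it directly from the stated formula amounts to recognising both cyclic sums as basis-independent contractions of $\operatorname{Ad}(K)$-invariant tensors, which is the cleanest way to confirm that the computation performed in a particular orthonormal frame gives an intrinsic answer.
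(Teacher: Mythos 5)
Your proposal is correct and follows essentially the same route as the paper: reduce to the base point $eK$, express the Levi-Civita connection of the invariant metric through the structure constants (the paper does this in exponential coordinates via $\Gamma_{ij}^{k}(eK)=\tfrac12(c_{ki}^{j}+c_{kj}^{i})$, which is exactly your Nomizu correction $\Lambda$), and then apply the cross product, whose restriction to $TM$ is the ordinary $3$-dimensional one, i.e.\ $e_{i}\times\cdot=e_{i+1}\wedge e_{i+2}$. The only small inaccuracy is that no double cross products, and hence no appeal to (\ref{chi_cross}), arise in the zeroth-order term: the part of the Koszul correction symmetric in $(i,j)$ is annihilated by the antisymmetry of $e_{i}\times e_{j}$, and the surviving antisymmetric part $-\tfrac12\sum v_{k}c_{ij}^{k}\,e_{i}\times e_{j}$ is already $-\sum_{i}\langle[e_{i+1},e_{i+2}]_{\mathfrak{p}},\cdot\rangle e_{i}$.
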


\begin{rem} \label{rot_homog}
Set $[e_{i}, e_{j}]_{\mathfrak{p}} = \sum_{k = 1}^{3} c_{i j}^{k} e_{k}$. 
Then with respect to $\{ e_{1}, e_{2}, e_{3}\}$, 
$\overline{{\rm rot}}$ is described as the following 
$U(\mathfrak{g})$-valued matrix:
\begin{align*}
\overline{{\rm rot}} = 
\left(
\begin{array}{ccc}
      0& -e_{3} & e_{2}  \\
  e_{3}&      0  & -e_{1}\\
-e_{2}&   e_{1} & 0     \\
\end{array} 
\right)
-
\left(
\begin{array}{ccc}
c_{2 3}^{1}& c_{2 3}^{2}& c_{2 3}^{3}\\
c_{3 1}^{1}& c_{3 1}^{2}& c_{3 1}^{3}\\
c_{1 2}^{1}& c_{1 2}^{2}& c_{1 2}^{3}\\
\end{array} 
\right).
\end{align*}
\end{rem}

\begin{proof}[Proof of Proposition \ref{rot_homog_prop}]

It is straightforward to show that ${\rm rot}$ is a homogeneous differential operator. 
Since $\overline{{\rm rot}}$ is independent of the choice of $\{ e_{i} \}_{i = 1, 2, 3}$ 
and ${\rm Ad}(K)$ preserves the orientation and the metric, 
we see that  $\overline{{\rm rot}}$ is $K$-invariant.

From Remark \ref{1pt_homog_diff}, 
a homogeneous differential operator 
is completely determined 
by its value at a point, 
and so we only have to compute 
${\rm rot}$ at $eK \in G/K=M$. 

For any $\tilde{v} = \sum_{i = 1}^{3} v_{i} e_{i} \in C^{\infty}(G, \mathfrak{p})^{(K, {\rm Ad})}$
where $v_{i} \in C^{\infty}(G)$, 
denote by $v \in \mathfrak{X}(M)$ the induced vector field: 
\begin{align*}
v(gK) = \left. \frac{d}{dt} g \cdot \exp \left( t \sum_{i=1}^{3} v_{i}(g) e_{i} \right) \cdot K \right|_{t = 0}. 
%
\end{align*}

Take local coordinates $(y_{1}, y_{2}, y_{3})$ around $eK$ defined by 
$(y_{1}, y_{2}, y_{3}) \mapsto \exp \left( \sum_{i=1}^{3} y_{i} e_{i} \right)$. 
Let $\pi : G \rightarrow G/K = M$ be the projection and 
$\tau_{g} : M \rightarrow M$ for  $g \in G$ be the left translation.
Denoting $\nabla_{\frac{\partial}{\partial y_{i}}}^{\top} \frac{\partial}{\partial y_{j}} 
= \sum_{k = 1}^{3} \Gamma_{i j}^{k} \frac{\partial}{\partial y_{k}}, 
$
we see the following. 

\begin{lem} [\cite{Helgason, Muto_Urakawa}]
For a sufficiently small $X \in \mathfrak{p}$, we have
\begin{align*}
\left( \frac{\partial}{\partial y_{i}} \right)_{\exp(X) \cdot K} 
=&
((\tau_{\exp(X)})_{*})_{eK} (\pi_{*})_{e} \left( \sum_{m = 0}^{\infty} \frac{(-{\rm ad}(X))^{m}}{(m+1)!} e_{i} \right)\\
=&
((\tau_{\exp(X)})_{*})_{eK} \left( \sum_{m = 0}^{\infty} \frac{(-{\rm ad}(X))^{m}}{(m+1)!} 
\left( \frac{\partial}{\partial y_{i}} \right)_{e K}  \right),
\\
g_{M} \left(\frac{\partial}{\partial y_{i}}, \ \frac{\partial}{\partial y_{j}} \right)_{eK} =& \delta_{i j}, 
\qquad
\Gamma_{i j}^{k}(eK) = \frac{1}{2}(c_{k i}^{j} + c_{k j}^{i}). 
\end{align*}
\end{lem}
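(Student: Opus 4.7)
The plan is to verify the three equalities in turn using only the classical formula for the differential of $\exp$ and Koszul's formula; both facts are standard and appear in the cited sources.

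For the first identity I would apply the classical formula
\begin{equation*}
(d\exp)_{X}(Y) = (L_{\exp X})_{*} \sum_{m = 0}^{\infty} \frac{(-{\rm ad}(X))^{m}}{(m+1)!} Y, \qquad X, Y \in \mathfrak{g}.
\end{equation*}
The chart in question is the composition $\pi \circ \exp$ of the Lie exponential with $\pi : G \to G/K$, so $(\partial/\partial y_{i})_{\exp(X) \cdot K} = \pi_{*}(d\exp)_{X}(e_{i})$. The intertwining $\pi \circ L_{g} = \tau_{g} \circ \pi$ converts $\pi_{*}(L_{\exp X})_{*}$ into $(\tau_{\exp X})_{*}(\pi_{*})_{e}$, which yields the first stated expression. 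The second form then follows by commuting $(\tau_{\exp X})_{*}$ past $(\pi_{*})_{e}$ and observing, from the $m = 0$ term, that $(\pi_{*})_{e}(e_{i}) = (\partial/\partial y_{i})_{eK}$.

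For the orthonormality at $eK$, note that by construction $(\pi_{*})_{e} : \mathfrak{p} \to T_{eK}M$ intertwines the ${\rm Ad}(K)$-invariant inner product $\langle \cdot, \cdot \rangle$ with $(g_{M})_{eK}$, and $\{e_{1}, e_{2}, e_{3}\}$ is orthonormal for $\langle \cdot, \cdot \rangle$; combined with $(\partial/\partial y_{i})_{eK} = (\pi_{*})_{e}(e_{i})$ the claim is immediate.

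For the Christoffel symbols, I would start from Koszul's formula. Because the $\partial/\partial y_{i}$ are coordinate vector fields they commute pairwise, so the bracket terms drop and
\begin{equation*}
2 g_{M}\!\left( \nabla^{\top}_{\partial/\partial y_{i}} \partial/\partial y_{j},\ \partial/\partial y_{k} \right)\!\bigg|_{eK}
= \left( \frac{\partial g_{jk}}{\partial y_{i}} + \frac{\partial g_{ik}}{\partial y_{j}} - \frac{\partial g_{ij}}{\partial y_{k}} \right)\!(0),
\end{equation*}
where $g_{ij}(X) := g_{M}(\partial/\partial y_{i}, \partial/\partial y_{j})_{\exp(X) \cdot K}$. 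Using the $G$-invariance of $g_{M}$ together with the formula from part (i), one obtains $g_{ij}(X) = \langle [S(X) e_{i}]_{\mathfrak{p}}, [S(X) e_{j}]_{\mathfrak{p}} \rangle$ with $S(X) = \sum_{m \geq 0} (-{\rm ad}(X))^{m}/(m+1)!$. Expanding $S(X) = {\rm id} - \tfrac{1}{2}{\rm ad}(X) + O(|X|^{2})$ and reading off the linear term in the direction $e_{k}$ gives $(\partial g_{ij}/\partial y_{k})(0) = -\tfrac{1}{2}(c_{ki}^{j} + c_{kj}^{i})$. Substituting into Koszul, using antisymmetry $c_{ij}^{k} = -c_{ji}^{k}$, and invoking the orthonormality from part (ii) produces the stated formula $\Gamma_{ij}^{k}(eK) = \tfrac{1}{2}(c_{ki}^{j} + c_{kj}^{i})$.

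The main obstacle is not conceptual but bookkeeping: one must carefully track several identifications (the projection $\pi_{*}$, the translation $(\tau_{\exp X})_{*}$, and the splitting $Y \mapsto [Y]_{\mathfrak{p}}$) and be careful with signs in the Koszul step; no deeper analytic input is needed.
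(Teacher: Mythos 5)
Your argument is correct, and the detailed computation checks out: the expansion $S(X)=\mathrm{id}-\tfrac12\,\mathrm{ad}(X)+O(|X|^{2})$ gives $(\partial g_{ij}/\partial y_{k})(0)=-\tfrac12(c_{ki}^{j}+c_{kj}^{i})$, and feeding this into Koszul's formula with the antisymmetry $c_{ij}^{k}=-c_{ji}^{k}$ and $g_{ij}(0)=\delta_{ij}$ does yield $\Gamma_{ij}^{k}(eK)=\tfrac12(c_{ki}^{j}+c_{kj}^{i})$. Note that the paper offers no proof of this lemma at all --- it is quoted from Helgason and Muto--Urakawa --- so there is nothing internal to compare against; your route via the classical formula for $(d\exp)_{X}$ together with Koszul is exactly the standard argument in those references. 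One small point of care: in the second displayed identity the expression $(-\mathrm{ad}(X))^{m}(\partial/\partial y_{i})_{eK}$ only makes sense through the identification $T_{eK}M\cong\mathfrak{p}$, and since $[(-\mathrm{ad}(X))^{m}e_{i}]_{\mathfrak{p}}$ need not equal $([-\mathrm{ad}(X)]_{\mathfrak{p}})^{m}e_{i}$, the series must be read as $(\pi_{*})_{e}$ applied to the whole $\mathfrak{g}$-valued sum rather than as an iterated operator on $T_{eK}M$; your phrasing (``commuting $(\tau_{\exp X})_{*}$ past $(\pi_{*})_{e}$'') glosses over this, but it does not affect the orthonormality or Christoffel computations, which only use the terms $m\le 1$.
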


Now we compute ${\rm rot} (v)$ at $eK \in G/K =M$. 
First, we compute $(\nabla_{\frac{\partial}{\partial y_{i}}}^{\top} v)_{eK}$. 
Since the metric $g_{M}$ is $G$-invariant, we have 
\begin{align*}
\langle e_{i}, e_{j} \rangle
=
g_{M} \left( \frac{\partial}{\partial y_{i}}, \frac{\partial}{\partial y_{j}} \right)_{eK}
=
g_{M} \left(((\tau_{\exp(X)})_{*})_{eK} \left(\frac{\partial}{\partial y_{i}} \right)_{e K}, 
((\tau_{\exp(X)})_{*})_{eK} \left(\frac{\partial}{\partial y_{j}} \right)_{e K} \right) 
\end{align*}
for any $X \in \mathfrak{p}$. 
Then for sufficiently small $t \in \mathbb{R}$, 
it follows that  
\begin{align*}
g_{M} \left( v, \frac{\partial}{\partial y_{j}} \right )_{exp(t e_{i}) \cdot K}
&=
g_{M} \left(\sum_{k = 1}^{3} v_{k}(\exp (t e_{i})) (\tau_{\exp(t e_{i})})_{*} 
\left( \frac{\partial}{\partial y_{k}} \right)_{eK}, 
\left( \frac{\partial}{\partial y_{j}}  \right)_{\exp(t e_{i}) \cdot K} \right) \\
&=
\sum_{k = 1}^{3} v_{k}(\exp (t e_{i})) 
\left \langle
e_{k}, \sum_{m = 0}^{\infty} \frac{(-t \cdot {\rm ad}(e_{i}))^{m}}{(m+1)!} e_{j}
\right \rangle, \\
g_{M} \left(\nabla_{\frac{\partial}{\partial y_{i}}}^{\top} v, \frac{\partial}{\partial y_{j}} \right)_{eK}
&= 
\left( \frac{\partial}{\partial y_{i}} \right)_{eK} g_{M} \left(v, \frac{\partial}{\partial y_{j}} \right) 
- 
g_{M} \left(v, \nabla_{\frac{\partial}{\partial y_{i}}}^{\top} \frac{\partial}{\partial y_{j}} \right)_{eK}\\ 
&= 
\sum_{k = 1}^{3} \left( e_{i}(v_{k}) \delta_{k j} + v_{k} \left \langle e_{k}, - \frac{1}{2} [e_{i}, e_{j}] \right \rangle \right)
-g_{M}\left(v, \sum_{k = 1}^{3} \Gamma_{i j}^{k}(eK) \frac{\partial}{\partial y_{k}} \right)\\
&=
e_{i}(v_{j}) -\frac{1}{2} \sum_{k = 1}^{3} v_{k}(c_{i j}^{k} + c_{k i}^{j} + c_{k j}^{i}). 
\end{align*}
Hence we obtain 
\begin{align*}
(\nabla_{\frac{\partial}{\partial y_{i}}}^{\top} v)_{eK}
=
\sum_{j = 1}^{3}e_{i}(v_{j}) \frac{\partial}{\partial y_{j}}
- 
\frac{1}{2} \sum_{j, k = 1}^{3} v_{k}(c_{i j}^{k} + c_{k i}^{j} + c_{k j}^{i}) \frac{\partial}{\partial y_{j}}. 
\end{align*}
Thus 
we compute 
\begin{align*}
({\rm rot}(v))_{eK}
&=
\left( \frac{\partial}{\partial y_{i}} \right)_{eK} \times 
(\nabla_{\frac{\partial}{\partial y_{i}}}^{\top} v)_{eK}\\
&=
\left(
- e_{3}(v_{2}) + e_{2}(v_{3}) - \sum_{j = 1}^{3} c_{2 3}^{j} 
\right )  \frac{\partial}{\partial y_{1}}
+
\left(
e_{3}(v_{1}) - e_{1}(v_{3}) - \sum_{j = 1}^{3} c_{3 1}^{j} 
\right )  \frac{\partial}{\partial y_{2}} \\
&+
\left(
- e_{2}(v_{1}) + e_{1}(v_{2}) - \sum_{j = 1}^{3} c_{1 2}^{j} 
\right )  \frac{\partial}{\partial y_{3}}, 
\end{align*}
which implies the proof. 
\end{proof}

\section{Associative deformations in the sine cone of nearly K\"{a}hler manifolds}

Let $(N, k, J, \sigma, \psi^{\pm})$ be a nearly K\"{a}hler manifold and 
$L \subset N$ be a Lagrangian submanifold. 
From Lemma \ref{G2 str on NK sine cone} and \ref{submfd_sine cone NK}, 
the sine cone $C_{s}(N) = (0, \pi) \times N$ admits nearly parallel $G_{2}$-structure 
$(\tilde{\varphi}, \tilde{k})$ 
and 
$\{ \frac{\pi}{2} \} \times L \subset C_{s}(N)$ is associative. 
We study the infinitesimal associative deformations of $\{ \frac{\pi}{2} \} \times L$.

Let $\nu \rightarrow \{ \frac{\pi}{2} \} \times L$ be the normal bundle of 
$\{ \frac{\pi}{2} \} \times L \subset C_{s}(N)$. 
First, we rewrite the operator 
$D : C^{\infty}(\{ \frac{\pi}{2} \} \times L, \nu) \rightarrow C^{\infty}(\{ \frac{\pi}{2} \} \times L, \nu)$
in Proposition \ref{deform_asso_NP}
in this case. 
Since $L$ is Lagrangian, there exists canonical isomorphism
$TL \oplus \mathbb{R} \ni (v, x) \mapsto Jv + x \frac{\partial}{\partial t} |_{t = \frac{\pi}{2}} \in \nu$. 
Via this identification, we obtain the following.

\begin{prop} \label{deform_asso_sine cone NK}
The corresponding operator $D : \mathfrak{X}(L) \oplus C^{\infty}(L) \rightarrow \mathfrak{X}(L) \oplus C^{\infty}(L)$ is described as 
\begin{align*}
D(v, f) = \left( - {\rm grad}(f) - {\rm rot}(v) + 2v, {\rm div}(v) \right), 
\end{align*}
where 
we use the notation in Proposition \ref{deform_asso_SE}. 
\end{prop}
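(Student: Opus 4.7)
The plan is to follow the same strategy as the proof of Proposition~\ref{deform_asso_SE}, but using the nearly K\"ahler identities in place of the Sasaki--Einstein ones and the sine-cone warping in place of the Riemannian cone. I would pick a local oriented orthonormal frame $\{e_1,e_2,e_3\}$ of $TL$, set $e_4=\partial_t$, and take $\eta_i=Je_i$ for $1\le i\le 3$ and $\eta_4=\partial_t$ as an oriented orthonormal frame of the normal bundle $\nu$ at $t=\pi/2$. Under the identification $(v,f)\leftrightarrow Jv+f\partial_t$ a section becomes $\psi=\sum_{j=1}^3 v_j\eta_j+f\eta_4$, and the whole computation reduces to evaluating the two pieces $e_i\times\eta_a$ and $\nabla^\perp_{e_i}\eta_a$ at $t=\pi/2$.

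For the cross products I would use Lemma~\ref{G2 str on NK sine cone}, which at $t=\pi/2$ gives $\tilde\varphi=dt\wedge\sigma-\psi^-$. Combined with the Lagrangian conditions $\sigma|_{TL}=0$, $\psi^+|_{TL}=0$ and $\psi^-|_{TL}=-\mathrm{vol}_L$ (from the proof of Lemma~\ref{deform_Lag} and the remark following Lemma~\ref{submfd_sine cone NK}) together with the type identities coming from $\Psi=\psi^++i\psi^-$ being a $(3,0)$-form (such as $\psi^-(JA,B,C)=\psi^+(A,B,C)$ and $\psi^-(A,JB,JC)=-\psi^-(A,B,C)$), a direct bookkeeping computation yields
\[
e_i\times\eta_j=\delta_{ij}\eta_4-\epsilon_{ijk}\eta_k\quad(1\le i,j\le 3),\qquad e_i\times\eta_4=-\eta_i.
\]
Relative to the Sasaki--Einstein table in the proof of Proposition~\ref{deform_asso_SE}, the sign in front of $\epsilon_{ijk}$ is reversed and the mixed $\eta_4$-contribution from $\tilde\Gamma^4_{ij}$ is absent.

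For the normal connection coefficients I would apply the standard warped-product formulas for $\tilde k=dt^2+\sin^2t\cdot k$ and then set $t=\pi/2$. Since $\cos(\pi/2)=0$, all the warping corrections vanish, so $\tilde\nabla_XY=\nabla^k_XY$ for $X,Y\in TN$ and $\tilde\nabla_X\partial_t=0$. Together with Lemma~\ref{cross_NK} ($\nabla^k_XJ=G(X,\cdot)$) and the identity $k(G(e_i,e_j),Je_l)=\psi^+(e_i,e_j,Je_l)=-\psi^-(e_i,e_j,e_l)=\epsilon_{ijl}$, combined with the observation that the $J\circ\mathrm{II}(e_i,e_j)$ piece lies in $TL$ and is therefore annihilated by the normal projection, this gives
\[
\tilde\Gamma^k_{ij}=\Gamma^k_{ij}+\epsilon_{ijk},\qquad \tilde\Gamma^4_{ij}=\tilde\Gamma^k_{i4}=\tilde\Gamma^4_{i4}=0.
\]

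Finally I would assemble $D\psi=\sum_i e_i\times\nabla^\perp_{e_i}\psi$ exactly as in the proof of Proposition~\ref{deform_asso_SE}. The $\eta_4$-coefficient simplifies to $\mathrm{div}(v)$ with no $3f$-term (because $\tilde\Gamma^k_{i4}=0$), and the $\eta_l$-coefficients simplify via the standard contraction $\sum_{i,k}\epsilon_{ijk}\epsilon_{ikl}=-2\delta_{jl}$ to $-e_l(f)-\mathrm{rot}(v)_l+2v_l$, which translates back to the stated formula. The main subtlety I expect is keeping the sign conventions straight and recognising that the coefficient $+2v$ originates entirely from the new $\epsilon_{ijk}$-piece of $\tilde\Gamma^k_{ij}$, i.e.\ from the nearly K\"ahler torsion $G=\nabla J$, and not from the warping, whose contribution is invisible at $t=\pi/2$.
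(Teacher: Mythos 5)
Your proposal is correct and follows essentially the same route as the paper: the same identification $(v,f)\mapsto Jv+f\,\partial_t$, the same frames $\eta_i=Je_i$, $\eta_4=\partial_t$ with the orientation convention $\psi^-|_{TL}=-{\rm vol}_L$, the observation that the warped-product corrections vanish at $t=\pi/2$, the cross-product table from $\tilde\varphi=dt\wedge\sigma-\psi^-$, and the connection coefficients $\nabla^\perp_{e_i}\eta_j=\sum_k(\epsilon_{ijk}+\Gamma^k_{ij})\eta_k$ coming from $\nabla J=G$, assembled exactly as in Proposition \ref{deform_asso_SE}. Your tables and the final bookkeeping (including the contraction $\sum_{i,k}\epsilon_{ijk}\epsilon_{ikl}=-2\delta_{jl}$ producing the $+2v$ term) agree with the paper's computation.
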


By Proposition \ref{deform_asso_sine cone NK}, 
we prove Corollary \ref{diff_asso_Lag} 
as in the case of Corollary \ref{diff_asso_sL}.


\begin{cor} \label{diff_asso_Lag}
We have  
\begin{align*}
  &\dim \{  \mbox{the infinitesimal associative deformations of } \{ \tfrac{\pi}{2} \} \times L \} \\
=&
\dim \{ f \in C^{\infty}(L); \Delta_{+}f = 3f \} 
+
\dim \{ v \in \mathfrak{X}(L); {\rm rot}(v) = 3v \}. 
\end{align*}
\end{cor}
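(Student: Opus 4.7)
The plan is to follow the strategy of the proof of Corollary \ref{diff_asso_sL}, substituting the formula for $D$ given in Proposition \ref{deform_asso_sine cone NK}. Under the identification $TL \oplus \mathbb{R} \cong \nu$, the eigenvalue equation $D(v,f) = -(v,f)$ becomes the coupled system
\begin{align*}
\mathrm{rot}(v) - 3v &= -\mathrm{grad}(f), \\
\mathrm{div}(v) &= -f.
\end{align*}
Taking the divergence of the first equation and using $\mathrm{div}(\mathrm{rot}(v)) = 0$ together with $\mathrm{div}(\mathrm{grad}(f)) = -\Delta_+ f$ and the second equation yields $\Delta_+ f = 3f$; hence the system is equivalent to $\mathrm{rot}(v) - 3v = -\mathrm{grad}(f)$ together with $\Delta_+ f = 3f$ (the condition $\mathrm{div}(v) = -f$ then follows automatically upon taking the divergence of the first equation).

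For any eigenfunction $f$ of $\Delta_+$ with eigenvalue $3$, the pair $(v,f) = (\tfrac{1}{3}\mathrm{grad}(f), f)$ is an explicit solution, using $\mathrm{rot}(\mathrm{grad}(f)) = 0$. Conversely, any $v \in \mathfrak{X}(L)$ with $\mathrm{rot}(v) = 3v$ produces a solution $(v,0)$ (with automatic $\mathrm{div}(v) = \tfrac{1}{3}\mathrm{div}(\mathrm{rot}(v)) = 0$), which by Lemma \ref{deform_Lag} is precisely an infinitesimal Lagrangian deformation of $L$ in $N$. These two families together exhaust the kernel of $D + \mathrm{id}_\nu$: given any solution $(v,f)$, the vector field $\tilde v := v - \tfrac{1}{3}\mathrm{grad}(f)$ satisfies $\mathrm{rot}(\tilde v) = 3\tilde v$ directly from the first equation, so
$$ (v,f) = (\tilde v, 0) + \left( \tfrac{1}{3}\mathrm{grad}(f),\ f \right). $$
This decomposition is a direct sum, since elements of the first summand have $f = 0$ while elements of the second summand vanish exactly when $f = 0$, so the two subspaces intersect trivially.

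Summing the dimensions of the two summands yields the identity in the statement. The argument is essentially algebraic once Proposition \ref{deform_asso_sine cone NK} and the identities $\mathrm{rot}\circ\mathrm{grad} = 0$, $\mathrm{div}\circ\mathrm{rot} = 0$ from Lemma \ref{eq_vec_an} are in hand, and no serious obstacle is anticipated; the only care needed is matching signs and coefficients between the three operators, exactly parallel to the proof of Corollary \ref{diff_asso_sL}.
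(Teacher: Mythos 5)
Your proposal is correct and follows exactly the route the paper intends: the paper proves this corollary "as in the case of Corollary \ref{diff_asso_sL}", i.e.\ by writing $D(v,f)=-(v,f)$ via Proposition \ref{deform_asso_sine cone NK} as $\mathrm{rot}(v)-3v=-\mathrm{grad}(f)$, $\mathrm{div}(v)=-f$, reducing via Lemma \ref{eq_vec_an} to $\Delta_{+}f=3f$, and splitting solutions into the particular solution $(\tfrac{1}{3}\mathrm{grad}(f),f)$ plus the homogeneous solutions $\mathrm{rot}(v)=3v$. Your signs and coefficients check out, so no issues.
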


\begin{rem} \label{dim asso_nonLag}
From Lemma \ref{deform_Lag}, 
$\dim \{ f \in C^{\infty}(L); \Delta_{+}f = 3f \}$ gives 
the dimension of infinitesimal associative  
and non-Lagrangian deformations. 
\end{rem}

\begin{proof}[Proof of Proposition \ref{deform_asso_sine cone NK}]
Let $\{ e_{1}, e_{2}, e_{3}\} \subset TL$ be a local oriented orthonormal frame 
such that $\underline{\psi^{-}(e_{1}, e_{2}, e_{3}) = -1}$. 
Set $\eta_{j} := J(e_{j})$ for $1 \leq j \leq 3$ and 
$\eta_{4} := \frac{\partial}{\partial t} |_{t = \frac{\pi}{2}}$. 
Then $\{ \eta_{j} \}_{1 \leq j \leq 4}$ is a local oriented orthonormal frame of $\nu$. 
Let 
$\{e^{1}, \cdots, e^{3}, \eta^{1}, \cdots, \eta^{4} \}$ be the dual coframe, then 
we have 
\begin{align*}
\sigma = \sum_{i = 1}^{3} e^{i} \wedge \eta^{i}, \qquad
\Psi = \psi^{+} + i \psi^{-} = -i (e^{1} + i \eta^{1}) \wedge (e^{2} + i \eta^{2})  \wedge (e^{3} + i \eta^{3}). 
\end{align*}
Hence at a point of $L \times \{ \frac{\pi}{2} \}$, we have 
\begin{align*} 
\tilde{\varphi} = \eta^{4} \wedge \sum_{i = 1}^{3} e^{i} \wedge \eta^{i} 
+ e^{1} (e^{23} - \eta^{23}) - \eta^{1} (e^{2} \wedge \eta^{3} + \eta^{2} \wedge e^{3}). 
\end{align*}

As in the Sasakian case, 
the definition of the Levi-Civita connection gives the following.

\begin{lem}
For any $X, Y \in \mathfrak{X}(N \times \{ \frac{\pi}{2} \})$, we have 
\begin{align*}
\nabla^{C_{s}(N)}_{X} Y |_{N \times \{ \frac{\pi}{2} \}} = \nabla^{N}_{X} Y, \qquad
\nabla^{C_{s}(N)}_{X} \frac{\partial}{\partial t} |_{N \times \{ \frac{\pi}{2} \}} = 0, \qquad
\nabla^{C_{s}(N)}_{\frac{\partial}{\partial t}} X |_{N \times \{ \frac{\pi}{2} \}} = 0, 
\end{align*}
where $\nabla^{C_{s}(N)}$ and $\nabla^{N}$ are the Levi-Civita connections of 
$\tilde{k}$ on $C_{s}(N)$ and $k$ on $N$, respectively. 
\end{lem}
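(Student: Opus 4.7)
The plan is to recognize $(C_s(N), \tilde{k})$ as a warped product $(0,\pi) \times_f N$ with warping function $f(t) = \sin t$, and then invoke the standard Koszul formula, exploiting the crucial fact that $f(\pi/2) = 1$ and $f'(\pi/2) = \cos(\pi/2) = 0$. The vanishing of $f'$ at $\pi/2$ is what makes all the extra terms coming from the warping disappear on the slice $N \times \{\pi/2\}$.

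First, I would extend each $X \in \mathfrak{X}(N \times \{\pi/2\})$ to a vector field on $C_s(N)$ that is $t$-independent, so that $[X, \partial/\partial t] = 0$ and the vector field depends only on the $N$-factor. For vector fields $X, Y$ of this type together with a test vector $Z \in \mathfrak{X}(N)$, the Koszul formula reduces to
\begin{align*}
2 \tilde{k}(\nabla^{C_s(N)}_X Y, Z) = \sin^2(t)\bigl( X k(Y,Z) + Y k(X,Z) - Z k(X,Y) + k([X,Y],Z) - k([X,Z],Y) - k([Y,Z],X) \bigr),
\end{align*}
so that the $N$-component of $\nabla^{C_s(N)}_X Y$ agrees with $\nabla^N_X Y$ pointwise on $C_s(N)$. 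Pairing $\nabla^{C_s(N)}_X Y$ instead with $\partial/\partial t$, the only surviving Koszul term is $-\tfrac{1}{2}\partial_t\tilde{k}(X,Y) = -\sin(t)\cos(t)\, k(X,Y)$, which vanishes at $t = \pi/2$. This proves the first identity.

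Next, for $\nabla^{C_s(N)}_X \partial/\partial t$ tested against $Z \in \mathfrak{X}(N)$, the only nonzero Koszul term is $\partial_t \tilde{k}(X,Z) = 2\sin(t)\cos(t)\,k(X,Z)$, which again vanishes at $t = \pi/2$; and tested against $\partial/\partial t$ itself, $\tilde{k}(\partial/\partial t, \partial/\partial t) = 1$ is constant so the result is zero. Hence $\nabla^{C_s(N)}_X \partial/\partial t|_{t=\pi/2} = 0$. The third identity then follows immediately from the torsion-free property, since $[\partial/\partial t, X] = 0$ gives $\nabla^{C_s(N)}_{\partial/\partial t} X = \nabla^{C_s(N)}_X \partial/\partial t$.

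There is no real obstacle here; the lemma is a direct specialization of the warped-product connection formulas
\[ \nabla_X Y = \nabla^N_X Y - \tfrac{f'}{f}\, \tilde{k}(X,Y)\tfrac{\partial}{\partial t}, \qquad \nabla_X \tfrac{\partial}{\partial t} = \nabla_{\partial/\partial t} X = \tfrac{f'}{f} X, \]
evaluated at the point where $f' = 0$. If anything, the only care needed is to fix the convention for extending $X, Y$ off the slice (taking them $t$-independent is the natural choice so that $[X, \partial/\partial t] = 0$), after which the three identities drop out in sequence.
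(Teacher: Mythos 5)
Your proof is correct and is essentially the computation the paper has in mind: the paper simply asserts that "the definition of the Levi-Civita connection gives the following," i.e.\ a direct Koszul-formula computation for the warped-product metric $\tilde{k}=dt^{2}+(\sin^{2}t)\,k$, which is exactly what you carry out, with the key point being $\frac{d}{dt}\sin t\big|_{t=\pi/2}=0$. No gaps; the only convention you need (extending $X,Y$ to be $t$-independent so that $[X,\partial/\partial t]=0$) is the one you state.
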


Denoting 
$
\nabla_{e_{i}}^{\top} e_{j} = \sum_{k = 1}^{3} \Gamma_{i j}^{k} e_{k}
$
for 
$
1 \leq i, j \leq 3, 
$
we see the following from the computations above and Lemma \ref{cross_NK}. 
\begin{lem}
\begin{align*}
(e_{i} \times \eta_{a}) 
= 
\left( 
\begin{array}{cccc}
\eta_{4}  & -\eta_{3}   & \eta_{2} & -\eta_{1} \\
\eta_{3} & \eta_{4}  & -\eta_{1}   & -\eta_{2} \\
-\eta_{2}  & \eta_{1} & \eta_{4}   & -\eta_{3} 
\end{array} 
\right), \qquad
\nabla^{\perp}_{e_{i}} \eta_{j} = 
\sum_{k = 1}^{3} (\epsilon_{ijk} + \Gamma_{i j}^{k}) \eta_{k}, 
\end{align*}
where $\epsilon_{ijk}$ is the permutation symbol and $1 \leq i, j \leq 3$.
\end{lem}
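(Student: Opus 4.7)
The plan is to verify both assertions pointwise at a fixed $p \in L \times \{\pi/2\}$, since the first is a pointwise identity and the second a relation between values of a connection expression at $p$. For the cross product table, I would apply the defining relation $\tilde{k}(u \times v, w) = \tilde{\varphi}(u, v, w)$ together with the orthonormality of $\{e_1, e_2, e_3, \eta_1, \eta_2, \eta_3, \eta_4\}$ at $p$, which gives
$$e_i \times \eta_a = \sum_{w} \tilde{\varphi}_p(e_i, \eta_a, w)\, w,$$
with $w$ ranging over the basis. From the expression for $\tilde{\varphi}_p$ displayed just before the lemma, each entry is filled in by picking out the monomials containing both $e^i$ and $\eta^a$ in their first two slots: for instance only $\eta^4 \wedge e^1 \wedge \eta^1$ contributes to $\tilde{\varphi}_p(e_1, \eta_1, \cdot)$, giving $e_1 \times \eta_1 = \eta_4$, and only $-e^1 \wedge \eta^2 \wedge \eta^3$ contributes to $\tilde{\varphi}_p(e_1, \eta_2, \cdot)$, giving $e_1 \times \eta_2 = -\eta_3$. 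The other ten entries follow in exactly the same way.

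For the connection formula my strategy is to pass from $C_s(N)$ to $N$ via the preceding lemma. Since $e_i$ and $\eta_j = Je_j$ ($1 \le j \le 3$) extend to vector fields on $N \times \{\pi/2\}$, one has $\nabla^{C_s(N)}_{e_i}\eta_j|_{t = \pi/2} = \nabla^N_{e_i}\eta_j$. I would then split
$$\nabla^N_{e_i}\eta_j = (\nabla^N_{e_i} J)e_j + J\nabla^N_{e_i} e_j = G(e_i, e_j) + J\nabla^N_{e_i} e_j$$
using Lemma \ref{cross_NK}, and decompose $\nabla^N_{e_i} e_j = \sum_k \Gamma_{ij}^k e_k + h(e_i, e_j)$, where $h$ is the second fundamental form of $L \subset N$. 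Because $L$ is Lagrangian, $h(e_i, e_j) \in J(TL)$, hence $Jh(e_i, e_j) \in -TL$ is tangential to $L \times \{\pi/2\}$ and is annihilated by the projection to $\nu$; the tangential term $\sum_k \Gamma_{ij}^k e_k$ becomes $\sum_k \Gamma_{ij}^k \eta_k \in \nu$ after applying $J$. Thus $\nabla^\perp_{e_i}\eta_j = G(e_i, e_j) + \sum_k \Gamma_{ij}^k \eta_k$, reducing matters to the identity $G(e_i, e_j) = \sum_k \epsilon_{ijk}\eta_k$.

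This last identity I would establish by computing $\psi^+$ explicitly at $p$ and reading off $\psi^+(e_i, e_j, \eta_k)$ via the defining relation $k(G(u,v), w) = \psi^+(u, v, w)$. The normalization $\psi^-(e_1, e_2, e_3) = -1$, the $SU(3)$ constraint that $\Psi = \psi^+ + i\psi^-$ is a $(3,0)$-form of unit norm, and the Lagrangian vanishing $\psi^+|_{TL} = 0$ (remark after Lemma \ref{submfd_sine cone NK}) together pin down $\Psi_p = -i(e^1 + i\eta^1) \wedge (e^2 + i\eta^2) \wedge (e^3 + i\eta^3)$; extracting the real part yields $\psi^+$ in a form from which $\psi^+(e_i, e_j, \eta_k) = \epsilon_{ijk}$ is immediate. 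The main obstacle, if any, is consistent bookkeeping of signs and phase conventions — specifically the phase $-i$ for Lagrangians regarded as special Lagrangians and the orientation choice $\psi^-(e_1, e_2, e_3) = -1$ — once those are fixed, both parts of the lemma reduce to direct inspection.
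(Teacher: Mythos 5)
Your proposal is correct and follows the same route the paper intends: the cross-product table is read off from the displayed expression for $\tilde{\varphi}$ at a point of $L\times\{\tfrac{\pi}{2}\}$ via $\tilde{k}(u\times v,w)=\tilde{\varphi}(u,v,w)$, and the connection formula comes from reducing to $\nabla^{N}$ by the preceding lemma and using $(\nabla_{X}J)Y=G(X,Y)$ with $k(G(u,v),w)=\psi^{+}(u,v,w)$ from Lemma \ref{cross_NK}, the term $G(e_{i},e_{j})=\sum_{k}\epsilon_{ijk}\eta_{k}$ being exactly the $\epsilon_{ijk}$ contribution. Your explicit splitting off of the second fundamental form $h(e_{i},e_{j})\in J(TL)$, whose image under $J$ is tangential and hence killed by $\pi_{\nu}$, correctly accounts for why only the Christoffel symbols of $L$ survive, so the argument is complete.
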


Via the identification 
$\mathfrak{X}(L) \oplus C^{\infty}(L) 
\ni 
(\sum_{j = 1}^{3} v_{j} e_{j}, f) 
\mapsto
\sum_{j = 1}^{3} v_{j} \eta_{j} + f \eta_{4}
\in C^{\infty}(L \times \{ \frac{\pi}{2} \}, \nu)$
where $v_{j}, f \in C^{\infty}(L)$, 
we can calculate as in the proof of Proposition \ref{deform_asso_SE}. 
\end{proof}

\section{Computation in the standard sphere $S^{7}$}

By Definition \ref{def on R7}, 
$\mathbb{C}^{4}$  admits a torsion-free ${\rm Spin}(7)$-structure $(\Phi_{0}, h_{0})$, 
which induces the nearly parallel $G_{2}$-structure $(\varphi, g)$ on $S^{7}$ 
by Lemma \ref{NPstr_onSE}. 
In this section, 
we study the deformation spaces of 
homogeneous associative submanifolds in $S^{7}$, 
and prove Theorem \ref{rigid_summary} and \ref{Lag_asso_rigid}.

\subsection{Classification of homogeneous associative submanifolds in $S^{7}$}

Mashimo \cite{Mashimo} classified homogeneous Lagrangian submanifolds in $S^{6}$. 
Applying this classification, 
Lotay \cite{Lotay3} classified homogeneous associative submanifolds in $S^{7}$. 

\begin{prop} [\cite{Lotay3, Mashimo}]  \label{Lotay_classification_asso}
Let $A$ be a connected associative 3-fold in $S^{7} \subset \mathbb{C}^{4}$ 
which is the orbit
of a closed 3-dimensional Lie subgroup of ${\rm Spin}(7)$. 
If $A$ does
not lie in a totally geodesic $S^{6}$, 
then, up to the ${\rm Spin}(7)$-action, $A$ is either
\begin{enumerate}
\item $A_{1} \cong T^{3}$ given by Example \ref{asso_A1}, 
\item $A_{2} \cong {\rm SU}(2)/\mathbb{Z}_{3}$, 
or $A_{3} \cong {\rm SU}(2)$ given by Example \ref{asso_A2 3}, 
\end{enumerate}

If  $A$ lies in a totally geodesic $S^{6}$, 
then, up to the $G_{2}$-action, $A$ is either 
\begin{enumerate}
\item the totally geodesic $S^{3} \cong {\rm SU}(2)$, 
\item $L_{1} \cong {\rm SU}(2)$ given by Example \ref{asso_L1},
\item $L_{2} \cong {\rm SO}(3) \cong {\rm SU}(2)/\mathbb{Z}_{2}$ given by Example \ref{asso_L2}, or
\item $L_{3} \cong {\rm SU}(2)/A^{*}_{4}$, or $L_{4} \cong {\rm SU}(2)/D^{*}_{3}$ given by Example \ref{asso_L3 4}. 
\end{enumerate}
\end{prop}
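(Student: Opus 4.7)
The plan is to reduce the classification to two sub-problems by splitting on whether or not $A$ lies in a totally geodesic $S^{6} \subset S^{7}$, and then to invoke Mashimo's classification for one half and run a direct Lie-theoretic enumeration for the other half. To set up the reduction, I would first write $A = H \cdot p$ for some closed $3$-dimensional subgroup $H \subset {\rm Spin}(7)$ and $p \in S^{7}$; since $\dim A = 3 = \dim H$, the stabilizer $H_{p}$ is discrete, so only finitely many orbit types can occur for a given $H$. The possible Lie algebras $\mathfrak{h}$ are $\mathfrak{su}(2)$ and the abelian $\mathbb{R}^{3}$, so $H$ is (a quotient of) ${\rm SU}(2)$ or a $3$-torus.

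For the case $A \subset S^{6}$, I would use that a totally geodesic round $S^{6}$ inherits a nearly K\"ahler structure, and that the ambient round $S^{7}$ is realized geometrically as $S^{6}$ together with its two poles in a way compatible with the sine-cone construction of Lemma \ref{G2 str on NK sine cone}. By the equatorial case of Lemma \ref{submfd_sine cone NK}, an associative $3$-fold in $S^{7}$ contained in $S^{6}$ is precisely a Lagrangian $3$-fold in $S^{6}$. The stabilizer of the ambient $S^{6}$ inside ${\rm Spin}(7)$ is $G_{2}$, so the classification of homogeneous associatives in $S^{6}$ up to the ${\rm Spin}(7)$-action reduces to that of homogeneous Lagrangians in the nearly K\"ahler $S^{6}$ up to the $G_{2}$-action, which is precisely Mashimo's theorem, producing the totally geodesic $S^{3}$ together with $L_{1}, L_{2}, L_{3}, L_{4}$.

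For the case $A \not\subset S^{6}$, I would enumerate, up to ${\rm Spin}(7)$-conjugacy, the connected $3$-dimensional subgroups $H$ whose orbit through some $p \in S^{7}$ does not lie in a totally geodesic $S^{6}$. For $H \cong T^{3}$, the embedding is determined by a triple of commuting infinitesimal generators in $\mathfrak{spin}(7) \cong \mathfrak{so}(7)$ and is parameterized, via a maximal torus, by a finite set of weight configurations; imposing the associative condition $\varphi|_{TA} = {\rm vol}_{A}$ pointwise (which by homogeneity reduces to a single linear/algebraic condition at $p$) cuts this down and gives the single family $A_{1}$ of Example \ref{asso_A1}. For $H$ a quotient of ${\rm SU}(2)$, I would classify irreducible and reducible representations of $\mathfrak{su}(2)$ of (real) dimension $\le 8$ embedding into $\mathfrak{spin}(7)$ preserving $\Phi_{0}$, which is a short finite list, and for each such embedding check the associative condition on the resulting $3$-dimensional orbits; this produces exactly $A_{2}$ and $A_{3}$.

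The main obstacle, and the step that occupies the bulk of Lotay's work, is the last one: bounding and explicitly classifying the embeddings ${\rm SU}(2) \hookrightarrow {\rm Spin}(7)$ (together with their orbits on $S^{7}$) modulo conjugation, and eliminating the embeddings whose orbits happen to fall into a totally geodesic $S^{6}$ (which are already accounted for by the Mashimo half). Checking the associative condition for each candidate is a short linear-algebra computation once the embedding is written explicitly in the standard basis of Definition \ref{def on R7}, but the enumeration itself is delicate and relies crucially on the decomposition of $\mathbb{R}^{8}$ as an ${\rm SU}(2)$-representation together with the compatibility of this decomposition with $\Phi_{0}$.
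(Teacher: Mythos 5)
The paper does not actually prove this proposition: it is quoted with the citations to Lotay and Mashimo (Lotay's classification, which in the $S^{6}$ case rests on Mashimo's classification of homogeneous Lagrangian submanifolds of the nearly K\"ahler $S^{6}$), so there is no in-paper argument to compare against in detail. Judged on its own, your outline reproduces the architecture of the cited proof: split according to whether $A$ lies in a totally geodesic $S^{6}$; in that case use the sine-cone picture (Lemma \ref{submfd_sine cone NK}) to identify associative $3$-folds inside the equatorial $S^{6}$ with Lagrangian $3$-folds and invoke Mashimo; otherwise reduce to $H \cong T^{3}$ or $H$ a quotient of ${\rm SU}(2)$ (legitimate, since a closed subgroup of the compact group ${\rm Spin}(7)$ is compact, so $\mathfrak{h}$ is $\mathbb{R}^{3}$ or $\mathfrak{su}(2)$) and enumerate embeddings and orbits. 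This is indeed how the cited classification proceeds.

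As a proof, however, the proposal has a genuine gap: the entire mathematical content of the statement is the enumeration you defer. Asserting that the $T^{3}$ weight configurations ``cut down to the single family $A_{1}$'' and that the ${\rm SU}(2)$-representations on $\mathbb{R}^{8}$ compatible with $\Phi_{0}$ form ``a short finite list'' producing ``exactly $A_{2}$ and $A_{3}$'' is a restatement of the theorem, not an argument; nothing in the proposal excludes further associative orbits, and the congruence statements (``up to the ${\rm Spin}(7)$-action'') are not addressed --- note that for a fixed $H$ there is typically a continuum of associative orbits, as with the family $A_{2}(\theta)$ of Remark \ref{congr_A2}, all of which must be shown to be ${\rm Spin}(7)$-congruent; your remark that only finitely many orbit types occur does not do this. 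A second, smaller gap is the reduction to Mashimo: Mashimo classifies orbits of closed subgroups of $G_{2}$ in $S^{6}$, whereas your $A$ is an orbit of a subgroup $H \subset {\rm Spin}(7)$ which need not preserve the totally geodesic $S^{6}$ nor lie in $G_{2}$, so one must argue separately that such an $A$ is nevertheless congruent to an orbit on Mashimo's list. In short, the proposal is an accurate roadmap of the proof in the cited references, but it does not itself establish the classification.
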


Note that the automorphism group of nearly parallel $S^{7}$ is ${\rm Spin}(7)$ 
and that of nearly K\"{a}hler $S^{6}$ is $G_{2}$.

\begin{example} \label{asso_A1}
Define the action of 
$T^{3} \cong  (\mathbb{R}/ 2 \pi \mathbb{Z})^{3}$ on $\mathbb{C}^{4}$ by 
\begin{align*}
(\theta_{1}, \theta_{2}, \theta_{3}) \cdot {}^t\! (z_{1}, z_{2}, z_{3}, z_{4})
= {}^t\! (e^{i \theta_{1}} z_{1}, e^{i \theta_{2}} z_{2}, e^{i \theta_{3}} z_{3},
   e^{-i (\theta_{1} + \theta_{2} +\theta_{3})} z_{4}), 
\end{align*}
where $\theta_{i} \in \mathbb{R}/ 2 \pi \mathbb{Z}$ and $z_{i} \in \mathbb{C}$. 
Then 
\begin{align*}
A_{1} := T^{3} \cdot \frac{1}{2} {}^t\! (1, 1, 1, i) \cong T^{3}
\end{align*}
is special Legendrian given in \cite{Harvey Lawson}.
\end{example}

\begin{example} \label{asso_A2 3}
Define the ${\rm SU}(2)$-action on $\mathbb{C}^{4}$ by 

\begin{align} \label{SU(2)actionC4}
\left( 
\begin{array}{cc}
a & -\overline{b} \\
b &   \overline{a} \\
\end{array} 
\right) 
\cdot 
\left( 
\begin{array}{c}
z_{1} \\
z_{2} \\
z_{3} \\
z_{4} \\
\end{array} 
\right) 
= 
\left( 
\begin{array}{cccc}
a^{3}               & -\sqrt{3} a^{2} \overline{b} & \sqrt{3} a \overline{b}^{2}        & - \overline{b}^{3}\\
\sqrt{3} a^{2}b  & a (|a|^{2} - 2 |b|^{2})         & - \overline{b} (2|a|^{2} - |b|^{2}) & \sqrt{3} \overline{a} \overline{b}^{2}\\
\sqrt{3} a b^{2} & b (2|a|^{2} - |b|^{2})          & \overline{a} (|a|^{2} - 2 |b|^{2})  &-\sqrt{3} \overline{a}^{2} \overline{b}\\
b^{3}               & \sqrt{3} \overline{a} b^{2}  & \sqrt{3} \overline{a}^{2} b        & \overline{a}^{3}
\end{array} 
\right)
\left( 
\begin{array}{c}
z_{1} \\
z_{2} \\
z_{3} \\
z_{4} \\
\end{array} 
\right), 
\end{align}
where $z_{i} \in \mathbb{C}$ and 
$a, b \in \mathbb{C}$ such that $|a|^{2} + |b|^{2} = 1$. 
Set 
\begin{align*}
A_{2} = {\rm SU}(2) \cdot 
                            {}^t\! (1, 0, 0, 0) \cong {\rm SU}(2)/ \mathbb{Z}_{3}, \qquad
A_{3} = {\rm SU}(2) \cdot \frac{1}{\sqrt{2}} {}^t\! (0, 1, i, 0) \cong {\rm SU}(2), 
\end{align*}
where 
$
\mathbb{Z}_{3} = 
\left \{ 
\left( 
\begin{array}{cc}
\zeta & 0 \\
0       &   \overline{\zeta} 
\end{array} 
\right) 
\in {\rm SU}(2); 
\zeta^{3} = 1
\right \}.
$
Then 
$A_{2}$ is the Hopf lift of the Veronese curve in $\mathbb{C}P^{3}$: 
\begin{align*}
\{ [x^{3} : \sqrt{3} x^{2} y : \sqrt{3} x y^{2} : y^{3}] \in \mathbb{C}P^{3}; [x : y] \in \mathbb{C}P^{1} \}, 
\end{align*}
and hence associative. 
However, $A_{3}$ is an associative submanifold which does not arise from other known geometries. 
\end{example}

\begin{rem} \label{congr_A2}
Set 
$A_{2}(\theta) = {\rm SU}(2) \cdot 
                            {}^t\! (\cos \theta, 0, 0, \sin \theta)$ 
for $\theta \in [0, \frac{\pi}{4}]$. 
It is known that 
all the $A_{2}(\theta)$ are congruent up to the ${\rm Spin}(7)$-action
to $A_{2} = A_{2}(0)$, which is ${\rm U}(2)$-invariant. 
In \cite{Joyce}, 
$A_{2}(\frac{\pi}{4})$ is shown to be special Legendrian. 
\end{rem}

Next, we give examples of homogeneous Lagrangian submanifolds in $S^{6}$.

\begin{example} \label{asso_L1}
Define the ${\rm SU}(2)$-action on $\mathbb{R}^{7} = \mathbb{R} \oplus \mathbb{C}^{3}$ by 
\begin{align} \label{SU2action_L1}
\left( 
\begin{array}{cc}
a & -\overline{b} \\
b &   \overline{a} \\
\end{array} 
\right) 
\cdot 
\left( 
\begin{array}{c}
x_{1} \\
z_{1} \\
z_{2} \\
z_{3} \\
\end{array} 
\right) 
= 
\left( 
\begin{array}{c}
(|a|^{2} - |b|^{2}) x_{1} - 2 {\rm Im}(\overline{ab} z_{1}) \\
2i \overline{a} b x_{1} + \overline{a}^{2} z_{1} + b^{2} \overline{z_{1}} \\
a z_{2} - \overline{b} \overline{z_{3}}  \\
\overline{b} \overline{z_{2}} + a z_{3} \\
\end{array} 
\right), 
\end{align}
where $a, b \in \mathbb{C}$ such that $|a|^{2} + |b|^{2} = 1$. 
Then 
\begin{align}
L_{1}:= {\rm SU}(2) \cdot {}^t\! (\tfrac{\sqrt{5}}{3}, 0, \tfrac{2}{3}, 0) \cong {\rm SU}(2), 
\end{align}
where ${}^t\! (\frac{\sqrt{5}}{3}, 0, \frac{2}{3}, 0) \in \mathbb{R} \oplus \mathbb{C}^{3}$, 
is Lagrangian in $S^{6}$ given in \cite{Harvey Lawson}. 
Moreover, $L_{1}$ is invariant under a ${\rm U}(2) (\subset G_{2})$ action.  
\end{example}

\begin{example} \label{asso_L2}
Let $L_{2} \subset S^{6}$ be given by 
\begin{align}
L_{2} =
\left \{
(0, z_{1}, z_{2}, z_{3}) \in \mathbb{R} \oplus \mathbb{C}^{3}; 
|z_{1}|^{2} + |z_{2}|^{2} + |z_{3}|^{2} = 1, 
z_{1}^{2} + z_{2}^{2} + z_{3}^{2} = 0
\right \}. 
\end{align}
Since $L_{2}$ is the link of an complex cone, it is Lagrangian in $S^{6}$. 
Define the ${\rm SO}(3)$-action on $\mathbb{R}^{7} = \mathbb{R} \oplus \mathbb{C}^{3}$ 
by the trivial action of  $\mathbb{R}$ and the standard (real) action on $\mathbb{C}^{3}$. 
Let $\varpi : {\rm SU}(2) \rightarrow {\rm SO}(3)$ 
be a standard double covering: 
\begin{align} \label{covering_SU2_SO3}
\varpi: 
\left( 
\begin{array}{cc}
a & -\overline{b} \\
b &   \overline{a} \\
\end{array} 
\right) 
\mapsto
\left( 
\begin{array}{ccc}
|a|^{2} - |b|^{2}                 & 2 {\rm Im}(ab)            & -2 {\rm Re}(ab)\\
-2 {\rm Im}(\overline{a} b) & {\rm Re}(a^{2} + b^{2})  & {\rm Im}(a^{2} + b^{2})\\
 2 {\rm Re}(\overline{a} b) & {\rm Im}(-a^{2} + b^{2}) & {\rm Re}(a^{2} - b^{2}) 
\end{array} 
\right), 
\end{align}
where $a, b \in \mathbb{C}$ such that $|a|^{2} + |b|^{2} = 1$. 
By composing these actions, 
${\rm SU}(2)$ acts on $\mathbb{R}^{7}$, and we have  
\begin{align*}
L_{2} = {\rm SU}(2) \cdot \frac{1}{\sqrt{2}} {}^t\! (0, 0, 1, i) \cong {\rm SU}(2)/ \mathbb{Z}_{2} = {\rm SO}(3). 
\end{align*}
\end{example}

\begin{example} \label{asso_L3 4}
Let $\{\epsilon_{1}, \cdots, \epsilon_{7} \}$ be a standard basis for $\mathbb{R}^{7}$. 
Identify $\mathbb{R}^{7}$ with the homogeneous harmonic cubics 
$\mathcal{H}^{3}(\mathbb{R}^{3})$ on $\mathbb{R}^{3}$ by: 
\begin{align*}
&\epsilon_{1} \mapsto 
\frac{ \sqrt{10}}{10} x (2 x^{2} -3 y^{2} - 3z^{2}); \qquad
&
&\epsilon_{2} \mapsto 
-\sqrt{6} xyz; \qquad
\epsilon_{3} \mapsto 
\frac{\sqrt{6}}{2} x (y^{2} - z^{2}); \\
&\epsilon_{4} \mapsto 
-\frac{\sqrt{15}}{10} y (4 x^{2} - y^{2} - z^{2}); \qquad
&
&\epsilon_{5} \mapsto 
-\frac{\sqrt{15}}{10} z (4 x^{2} - y^{2} - z^{2}); \\
&\epsilon_{6} \mapsto 
\frac{1}{2} y (y^{2} - 3 z^{2}); \qquad
&
&\epsilon_{7} \mapsto 
\frac{1}{2} z (z^{2} - 3 y^{2}). 
\end{align*}
Let ${\rm SU}(2)$ act on $\mathcal{H}^{3}(\mathbb{R}^{3}) \cong \mathbb{R}^{7}$ as 
$
A \cdot f(x, y, z) = f((x, y, z) \varpi(A)), 
$
where $A \in {\rm SU}(2)$ and $f \in \mathcal{H}^{3}(\mathbb{R}^{3}) \cong \mathbb{R}^{7}$. 
Set 
\begin{align*}
L_{3}:= {\rm SU}(2) \cdot \epsilon_{2}, \qquad
L_{4}:= {\rm SU}(2) \cdot \epsilon_{6}. 
\end{align*}
Then $L_{3} \cong {\rm SU}(2)/ A^{*}_{4}$ and 
$L_{4} \cong {\rm SU}(2)/D^{*}_{3}$ are Lagrangian, 
where 
$A^{*}_{4}$ is a binary tetrahedral group of order 24 generated by 
\begin{align} \label{generator A4}
k_{1}=\left( 
\begin{array}{cc}
i  & 0 \\
0 & -i
\end{array} 
\right), \qquad
k_{2}=\left( 
\begin{array}{cc}
0 & -1 \\
1 & 0
\end{array} 
\right), \qquad
k_{3}=
\frac{1}{\sqrt{2}}
\left( 
\begin{array}{cc}
e^{\frac{\pi i}{4}} & - e^{\frac{- \pi i}{4}} \\
e^{\frac{\pi i}{4}} & e^{\frac{- \pi i}{4}}
\end{array} 
\right),  
\end{align}
and $D^{*}_{3}$ is a binary dihedral group of order 12 generated by 
\begin{align} \label{generator D3}
k_{4}=
\left( 
\begin{array}{cc}
0 & -1 \\
1 & 0
\end{array} 
\right), \qquad
k_{5}=
\left( 
\begin{array}{cc}
e^{\frac{\pi i}{3}} & 0\\
0                     & e^{- \frac{\pi i}{3}}
\end{array} 
\right). 
\end{align}

\end{example}


\subsection{Computations on ${\rm SU}(2)$}

For the convenience of the following computations, 
we summarize formulas on ${\rm SU}(2)$. 
Define the basis of the Lie algebra $\mathfrak{su}(2)$ of ${\rm SU}(2)$ by 
\begin{align} \label{E1E2E3}
E_{1} = 
\left( 
\begin{array}{cc}
0   & 1 \\
-1 & 0
\end{array} 
\right), \qquad
E_{2} = 
\left( 
\begin{array}{cc}
0 & i \\
i & 0
\end{array} 
\right), \qquad
E_{3} = 
\left( 
\begin{array}{cc}
i &   0 \\
0 & -i
\end{array} 
\right), 
\end{align}
which satisfies the relation $[E_{i}, E_{i + 1}] = 2 E_{i + 2}$ for  $i \in \mathbb{Z}/3$. 
We see the following by Proposition \ref{Fourier_general} and Lemma \ref{Schur_orthog}.

\begin{lem} \label{Fourier_S3}
Let 
$V_{n}$ be a $\mathbb{C}$-vector space 
of all complex homogeneous polynomials 
with two variables $z_{1}, z_{2}$ of degree $n (n \geq 0)$ and 
define the representation 
$\rho_{n} : {\rm SU}(2) \rightarrow {\rm GL}(V_{n})$ as 
\begin{align*} 
\left(\rho_{n} 
\left( 
\begin{array}{cc}
a & -\overline{b} \\
b &   \overline{a} \\
\end{array} 
\right) f
\right) (z_{1}, z_{2}) 
=
f
\left( 
(z_{1}, z_{2}) 
\left(
\begin{array}{cc}
a & -\overline{b} \\
b &   \overline{a} \\
\end{array} 
\right) 
\right).  
\end{align*} 
Define the Hermitian inner product $\langle \ , \ \rangle$ 
of $V_{n}$ such that 
\begin{align*} 
\left \{
v^{(n)}_{k}
=
\frac{1}{\sqrt{k ! (n - k)!}} z_{1}^{n-k} z_{2}^{k}
\right \}_{0 \leq k \leq n}  
\end{align*} 
is a unitary basis of $V_{n}$. 
If we denote by $\widehat{{\rm SU}(2)}$  
the set of all equivalence classes of finite dimensional irreducible representations of ${\rm SU}(2)$, 
we know that 
$\widehat{{\rm SU}(2)} = \{ (V_{n} ,\rho_{n}) ; n \geq 0 \}$.  
Then 
every $\mathbb{C}$-valued continuous function 
on ${\rm SU}(2)$ is 
uniformly approximated by 
the $\mathbb{C}$-linear combination of the following functions: 
\begin{align*}
\left \{ \langle \rho_{n} (\cdot) v^{(n)}_{i}, v^{(n)}_{j} \rangle ; 
n \geq 0, 0 \leq i, j \leq n
\right \}, 
\end{align*} 
which are mutually orthogonal with respect to the $L_{2}$ inner product. 
\end{lem}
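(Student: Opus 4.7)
The plan is to invoke the two appendix results the lemma cites---Proposition~\ref{Fourier_general} (a Peter--Weyl type density statement) and Lemma~\ref{Schur_orthog} (Schur orthogonality)---once three inputs are in place for $G=\mathrm{SU}(2)$: (i)~the representation $\rho_n$ is unitary with respect to the prescribed Hermitian inner product; (ii)~the list $\{(V_n,\rho_n)\}_{n\geq 0}$ consists of mutually inequivalent irreducibles; (iii)~every finite dimensional irreducible representation of $\mathrm{SU}(2)$ appears in this list.

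For (i), I would observe that the normalization $v_k^{(n)}=z_1^{n-k}z_2^k/\sqrt{k!(n-k)!}$ is precisely the one that makes the Bargmann-type pairing $\langle z_1^a z_2^b,\, z_1^c z_2^d\rangle=a!\,b!\,\delta_{ac}\delta_{bd}$ orthonormal on $V_n$, and this pairing is manifestly $\mathrm{U}(2)$-invariant, hence $\mathrm{SU}(2)$-invariant. Therefore $\rho_n$ is unitary and $\{v_k^{(n)}\}_{0\leq k\leq n}$ is a unitary basis, as claimed.

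For (ii) and (iii), I would run the standard highest-weight argument. Diagonalizing the maximal torus $T=\{\mathrm{diag}(e^{i\theta},e^{-i\theta})\}$ on $V_n$ produces weight spaces $\mathbb{C}\,v_k^{(n)}$ with distinct weights $n-2k$, and any nonzero invariant subspace contains a highest-weight vector which, under the lowering operator in the complexified Lie algebra $\mathfrak{sl}(2,\mathbb{C})$, generates all of $V_n$; this gives irreducibility, and inequivalence across different $n$ is immediate from $\dim V_n=n+1$. Exhaustiveness then follows from the classical fact that every finite dimensional irreducible $\mathfrak{sl}(2,\mathbb{C})$-module is determined by a single non-negative integer highest weight, realized uniquely by $V_n$.

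With (i)--(iii), density of the matrix coefficients $\langle\rho_n(\cdot)v_i^{(n)},v_j^{(n)}\rangle$ in $C(\mathrm{SU}(2);\mathbb{C})$ is exactly Proposition~\ref{Fourier_general} applied in the unitary basis $\{v_k^{(n)}\}$, and mutual $L^2$-orthogonality is the direct content of Lemma~\ref{Schur_orthog} for the unitary irreducibles $(V_n,\rho_n)$. The main obstacle is really (iii), the completeness of the highest-weight classification, since everything else is a mechanical verification once this is known; in practice one would simply cite it from a standard reference on compact Lie groups.
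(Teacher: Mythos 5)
Your proposal is correct and follows essentially the same route as the paper: the paper simply asserts the lemma as a consequence of Proposition~\ref{Fourier_general} (applied with $K=\{e\}$ and $E_{0}=\mathbb{C}$) and Lemma~\ref{Schur_orthog}, together with the classical fact that $\widehat{{\rm SU}(2)}=\{(V_{n},\rho_{n})\}_{n\geq 0}$, which is exactly your steps (i)--(iii). Your verifications of unitarity via the Bargmann pairing and of irreducibility/exhaustiveness via highest weights are the standard details the paper leaves implicit.
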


By a direct computation, we see the following. 

\begin{lem} \label{calc_S3}
Identify $X \in \mathfrak{su}(2) \subset U(\mathfrak{su}(2))$ with 
the left invariant differential operator on ${\rm SU}(2)$. 
For $u = \sum_{l = 0}^{n} C_{l} v^{(n)}_{l} \in V_{n}$, 
set  
\begin{align*}
u^{*} = \sum_{l = 0}^{n} (-1)^{n - l} \overline{C}_{n - l} v^{(n)}_{l} \in V_{n}. 
\end{align*}
Then 
for any $n \geq 0, 0 \leq k, l \leq n,  u, v \in V_{n}, X \in \mathfrak{su}(2)$, we have 
\begin{align*}
X \langle \rho_{n} (\cdot) v, u \rangle
&=
\langle \rho_{n} (\cdot)  d \rho_{n}  (X) v, u \rangle,\\
(d \rho_{n} (X) v) (z_{1}, z_{2}) 
&= 
\left( \frac{\partial v}{\partial z_{1}}, \frac{\partial v}{\partial z_{2}} \right) {}^t\! X
\left(
\begin{array}{l}
z_{1} \\
z_{2}
\end{array}
\right), \\
\overline{ \langle \rho_{n} (\cdot) v^{(n)}_{k}, u \rangle} 
&=
(-1)^{k}
\langle \rho_{n} (\cdot) v^{(n)}_{n - k}, u^{*} \rangle,  \\
(-i E_{1} + E_{2}) 
\langle \rho_{n} (\cdot) v^{(n)}_{k}, u \rangle
&=
\left\{
\begin{array}{ll}
2i \sqrt{(k + 1)(n - k)} \langle \rho_{n} (\cdot) v^{(n)}_{k + 1}, u \rangle, & (k < n) \\
0,                                                                                                      & (k = n)
\end{array}
\right.\\
(i E_{1} + E_{2}) 
\langle \rho_{n} (\cdot) v^{(n)}_{k}, u \rangle
&=
\left\{
\begin{array}{ll}
2i \sqrt{k (n - k + 1)} \langle \rho_{n} (\cdot) v^{(n)}_{k - 1}, u \rangle, & (k > 0) \\
0,                                                                                                      & (k = 0)
\end{array}
\right.\\
i E_{3} \langle \rho_{n} (\cdot) v^{(n)}_{k}, u \rangle
&= (-n + 2k) \langle \rho_{n} (\cdot) v^{(n)}_{k}, u \rangle. \\ 
\end{align*}
\end{lem}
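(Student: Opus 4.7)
The plan is to verify each of the five identities by unpacking definitions, using the left-invariant vector field interpretation of $X \in \mathfrak{su}(2)$, the explicit action of $\rho_n$ on homogeneous polynomials, and the Hermitian inner product on $V_n$.

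First I would check the identity $X\langle \rho_n(\cdot) v, u\rangle = \langle \rho_n(\cdot) d\rho_n(X) v, u\rangle$. This is immediate from the convention $(Xf)(g) = \frac{d}{dt}\big|_{t=0} f(g\exp(tX))$ applied to the matrix coefficient $f(g) = \langle \rho_n(g) v, u\rangle$, after interchanging the $t$-derivative with the inner product. The explicit formula for $d\rho_n(X) v$ then follows by differentiating $(\rho_n(\exp tX) v)(z_1, z_2) = v((z_1, z_2)\exp(tX))$ at $t=0$; the chain rule applied to the row-vector product $(z_1, z_2)\exp(tX)$ yields the claimed bilinear expression in the entries of $X$ and the partials of $v$.

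For the three operator identities, I would substitute $X = -iE_1 + E_2$, $iE_1 + E_2$, and $iE_3$ in turn. These matrices are strictly upper-triangular, strictly lower-triangular, and diagonal respectively, so $d\rho_n(X)$ collapses to the simple operators $2i z_2 \partial_1$, $2i z_1 \partial_2$, and $-z_1 \partial_1 + z_2\partial_2$. Applying each to $v_k^{(n)} = (k!(n-k)!)^{-1/2} z_1^{n-k} z_2^k$ and simplifying factorial ratios of the form $(n-k)/\sqrt{k!(n-k)!} = \sqrt{(n-k)(k+1)}/\sqrt{(k+1)!(n-k-1)!}$ produces the stated raising, lowering, and weight coefficients $2i\sqrt{(k+1)(n-k)}$, $2i\sqrt{k(n-k+1)}$, and $-n+2k$; the boundary cases $k=0$ and $k=n$ drop out automatically from the vanishing of the differentiated monomial.

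The conjugation identity is the most delicate step. It amounts to saying that $u \mapsto u^*$ is an antilinear ${\rm SU}(2)$-intertwiner on $V_n$, reflecting the quaternionic (for odd $n$) or real (for even $n$) structure of the irreducible. I would prove it by using $\overline{g} = \sigma g \sigma^{-1}$ on ${\rm SU}(2)$, where $\sigma = \bigl(\begin{smallmatrix} 0 & -1 \\ 1 & 0 \end{smallmatrix}\bigr)$, lifting to the operator $\rho_n(\sigma)$ on $V_n$, and checking on the basis that $\rho_n(\sigma)$ composed with the naive antilinear polynomial conjugation coincides with the $*$-operation. Combined with the unitarity of $\rho_n$, this rewrites $\overline{\langle \rho_n(g) v_k^{(n)}, u\rangle}$ in terms of $\langle \rho_n(g) v_{n-k}^{(n)}, u^*\rangle$ up to the stated sign after reindexing. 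Tracking the signs and the relabelling $l \mapsto n-l$ is the main bookkeeping obstacle in the lemma; the other four identities are direct calculations from the definitions.
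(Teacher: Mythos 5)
Your proposal is correct and matches the paper, which offers no written argument beyond ``By a direct computation, we see the following'': the first four identities follow exactly as you describe, and your treatment of the conjugation identity via the antiunitary intertwiner $\rho_n(\sigma)^{-1}\circ(\text{coefficient conjugation})$ with $\sigma = \bigl(\begin{smallmatrix}0&-1\\1&0\end{smallmatrix}\bigr)$ does reproduce the stated signs, since $\rho_n(\sigma)v^{(n)}_k=(-1)^k v^{(n)}_{n-k}$. One harmless slip: because $\rho_n$ acts on row vectors from the right, $-iE_1+E_2=\bigl(\begin{smallmatrix}0&0\\2i&0\end{smallmatrix}\bigr)$ is strictly \emph{lower} triangular and $iE_1+E_2$ strictly upper, the opposite of what you wrote, but the operators $2iz_2\partial_1$ and $2iz_1\partial_2$ you extract from them are the correct ones.
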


The next lemma is useful for the later computations.

\begin{lem} \label{gen_rot}
Suppose that $\{ e_{1}, e_{2}, e_{3} \} = \{ p E_{1}, p E_{2}, q E_{3} \}$
where $0 \neq p, q \in \mathbb{R}$ 
is an oriented orthonormal basis of $\mathfrak{su}(2)$ 
for some metric and orientation. 
For 
$v = \sum_{i = 1}^{3} v_{i} e_{i} \in C^{\infty} ({\rm SU}(2), \mathfrak{su}(2))$, 
$\overline{{\rm rot}} (v) = \alpha v$ for $0 \neq \alpha \in \mathbb{R}$ 
is equivalent to 
\begin{align}
\left( i e_{3} - (2q + \alpha) \right) (v_{1} + i v_{2}) + (-i e_{1} + e_{2}) v_{3} &= 0, \label{gen_rot1}\\
(i e_{1} + e_{2}) (v_{1} + i v_{2}) + \left( \alpha + \frac{2 p^{2}}{q} + i e_{3} \right) v_{3} &= 0. \label{gen_rot2}
\end{align}
These equations imply that 
\begin{align}
\left \{ \Delta_{+} + \left (\frac{4 p^{2}}{q} - 4q \right) i e_{3} 
+ \left( -\alpha - \frac{2 p^{2}}{q} + 2q \right) (2q + \alpha) \right \} (v_{1} + i v_{2}) = 0, \label{gen_rot3}\\
\left \{ \Delta_{+} - \alpha \left( \alpha + \frac{2 p^{2}}{q} \right) \right \} v_{3} = 0, \label{gen_rot4} 
\end{align}
where $\Delta_{+} = - \sum_{i = 1}^{3} e_{i}^{2}$ is a Laplacian. 
Especially, for any $n \geq 0, 0 \leq k \leq n, u \in V_{n}$, we have 
\begin{align}
&\Delta_{+} \langle \rho_{n} (\cdot) v^{(n)}_{k}, u \rangle
=
\left \{
(-p^{2} + q^{2})(n - 2k)^{2} 
+
p^{2} (n^{2} + 2n)
\right \}
\langle \rho_{n} (\cdot) v^{(n)}_{k}, u \rangle, \label{gen_rot5} \\
&\left \{ \Delta_{+} + \left (\frac{4 p^{2}}{q} - 4q \right) i e_{3} 
+ \left( -\alpha - \frac{2 p^{2}}{q} + 2q \right) (2q + \alpha) \right \} 
\langle \rho_{n} (\cdot) v^{(n)}_{k}, u \rangle  \nonumber \\
&=
\left \{
(-p^{2} + q^{2})(n - 2k + 2)^{2} 
+
p^{2} (n^{2} + 2n) - \alpha \left( \alpha + \frac{2 p^{2}}{q} \right) 
\right \}
\langle \rho_{n} (\cdot) v^{(n)}_{k}, u \rangle. \label{gen_rot6}
\end{align}
\end{lem}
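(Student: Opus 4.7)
The plan is to unfold Remark~\ref{rot_homog} for the specific basis $\{pE_1, pE_2, qE_3\}$, to decouple the resulting system by complexifying, and to reduce (6.3.5)--(6.3.6) to the action of the $\mathfrak{su}(2)$-Casimir on matrix coefficients via Lemma~\ref{calc_S3}.

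First, from $[E_i, E_{i+1}] = 2E_{i+2}$ I would compute $[e_2, e_3] = 2q\,e_1$, $[e_3, e_1] = 2q\,e_2$ and $[e_1, e_2] = (2p^2/q)\,e_3$, so the only nonzero structure constants are $c_{23}^{1} = c_{31}^{2} = 2q$ and $c_{12}^{3} = 2p^2/q$. Plugging into Remark~\ref{rot_homog}, the equation $\overline{{\rm rot}}(v) = \alpha v$ becomes three scalar equations, and forming (Row~1) $+ i\,$(Row~2) yields (\ref{gen_rot1}), while Row~3 coincides with the real part of (\ref{gen_rot2}). The imaginary part of (\ref{gen_rot2}), namely $e_1 v_1 + e_2 v_2 + e_3 v_3 = 0$, is not one of the three original equations, but it follows from them: applying $e_1, e_2, e_3$ respectively to Rows 1, 2, 3 and summing, the differential cross terms reassemble as brackets $[e_i, e_j]$, which precisely cancel the diagonal correction coming from the $c_{ij}^{k}$'s, leaving $-\alpha(e_1 v_1 + e_2 v_2 + e_3 v_3) = 0$. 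This is the only place where the hypothesis $\alpha \neq 0$ intervenes.

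Next, to derive (\ref{gen_rot4}) I would apply $(ie_1 + e_2)$ to (\ref{gen_rot1}) and use (\ref{gen_rot2}) to eliminate the factor $(ie_1 + e_2)(v_1 + iv_2)$. Two algebraic identities are the engine: the commutator $[ie_1 + e_2,\, ie_3] = 2q(ie_1 + e_2)$, immediate from the structure constants, and $(ie_1 + e_2)(-ie_1 + e_2) = e_1^{2} + e_2^{2} + i(2p^{2}/q)\,e_3$. After expansion, all $e_3 v_3$ contributions cancel and the identity collapses to $(e_1^{2} + e_2^{2} + e_3^{2})v_3 + \alpha(\alpha + 2p^{2}/q)v_3 = 0$, which is (\ref{gen_rot4}) since $\Delta_{+} = -(e_1^{2} + e_2^{2} + e_3^{2})$. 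Equation (\ref{gen_rot3}) is obtained symmetrically by applying $(-ie_1 + e_2)$ to (\ref{gen_rot2}) and substituting (\ref{gen_rot1}); the first-order term $(4p^{2}/q - 4q)\,ie_3$ arises from the analogous commutator $[-ie_1+e_2,\, e_3]$ combined with the correction $2p^2/q$.

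Finally, (\ref{gen_rot5})--(\ref{gen_rot6}) follow from Lemma~\ref{calc_S3} by direct substitution. Writing $\Delta_{+} = -p^{2}(E_1^{2} + E_2^{2} + E_3^{2}) + (p^{2} - q^{2})E_3^{2}$, the first summand is $p^{2}$ times the standard $\mathfrak{su}(2)$-Casimir; using $(-iE_1 + E_2)(iE_1 + E_2) = E_1^{2} + E_2^{2} - 2iE_3$ together with the raising/lowering formulas of Lemma~\ref{calc_S3}, one checks that this Casimir acts on every matrix coefficient of $V_n$ by $n(n+2)$. Adding the $E_3^{2}$ correction $-(n-2k)^{2}$ yields (\ref{gen_rot5}). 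For (\ref{gen_rot6}) one plugs in the eigenvalues of $\Delta_{+}$ and $ie_3$ and verifies the algebraic identity $(-p^{2} + q^{2})(n - 2k)^{2} + 4(q^{2} - p^{2})(-n + 2k) = (-p^{2} + q^{2})\bigl[(n - 2k + 2)^{2} - 4\bigr]$, whose residual constants combine with $(-\alpha - 2p^{2}/q + 2q)(2q + \alpha)$ to produce $-\alpha(\alpha + 2p^{2}/q)$. The only delicate point in the whole argument is the bookkeeping of commutators for the equivalence direction, because the apparent extra equation $e_1 v_1 + e_2 v_2 + e_3 v_3 = 0$ must be recognized as a derived consequence that exploits $\alpha \neq 0$; everything else reduces mechanically to the structure constants and to Lemma~\ref{calc_S3}.
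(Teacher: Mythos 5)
Your overall route coincides with the paper's: unfold Remark \ref{rot_homog} with the structure constants $c_{23}^{1}=c_{31}^{2}=2q$, $c_{12}^{3}=2p^{2}/q$, combine rows $1$ and $2$ into (\ref{gen_rot1}), recognize row $3$ as the real part of (\ref{gen_rot2}), recover the missing imaginary part using $\alpha\neq 0$, obtain (\ref{gen_rot3})--(\ref{gen_rot4}) by applying $(ie_{1}+e_{2})$ resp.\ $(-ie_{1}+e_{2})$ and eliminating via the other equation, and read off (\ref{gen_rot5})--(\ref{gen_rot6}) from Lemma \ref{calc_S3}. The only real variation is how you produce the imaginary part of (\ref{gen_rot2}): the paper applies $(ie_{1}+e_{2})$ to (\ref{gen_rot1}) and extracts real and imaginary parts with the help of row $3$, whereas you apply $e_{1},e_{2},e_{3}$ to the three rows and sum, so that the second-order cross terms reassemble into $[e_{2},e_{3}]v_{1}+[e_{3},e_{1}]v_{2}+[e_{1},e_{2}]v_{3}=2q(e_{1}v_{1}+e_{2}v_{2})+\tfrac{2p^{2}}{q}e_{3}v_{3}$ and everything collapses to $-\alpha(e_{1}v_{1}+e_{2}v_{2}+e_{3}v_{3})=0$. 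I checked this; it is correct, arguably cleaner than the paper's real/imaginary bookkeeping, and uses $\alpha\neq 0$ at exactly the same spot. Your derivations of (\ref{gen_rot4}) and (\ref{gen_rot3}), and the Casimir computation behind (\ref{gen_rot5}), also check out.

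One slip to fix in the final verification: the algebraic identity you state for (\ref{gen_rot6}) is false as written. The term $\bigl(\tfrac{4p^{2}}{q}-4q\bigr)ie_{3}$ acts on $\langle\rho_{n}(\cdot)v^{(n)}_{k},u\rangle$ by $\bigl(\tfrac{4p^{2}}{q}-4q\bigr)q(-n+2k)=4(q^{2}-p^{2})(n-2k)$, i.e.\ $4(p^{2}-q^{2})(-n+2k)$, not $4(q^{2}-p^{2})(-n+2k)$; with your sign the two sides of your identity equal $(q^{2}-p^{2})(n-2k)(n-2k-4)$ and $(q^{2}-p^{2})(n-2k)(n-2k+4)$, which differ. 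The corrected identity $(-p^{2}+q^{2})(n-2k)^{2}+4(q^{2}-p^{2})(n-2k)=(-p^{2}+q^{2})\bigl[(n-2k+2)^{2}-4\bigr]$ is immediate, and the constant terms then combine exactly as you intend, via $\bigl(-\alpha-\tfrac{2p^{2}}{q}+2q\bigr)(2q+\alpha)=4(q^{2}-p^{2})-\alpha\bigl(\alpha+\tfrac{2p^{2}}{q}\bigr)$, so (\ref{gen_rot6}) follows; with that sign repaired your proof is complete.
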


\begin{rem}
In the case of ${\rm SU}(2)/ \Gamma$ for some finite subgroup $\Gamma$, 
we have to consider the $\Gamma$ equivariant solutions of (\ref{gen_rot1}) and (\ref{gen_rot2}). 
\end{rem}

\begin{proof}
Note that 
$
[e_{1}, e_{2}] = \frac{2p^{2}}{q} e_{3}, 
[e_{1}, e_{3}] = -2q e_{2}, 
[e_{2}, e_{3}] =  2q e_{1}.
$
Then from Remark \ref{rot_homog}, 
$\overline{{\rm rot}}(v) = \alpha v$ is equivalent to 
\begin{align}
i e_{3} (v_{1} + i v_{2}) + (-i e_{1} + e_{2})(v_{3}) &= (2q + \alpha) (v_{1} + i v_{2}),               \label{gen_rot7} \\
{\rm Re}((i e_{1} + e_{2}) (v_{1} + i v_{2}))        &= - \left( \alpha + \frac{2p^{2}}{q} \right) v_{3}. \label{gen_rot8}   
\end{align}
It is clear that (\ref{gen_rot1}) and (\ref{gen_rot2}) imply 
(\ref{gen_rot7}) and (\ref{gen_rot8}). 
Conversely, suppose that (\ref{gen_rot7}) and (\ref{gen_rot8}) hold. 
Applying $(i e_{1} + e_{2})$ to (\ref{gen_rot7}), we obtain 
\begin{align*}
(i e_{3} - \alpha) (i e_{1} + e_{2})(v_{1} + i v_{2}) 
+ 
\left( e_{1}^{2} + e_{2}^{2} + \frac{2p^{2}}{q} i e_{3} \right) v_{3} 
= 0.
\end{align*}
Considering the real and imaginary parts, we obtain from (\ref{gen_rot8}) 
\begin{align}
- e_{3} {\rm Im} ((i e_{1} + e_{2})(v_{1} + i v_{2})) 
+ \alpha \left(\alpha + \frac{2p^{2}}{q} \right) v_{3} 
+ (e_{1}^{2} + e_{2}^{2}) (v_{3}) = 0, \label{gen_rot9} \\
- \alpha e_{3}(v_{3}) 
- \alpha {\rm Im} ((i e_{1} + e_{2})(v_{1} + i v_{2})) = 0. \label{gen_rot10}
\end{align}
The equations (\ref{gen_rot8}) and (\ref{gen_rot10}) imply (\ref{gen_rot2}),  
and hence we obtain the first statement.

Substituting (\ref{gen_rot10}) into (\ref{gen_rot9}), 
we have (\ref{gen_rot4}). 
Applying $(-i e_{1} + e_{2})$ to (\ref{gen_rot2}), we obtain from (\ref{gen_rot1})
\begin{align*}
&\left( e_{1}^{2} + e_{2}^{2} - \frac{2p^{2}}{q} i e_{3} \right) (v_{1} + i v_{2}) \\
=& 
\left( - \alpha - \frac{2p^{2}}{q} +2q - i e_{3} \right) (-i e_{1} + e_{2})v_{3} \\
=& 
\left \{ -e_{3}^{2} + \left (\frac{4 p^{2}}{q} - 4q \right) i e_{3} 
+ \left( -\alpha - \frac{2 p^{2}}{q} + 2q \right) (2q + \alpha) \right \} (v_{1} + i v_{2}), 
\end{align*}
which imply (\ref{gen_rot3}). 
Then from Lemma \ref{calc_S3}, we obtain (\ref{gen_rot5}) and (\ref{gen_rot6}). 
\end{proof}



\subsection{The case $A_{1}, A_{2}$, and $A_{3}$}

First, we study the deformation of 
homogeneous associative submanifolds 
which do not lie in a totally geodesic $S^{6}$.

\subsubsection{The case $A_{1} \cong T^{3}$} \label{T3_deform}

Define the basis of the Lie algebra $\mathfrak{t}^{3}$ of $T^{3}$ by 
\begin{align*}
e_{1} = (\sqrt{2}, 0, 0), \qquad 
e_{2} = (0 ,\sqrt{2}, -\sqrt{2}), \qquad 
e_{3} = (-1, 1, 1)
\in
\mathbb{R}^{3} \cong \mathfrak{t}^{3}, 
\end{align*}
which is an oriented orthonormal basis of $\mathfrak{t}^{3}$ 
with respect to the orientation and the metric 
induced from $A_{1}$.

Define the smooth function $f_{\gamma} \in C^{\infty}(T^{3}, \mathbb{C})$ 
for $\gamma = (\gamma_{1}, \gamma_{2}, \gamma_{3}) \in \mathbb{Z}^{3}$ 
on $T^{3} \cong  (\mathbb{R}/ 2 \pi \mathbb{Z})^{3}$ 
by 
$
f_{\gamma}(\theta_{1}, \theta_{2}, \theta_{3}) = \exp (i \sum_{j = 1}^{3} \gamma_{j} \theta_{j}).  
$
By a Fourier series expansion, 
every $\mathbb{C}$-valued continuous function 
on $T^{3}$ is 
uniformly approximated by 
the $\mathbb{C}$-linear combination of $f_{\gamma}$'s. 
By a direct computation, 
we obtain the following. 

\begin{lem} \label{calc_A1}
Identifying $e_{i} \in \mathfrak{t}^{3}$ with the left invariant differential operator on $T^{3}$, 
we have 
\begin{align*}
e_{1}(f_{\gamma}) &= \sqrt{2} \gamma_{1} i f_{\gamma}, \qquad 
e_{2}(f_{\gamma}) = \sqrt{2} (\gamma_{2} - \gamma_{3}) i f_{\gamma}, \qquad
e_{3}(f_{\gamma}) =            (-\gamma_{1} + \gamma_{2} + \gamma_{3}) i f_{\gamma}, \\
\Delta_{+}(f_{\gamma}) &= \{ 2 \gamma_{1}^{2} +  2 (\gamma_{2} - \gamma_{3})^{2} 
+ (-\gamma_{1} + \gamma_{2} + \gamma_{3})^{2} \}  f_{\gamma}.
\end{align*}
\end{lem}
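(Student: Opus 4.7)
The plan is to carry out this as a direct eigenvalue computation for translation-invariant differential operators on the flat torus, using that $T^{3}$ is abelian and that $f_{\gamma}$ is by construction a Fourier mode.

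First, I would identify each vector $X=(a_{1},a_{2},a_{3})\in\mathbb{R}^{3}\cong\mathfrak{t}^{3}$ with the corresponding translation-invariant vector field $\sum_{j}a_{j}\,\partial/\partial\theta_{j}$ on $T^{3}=(\mathbb{R}/2\pi\mathbb{Z})^{3}$; since the group is abelian, left- and right-invariant vector fields coincide with translation-invariant ones, and the identification is unambiguous. Applying such an $X$ to $f_{\gamma}$ and using $\partial_{\theta_{j}}f_{\gamma}=i\gamma_{j}f_{\gamma}$ gives
\[
X(f_{\gamma})=i\Bigl(\sum_{j=1}^{3}a_{j}\gamma_{j}\Bigr)f_{\gamma},
\]
so every $f_{\gamma}$ is a simultaneous eigenvector for all of $\mathfrak{t}^{3}$. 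The three stated formulas for $e_{i}(f_{\gamma})$ then follow by substituting the coordinate triples $(\sqrt{2},0,0)$, $(0,\sqrt{2},-\sqrt{2})$, $(-1,1,1)$ assigned to $e_{1},e_{2},e_{3}$ in the setup.

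For the Laplacian, I would use that the $e_{i}$ pairwise commute, so $e_{i}^{2}(f_{\gamma})=(i\lambda_{i})^{2}f_{\gamma}=-\lambda_{i}^{2}f_{\gamma}$ where $\lambda_{i}$ is the real eigenvalue computed in the previous step. Summing and taking into account the sign convention $\Delta_{+}=-\sum_{i}e_{i}^{2}$ (which reproduces the Hodge Laplacian precisely because $\{e_{1},e_{2},e_{3}\}$ was chosen orthonormal in the setup preceding the lemma) yields $\Delta_{+}f_{\gamma}=(\lambda_{1}^{2}+\lambda_{2}^{2}+\lambda_{3}^{2})f_{\gamma}$ with $\lambda_{1}=\sqrt{2}\gamma_{1}$, $\lambda_{2}=\sqrt{2}(\gamma_{2}-\gamma_{3})$, $\lambda_{3}=-\gamma_{1}+\gamma_{2}+\gamma_{3}$.

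There is no genuine obstacle here; the lemma is a bookkeeping exercise that packages the basis choice into eigenvalues on Fourier modes. The only point where one must be careful is to make sure the convention $e_{i}\leftrightarrow \sum_{j}a_{j}\partial/\partial\theta_{j}$ is used (not its negative) so that the signs on the right-hand sides match, and to keep the orthonormality of $\{e_{i}\}$ in mind when invoking $\Delta_{+}=-\sum e_{i}^{2}$.
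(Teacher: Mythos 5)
Your computation is correct and is exactly the ``direct computation'' the paper alludes to without writing out: apply the translation-invariant vector field corresponding to each $e_{i}$ to the Fourier mode $f_{\gamma}$, read off the eigenvalue $i\sum_{j}a_{j}\gamma_{j}$, and sum the squares with the sign convention $\Delta_{+}=-\sum_{i}e_{i}^{2}$ for the orthonormal frame. No gaps; the approach coincides with the paper's.
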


Then we deduce the following.

\begin{prop} \label{sol_lap_A1}
$\dim_{\mathbb{R}} \{ f \in C^{\infty}(T^{3}) ; \Delta_{+} f = 8f \} = 12$. 
\end{prop}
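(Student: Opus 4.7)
The plan is to reduce this to a lattice-point count using the Fourier decomposition on $T^{3}$, which Lemma \ref{calc_A1} has already set up. Since the characters $\{f_{\gamma}\}_{\gamma \in \mathbb{Z}^{3}}$ form a complete orthogonal basis of $L^{2}(T^{3},\mathbb{C})$ and each is an eigenfunction of $\Delta_{+}$ with eigenvalue
\[
\lambda(\gamma) \;=\; 2\gamma_{1}^{2} + 2(\gamma_{2}-\gamma_{3})^{2} + (-\gamma_{1}+\gamma_{2}+\gamma_{3})^{2},
\]
the complex 8-eigenspace is spanned by $\{f_{\gamma} : \lambda(\gamma) = 8\}$, and its complex dimension equals the number of $\gamma \in \mathbb{Z}^{3}$ with $\lambda(\gamma)=8$.

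Next I would change variables to $(a,b,c) = (\gamma_{1},\, \gamma_{2}-\gamma_{3},\, -\gamma_{1}+\gamma_{2}+\gamma_{3})$, turning the equation into the simpler diophantine problem $2a^{2}+2b^{2}+c^{2}=8$. One caveat: this substitution is only an isomorphism from $\mathbb{Z}^{3}$ onto the sublattice $\{(a,b,c)\in\mathbb{Z}^{3} : a+b+c\equiv 0 \pmod{2}\}$, since recovering $\gamma_{2},\gamma_{3}$ requires $(a+b+c)/2$ and $(a-b+c)/2$ to be integers. So I must count solutions $(a,b,c)$ with this parity constraint.

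The enumeration is short: $c^{2}\le 8$ forces $c \in \{0,\pm 1,\pm 2\}$; the cases $c=\pm 1$ are immediately excluded since $2a^{2}+2b^{2}=7$ has no integer solution. The case $c=0$ reduces to $a^{2}+b^{2}=4$, giving $(a,b)\in\{(\pm 2,0),(0,\pm 2)\}$ (4 solutions, all with $a+b$ even). The case $c=\pm 2$ reduces to $a^{2}+b^{2}=2$, giving $(a,b)\in\{(\pm 1,\pm 1)\}$ (8 solutions total over both signs of $c$, and $a+b+c$ is always even since all three are odd). Adding these yields $4+8=12$ valid triples.

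Finally, to pass from complex to real dimension, observe that $\overline{f_{\gamma}} = f_{-\gamma}$ and $\lambda(-\gamma)=\lambda(\gamma)$, so the 12 solutions split into 6 pairs $\{\gamma,-\gamma\}$, each contributing a 2-dimensional real subspace spanned by $\mathrm{Re}(f_{\gamma})$ and $\mathrm{Im}(f_{\gamma})$. Since $\Delta_{+}$ is a real operator, this gives $\dim_{\mathbb{R}}\{f\in C^{\infty}(T^{3}): \Delta_{+}f=8f\} = 12$. There is no real conceptual obstacle here; the only place to be careful is the parity condition coming from the change of variables, which happens to be automatically satisfied for every solution of $2a^{2}+2b^{2}+c^{2}=8$, so no candidate triples are lost.
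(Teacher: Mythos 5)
Your proposal is correct and follows essentially the same route as the paper: expand in the characters $f_{\gamma}$ using Lemma \ref{calc_A1} and count the lattice points with $2\gamma_{1}^{2}+2(\gamma_{2}-\gamma_{3})^{2}+(-\gamma_{1}+\gamma_{2}+\gamma_{3})^{2}=8$, of which there are exactly $12$. Your change of variables with the parity check is just a tidier way of organizing the same enumeration the paper does directly, and your count reproduces the paper's list $\pm(2,1,1)$, $\pm(0,1,-1)$, $\pm(1,2,1)$, $\pm(1,1,2)$, $\pm(1,-1,0)$, $\pm(1,0,-1)$.
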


\begin{prop} \label{sol_rot_A1}
$\dim_{\mathbb{R}} \{ v \in C^{\infty}( T^{3}, \mathfrak{t}^{3}) ; 
 \overline{{\rm rot}}(v) = - 2v \} = 6$. 
\end{prop}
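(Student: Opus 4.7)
The plan is to diagonalize $\overline{\mathrm{rot}}$ explicitly via the Fourier expansion on $T^{3}$ and to reduce the eigenvalue problem to counting integer lattice points of a quadratic form.

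First I would observe that $T^{3}$ is abelian, so $[e_{i},e_{j}]_{\mathfrak{p}}=0$ and the second term in the formula of Remark \ref{rot_homog} vanishes. Hence $\overline{\mathrm{rot}}$ acts on $v=\sum_{j}v_{j}e_{j}\in C^{\infty}(T^{3},\mathfrak{t}^{3})$ as the constant-coefficient first-order operator
\[
\overline{\mathrm{rot}}=
\begin{pmatrix}
0 & -e_{3} & e_{2}\\
e_{3} & 0 & -e_{1}\\
-e_{2} & e_{1} & 0
\end{pmatrix}.
\]

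Next, I would expand each $v_{j}$ into a Fourier series $v_{j}=\sum_{\gamma}c_{j}^{\gamma}f_{\gamma}$ with $\gamma\in\mathbb{Z}^{3}$. By Lemma \ref{calc_A1} each $e_{k}$ acts on $f_{\gamma}$ as multiplication by $i\lambda_{k}(\gamma)$, where $\lambda_{1}(\gamma)=\sqrt{2}\gamma_{1}$, $\lambda_{2}(\gamma)=\sqrt{2}(\gamma_{2}-\gamma_{3})$, $\lambda_{3}(\gamma)=-\gamma_{1}+\gamma_{2}+\gamma_{3}$. Thus on the $\gamma$-th Fourier mode the equation $\overline{\mathrm{rot}}(v)=-2v$ becomes $iM_{\gamma}c^{\gamma}=-2c^{\gamma}$, where $M_{\gamma}$ is the real skew-symmetric $3\times 3$ matrix with entries built from the $\lambda_{k}(\gamma)$. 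Standard linear algebra gives that the eigenvalues of $iM_{\gamma}$ are $0$ and $\pm\|\lambda(\gamma)\|$, so the $(-2)$-eigenspace on the $\gamma$-th mode is $1$-dimensional over $\mathbb{C}$ if $\|\lambda(\gamma)\|^{2}=4$ and trivial otherwise.

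The problem thus reduces to enumerating integer triples $\gamma$ satisfying
\[
2\gamma_{1}^{2}+2(\gamma_{2}-\gamma_{3})^{2}+(-\gamma_{1}+\gamma_{2}+\gamma_{3})^{2}=4.
\]
A short case analysis, for instance by parametrising via $u_{3}:=-\gamma_{1}+\gamma_{2}+\gamma_{3}\in\{0,\pm 1,\pm 2\}$, yields exactly six solutions, namely $\pm(0,1,1)$, $\pm(1,1,0)$, $\pm(1,0,1)$. This gives a $6$-dimensional complex eigenspace in the complexification. Since $\overline{\mathrm{rot}}$ is a real operator preserving the real form $C^{\infty}(T^{3},\mathfrak{t}^{3})$, the real dimension of its $(-2)$-eigenspace coincides with this complex dimension, yielding $6$.

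The computation is essentially trivial linear algebra; the only point that deserves care is the bookkeeping of the real-versus-complex dimension count. One can check consistency by noting that the six $\gamma$'s above group into three conjugate pairs $\{\gamma,-\gamma\}$, and the reality condition $c^{-\gamma}=\overline{c^{\gamma}}$ leaves one complex (hence two real) parameter per pair, confirming the total of $6$ real dimensions.
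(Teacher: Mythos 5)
Your proof is correct and follows essentially the same route as the paper: specialize Remark \ref{rot_homog} to the abelian case, expand in the Fourier modes $f_{\gamma}$ of Lemma \ref{calc_A1}, and reduce the problem to counting the six lattice points of $2\gamma_{1}^{2}+2(\gamma_{2}-\gamma_{3})^{2}+(-\gamma_{1}+\gamma_{2}+\gamma_{3})^{2}=4$. The only cosmetic difference is that you diagonalize the Hermitian matrix $iM_{\gamma}$ mode by mode and then invoke the general real-versus-complex dimension equality, whereas the paper eliminates $v_{3}$, works with the scalar equations for $v_{1}+iv_{2}$, and tracks the reality condition $C_{-\gamma}=\overline{C_{\gamma}}$ explicitly; both yield the same count of $6$.
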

By Corollary \ref{diff_asso_sL}, 
these imply that associative deformations of $A_{1}$ are trivial since 
${\rm Spin(7)}$ induces 
$18 (= \dim_{\mathbb{R}} ({\rm Spin}(7)/ T^{3}))$-dimensional 
associative deformations of $A_{1}$. 
Now, we give proofs.

\begin{proof}[Proof of Proposition \ref{sol_lap_A1}]
For $(\gamma_{1}, \gamma_{2}, \gamma_{3}) \in \mathbb{Z}^{3}$, we know that 
\begin{align*}
&2 \gamma_{1}^{2} +  2 (\gamma_{2} - \gamma_{3})^{2} 
+ (-\gamma_{1} + \gamma_{2} + \gamma_{3})^{2} = 8 \\
\Leftrightarrow 
&(\gamma_{1}, \gamma_{2}, \gamma_{3}) = \pm(2, 1, 1), \pm(0, 1, -1), 
                                                       \pm(1, 2, 1), \pm(1, 1, 2), 
                                                        \pm(1, -1, 0), \pm(1, 0, -1), 
\end{align*}
which gives the proof by Lemma \ref{calc_A1}. 
\end{proof}

\begin{proof}[Proof of Proposition \ref{sol_rot_A1}]
Take any $v = \sum_{i = 1}^{3} v_{i} e_{i} \in C^{\infty}(T^{3}, \mathfrak{t}^{3})$ 
where $v_{i} \in C^{\infty}(T^{3})$. 
Then from Remark \ref{rot_homog}, 
$\overline{{\rm rot}}(v) = \alpha v$ for $\alpha \in \mathbb{R}$ is equivalent to 
\begin{align}
(i e_{3} - \alpha) (v_{1} + i v_{2}) + (-i e_{1} + e_{2})(v_{3}) &= 0,  \label{rot_A1_1} \\
{\rm Re}((i e_{1} + e_{2}) (v_{1} + i v_{2}))        &= - \alpha v_{3}.              \label{rot_A1_2}   
\end{align}
Eliminating $v_{3}$, we have 
\begin{align} \label{rot_A1_3}
-\alpha (i e_{3} - \alpha) (v_{1} + i v_{2}) + (- i e_{1} + e_{2}) {\rm Re}((i e_{1} + e_{2})(v_{1} + i v_{2})) = 0. 
\end{align}
Set 
$v_{1} + i v_{2} = \sum_{\gamma \in \mathbb{Z}^{3}} C_{\gamma} f_{\gamma}$ 
where $C_{\gamma} \in \mathbb{C}$. 
Since $\overline{f}_{\gamma} = f_{-\gamma}$, 
(\ref{rot_A1_3}) is equivalent to 
\begin{align} \label{rot_A1_4}
C_{\gamma} 
\left( - \gamma_{1}^{2} - (\gamma_{2} - \gamma_{3})^{2} + 
\alpha (- \gamma_{1} + \gamma_{2} + \gamma_{3} + \alpha) 
\right) 
+ 
\overline{C}_{-\gamma} (\gamma_{1} + (\gamma_{2} - \gamma_{3})i)^{2} = 0. 
\end{align}
Take the complex conjugation of (\ref{rot_A1_4}) and 
replace $\gamma$ by $- \gamma$, then we obtain 
\begin{align} \label{rot_A1_5}
C_{\gamma} 
 (\gamma_{1} + (- \gamma_{2} + \gamma_{3})i)^{2} 
+ 
\overline{C}_{-\gamma}
\left( - \gamma_{1}^{2} - (\gamma_{2} - \gamma_{3})^{2} + 
\alpha (\gamma_{1} - \gamma_{2} - \gamma_{3} + \alpha) 
\right) 
= 0. 
\end{align}
Eliminating $\overline{C}_{\gamma}$ from (\ref{rot_A1_4}) and (\ref{rot_A1_5}), we have 
\begin{align*}
\alpha^{2} 
\left \{
-2 
(\gamma_{1}^{2} + (\gamma_{2} - \gamma_{3})^{2}) 
- \alpha^{2}
(- \gamma_{1} + \gamma_{2} + \gamma_{3})^{2} 
+ \alpha^{2}
\right \} 
C_{\gamma} 
= 0.  
\end{align*}

Set $\alpha = -2$. 
Since we know that 
$-2 
(\gamma_{1}^{2} + (\gamma_{2} - \gamma_{3})^{2}) 
- 
(- \gamma_{1} + \gamma_{2} + \gamma_{3})^{2} 
+ 4
= 0 
\Leftrightarrow 
(\gamma_{1},  \gamma_{2}, \gamma_{3})
= 
\pm (1, 1, 0), \pm (1, 0, 1), \pm (0, 1, 1), 
$
we deduce by (\ref{rot_A1_4}) that 
\begin{align*}
v_{1} + i v_{2} =& C_{(1, 1, 0)} f_{(1, 1, 0)} -i \overline{C}_{(1, 1, 0)} f_{(-1, -1, 0)} 
                     +C_{(1, 0, 1)} f_{(1, 0, 1)} +i \overline{C}_{(1, 0, 1)} f_{(-1, 0, -1)} \\
                   &+C_{(0, -1, -1)} f_{(0, -1, -1)}.  
\end{align*}
Thus $v_{1} + i v_{2}$ depends 3 complex parameters $C_{(1, 1, 0)}, C_{(1, 0, 1)}, C_{(0, -1, -1)}$, 
which implies Proposition \ref{sol_rot_A1}. 
\end{proof}


\subsubsection{The case $A_{2} \cong {\rm SU}(2)/ \mathbb{Z}_{3} $} \label{A2_deform}

By Remark \ref{congr_A2}, 
$A_{2} = A_{2}(0)$ is congruent to $A_{2}(\frac{\pi}{4})$, which is special Legendrian. 
We may compute the dimension of the infinitesimal associative deformations of $A_{2}(\frac{\pi}{4})$ by 
Corollary \ref{diff_asso_sL}. 
The action (\ref{SU(2)actionC4}) induces 
an inclusion 
$\mathfrak{su}(2) \hookrightarrow \mathfrak{su}(4)$,  
where 
$E_{1}, E_{2}, E_{3}$ in (\ref{E1E2E3})
correspond to 
\begin{align*}
\left( 
\begin{array}{cccc}
0           & \sqrt{3} &0         & 0\\
-\sqrt{3} & 0         &2          & 0 \\
0           &-2         &0          & \sqrt{3}\\
0           &0           &-\sqrt{3}&0
\end{array} 
\right), 
\left( 
\begin{array}{cccc}
0           & \sqrt{3}i &0           & 0\\
\sqrt{3}i  & 0          &2i          & 0 \\
0           &2i          &0           & \sqrt{3}i\\
0           &0           &\sqrt{3}i &0
\end{array} 
\right), 
\left( 
\begin{array}{cccc}
3i &0 &0 & 0\\
0 & i  &0 & 0 \\
0 &0  &-i &0 \\
0 &0  &0 &-3i
\end{array} 
\right),
\end{align*}
respectively. 
Set $p_{0} = \frac{1}{\sqrt{2}} {}^t\! (1, 0, 0, 1) \in \mathbb{C}^{4}$. 
Then we have 
\begin{align*}
(E_{1}^{*})_{p_{0}} = \sqrt{\frac{3}{2}}  {}^t\! (0, -1, 1, 0), \qquad
(E_{2}^{*})_{p_{0}} = \sqrt{\frac{3}{2}}  {}^t\! (0, i, i, 0),  \qquad
(E_{3}^{*})_{p_{0}} = \frac{3i}{\sqrt{2}}  {}^t\! (1, 0, 0, -1),  
\end{align*}
Hence if we set $e_{1} := E_{1}/ \sqrt{3}, e_{2} := E_{2}/ \sqrt{3}, e_{3} := E_{3}/ 3$, 
$\{ e_{i} \}_{1 \leq i \leq 3}$ is an oriented orthonormal basis of $\mathfrak{su}(2)$ 
with respect to the orientation and the metric 
induced from $A_{2}$. 

\begin{prop} \label{sol_lap_A2}
$\dim_{\mathbb{R}} \{ f \in C^{\infty}(A_{2}) ; \Delta_{+} f = 8f \} = 19$. 
\end{prop}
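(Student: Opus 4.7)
The plan is to lift the eigenvalue equation on $A_2 \cong \mathrm{SU}(2)/\mathbb{Z}_3$ to $\mathrm{SU}(2)$, restrict to $\mathbb{Z}_3$-invariant functions, and apply the explicit eigenvalue formula (\ref{gen_rot5}) of Lemma \ref{gen_rot}. With the orthonormal frame $e_1 = E_1/\sqrt{3}$, $e_2 = E_2/\sqrt{3}$, $e_3 = E_3/3$ one is in the setting of Lemma \ref{gen_rot} with $p = 1/\sqrt{3}$ and $q = 1/3$, so by Lemma \ref{Fourier_S3} the Laplacian on $\mathrm{SU}(2)$ is diagonalized in the Peter-Weyl basis $\{\langle \rho_n(\cdot) v_k^{(n)}, u\rangle\}$ with eigenvalue
\[
\lambda_{n,k} = -\tfrac{2}{9}(n - 2k)^2 + \tfrac{1}{3}(n^2 + 2n).
\]

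First I would solve $\lambda_{n,k} = 8$ over the integers. Setting $m = n - 2k$, the equation becomes $m^2 = \tfrac{3}{2}(n-4)(n+6)$. Since $\tfrac{3}{2}(n-4)(n+6) > n^2 \ge m^2$ whenever $n \ge 7$, only the range $0 \le n \le 6$ needs to be inspected, and a direct verification leaves exactly three admissible pairs: $(n,k) = (4,2)$ with $m = 0$, and $(n,k) = (6,0),(6,6)$ with $m = \pm 6$.

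Next I would impose $\mathbb{Z}_3$-invariance. The generator $\mathrm{diag}(\zeta,\overline{\zeta})$ of $\mathbb{Z}_3$ acts on $v_k^{(n)}$ by multiplication by $\zeta^{n-2k}$, so $\langle \rho_n(\cdot) v_k^{(n)}, u\rangle$ descends to a function on $A_2$ precisely when $3 \mid (n-2k)$; this holds for all three pairs above. Since $u \in V_n$ is unconstrained, each pair contributes a complex eigenspace of dimension $n+1$, giving a complex total of $5 + 7 + 7 = 19$.

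The last step, which is the main delicate point, is to pass from complex to real dimension. Using the conjugation rule $\overline{\langle \rho_n(\cdot) v_k^{(n)}, u\rangle} = (-1)^k \langle \rho_n(\cdot) v_{n-k}^{(n)}, u^*\rangle$ from Lemma \ref{calc_S3}, complex conjugation preserves the $(4,2)$-component as an antilinear involution and swaps the $(6,0)$- and $(6,6)$-components; in both cases the real fixed-point set has real dimension equal to the complex dimension of the ambient space, so the real answer is $5 + 14 = 19$. The care required here is to avoid double counting: the individual Peter-Weyl pieces are not themselves real, and one must identify the correct real structure via the involution $u \mapsto u^*$ before summing.
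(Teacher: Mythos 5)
Your proposal is correct and follows essentially the same route as the paper: apply the eigenvalue formula (\ref{gen_rot5}) with $p=1/\sqrt{3}$, $q=1/3$ to find the admissible pairs $(n,k)=(4,2),(6,0),(6,6)$, observe that these matrix coefficients are $\mathbb{Z}_{3}$-invariant, and count $5+7+7=19$. The only difference is that you make explicit two points the paper leaves implicit, namely the finiteness argument excluding $n\geq 7$ and the passage from the conjugation-stable complex eigenspace to the real dimension via the rule $\overline{\langle \rho_{n}(\cdot)v^{(n)}_{k},u\rangle}=(-1)^{k}\langle \rho_{n}(\cdot)v^{(n)}_{n-k},u^{*}\rangle$, both of which are handled correctly.
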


\begin{prop} \label{sol_rot_A2}
$\dim_{\mathbb{R}} \{ v \in \mathfrak{X} (A_{2});  {\rm rot}(v) = -2v \} = 11$. 
\end{prop}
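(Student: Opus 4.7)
The plan is to apply Lemma \ref{gen_rot} with the orthonormal frame $e_1 = E_1/\sqrt{3}$, $e_2 = E_2/\sqrt{3}$, $e_3 = E_3/3$ (so $p = 1/\sqrt{3}$, $q = 1/3$) and $\alpha = -2$. The key algebraic simplification is that $\alpha + 2p^2/q = 0$ and $2q + \alpha = -4/3$. The former reduces (\ref{gen_rot4}) to $\Delta_+ v_3 = 0$, so compactness of $A_2$ forces $v_3$ to be a real constant (one real dimension). With $v_3$ constant, the original equations (\ref{gen_rot1}), (\ref{gen_rot2})---which together are equivalent to $\overline{{\rm rot}}(v) = -2v$---collapse to
\[
  i e_3\,\phi = -\tfrac{4}{3}\,\phi, \qquad (i e_1 + e_2)\phi = 0,
\]
where $\phi := v_1 + i v_2$.

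I then expand $\phi$ in the matrix coefficients $\langle \rho_n(\cdot) v^{(n)}_k, u\rangle$ supplied by Lemma \ref{Fourier_S3}. By Lemma \ref{calc_S3}, $i e_3$ acts with eigenvalue $(-n+2k)/3$, so the first condition pins down $n - 2k = 4$. Since $(i e_1 + e_2)$ sends such a coefficient to a nonzero multiple of $\langle \rho_n(\cdot) v^{(n)}_{k-1}, u\rangle$ whenever $k > 0$, the second condition forces $k = 0$. Hence $(n, k) = (4, 0)$ is the unique possibility, with $u$ ranging freely over $V_4$, giving a complex $5$-dimensional family of $\phi$'s.

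Next I verify the $\mathbb{Z}_3$-equivariance on $A_2 = {\rm SU}(2)/\mathbb{Z}_3$ for the generator $\gamma = {\rm diag}(\zeta, \bar\zeta)$, $\zeta = e^{2\pi i/3}$. A direct computation of ${\rm Ad}(\gamma^{-1})$ on the basis $\{e_1, e_2, e_3\}$ shows that $v(g\gamma) = {\rm Ad}(\gamma^{-1}) v(g)$ translates to $v_3(g\gamma) = v_3(g)$ together with $\phi(g\gamma) = \zeta\,\phi(g)$. At $(n, k) = (4, 0)$ we have $\rho_n(\gamma) v^{(n)}_0 = \zeta^{n-2k} v^{(n)}_0 = \zeta^4 v^{(n)}_0 = \zeta v^{(n)}_0$, so the twist matches automatically and every $u \in V_4$ produces a valid solution. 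Taking real and imaginary parts of $\phi$ then contributes $10$ real dimensions, and adding the single dimension from $v_3$ yields the claimed total of $11$.

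The main subtlety will be avoiding the detour through the second-order eigenvalue equation (\ref{gen_rot3}): applied directly it leads to the Pell-type constraint $(2m+1)^2 - 24 b^2 = 1$ on the admissible harmonics, and one must then intersect its infinite family of integer solutions with the range $0 \le k \le n$. Setting $v_3$ constant first and exploiting the first-order equations (\ref{gen_rot1}), (\ref{gen_rot2}) sidesteps this entirely and isolates $(n, k) = (4, 0)$ immediately.
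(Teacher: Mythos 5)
Your argument is correct and is essentially the paper's proof: both apply Lemma \ref{gen_rot} with $(p,q,\alpha)=(\tfrac{1}{\sqrt{3}},\tfrac{1}{3},-2)$, expand in the matrix coefficients of Lemmas \ref{Fourier_S3} and \ref{calc_S3}, check $\mathbb{Z}_3$-equivariance of the resulting $(n,k)=(4,0)$ family together with the constant $v_3$, and count $10+1=11$. The only variation is where $(n,k)$ gets pinned down: the paper solves the second-order conditions (\ref{gen_rot5})--(\ref{gen_rot6}) (where the constraint $0\le k\le n$ already forces $n\le 4$, so the Pell-type family you worry about never actually enters) and then verifies (\ref{gen_rot1})--(\ref{gen_rot2}), whereas you read off $(4,0)$ directly from the first-order system after making $v_3$ constant.
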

On the other hand, 
${\rm Spin(7)}$ induces 
$17 (= \dim_{\mathbb{R}} ({\rm Spin}(7)/ {\rm U} (2)))$-dimensional 
associative deformations of $A_{2}$. 
By Corollary \ref{diff_asso_sL} and 
Remark \ref{congr_A2}, 
we have a $30$-dimensional 
infinitesimal associative deformation space of $A_{2}$, 
and hence 
$A_{2}$ can have non-trivial associative deformations. 
In fact, 
we obtain the following. 

\begin{prop} \label{nontrivial_A2}
All non-trivial associative deformations of $A_{2}$ are induced by 
the ${\rm PGL}(4, \mathbb{C})$-action on $\mathbb{C}P^{3}$ via the Hopf lift. 
\end{prop}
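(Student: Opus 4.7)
The plan is to realize the non-trivial infinitesimal associative deformations of $A_{2}$ as holomorphic deformations of its underlying twisted cubic via the ${\rm PGL}(4, \mathbb{C})$-action, then close by a dimension count.

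First I would recall that $A_{2}$ is the Hopf lift of the Veronese curve $V = \{[x^{3} : \sqrt{3}x^{2}y : \sqrt{3}xy^{2} : y^{3}] \in \mathbb{C}P^{3}\}$, and observe (by Lemma \ref{sLeg_cpx_asso}) that the Hopf lift of any complex curve in $\mathbb{C}P^{3}$ is associative, since its Riemannian cone is a complex affine cone in $\mathbb{C}^{4}$. Because ${\rm PGL}(4, \mathbb{C})$ acts on $\mathbb{C}P^{3}$ by biholomorphisms and sends twisted cubics to twisted cubics, applying it to $V$ and Hopf-lifting produces a family of associative deformations of $A_{2}$. The dimension of this family equals $\dim_{\mathbb{R}} {\rm PGL}(4, \mathbb{C}) - \dim_{\mathbb{R}} {\rm Stab}(V) = 30 - 6 = 24$, since the projective stabilizer of a smooth twisted cubic is the image of the Veronese embedding ${\rm PGL}(2, \mathbb{C}) \hookrightarrow {\rm PGL}(4, \mathbb{C})$, of real dimension $6$.

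To finish, I would compare this $24$-dimensional ${\rm PGL}$-orbit with the trivial ${\rm Spin}(7)$-orbit of dimension $17$ inside the $30$-dimensional infinitesimal deformation space coming from Propositions \ref{sol_lap_A2} and \ref{sol_rot_A2}. The two orbits share the common sub-orbit generated by ${\rm SU}(4) = {\rm Spin}(6) \subset {\rm PGL}(4, \mathbb{C}) \cap {\rm Spin}(7)$, on which both actions coincide; a stabilizer calculation pins this intersection, and inclusion--exclusion $24 + 17 - (\text{intersection}) = 30$ forces the two orbits together to exhaust the entire infinitesimal deformation space. Hence every non-trivial (i.e.\ modulo ${\rm Spin}(7)$) infinitesimal associative deformation of $A_{2}$ lies in the image of ${\rm PGL}(4, \mathbb{C})$; by the unobstructedness of the deformation space asserted in Theorem \ref{rigid_summary}, this infinitesimal statement integrates to the desired statement about actual associative deformations.

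The principal obstacle will be identifying the intersection of the two orbits exactly: one must determine how the continuous stabilizer of $A_{2}$ inside ${\rm Spin}(7)$ is distributed between ${\rm SU}(4)$ and the antilinear complement ${\rm Spin}(7) \setminus {\rm SU}(4)$, and verify that no further coincidences between ${\rm PGL}$- and ${\rm Spin}(7)$-actions on the normal bundle of $A_{2}$ occur outside of ${\rm SU}(4)$.
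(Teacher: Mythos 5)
Your proposal is correct and follows essentially the same route as the paper: Hopf lifts of holomorphic curves are associative, the ${\rm PGL}(4,\mathbb{C})$-orbit of the Veronese curve has dimension $30-6=24$, the part of it already realized inside ${\rm Spin}(7)$ is the ${\rm SU}(4)$-orbit of dimension $15-4=11$, and the count $24-11=13=30-17$ shows the ${\rm PGL}$-family accounts for all non-trivial infinitesimal deformations. The only cosmetic difference is that you invoke unobstructedness from Theorem \ref{rigid_summary} to integrate, whereas in the paper unobstructedness is a consequence of this same dimension count (all $30$ infinitesimal deformations are realized by actual ones), so no external input is needed.
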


\begin{rem} (\cite{Ohnita_def, Bolton})
As a special Legendrian submanifold, 
$A_{2}(\frac{\pi}{4})$ is not rigid, either. 
By a non-standard projection $p_{2} : S^{7} \rightarrow \mathbb{C}P^{3}$, 
$p_{2}(A_{2}(\frac{\pi}{4}))$ is a horizontal holomorphic curve in $\mathbb{C}P^{3}$, 
and 
for any horizontal holomorphic curve $\Sigma$, 
$p_{2}^{-1}(\Sigma) \subset S^{7}$ is a special Legendrian submanifold. 

Since 
the group of biholomorphic maps which preserve 
the horizontal distribution is 
${\rm PSp}(2, \mathbb{C})$, 
all non-trivial special Legendrian deformations of $A_{2}(\frac{\pi}{4})$ 
are given by the induced action of ${\rm PSp}(2, \mathbb{C})$ on $\mathbb{C}P^{3}$. 
\end{rem}

Now, we give proofs. 
First, we prove the following lemma. 

\begin{lem} \label{Hom_A2}
Let $\{(v^{(n)}_{l})^{*} = \langle \cdot, v^{(n)}_{l} \rangle \}$ 
be the dual basis of $\{v^{(n)}_{l} \}$. Then we have 
\begin{align*}
&{\rm Hom}_{\mathbb{Z}_{3}} (V_{n}, \mathfrak{su}(2) \otimes_{\mathbb{R}} \mathbb{C}) \\
=& 
\left \{ 
L \in {\rm Hom}_{\mathbb{C}} (V_{n}, \mathfrak{su}(2) \otimes_{\mathbb{R}} \mathbb{C}) ; 
L(\rho_{n}(k) v) = 
{\rm Ad}
(k) L(v)
\mbox{ for any } k \in \mathbb{Z}_{3}, v \in V_{n}
\right \} \\
=&
{\rm span}_{\mathbb{C}} \left \{ (v^{(n)}_{i})^{*} \otimes X ; 
X = 
\left \{
\begin{array}{ll}
e_{3}            & (n - 2i \in 3 \mathbb{Z}) \\
e_{1} + i e_{2} & (n - 2i \in 3 \mathbb{Z} + 1) \\
e_{1} - i e_{2} & (n - 2i \in 3 \mathbb{Z} + 2) \\
\end{array}
\right.
\right \}, \\
&{\rm Hom}_{\mathbb{Z}_{3}} (V_{n}, \mathbb{C}) 
= {\rm span}_{\mathbb{C}} \left \{ (v^{(n)}_{i})^{*}; n - 2i \in 3 \mathbb{Z} \right \}.
\end{align*}
\end{lem}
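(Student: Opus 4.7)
The plan is to reduce the statement to a weight calculation under the action of the cyclic group $\mathbb{Z}_3$ on both $V_n$ and on $\mathfrak{su}(2) \otimes_{\mathbb{R}} \mathbb{C}$. The first displayed equality is just the definition of $\mathbb{Z}_3$-equivariant homomorphisms, where $\mathbb{Z}_3$ acts on $V_n$ via $\rho_n$ and on $\mathfrak{su}(2)\otimes_{\mathbb{R}}\mathbb{C}$ via the adjoint representation (inherited from its inclusion in $\mathrm{SU}(2)$). So the real content is the identification of the equivariant subspace with the listed span.

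First I would fix a generator $k = \mathrm{diag}(\zeta,\bar{\zeta}) \in \mathbb{Z}_3$, where $\zeta = e^{2\pi i/3}$, and diagonalize both actions. On $V_n$, using the formula from Lemma~\ref{Fourier_S3}, one computes directly
\[
\rho_n(k)\, v^{(n)}_i \;=\; \zeta^{\,n-2i}\, v^{(n)}_i,
\]
so $v^{(n)}_i$ lies in the $\zeta^{n-2i}$-eigenspace. On $\mathfrak{su}(2)\otimes_{\mathbb{R}}\mathbb{C}$, using the explicit matrices $E_1,E_2,E_3$ from (\ref{E1E2E3}), a short matrix computation gives
\[
\mathrm{Ad}(k)\,e_3 = e_3, \qquad \mathrm{Ad}(k)(e_1+ie_2) = \zeta\,(e_1+ie_2), \qquad \mathrm{Ad}(k)(e_1-ie_2) = \zeta^2\,(e_1-ie_2),
\]
so $\{e_3,\ e_1+ie_2,\ e_1-ie_2\}$ is an eigenbasis with eigenvalues $\zeta^0,\zeta^1,\zeta^2$ respectively.

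Next I would write an arbitrary $\mathbb{C}$-linear $L : V_n \to \mathfrak{su}(2)\otimes_{\mathbb{R}}\mathbb{C}$ as $L = \sum_{i=0}^{n} (v^{(n)}_i)^* \otimes X_i$ with $X_i \in \mathfrak{su}(2)\otimes_{\mathbb{R}}\mathbb{C}$. The equivariance condition $L(\rho_n(k)v) = \mathrm{Ad}(k)\,L(v)$, applied to each basis vector $v^{(n)}_i$, collapses to the eigenvalue condition
\[
\mathrm{Ad}(k)\,X_i \;=\; \zeta^{\,n-2i}\, X_i
\]
for every $i$. Matching this eigenvalue against the three one-dimensional Ad-eigenspaces computed above yields exactly the three cases listed in the statement, proving the first identity. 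The second identity follows by the same argument with $\mathfrak{su}(2)\otimes_{\mathbb{R}}\mathbb{C}$ replaced by the trivial $\mathbb{Z}_3$-representation $\mathbb{C}$: equivariance then forces $\zeta^{n-2i} = 1$, i.e.\ $n-2i \in 3\mathbb{Z}$.

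The proof is essentially bookkeeping; the only step requiring care is the Ad-eigenvalue computation on $\mathfrak{su}(2)\otimes_{\mathbb{R}}\mathbb{C}$, where one must be consistent about the chosen generator $\zeta$ and the relation $\bar{\zeta}^2 = \zeta$. There is no analytic difficulty, and the result is a purely representation-theoretic weight decomposition.
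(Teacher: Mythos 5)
Your proposal is correct and follows essentially the same route as the paper's proof: both reduce to the weight computation $\rho_n(k)v^{(n)}_i=\zeta^{n-2i}v^{(n)}_i$ together with the action of $\mathrm{Ad}(k)$ on $\mathfrak{su}(2)\otimes_{\mathbb{R}}\mathbb{C}$, and your eigenvalues $1,\zeta,\zeta^2$ for $e_3,\ e_1+ie_2,\ e_1-ie_2$ agree with the paper's conditions on $C_{l3}$ and $C_{l2}\mp iC_{l1}$. The only cosmetic difference is that you diagonalize the adjoint action up front rather than writing $L$ in the real basis and extracting the eigenvector combinations afterwards.
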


\begin{proof}
Take any $v \in V_{n}$ and 
$k = 
\left( 
\begin{array}{cc}
\zeta & 0 \\
0       &   \overline{\zeta} 
\end{array} 
\right) 
\in \mathbb{Z}_{3}
$ 
where $\zeta^{3} = 1$. 
By definition, we see that 
\begin{align*}
{\rm Ad}(k) e_{1} &= {\rm Re}(\zeta) e_{1} - {\rm Im}(\zeta) e_{2}, \\
{\rm Ad}(k) e_{2} &= {\rm Im}(\zeta) e_{1} + {\rm Re}(\zeta) e_{2}, \\
{\rm Ad}(k) e_{3} &= e_{3}, \\
\rho_{n}(k) v^{(n)}_{l} &= \zeta^{n - 2l} v^{(n)}_{l}. 
\end{align*}
Setting 
$L = \sum_{l = 0}^{n} \sum_{i = 1}^{3} C_{l i} (v^{(n)}_{l})^{*} \otimes e_{i} 
\in {\rm Hom}_{\mathbb{C}}(V_{n}, \mathfrak{su}(2) \otimes_{\mathbb{R}} \mathbb{C} )$
where $C_{k i} \in \mathbb{C}$, 
we know that 
$L \in {\rm Hom}_{\mathbb{Z}_{3}}(V_{n}, \mathfrak{su}(2) \otimes_{\mathbb{R}} \mathbb{C} )$
if and only if 
\begin{align*}
\zeta^{n - 2l}  \sum_{i = 1}^{3} C_{l i} e_{i} 
= 
C_{l 1} ({\rm Re}(\zeta) e_{1} - {\rm Im}(\zeta) e_{2}) 
+ C_{l 2} ({\rm Im}(\zeta) e_{1} + {\rm Re}(\zeta) e_{2}) 
+ C_{l 3} e_{3} 
\end{align*}
for any $0 \leq l \leq n$ and $\zeta^{3} = 1$. 
This is equivalent to 
\begin{align*}
(\zeta^{n - 2l} - 1)C_{l 3} = 0, \\
(\zeta^{n - 2l + 1} - 1)(C_{l 2} -i C_{l 1}) = 0, \\
(\zeta^{n - 2l + 2} - 1)(C_{l 2} +i C_{l 1}) = 0, 
\end{align*}
which implies the first statement. 
The second is proven in the same way.  
\end{proof}

\begin{proof}[Proof of Proposition \ref{sol_lap_A2}]
From (\ref{gen_rot5}), 
the solution $f$ of $\Delta_{+} f = 8f$ is contained in 
$
{\rm span}_{\mathbb{C}} 
\left \{ 
\langle 
\rho_{n} (\cdot) 
v^{(n)}_{k}, 
v^{(n)}_{l}  
\rangle; 
(n, k) = (6, 0), (6, 6), (4, 2), 
0 \leq l \leq n
\right \}, 
$
which are $\mathbb{Z}_{3}$ invariant. 
Hence we obtain Proposition \ref{sol_lap_A2}. 
\end{proof}

\begin{proof}[Proof of Proposition \ref{sol_rot_A2}]
First,  we consider $\dim_{\mathbb{R}} \{ v \in \mathfrak{X} (A_{2}); \overline{{\rm rot}}(v) = -2 v \}$. 
Set $(p, q, \alpha) = (\frac{1}{\sqrt{3}}, \frac{1}{3}, -2)$ in Lemma \ref{gen_rot}. 
Since we know that 
\begin{align*}
-\frac{2}{9} (n -2k + 2)^{2} + \frac{1}{3} (n^{2} + 2n)  = 0 
\Leftrightarrow 
(n, k) = (4, 0), \\
-\frac{2}{9} (n -2k)^{2} + \frac{1}{3} (n^{2} + 2n) = 0 
\Leftrightarrow 
(n, k) = (0, 0), 
\end{align*}
we have 
$
v_{1} + i v_{2} =  \langle \rho_{4} (\cdot) v^{(4)}_{0}, u \rangle
$
for $u \in V_{4}$ and $v_{3}$ is constant. 
We see that 
$v = \sum_{i = 1}^{3} v_{i} e_{i}$ satisfies (\ref{gen_rot1}), (\ref{gen_rot2}), 
and is $\mathbb{Z}_{3}$ equivariant. 
Hence we obtain 
$\dim_{\mathbb{R}} \{ v \in \mathfrak{X} (A_{2}); {\rm rot}(v) = -2 v \} = 11$. 
\end{proof}

\begin{proof}[Proof of Proposition \ref{nontrivial_A2}]

We find $13(= 30 -17)$-dimensional family of non-trivial associative deformations. 

Let $p_{1} : S^{7} \rightarrow \mathbb{C}P^{3}$ be the Hopf fibration. 
By Lemma \ref{sLeg_cpx_asso}, for any holomorphic curve $\Sigma \subset \mathbb{C}P^{3}$, 
the Hopf lift $p_{1}^{-1}(\Sigma) \subset S^{7}$ of $\Sigma$ 
is an associative submanifold. 
Since $p_{1}(A_{2})$ is a holomorphic curve in $\mathbb{C}P^{3}$, 
the group of biholomorphic map of $\mathbb{C}P^{3}$, which is known to be ${\rm PGL}(4, \mathbb{C})$, 
induces the associative deformations of $A_{2}$ via the Hopf lift. 

The ${\rm PGL}(4, \mathbb{C})$-action included in the ${\rm Spin}(7)$-action 
is the standard ${\rm SU}(4)$-action on $S^{7}$. 
Thus 
the dimension of non-trivial associative deformations of $A_{2}$ 
induced by ${\rm PGL}(4, \mathbb{C})$
is given by 
\begin{align*}
&\dim_{\mathbb{R}} {\rm PGL}(4, \mathbb{C}) 
- 
\dim_{\mathbb{R}} \{ g \in {\rm PGL}(4, \mathbb{C}) ; g \cdot p_{1} (A_{2}) \subset p_{1} (A_{2}) \} \\
&- 
\left (
\dim_{\mathbb{R}} {\rm SU}(4) 
- \dim_{\mathbb{R}} \{ h \in {\rm SU}(4) ; h \cdot A_{2} \subset A_{2} \} 
\right )\\
=& \dim_{\mathbb{R}} {\rm PGL}(4, \mathbb{C}) - \dim_{\mathbb{R}} {\rm PGL}(2, \mathbb{C}) 
- \dim_{\mathbb{R}} {\rm SU}(4) + \dim_{\mathbb{R}} {\rm U}(2) \\
=&
30 - 6 - 15 + 4
= 13, 
\end{align*}
which gives the proof. 
\end{proof}


\subsubsection{The case $A_{3} \cong {\rm SU}(2)$} \label{A3_deform}

Since $A_{3}$ is not special Legendrian, we cannot apply Corollary \ref{diff_asso_sL} to this case. 
First, we describe the operator $D$ explicitly. 
Define $E_{i} \in \mathfrak{su}(2)$ 
as (\ref{E1E2E3}). 
We denote by $e_{1}, e_{2}, e_{3}$
the left invariant vector fields 
on ${\rm SU}(2) \cong A_{3}$ 
induced by 
$\frac{1}{\sqrt{7}} E_{1}, \frac{1}{\sqrt{7}} E_{2}, E_{3}$, respectively. 
If we define the vectors 
$\eta_{k}$ for $1 \leq k \leq 4$ as 
\begin{align*}
\eta_{1} = \sqrt{\frac{7}{3}} \left(J e_{1} + \frac{2}{\sqrt{7}} e_{4} \right), \qquad
\eta_{2} = \sqrt{\frac{7}{3}} \left(J e_{2} + \frac{2}{\sqrt{7}} e_{3} \right), \\
\eta_{3} = \sqrt{\frac{7}{3}} \left(J e_{3} - \frac{2}{\sqrt{7}} e_{2} \right), \qquad
\eta_{4} = \sqrt{\frac{7}{3}} \left(J e_{4} - \frac{2}{\sqrt{7}} e_{1} \right), 
\end{align*}
where $J$ is the standard complex structure on $\mathbb{C}^{4}$ 
and $e_{4}$ is the position vector, 
then 
$\{ e_{1}, \cdots, e_{3} \}$ is the orthonormal frame of $TA_{3}$ and 
$\{ \eta_{1}, \cdots, \eta_{4} \}$ 
is the orthonormal frame of $\nu$. 
At $p_{0} = \frac{1}{\sqrt{2}} {}^t\! (0, 1, i, 0)$, we have 
\begin{align*}
e_{1} = 
\frac{1}{\sqrt{14}} 
\left( 
\begin{array}{c}
\sqrt{3} \\
2i         \\
-2        \\
-\sqrt{3}i
\end{array} 
\right), 
e_{2} = 
\frac{1}{\sqrt{14}} 
\left( 
\begin{array}{c}
\sqrt{3}i \\
-2         \\
2i        \\
-\sqrt{3}
\end{array} 
\right), 
e_{3} = 
\frac{1}{\sqrt{2}} 
\left( 
\begin{array}{c}
0 \\
i  \\
1\\
0
\end{array} 
\right), 
e_{4} = 
\frac{1}{\sqrt{2}} 
\left( 
\begin{array}{c}
0 \\
1 \\
i  \\
0
\end{array} 
\right), 
\end{align*}
\vspace{-0.3cm}
\begin{align*}
\eta_{1} 
= \frac{1}{\sqrt{2}} 
\left( 
\begin{array}{c}
i \\
0  \\
0 \\
1
\end{array} 
\right), 
\eta_{2} 
= \frac{1}{\sqrt{2}} 
\left( 
\begin{array}{c}
-1 \\
0  \\
0 \\
-i
\end{array} 
\right), 
\eta_{3} 
= \frac{1}{\sqrt{42}} 
\left( 
\begin{array}{c}
-2 \sqrt{3}i \\
-3  \\
3i \\
2 \sqrt{3}
\end{array} 
\right), 
\eta_{4} 
= \frac{1}{\sqrt{42}} 
\left( 
\begin{array}{c}
-2 \sqrt{3} \\
3 i  \\
-3 \\
2 \sqrt{3} i
\end{array} 
\right).
\end{align*}

\begin{lem}
We have 
\begin{align*}
\nabla_{e_{i}}^{\top} e_{i} = 0  \mbox{ for } i = 1, 2, 3, \qquad
[e_{1}, e_{2}] = \frac{2}{7} e_{3}, \qquad
[e_{1}, e_{3}] = -2 e_{2}, \qquad
[e_{2}, e_{3}] = 2 e_{1}, 
\end{align*}
\vspace{-0.5cm}
\begin{align*}
(\nabla^{\perp}_{e_{i}} \eta_{j}) 
=
\frac{3}{7}
\left( 
\begin{array}{cccc}
- \eta_{4} & -\eta_{3}   & \eta_{2}    & \eta_{1} \\
 \eta_{3}  & - \eta_{4}  & - \eta_{1}  & \eta_{2} \\
7\eta_{2}  & -7 \eta_{1} & -5 \eta_{4} & 5 \eta_{3} 
\end{array} 
\right), 
(e_{i} \times \eta_{j}) 
= 
\left( 
\begin{array}{cccc}
\eta_{4} & \eta_{3}  & -\eta_{2} & -\eta_{1} \\
-\eta_{3} & \eta_{4}   & \eta_{1}  & -\eta_{2} \\
\eta_{2}  & -\eta_{1} & \eta_{4}   & -\eta_{3} 
\end{array} 
\right).
\end{align*}
\end{lem}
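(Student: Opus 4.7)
The plan is to reduce all four assertions to linear algebra at the single point $p_0=\tfrac{1}{\sqrt 2}\,{}^{t}(0,1,i,0)\in A_3$, exploiting that the frames $\{e_i\}$ and $\{\eta_j\}$ are $\mathrm{SU}(2)$-equivariant along $A_3$. The bracket identities follow immediately from $[E_i,E_{i+1}]=2E_{i+2}$ and the rescalings $e_1=\tfrac{1}{\sqrt 7}E_1$, $e_2=\tfrac{1}{\sqrt 7}E_2$, $e_3=E_3$. Since the $\mathrm{SU}(4)$-representation in (\ref{SU(2)actionC4}) is unitary, the metric $g_M$ induced on $A_3$ is left-$\mathrm{SU}(2)$-invariant and $\{e_i\}$ is orthonormal for it; the Koszul formula then gives
\[
2g_M(\nabla^{\top}_{e_i}e_i,e_j)=-2\,g_M([e_i,e_j],e_i)=0,
\]
because $[e_i,e_j]$ is a scalar multiple of the remaining basis vector, proving $\nabla^{\top}_{e_i}e_i=0$.

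For the cross-product table, Lemma \ref{NPstr_onSE} identifies the nearly parallel $G_2$-form $\varphi$ on $S^7$ with the restriction of $\Phi_0=\tfrac12\omega_0^2+\mathrm{Re}\,\Omega_0$; concretely, $\varphi_{p_0}(u,v,w)=\Phi_0(p_0,u,v,w)$ for $u,v,w\in T_{p_0}S^7$, since the position vector $p_0$ plays the role of $\partial_r$ at $r=1$. The defining identity $g(e_i\times\eta_j,w)=\varphi(e_i,\eta_j,w)$ then pins down each $e_i\times\eta_j$ by evaluating $\Phi_0(p_0,e_i,\eta_j,w)$ against the orthonormal basis $\{e_1,e_2,e_3,\eta_1,\eta_2,\eta_3,\eta_4\}$ of $T_{p_0}S^7$ with the explicit vectors supplied in the statement.

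For $\nabla^{\perp}_{e_i}\eta_j$, let $D$ be the flat connection on $\mathbb{C}^4$. Applying the Gauss and Weingarten formulas for $A_3\subset S^7\subset\mathbb{R}^8$ and using $\langle e_i,\eta_j\rangle=0$, one obtains
\[
\nabla^{\perp}_{e_i}\eta_j=\pi_\nu\bigl(D_{e_i}\eta_j\bigr),
\]
where $\pi_\nu$ is orthogonal projection of $\mathbb{C}^4$ onto $\nu_{p_0}=\mathrm{span}(\eta_1,\eta_2,\eta_3,\eta_4)$. Each $\eta_j$ is given as $\sqrt{7/3}\bigl(Je_{\sigma(j)}+\epsilon_j\tfrac{2}{\sqrt 7}e_{\tau(j)}\bigr)$; combining $D_X(JY)=JD_XY$ with $D_Xe_4=X$ (since $e_4$ is the position vector on $S^7$), the task reduces to computing $D_{e_i}e_k$ at $p_0$ for $i,k\in\{1,2,3\}$. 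For this I would take the curve $\gamma(t)=\exp(tc_i^{-1}E_i)\cdot p_0$, whose velocity at $t=0$ is $e_i|_{p_0}$, and obtain
\[
D_{e_i}e_k\big|_{p_0}=\frac{1}{c_ic_k}\,d\rho(E_i)\,d\rho(E_k)\,p_0,
\]
where $d\rho\colon\mathfrak{su}(2)\to\mathfrak{u}(4)$ is the differential of (\ref{SU(2)actionC4}) and $c_1=c_2=\sqrt 7,\ c_3=1$. Decomposing the resulting twelve vectors of $\mathbb{C}^4$ in the orthonormal basis $\{e_k,\eta_k,p_0\}$ then yields the tabulated coefficients.

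The main obstacle is bookkeeping in this last step: one must execute the twelve matrix–vector products $d\rho(E_i)d\rho(E_k)p_0$, track the factors of $\sqrt{7/3}$ and the signs coming from $J$ and from the sign conventions in the definition of the $\eta_j$, and then project onto $\nu_{p_0}$. No new ideas are required; the real content is the reduction to the single point $p_0$ via $\mathrm{SU}(2)$-equivariance, after which the matrix may be filled in entry by entry.
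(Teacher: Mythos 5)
Your proposal is correct and follows essentially the same route as the paper: reduce everything to the point $p_{0}$ by ${\rm SU}(2)$-equivariance, read off the brackets from $[E_{i},E_{i+1}]=2E_{i+2}$ and the rescaling, obtain $\nabla^{\perp}_{e_{i}}\eta_{j}$ as the $\nu$-projection of the flat derivative of the explicit frame in $\mathbb{C}^{4}$, and compute $e_{i}\times\eta_{j}$ from $\varphi=i(e_{4})\Phi_{0}$ evaluated on the explicit vectors. The only cosmetic difference is that the paper rewrites $\omega_{0}$ and ${\rm Re}\,\Omega_{0}$ in the adapted coframe (fixing the normalization $C=7/3$ via the Calabi--Yau volume relation) before contracting with $e_{4}$, whereas you evaluate $\Phi_{0}(p_{0},e_{i},\eta_{j},\cdot)$ directly; both amount to the same linear-algebra computation.
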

\begin{proof}
Since the ${\rm SU}(2)$-action preserves the $G_{2}$-structure on $S^{7}$, 
we only have to consider at $p_{0}$. 
The equations of $\nabla_{e_{i}}^{\top} e_{i}$ and $[e_{i}, e_{j}]$
is shown easily. 
By a direct computation, we have 
\begin{align*}
(\nabla^{\mathbb{C}^{4}}_{e_{i}} \eta_{j}) 
=
\frac{3}{7}
\left( 
\begin{array}{cccc}
- \eta_{4} & -\eta_{3}   & \eta_{2}    & \eta_{1} \\
 \eta_{3}  & - \eta_{4}  & - \eta_{1}  & \eta_{2} \\
7\eta_{2}  & -7 \eta_{1} & -5 \eta_{4} & 5 \eta_{3} 
\end{array} 
\right) 
+
\frac{2 \sqrt{3}}{7}
\left( 
\begin{array}{cccc}
-e_{1} & -e_{2}  & 2 e_{3} & 0 \\
e_{2}  & -e_{1}  & 0        & -2 e_{3}\\
0      & 0        & 2 e_{1} & -2 e_{2}
\end{array} 
\right),  
\end{align*}
and hence we obtain $\nabla^{\perp}_{e_{i}} \eta_{j}$. 
To prove the equations of $e_{i} \times \eta_{j}$, 
let $h_{0}$ be the standard metric on $\mathbb{C}^{4}$, 
$\omega_{0}$ be the standard K\"{a}hler form on $\mathbb{C}^{4}$, 
and $\Omega_{0}$  be the standard holomorphic volume form on $\mathbb{C}^{4}$. 
Define 
$e^{i} = h_{0} (e_{i}, \cdot), \eta^{j} = h_{0}(\eta_{j}, \cdot)$. 
Then 
$\{ e^{1}, \cdots, e^{4}, \eta^{1}, \cdots, \eta^{4} \}$ 
is the dual coframe of 
$\{ e_{1}, \cdots, e_{4}, \eta_{1}, \cdots, \eta_{4} \}$. 
We compute 
\begin{align*}
\left( 
\begin{array}{c}
e^{1}(J \cdot) \\
e^{2}(J \cdot) \\
e^{3}(J \cdot) \\
e^{4}(J \cdot)
\end{array} 
\right) 
=
\frac{2}{\sqrt{7}}
\left( 
\begin{array}{c}
e^{4} \\
e^{3} \\
-e^{2} \\
-e^{1}
\end{array} 
\right) 
-
\sqrt{\frac{3}{7}}
\left( 
\begin{array}{c}
\eta^{1} \\
\eta^{2} \\
\eta^{3} \\
\eta^{4} 
\end{array} 
\right), 
\left( 
\begin{array}{c}
\eta^{1}(J \cdot) \\
\eta^{2}(J \cdot) \\
\eta^{3}(J \cdot) \\
\eta^{4}(J \cdot)
\end{array} 
\right) 
=
\sqrt{\frac{3}{7}}
\left( 
\begin{array}{c}
e^{1} \\
e^{2} \\
e^{3} \\
e^{4}
\end{array} 
\right) 
+ 
\frac{2}{\sqrt{7}}
\left( 
\begin{array}{c}
-\eta^{4} \\
-\eta^{3} \\
\eta^{2} \\
\eta^{1} 
\end{array} 
\right). 
\end{align*}
Since we know 
$h_{0} = \sum_{i = 1}^{4} ((e^{i})^{2} + (\eta^{i})^{2})$, we obtain 
\begin{align*}
\omega_{0} 
= h_{0}(J \cdot, \cdot) 
=
\sqrt{\frac{3}{7}} \sum_{i = 1}^{4} e^{i} \wedge \eta^{i} 
+ \frac{2}{\sqrt{7}} (-e^{14} - e^{23} + \eta^{14} + \eta^{23}).  
\end{align*}
The holomorphic volume form $\Omega_{0}$ 
is of the form 
$C \cdot (e^{1} + i g(e_{1}, \cdot)) \wedge \cdots (e^{4} + i g(e_{4}, \cdot)) 
=
C \cdot (e^{1} - i e^{1}(J \cdot)) \wedge \cdots (e^{4} - i e^{4}(J \cdot)) 
$
for $C>0$, 
and from the relation
$\omega_{0}^{4}/4! = (i/2)^{4} \Omega_{0} \wedge \overline{\Omega_{0}}$, 
we have $C=7/3$. 
Hence 
the $G_{2}$-structure $\varphi \in \Omega^{3}(S^{7})$ on $S^{7}$ is described as 
\begin{align*}
\varphi &= i(e_{4}) \left(\frac{1}{2} \omega_{0}^{2} + {\rm Re} \Omega_{0} \right) \\
      &= -e^{123} 
        +e^{1} \wedge (\eta^{14} + \eta^{23}) 
        +e^{2} \wedge (-\eta^{13} + \eta^{24}) 
        +e^{3} \wedge (\eta^{12} + \eta^{34}), 
\end{align*}
which implies the lemma. 
\end{proof}

\begin{prop} \label{explicit_D_A3}
By the trivialization of $\nu$ via $\{ \eta_{1}, \cdots, \eta_{4} \}$, 
$D :  C^{\infty}({\rm SU}(2), \mathbb{R}^{4}) \cong C^{\infty} (A_{3}, \nu) 
      \rightarrow C^{\infty} (A_{3}, \nu) \cong C^{\infty}({\rm SU}(2), \mathbb{R}^{4})$ 
is described as follows: 
\begin{align*}
D
\left( 
\begin{array}{c}
\psi_{1} \\
\psi_{2} \\
\psi_{3} \\
\psi_{4}
\end{array} 
\right) 
= 
\left \{
\left( 
\begin{array}{cccc}
0        & -e_{3}  & e_{2}   & -e_{1} \\
e_{3}    & 0       & -e_{1} & -e_{2} \\
-e_{2}  & e_{1}   & 0       & -e_{3} \\
e_{1}   & e_{2}   & e_{3}   & 0
\end{array} 
\right) 
+
\left( 
\begin{array}{cccc}
-\frac{15}{7}   &                  &    &  \\
                    & -\frac{15}{7}&    & \\
                    &                  & 3 & \\
                    &                  &   & 3
\end{array} 
\right) 
\right \}
\left( 
\begin{array}{c}
\psi_{1} \\
\psi_{2} \\
\psi_{3} \\
\psi_{4}
\end{array} 
\right). 
\end{align*}
Setting $\Psi_{1} = \psi_{1} + i \psi_{2}$, 
$\Psi_{2} = \psi_{3} - i \psi_{4}$, we have
\begin{align*}
D 
\left( 
\begin{array}{c}
\Psi_{1} \\
\Psi_{2} 
\end{array} 
\right)
=
\left \{
\left( 
\begin{array}{cc}
ie_{3}                & -i e_{1} + e_{2} \\
-(i e_{1} + e_{2})  & - i e_{3}          
\end{array} 
\right) 
+
\left( 
\begin{array}{cc}
-\frac{15}{7}  &  \\
                  & 3       
\end{array} 
\right) 
\right \}
\left( 
\begin{array}{c}
\Psi_{1} \\
\Psi_{2} 
\end{array} 
\right).
\end{align*}
\end{prop}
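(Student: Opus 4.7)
The plan is to compute $D\psi$ for $\psi=\sum_{j=1}^4\psi_j\eta_j$ directly from the definition
\[
D\psi=\sum_{i=1}^3 e_i\times\nabla^\perp_{e_i}\psi,
\]
by applying the Leibniz rule and then reading off the two resulting terms from the tables in the preceding lemma. The expansion splits as
\[
D\psi=\sum_{i,j}e_i(\psi_j)\,(e_i\times\eta_j)+\sum_{i,j}\psi_j\,(e_i\times\nabla^\perp_{e_i}\eta_j),
\]
so the claim amounts to two independent matrix computations: a first-order (derivation) part and a zeroth-order (curvature-like) part.

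First I would handle the first-order part. Since the table $(e_i\times\eta_j)$ is explicit, one simply groups the contributions of $e_i(\psi_j)(e_i\times\eta_j)$ by the $\eta_k$-component. For example, the $\eta_4$-component gathers $(i,j)=(1,1),(2,2),(3,3)$ with all signs $+1$, giving $e_1(\psi_1)+e_2(\psi_2)+e_3(\psi_3)$; the $\eta_1$-component uses $(1,4),(2,3),(3,2)$ with signs $-1,+1,-1$, giving $-e_1(\psi_4)+e_2(\psi_3)-e_3(\psi_2)$; the other two rows are produced analogously. This yields precisely the skew operator-matrix claimed in the statement.

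Next I would compute the zeroth-order contribution $\sum_i e_i\times\nabla^\perp_{e_i}\eta_j$ for each $j=1,\dots,4$, inserting the explicit values of $\nabla^\perp_{e_i}\eta_j$ from the lemma and then cross-multiplying with $e_i$ via the table. For instance, for $j=1$ one obtains
\[
-\tfrac{3}{7}(e_1\times\eta_4)+\tfrac{3}{7}(e_2\times\eta_3)+3(e_3\times\eta_2)=\tfrac{3}{7}\eta_1+\tfrac{3}{7}\eta_1-3\eta_1=-\tfrac{15}{7}\eta_1,
\]
and analogous cancellations give $-\tfrac{15}{7}\eta_2$, $3\eta_3$, $3\eta_4$ for $j=2,3,4$ respectively. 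Combining with the first-order part produces the displayed matrix formula for $D$.

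Finally, the complex form is a formal rearrangement: introducing $\Psi_1=\psi_1+i\psi_2$ and $\Psi_2=\psi_3-i\psi_4$, the real $4\times 4$ matrix collapses into the stated $2\times 2$ matrix with entries in $\mathfrak{su}(2)\otimes\mathbb{C}$; this is mechanical. The only real obstacle is sign-bookkeeping across the twelve cross products in the zeroth-order calculation, but the decomposition into the two independent matrix identities makes each check a short, self-contained verification.
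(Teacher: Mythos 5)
Your proposal is correct and follows essentially the same route as the paper: expand $D\psi=\sum_{i,a}\bigl(e_i(\psi_a)\,e_i\times\eta_a+\psi_a\,e_i\times\nabla^{\perp}_{e_i}\eta_a\bigr)$ and read off both the first-order and zeroth-order parts from the tables of $e_i\times\eta_j$ and $\nabla^{\perp}_{e_i}\eta_j$ in the preceding lemma, then pass to $\Psi_1,\Psi_2$ by a formal rearrangement. Your sample checks (e.g.\ the $-\tfrac{15}{7}\eta_1$ and the $\eta_4$-row) agree with the paper's computation, so nothing is missing.
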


\begin{proof}
Take $\psi = \sum_{a = 1}^{4} \psi_{a} \eta_{a} \in C^{\infty} (A_{3}, \nu)$ for $\psi_{a} \in C^{\infty}(A_{3})$. 
By the lemma above, we see 
\begin{align*}
D \psi &= 
\sum_{i ,a} (e_{i} (\psi_{a}) e_{i} \times \eta_{a} + \psi_{a} e_{i} \times \nabla^{\perp}_{e_{i}} \eta_{a}) \\
&=
(- e_{3} (\psi_{2}) +e_{2} (\psi_{3}) - e_{1} (\psi_{4}) -\tfrac{15}{7} \psi_{1} ) \eta_{1} 
+ (e_{3} (\psi_{1}) -e_{1} (\psi_{3}) - e_{2} (\psi_{4}) -\tfrac{15}{7} \psi_{2} ) \eta_{2} \\
&+ ( - e_{2} (\psi_{1}) + e_{1} (\psi_{2}) - e_{3} (\psi_{4}) + 3 \psi_{3}) \eta_{3} 
+ (e_{1} (\psi_{1}) + e_{2} (\psi_{2}) + e_{3} (\psi_{3}) + 3 \psi_{4}) \eta_{4}, 
\end{align*}
which gives the proof. 
\end{proof}

From these descriptions, we compute the following.

\begin{prop} \label{sol_A3}
$\dim_{\mathbb{R}} \{ \psi \in C^{\infty}({\rm SU}(2), \mathbb{R}^{4}) ; D\psi = - \psi \} = 34$. 
\end{prop}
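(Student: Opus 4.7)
The plan is to diagonalize the first-order operator $D + id_{\nu}$ on ${\rm SU}(2) \cong A_{3}$ via the Peter--Weyl decomposition (Lemma \ref{Fourier_S3}). Using the complex combinations $\Psi_{1} = \psi_{1} + i \psi_{2}$ and $\Psi_{2} = \psi_{3} - i \psi_{4}$ from Proposition \ref{explicit_D_A3}, the equation $D \psi = -\psi$ is equivalent to the $\mathbb{C}$-linear system
\begin{align*}
(i e_{3} - \tfrac{8}{7}) \Psi_{1} + (-i e_{1} + e_{2}) \Psi_{2} &= 0, \\
-(i e_{1} + e_{2}) \Psi_{1} + (-i e_{3} + 4) \Psi_{2} &= 0.
\end{align*}
The correspondence $(\psi_{1}, \ldots, \psi_{4}) \leftrightarrow (\Psi_{1}, \Psi_{2})$ is an $\mathbb{R}$-linear bijection whose solution image is $\mathbb{C}$-closed, so it suffices to show
\[
\dim_{\mathbb{R}} \ker (D + id_{\nu}) = 2 \dim_{\mathbb{C}} \{ (\Psi_{1}, \Psi_{2}) \text{ solving the system above} \} = 34.
\]

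Next I would expand $\Psi_{1} = \sum_{n, k, \ell} a_{n, k, \ell} \langle \rho_{n}(\cdot) v_{k}^{(n)}, v_{\ell}^{(n)} \rangle$ and $\Psi_{2}$ analogously with coefficients $b_{n, k, \ell}$. Since $e_{1}, e_{2}, e_{3}$ are left-invariant, with $e_{i} = E_{i}/\sqrt{7}$ for $i = 1, 2$ and $e_{3} = E_{3}$, Lemma \ref{calc_S3} gives the action of $i e_{3}$ and $\pm i e_{1} + e_{2}$ on $v_{k}^{(n)}$, and the system decouples across the irreducible types $(n, \ell)$. For each fixed $(n, \ell)$, writing $a_{k} = a_{n, k, \ell}$ and $b_{k} = b_{n, k, \ell}$ with $0 \le k \le n$, the system becomes
\begin{align*}
(2k - n - \tfrac{8}{7})\, a_{k} &= - \tfrac{2 i \sqrt{k (n - k + 1)}}{\sqrt{7}}\, b_{k-1}, \\
(n - 2k + 4)\, b_{k} &= \tfrac{2 i \sqrt{(k + 1) (n - k)}}{\sqrt{7}}\, a_{k+1},
\end{align*}
with the conventions $b_{-1} = a_{n+1} = 0$.

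The boundary $k = 0$ forces $a_{0} = 0$, while $k = n$ forces $b_{n} = 0$ unless $n = 4$, in which case $b_{n}$ is unconstrained. For $1 \le k \le n$ the coefficient $2k - n - \tfrac{8}{7}$ is never zero (since $2k - n$ is an integer), so $a_{k}$ is determined by $b_{k-1}$; substituting into the second equation for $0 \le k < n$ yields the dichotomy $b_{k} = 0$ or
\[
(n + 1)^{2} + 6 (m + 2)^{2} = 49, \qquad m := n - 2 k.
\]
The only integer solutions with $n \ge 0$, $|m| \le n$, $m \equiv n \pmod{2}$ are $(n, k) = (6, 4)$ (giving $m = -2$) and $(n, k) = (4, 2)$ (giving $m = 0$); adjoining the boundary case $(n, k) = (4, 4)$ accounts for all free parameters.

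Finally, counting per fixed $\ell$: at $(n, k) = (6, 4)$ the free parameter is $b_{4}$, contributing $1$ complex dimension; at $(n, k) = (4, 2)$ and $(4, 4)$ the free parameters $b_{2}$ and $b_{4}$ contribute $2$ complex dimensions. Since $\ell$ runs over $\{ 0, \ldots, n \}$, the totals are $1 \cdot 7 = 7$ for $n = 6$ and $2 \cdot 5 = 10$ for $n = 4$, giving $\dim_{\mathbb{C}} = 17$ and hence $\dim_{\mathbb{R}} = 34$. The main obstacle is managing the algebra through the substitution cleanly to reach the clean Diophantine form $(n+1)^{2} + 6(m+2)^{2} = 49$ and then ruling out all other integer solutions in the admissible range; the rest is bookkeeping.
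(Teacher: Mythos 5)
Your proposal is correct, and it rests on the same ingredients as the paper: the reduction of $D\psi=-\psi$ to the $\mathbb{C}$-linear system in $(\Psi_{1},\Psi_{2})$ from Proposition \ref{explicit_D_A3}, the Peter--Weyl expansion of Lemma \ref{Fourier_S3}, and the ladder formulas of Lemma \ref{calc_S3}; your final list $(n,k)=(6,4),(4,2),(4,4)$ and the count $2(7+5+5)=34$ agree with the paper. The one genuine difference is in how the system is solved. The paper applies $(ie_{1}+e_{2})$ to the first equation and substitutes the second to eliminate $\Psi_{1}$, obtaining the second-order scalar equation $(-7\Delta_{+}+24ie_{3}+48)\Psi_{2}=0$, reads off the admissible $(n,k)$ from the joint spectrum of $\Delta_{+}$ and $ie_{3}$, and then reconstructs $\Psi_{1}$ from the first equation. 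You instead solve the first-order system directly as a banded recursion in the Fourier coefficients $a_{k},b_{k}$, which decouples into $2\times 2$ blocks $\{a_{k+1},b_{k}\}$ plus the boundary constraints $a_{0}=0$ and $(4-n)b_{n}=0$, and reduces to the Diophantine condition $(n+1)^{2}+6(m+2)^{2}=49$. Your route has the small advantage that necessity and sufficiency are handled in one pass, whereas the paper's elimination yields a priori only a necessary condition on $\Psi_{2}$ and leaves the verification that the reconstructed pair satisfies the second equation implicit. One cosmetic point: $(n,k)=(4,4)$ (i.e.\ $m=-4$) does satisfy your Diophantine equation within the range $|m|\le n$, $m\equiv n\bmod 2$; it is excluded from your dichotomy only because that analysis is restricted to $0\le k<n$, and it correctly reappears as the boundary case $k=n=4$ --- so the count is unaffected, but the sentence claiming $(6,4)$ and $(4,2)$ are the \emph{only} integer solutions should be qualified by the restriction $k<n$.
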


On the other hand, 
${\rm Spin(7)}$ induces 
$18 (= \dim_{\mathbb{R}} {\rm Spin}(7)/ {\rm SU} (2))$-dimensional 
associative deformations of $A_{3}$, 
and hence $A_{3}$ could potentially have $16$-dimensional nontrivial associative deformations. 
However, we do not know whether there exists actual 16-dimensional 
nontrivial deformations.

\begin{proof}[Proof of Proposition \ref{sol_A3}]
By Proposition \ref{explicit_D_A3}, $D \psi = \alpha \psi$ 
for $\alpha \in \mathbb{R}$ is equivalent to 
\begin{align}
\left( i e_{3} - \left ( \frac{15}{7} + \alpha \right) \right) \Psi_{1} + (-i e_{1} + e_{2}) \Psi_{2} = 0, \label{D_A3_1}\\
- (i e_{1} + e_{2}) \Psi_{1}                     +(-i e_{3} + (3 - \alpha))\Psi_{2} = 0. \label{D_A3_2} 
\end{align}
Applying $(i e_{1} + e_{2})$ to (\ref{D_A3_1}), we obtain 
\begin{align}
\left( i e_{3} - \left( \frac{1}{7} + \alpha \right) \right) ( i e_{1} + e_{2}) \Psi_{1} + \left( e_{1}^{2} + e_{2}^{2} + 
\frac{2}{7} i e_{3} \right) \Psi_{2} = 0.  \label{D_A3_3}
\end{align}
Substituting (\ref{D_A3_2}) into (\ref{D_A3_3}), we have 
$
\left( -7 \Delta_{+} + 24 i e_{3} + (7 \alpha + 1)(\alpha - 3) \right ) \Psi_{2} = 0. 
$
By using the notation in Lemma \ref{Fourier_S3} and Lemma \ref{calc_S3}, 
we obtain 
\begin{align*}
&\left( -7 \Delta_{+} + 24 i e_{3} + (7 \alpha + 1)(\alpha - 3) \right ) 
\langle \rho_{n} (\cdot) v^{(n)}_{k}, u \rangle \\
=& 
\left \{
- 6 (n - 2k + 2)^{2} -n^{2} - 2n + 24 + (7 \alpha + 1)(\alpha - 3)
\right \} 
\langle \rho_{n} (\cdot) v^{(n)}_{k}, u \rangle, 
\end{align*}
for $n \geq 0, 0 \leq k \leq n, u \in V_{n}.$

Set $\alpha = -1$. 
Since we know that 
$
- 6 (n - 2k + 2)^{2} -n^{2} - 2n + 48 = 0 
\Leftrightarrow 
(n, k) = (6, 4), (4, 2), (4, 4), 
$
we deduce that 
\begin{align*}
\Psi_{2} = 
\langle \rho_{6} (\cdot) v^{(6)}_{4}, u_{1} \rangle 
+ \langle \rho_{4} (\cdot) v^{(4)}_{2}, u_{2} \rangle
+ \langle \rho_{4} (\cdot) v^{(4)}_{4}, u_{3} \rangle, 
\end{align*}
for $u_{1} \in V_{6}, u_{2}, u_{3} \in V_{4}$. 
From (\ref{D_A3_1}), we see that 
\begin{align*}
\Psi_{1} = 
-i \sqrt{\frac{7}{10}} \langle \rho_{6} (\cdot) v^{(6)}_{5}, u_{1} \rangle 
- 2i  
\sqrt{\frac{7}{6}}
\langle \rho_{4} (\cdot) v^{(4)}_{3}, u_{2} \rangle. 
\end{align*}
Hence we obtain 
$\dim_{\mathbb{R}} \{ \psi \in C^{\infty}({\rm SU}(2), \mathbb{R}^{4}) ; D\psi = - \psi \} 
= 14 + 2 \cdot 10 
= 34$. 
\end{proof}


\subsection{The case $S^{3}, L_{1}, L_{2}, L_{3}$ and $L_{4}$}

Next, we study the deformations of 
homogeneous associative submanifolds 
which lie in a totally geodesic $S^{6}$. 
These Lagrangian deformation spaces are studied in \cite{Lotay_stab}. 
Hence we only consider associative and non-Lagrangian deformations 
by Remark \ref{dim asso_nonLag}.

\subsubsection{The totally geodesic $S^{3}\cong {\rm SU}(2)$} \label{totally_geodesic_S3} \label{S3_deform}

In this case,  $\{ e_{1}, e_{2}, e_{3} \} = \{ E_{1}, E_{2}, E_{3} \}$ gives an orthonormal basis 
of $\mathfrak{su}(2)$
with respect to the induced metric from the totally geodesic $S^{3}$. 
We easily see the following by (\ref{gen_rot5}).

\begin{prop} \label{sol_lap_S3}
$\dim_{\mathbb{R}} \{ f \in C^{\infty}(S^{3}) ; \Delta_{+} f = 3f \} = 4$. 
\end{prop}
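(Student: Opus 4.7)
The plan is to apply Lemma \ref{gen_rot} directly with the orthonormal basis $\{e_1,e_2,e_3\}=\{E_1,E_2,E_3\}$. First I would verify that this choice corresponds to $p=q=1$ in the notation of Lemma \ref{gen_rot}: the commutation relations $[E_i,E_{i+1}]=2E_{i+2}$ together with the fact that this basis is orthonormal for the round metric on $S^3$ (which has constant sectional curvature $1$) confirm this.

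Next, specializing formula (\ref{gen_rot5}) with $p=q=1$ kills the $(-p^2+q^2)(n-2k)^2$ term and leaves
\begin{equation*}
\Delta_+\langle\rho_n(\cdot)v_k^{(n)},u\rangle=(n^2+2n)\,\langle\rho_n(\cdot)v_k^{(n)},u\rangle
\end{equation*}
for every $n\geq 0$, $0\leq k\leq n$, and $u\in V_n$. By Lemma \ref{Fourier_S3} (Peter--Weyl), the matrix coefficients $\{\langle\rho_n(\cdot)v_k^{(n)},v_l^{(n)}\rangle\}_{n\geq 0,\,0\leq k,l\leq n}$ span a dense subspace of $C(\mathrm{SU}(2),\mathbb{C})$ and are mutually $L^2$-orthogonal, so the $\Delta_+$-eigenspace in $C^\infty(\mathrm{SU}(2),\mathbb{C})$ with eigenvalue $\lambda$ has complex dimension $\sum_{n:n^2+2n=\lambda}(n+1)^2$.

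Solving $n^2+2n=3$ over nonnegative integers gives uniquely $n=1$, contributing complex dimension $(1+1)^2=4$. Finally, since $\Delta_+$ has real coefficients, the complex eigenspace is closed under complex conjugation and thus admits a real form of real dimension $4$ consisting of real-valued eigenfunctions; equivalently, $\dim_{\mathbb{R}}\{f\in C^{\infty}(S^3):\Delta_+f=3f\}=(n+1)^2\big|_{n=1}=4$.

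There is essentially no obstacle here: the work is entirely bookkeeping once formula (\ref{gen_rot5}) is in hand, and the answer also matches the well-known spectrum $k(k+2)$ (with multiplicity $(k+1)^2$) of the Laplacian on the unit $S^3$. The only mild care needed is to pass from the complex dimension computation supplied by Peter--Weyl to the real dimension requested in the statement, which is immediate from invariance of the eigenspace under conjugation.
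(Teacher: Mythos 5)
Your proposal is correct and follows the paper's own (very brief) argument: with $\{e_1,e_2,e_3\}=\{E_1,E_2,E_3\}$ orthonormal, i.e.\ $p=q=1$, formula (\ref{gen_rot5}) gives eigenvalue $n^2+2n$ on the degree-$n$ matrix coefficients, and $n^2+2n=3$ forces $n=1$, yielding dimension $(1+1)^2=4$. Your extra care in passing from the complex Peter--Weyl count to the real dimension of real-valued eigenfunctions is fine and consistent with how the paper counts dimensions elsewhere.
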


This implies that associative and non-Lagrangian deformations of the totally geodesic $S^{3}$ are trivial 
since $G_{2}$ induces 
$8 (= \dim_{\mathbb{R}} G_{2} / {\rm SO}(4))$-dimensional Lagrangian deformations of $S^{3}$ and 
${\rm Spin(7)}$ 
induces 12-dimensional associative deformations of $S^{3}$,  
whose space is known to be 
${\rm Spin(7)} / K$, 
where $K \cong {\rm SU}(2)^{3} / \mathbb{Z}_{2}$ is 
a Lie subgroup of {\rm Spin(7)} 
(\cite[Theorem.I\hspace{-.1em}V.1.38]{Harvey Lawson}). 
%


\subsubsection{The case $L_{1} \cong {\rm SU}(2)$}

Set $p_{0} = \frac{\sqrt{5}}{3} \epsilon_{1} + \frac{2}{3} \epsilon_{4} 
= 
{}^t\! (\frac{\sqrt{5}}{3}, 0, \frac{2}{3}, 0) \in \mathbb{R} \oplus \mathbb{C}^{3}$. 
Then we have 
\begin{align*}
&(E_{1}^{*})_{p_{0}} = -\frac{2\sqrt{5}}{3} \epsilon_{3} - \frac{2}{3} \epsilon_{6}, \qquad
(E_{2}^{*})_{p_{0}} = -\frac{2\sqrt{5}}{3} \epsilon_{2} - \frac{2}{3} \epsilon_{7}, \qquad
(E_{3}^{*})_{p_{0}} =  \frac{2}{3} \epsilon_{5}. 
\end{align*} 
Thus $\{ e_{1}, e_{2}, e_{3} \} = 
\{ \frac{\sqrt{6}}{4} E_{1}, \frac{\sqrt{6}}{4} E_{2}, \frac{3}{2} E_{3} \}$ 
gives an orthonormal basis 
of $\mathfrak{su}(2)$. 
We easily see the following by (\ref{gen_rot5}).

\begin{prop} \label{sol_lap_L1}
$\dim_{\mathbb{R}} \{ f \in C^{\infty}(S^{3}) ; \Delta_{+} f = 3f \} = 7$. 
\end{prop}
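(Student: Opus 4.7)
The plan is to apply the eigenvalue formula (\ref{gen_rot5}) of Lemma \ref{gen_rot} directly, with the orthonormal frame specified just before the proposition. From the data above, $\{e_1, e_2, e_3\} = \{\tfrac{\sqrt{6}}{4}E_1, \tfrac{\sqrt{6}}{4}E_2, \tfrac{3}{2}E_3\}$, so I set $p = \sqrt{6}/4$ and $q = 3/2$, giving $p^2 = 3/8$ and $-p^2 + q^2 = 15/8$. Substituting into (\ref{gen_rot5}), $\Delta_+$ acts on the matrix coefficient $\langle \rho_n(\cdot) v^{(n)}_k, u\rangle$ as multiplication by
\[
\tfrac{15}{8}(n-2k)^2 + \tfrac{3}{8}\,n(n+2).
\]

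Setting this scalar equal to $3$ reduces to solving the integer equation $5(n-2k)^2 + n(n+2) = 8$ with $0 \le k \le n$. Since $n(n+2) > 8$ for $n \ge 3$, only $n \in \{0,1,2\}$ need be examined: $n=0$ gives $0$; $n=1$ gives $5(1-2k)^2 + 3 = 8$, so $k=0,1$ both work; $n=2$ forces $(2-2k)^2 = 0$, so $k=1$. Hence exactly three triples $(n,k) = (1,0), (1,1), (2,1)$ contribute.

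For each such $(n,k)$ the parameter $u$ ranges over $V_n$, whose complex dimension is $n+1$. By Lemma \ref{Fourier_S3} the functions $\langle \rho_n(\cdot)v^{(n)}_k, u\rangle$ with varying $(n,k,u)$ are $L^2$-orthogonal and span a dense subspace of the continuous functions, so the complex eigenspace for eigenvalue $3$ has complex dimension $2 + 2 + 3 = 7$. Because $\Delta_+$ is a real elliptic operator and $3 \in \mathbb{R}$, this eigenspace is stable under complex conjugation, so the space of \emph{real} solutions has the same real dimension $7$.

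No serious obstacle is expected: unlike the cases $L_2, L_3, L_4$, the submanifold $L_1$ is the full group $SU(2)$, so no finite-group equivariance needs to be imposed on matrix coefficients. The only step requiring mild care is verifying that $\{e_1,e_2,e_3\}$ is correctly oriented and normalized so that (\ref{gen_rot5}) applies, but since that formula depends only on $p^2$ and $q^2$ the orientation choice is irrelevant; and the enumeration of integer solutions above is elementary.
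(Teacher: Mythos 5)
Your proposal is correct and follows exactly the route the paper intends: the paper's entire justification is ``We easily see the following by (\ref{gen_rot5})'' with $\{e_1,e_2,e_3\}=\{\tfrac{\sqrt{6}}{4}E_1,\tfrac{\sqrt{6}}{4}E_2,\tfrac{3}{2}E_3\}$, and your substitution $p^2=3/8$, $q^2=9/4$ leading to $5(n-2k)^2+n(n+2)=8$ with solutions $(n,k)=(1,0),(1,1),(2,1)$ and total dimension $2+2+3=7$ is precisely that computation. The passage from the complex eigenspace to the real one is also handled correctly, since the solution set is stable under $k\mapsto n-k$ and hence under conjugation.
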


This implies that associative and non-Lagrangian deformations of $L_{1}$ are trivial 
since ${\rm Spin(7)} \setminus G_{2}$ 
induces 
$7$-dimensional associative deformations of $L_{1}$.  
%


\subsubsection{The case $L_{2} \cong {\rm SU}(2)/\mathbb{Z}_{2}$}

Set $p_{0} = \frac{1}{\sqrt{2}} (\epsilon_{4} + \epsilon_{7}) 
= \frac{1}{\sqrt{2}} {}^t\! (0, 0, 1, i) \in \mathbb{R} \oplus \mathbb{C}^{3}$. 
Then we have 
\begin{align*}
(E_{1}^{*})_{p_{0}} = \sqrt{2} \epsilon_{3}, \qquad 
(E_{2}^{*})_{p_{0}} = \sqrt{2} \epsilon_{2}, \qquad 
(E_{3}^{*})_{p_{0}} =  \sqrt{2} (\epsilon_{5} - \epsilon_{6}). 
\end{align*} 
Thus $\{ e_{1}, e_{2}, e_{3} \} = 
\{ \frac{1}{\sqrt{2}} E_{1}, \frac{1}{\sqrt{2}} E_{2}, \frac{1}{2} E_{3} \}$ 
gives an orthonormal basis 
of $\mathfrak{su}(2)$.

\begin{lem} \label{Hom_L2}
If $n$ is even, we have 
\begin{align*}
{\rm Hom}_{\mathbb{Z}_{2}} (V_{n}, \mathfrak{su}(2) \otimes_{\mathbb{R}} \mathbb{C}) 
=
{\rm Hom}_{\mathbb{C}} (V_{n}, \mathfrak{su}(2) \otimes_{\mathbb{R}} \mathbb{C}), \qquad
{\rm Hom}_{\mathbb{Z}_{2}} (V_{n}, \mathbb{C}) 
=
{\rm Hom}_{\mathbb{C}} (V_{n}, \mathbb{C}). 
\end{align*}
If $n$ is odd, 
both spaces are $\{ 0\}$. 
\end{lem}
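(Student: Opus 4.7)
The plan is to exploit the fact that the $\mathbb{Z}_2$ in question is the center $\{\pm I\}$ of ${\rm SU}(2)$, which forces the equivariance condition to become trivial or impossible depending only on the parity of $n$.

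First I would identify the group $\mathbb{Z}_2$ explicitly. Since $L_2 \cong {\rm SU}(2)/\mathbb{Z}_2 = {\rm SO}(3)$ and the quotient is realized by the double cover $\varpi$ from \eqref{covering_SU2_SO3}, the subgroup $\mathbb{Z}_2 \subset {\rm SU}(2)$ is exactly $\ker \varpi = \{\pm I\}$. In particular $\mathbb{Z}_2$ lies in the center of ${\rm SU}(2)$, so its adjoint action on $\mathfrak{su}(2)$ — and hence on $\mathfrak{su}(2) \otimes_{\mathbb{R}} \mathbb{C}$ and on $\mathbb{C}$ — is trivial.

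Next I would compute the action of the non-trivial element $-I$ on $V_n$. Directly from the definition of $\rho_n$ in Lemma \ref{Fourier_S3},
\begin{equation*}
(\rho_n(-I) f)(z_1, z_2) = f(-z_1, -z_2) = (-1)^n f(z_1, z_2),
\end{equation*}
so $\rho_n(-I) = (-1)^n \operatorname{id}_{V_n}$.

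Combining the two observations, the equivariance condition $L(\rho_n(k) v) = {\rm Ad}(k) L(v)$ for all $k \in \mathbb{Z}_2$ and $v \in V_n$ collapses (taking $k = -I$) to the single equation $(-1)^n L(v) = L(v)$. When $n$ is even this is automatic, so every $\mathbb{C}$-linear map is already $\mathbb{Z}_2$-equivariant, giving the claimed equalities. When $n$ is odd it forces $2 L(v) = 0$, hence $L = 0$, so the $\mathbb{Z}_2$-equivariant Hom spaces are trivial. There is no real obstacle here — the only thing to be careful about is correctly identifying the $\mathbb{Z}_2$ as the center, after which the proof is a one-line parity check.
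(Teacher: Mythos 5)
Your proof is correct, and it is essentially the argument the paper intends: Lemma \ref{Hom_L2} is stated without proof, being the specialization to $\zeta=-1$ of the coefficient computation in the proof of Lemma \ref{Hom_A2}, and your observation that the relevant $\mathbb{Z}_2$ is the central subgroup $\{\pm I\}$ (so ${\rm Ad}$ acts trivially on $\mathfrak{su}(2)\otimes_{\mathbb{R}}\mathbb{C}$ and on $\mathbb{C}$) together with $\rho_n(-I)=(-1)^n\,{\rm id}_{V_n}$ reproduces exactly that parity condition. No gap.
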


From this Lemma, we see the following by (\ref{gen_rot5}).

\begin{prop} \label{sol_lap_L2}
$\dim_{\mathbb{R}} \{ f \in C^{\infty}(L_{2}) ; \Delta_{+} f = 3f \} = 6$. 
\end{prop}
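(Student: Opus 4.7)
The plan is to reduce the problem to a finite Diophantine search via the Peter–Weyl decomposition, exactly as in the proof of Proposition \ref{sol_lap_A2}.

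First I would record the metric data: with the identification $L_{2} \cong \mathrm{SU}(2)/\mathbb{Z}_{2}$ and the orthonormal basis $\{e_{1},e_{2},e_{3}\}=\{E_{1}/\sqrt{2},\, E_{2}/\sqrt{2},\, E_{3}/2\}$ just computed, I get $(p,q)=(1/\sqrt{2},\,1/2)$ in the notation of Lemma~\ref{gen_rot}. Then formula (\ref{gen_rot5}) gives
\begin{equation*}
\Delta_{+}\langle \rho_{n}(\cdot) v^{(n)}_{k},u\rangle
= \left\{-\tfrac{1}{4}(n-2k)^{2}+\tfrac{1}{2}(n^{2}+2n)\right\}\langle \rho_{n}(\cdot) v^{(n)}_{k},u\rangle
\end{equation*}
for every $n\ge 0$, $0\le k\le n$ and $u\in V_{n}$. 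Since $L_{2}=\mathrm{SU}(2)/\mathbb{Z}_{2}$, Lemma~\ref{Hom_L2} tells me that only $n$ even contribute, so $C^{\infty}(L_{2},\mathbb{C})$ is the orthogonal Peter–Weyl sum of the spans of $\langle \rho_{n}(\cdot) v^{(n)}_{k},u\rangle$ with $n$ even.

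Next I would set the eigenvalue equal to $3$ and solve in integers. The equation $-(n-2k)^{2}+2n(n+2)=12$ forces $(n-2k)^{2}=2n^{2}+4n-12$; combined with $(n-2k)^{2}\le n^{2}$ (valid for $0\le k\le n$) this yields $n^{2}+4n-12\le 0$, hence $n\le 2$. A direct check eliminates $n=0$ and gives $n=2$ with $k=0$ or $k=2$. Both of these are allowed by the $\mathbb{Z}_{2}$-equivariance condition since $n=2$ is even, so Lemma~\ref{Hom_L2} imposes no further restriction on $u\in V_{2}$.

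To finish, I count dimensions. For each of the two admissible pairs $(n,k)$, letting $u$ range over $V_{n}=V_{2}$ produces $n+1=3$ linearly independent complex eigenfunctions, for a total complex dimension $6$. Finally, by Lemma~\ref{calc_S3}, complex conjugation sends $\langle\rho_{2}(\cdot) v^{(2)}_{0},u\rangle$ into $\langle\rho_{2}(\cdot) v^{(2)}_{2},u^{*}\rangle$, that is, it intertwines the $k=0$ and $k=2$ blocks bijectively. Therefore the real eigenspace has the same real dimension as the complex eigenspace of either single block, and in fact equals $2\cdot 3=6$. There is no real obstacle here; the only care needed is the realness bookkeeping under the $k\leftrightarrow n-k$ symmetry, which parallels the verifications in the previous subsections.
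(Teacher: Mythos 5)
Your proposal is correct and follows exactly the paper's (implicit) argument: restrict to even $n$ via Lemma \ref{Hom_L2}, apply (\ref{gen_rot5}) with $(p,q)=(1/\sqrt{2},1/2)$ to reduce to $(n,k)=(2,0),(2,2)$, and count the real-valued solutions as $6$ using the conjugation symmetry $k\leftrightarrow n-k$ from Lemma \ref{calc_S3}. No gaps.
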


This implies that associative and non-Lagrangian deformations of $L_{2}$ are trivial 
since ${\rm Spin(7)} \setminus G_{2}$ induces 
$6$-dimensional 
associative deformations of $L_{2}$. 
Note that 
$L_{2}$ is invariant under the action of 	
$\{ {\rm diag}(e^{-3it}, e^{it}, e^{it}, e^{it}); t \in \mathbb{R} \} \subset {\rm Spin}(7) \setminus G_{2}$.


\subsubsection{The case $L_{3} \cong {\rm SU}(2)/ A^{*}_{4}$}

We have 
\begin{align*}
&(E_{1}^{*})_{\epsilon_{2}} = \sqrt{10} \epsilon_{4} - \sqrt{6} \epsilon_{6}, \qquad
(E_{2}^{*})_{\epsilon_{2}} = \sqrt{10} \epsilon_{5} - \sqrt{6} \epsilon_{7}, \qquad
(E_{3}^{*})_{\epsilon_{2}} = -4 \epsilon_{3}. 
\end{align*} 
Thus $\{ e_{1}, e_{2}, e_{3} \} = 
\{ E_{1}/4, E_{2}/4, E_{3}/4 \}$ 
gives an orthonormal basis 
of $\mathfrak{su}(2)$. 

\begin{lem} \label{Z2A4_equiv}
\begin{align*}
{\rm Hom}_{A^{*}_{4}} (V_{6}, \mathbb{C}) 
= \mathbb{C} \left( (v^{(6)}_{1})^{*} - (v^{(6)}_{5})^{*} \right). 
\end{align*}
\end{lem}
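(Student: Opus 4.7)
My plan is to impose the invariance condition coming from the generators $k_{1}, k_{2}, k_{3}$ of $A^{*}_{4}$ in (\ref{generator A4}) one at a time, and to use character theory to avoid the messiest piece.

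First, using the defining formula $(\rho_{n}(g) f)(z_{1}, z_{2}) = f((z_{1}, z_{2}) g)$ from Lemma \ref{Fourier_S3}, I would compute the action of $k_{1} = {\rm diag}(i, -i)$: since $(z_{1}, z_{2}) k_{1} = (i z_{1}, -i z_{2})$, I get $\rho_{6}(k_{1}) v^{(6)}_{k} = i^{6-k}(-i)^{k} v^{(6)}_{k} = (-1)^{k+1} v^{(6)}_{k}$. Writing an arbitrary element as $L = \sum_{k=0}^{6} C_{k} (v^{(6)}_{k})^{*}$, the condition $L \circ \rho_{6}(k_{1}) = L$ forces $C_{k} = 0$ for every even $k$. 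Next, $(z_{1}, z_{2}) k_{2} = (z_{2}, -z_{1})$ gives $\rho_{6}(k_{2}) v^{(6)}_{k} = (-1)^{k} v^{(6)}_{6-k}$, so invariance under $k_{2}$ forces $C_{3} = -C_{3}$ (hence $C_{3} = 0$) and $C_{5} = -C_{1}$. After these two steps, any $\langle k_{1}, k_{2} \rangle$-invariant element already lies in the one-dimensional line $\mathbb{C}\left((v^{(6)}_{1})^{*} - (v^{(6)}_{5})^{*}\right)$.

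The remaining obstacle is $k_{3}$, which is not diagonal, so direct verification of $k_{3}$-invariance would require expanding $(z_{1} + z_{2})^{6-k}(-z_{1} + z_{2})^{k}$ with the phase $e^{(3-k)\pi i/2}/8$ and matching the coefficients of $z_{1}^{5} z_{2}$ and $z_{1} z_{2}^{5}$. I would rather sidestep this by establishing the dimension upfront via characters of $A^{*}_{4}$. The group has order $24$ with seven conjugacy classes of sizes $1, 1, 6, 4, 4, 4, 4$ (respectively $I$, $-I$, the six elements of order $4$, and four classes of size four coming from elements of orders $3$ and $6$). Since $-I$ acts trivially on the even-degree polynomials $V_{6}$, we have $\chi_{V_{6}}(I) = \chi_{V_{6}}(-I) = 7$; for any $g \in SU(2)$ of order $4$ the eigenvalues are $\pm i$, giving $\chi_{V_{6}}(g) = \sin(7\pi/2)/\sin(\pi/2) = -1$; and for any element of order $3$ or $6$ the eigenvalues are primitive cube or sixth roots of unity, giving $\chi_{V_{6}} = 1$. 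Summing,
\begin{align*}
\dim {\rm Hom}_{A^{*}_{4}}(V_{6}, \mathbb{C}) = \frac{1}{24}\bigl(7 + 7 - 6 + 4 + 4 + 4 + 4\bigr) = 1.
\end{align*}

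Combining this dimension count with the first paragraph completes the proof: the space of invariants is at most one-dimensional by characters and at least one-dimensional since $(v^{(6)}_{1})^{*} - (v^{(6)}_{5})^{*}$ satisfies both the $k_{1}$- and $k_{2}$-constraints, and hence must also be $k_{3}$-invariant automatically. The main obstacle, the explicit verification of $k_{3}$-invariance, is thus replaced by a short Burnside-style dimension count on a finite group, which I expect to be the cleanest path.
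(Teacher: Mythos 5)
Your proposal is correct, and it diverges from the paper's proof at exactly the step you identified as the obstacle. The paper does the same reduction as your first paragraph: it imposes invariance under $k_{1}$ and $k_{2}$, obtaining the conditions $(-1)^{l}i^{6}C_{l}=C_{l}$ and $(-1)^{l}C_{6-l}=C_{l}$, which cut the space down to $\mathbb{C}\left((v^{(6)}_{1})^{*}-(v^{(6)}_{5})^{*}\right)$; it then simply asserts ("we see that") the $k_{3}$-invariance of this functional, i.e.\ it relies on the direct binomial-expansion check that you chose to avoid. Your replacement of that check by the Burnside/character count over $A^{*}_{4}$ is valid: the class data ($1,1,6,4,4,4,4$ with the stated element orders), the character values $7,7,-1,1,1$ of $V_{6}$ via $\sin(7\theta)/\sin\theta$, and the resulting $\frac{1}{24}(7+7-6+16)=1$ are all correct, and since the full invariant space is one-dimensional and sits inside the one-dimensional $\langle k_{1},k_{2}\rangle$-invariant space, the two must coincide, so $k_{3}$-invariance follows for free. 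One small wording correction: the lower bound "at least one-dimensional" does not come from the fact that $(v^{(6)}_{1})^{*}-(v^{(6)}_{5})^{*}$ satisfies the $k_{1}$- and $k_{2}$-constraints (that alone says nothing about full $A^{*}_{4}$-invariants); it comes from the character computation itself, which gives the dimension exactly, so the correct phrasing is: $\dim {\rm Hom}_{A^{*}_{4}}(V_{6},\mathbb{C})=1$ by characters, and this space is contained in the one-dimensional space cut out by $k_{1},k_{2}$, hence equals it. With that rephrasing your argument is complete, and it has the incidental advantage of independently confirming the dimension that the paper's computation (which contains typos, e.g.\ $V_{10}$ for $V_{6}$) leaves to the reader.
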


\begin{proof}
Recall that $A^{*}_{4} $ is generated by $k_{1}, k_{2}, k_{3}$ 
in (\ref{generator A4}). 
Take
$
L = \sum_{l = 0}^{10} C_{l} (v_{l}^{(10)})^{*}  
\in 
{\rm Hom}_{\mathbb{C}} (V_{10}, \mathbb{C})$
where $C_{l i} \in \mathbb{C}$ 
and consider the condition 
\begin{align} \label{Ad_Z2A4}
L( \rho_{10}(k) v) = L(v), 
\end{align}
for $k \in A^{*}_{4}$ and $v \in V_{10}$.
As for $k = k_{1}, k_{2}$, (\ref{Ad_Z2A4}) is equivalent to 
\begin{align*}
(-1)^{l} i^{6} C_{l} = C_{l}, \qquad
(-1)^{l} C_{6-l} = C_{l}. 
\end{align*}
Thus $L$ is of the form 
$C \left( (v^{(6)}_{1})^{*} - (v^{(6)}_{5})^{*} \right)$ 
for $C \in \mathbb{C}$, 
and we see that 
$(v^{(6)}_{1})^{*} - (v^{(6)}_{5})^{*}$ is invariant by $k_{3}$. 
\end{proof}

\begin{prop} \label{sol_lap_L3}
$\dim_{\mathbb{R}} \{ f \in C^{\infty}(L_{3}) ; \Delta_{+} f = 3f \} = 7$. 
\end{prop}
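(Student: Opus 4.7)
The plan is to combine the eigenvalue formula (\ref{gen_rot5}), the Peter--Weyl decomposition in Lemma \ref{Fourier_S3}, and the invariant count supplied by Lemma \ref{Z2A4_equiv}. The orthonormal basis on $\mathfrak{su}(2)$ induced from $L_{3}$ has already been recorded above as $\{e_{1},e_{2},e_{3}\}=\{E_{1}/4,\,E_{2}/4,\,E_{3}/4\}$, so I would set $(p,q)=(1/4,1/4)$ in Lemma \ref{gen_rot}. Since $-p^{2}+q^{2}=0$, formula (\ref{gen_rot5}) collapses to
\[
\Delta_{+}\langle \rho_{n}(\cdot)v^{(n)}_{k},u\rangle \;=\; \tfrac{1}{16}(n^{2}+2n)\,\langle \rho_{n}(\cdot)v^{(n)}_{k},u\rangle,
\]
independent of $k$, so the $\Delta_+$-eigenvalue depends only on the isotypic component.

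Next I would solve $(n^{2}+2n)/16=3$, i.e.\ $(n+1)^{2}=49$, to conclude $n=6$. Hence every $\mathbb{C}$-valued eigenfunction for eigenvalue $3$ lies in the $V_{6}$-isotypic component of $L^{2}(\mathrm{SU}(2))$. A function on $\mathrm{SU}(2)$ descends to $L_{3}=\mathrm{SU}(2)/A^{*}_{4}$ precisely when it is right $A^{*}_{4}$-invariant. Since right translation by $k$ sends the matrix coefficient $\langle \rho_{6}(\cdot)v,u\rangle$ to $\langle \rho_{6}(\cdot)\rho_{6}(k)v,u\rangle$, such a coefficient is right $A^{*}_{4}$-invariant iff $\rho_{6}(k)v=v$ for every $k\in A^{*}_{4}$, i.e.\ $v\in V_{6}^{A^{*}_{4}}$, while $u\in V_{6}$ remains free.

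To finish, I would use the self-duality of $V_{6}$ as an $\mathrm{SU}(2)$-representation (hence as an $A^{*}_{4}$-representation) to identify $V_{6}^{A^{*}_{4}}$ with $\mathrm{Hom}_{A^{*}_{4}}(V_{6},\mathbb{C})$, which is one-dimensional by Lemma \ref{Z2A4_equiv}. Thus the complex dimension of the eigenspace is $1\cdot \dim V_{6}=7$, and since $\Delta_{+}$ is a real operator the complexification of the real eigenspace coincides with the complex eigenspace, yielding real dimension $7$. The only substantive input is the invariant-theoretic count $\dim V_{6}^{A^{*}_{4}}=1$, which is exactly what Lemma \ref{Z2A4_equiv} supplies; everything else is a direct application of the Peter--Weyl/Casimir formalism already developed earlier in the paper.
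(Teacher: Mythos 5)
Your proposal is correct and follows essentially the same route as the paper: use (\ref{gen_rot5}) with $p=q=1/4$ to pin the eigenvalue-$3$ eigenspace to the $V_{6}$-isotypic component, then invoke Lemma \ref{Z2A4_equiv} to see that the $A^{*}_{4}$-invariants contribute a single copy of $V_{6}$, giving real dimension $7$. The only cosmetic difference is that you phrase the invariance count via $V_{6}^{A^{*}_{4}}$ and self-duality rather than directly via ${\rm Hom}_{A^{*}_{4}}(V_{6},\mathbb{C})$ as in the paper's Fourier-expansion formalism.
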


This implies that associative and non-Lagrangian deformations of $L_{3}$ are trivial 
since ${\rm Spin}(7) \setminus G_{2}$ induces 
$7$-dimensional 
associative deformations of $L_{3}$. 

\begin{proof}
The solution $f$ of $\Delta_{+} f = 3f$ is contained in 
$
{\rm span}_{\mathbb{C}} 
\left \{ 
\langle 
\rho_{6} (\cdot) 
v^{(6)}_{a}, 
v^{(6)}_{b}  
\rangle; 
0 \leq a,b \leq 6
\right \} 
$
from (\ref{gen_rot5}). 
From Lemma \ref{Z2A4_equiv}, 
$A^{*}_{4}$ invariant solutions of $\Delta_{+} f = 3f$ are of the form 
$
f = \langle \rho_{6} (\cdot) (v^{(6)}_{1} - v^{(6)}_{5}), u 
\rangle
$ 
for 
$u \in V_{6}$. 
Imposing that $f$ is $\mathbb{R}$-valued, we have 
$\dim_{\mathbb{R}} \{ f \in C^{\infty}(L_{3}) ; \Delta_{+} f = 3f \} 
= 7. 
$
\end{proof}


\subsubsection{The case $L_{4} \cong {\rm SU}(2)/ D^{*}_{3}$}

We have 
\begin{align*}
(E_{1}^{*})_{\epsilon_{6}} = \sqrt{6} \epsilon_{2}, \qquad
(E_{2}^{*})_{\epsilon_{6}} = \sqrt{6} \epsilon_{3}, \qquad
(E_{3}^{*})_{\epsilon_{6}} = 6 \epsilon_{7}. 
\end{align*} 
Thus $\{ e_{1}, e_{2}, e_{3} \} = 
\{ E_{1}/\sqrt{6}, E_{2}/\sqrt{6}, E_{3}/6 \}$ 
gives an orthonormal basis 
of $\mathfrak{su}(2)$. 

\begin{lem} \label{D3_equiv}
The space 
${\rm Hom}_{D^{*}_{3}} (V_{n}, \mathbb{C}) $ 
is spanned by 
the following functions: 
\begin{enumerate}
\item In case $n = 6m$ where $m \in \mathbb{Z}_{\geq 0}$, 
\begin{align*}
(v^{(n)}_{3j})^{*} + (-1)^{j} (v^{(n)}_{n-3j})^{*} \qquad
\mbox{ for } \ 0 \leq j \leq m.
\end{align*}

\item In case $n = 6m+2$, 
\begin{align*}
(v^{(n)}_{3j+1})^{*} + (-1)^{j + 1} (v^{(n)}_{n-(3j+1)})^{*} \qquad
\mbox{ for } \ 0 \leq j \leq m.
\end{align*}

\item 
In case $n = 6m+4$, 
\begin{align*}
(v^{(n)}_{3j+2})^{*} + (-1)^{j} (v^{(n)}_{n-(3j+2)})^{*} \qquad
\mbox{ for } \ 0 \leq j \leq m.
\end{align*}
\end{enumerate}

In case $n \in 2 \mathbb{Z} + 1$, we have 
${\rm Hom}_{D^{*}_{3}} (V_{n}, \mathbb{C}) = \{ 0 \}.$ 
\end{lem}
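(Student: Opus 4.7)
The plan is to mirror the proof of Lemma \ref{Z2A4_equiv}, imposing the equivariance condition on the two generators $k_{4}, k_{5}$ of $D^{*}_{3}$ listed in \eqref{generator D3}. First I would compute the action of $\rho_{n}$ on the unitary basis $\{v^{(n)}_{l}\}$ of $V_{n}$ directly from the definition in Lemma \ref{Fourier_S3}:
\begin{align*}
\rho_{n}(k_{5}) v^{(n)}_{l} = e^{\pi i (n - 2l)/3}\, v^{(n)}_{l}, \qquad
\rho_{n}(k_{4}) v^{(n)}_{l} = (-1)^{l}\, v^{(n)}_{n - l}.
\end{align*}

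Next, writing an arbitrary $L = \sum_{l=0}^{n} C_{l} (v^{(n)}_{l})^{*} \in {\rm Hom}_{\mathbb{C}}(V_{n}, \mathbb{C})$, the invariance condition $L(\rho_{n}(k) v) = L(v)$ for $k = k_{5}$ translates to $C_{l} = 0$ whenever $n - 2l \not\equiv 0 \pmod{6}$, while for $k = k_{4}$ it gives the symmetry $C_{l} = (-1)^{l} C_{n - l}$. Since $D^{*}_{3}$ is generated by $k_{4}, k_{5}$, these two conditions together characterize ${\rm Hom}_{D^{*}_{3}}(V_{n}, \mathbb{C})$.

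From the $k_{5}$-condition, $n$ must be even (otherwise $n - 2l$ is odd and no $l$ survives, yielding the last assertion), and the surviving indices are determined modulo $3$ by the residue of $n$ modulo $6$: they are $l = 3j$ when $n = 6m$, $l = 3j + 1$ when $n = 6m + 2$, and $l = 3j + 2$ when $n = 6m + 4$, with $0 \le j \le 2m$ in each case. Then I would use the $k_{4}$-relation $C_{l} = (-1)^{l} C_{n - l}$ to pair the surviving indices $l$ and $n - l$. Each pair with $j < m$ contributes a single parameter, so the normalized solution $(v^{(n)}_{l})^{*} + (-1)^{l} (v^{(n)}_{n-l})^{*}$ reproduces exactly the spanning elements stated in the three cases.

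The only subtlety is the self-paired index $j = m$ (the "middle" term, where $l = n - l$): there the constraint reduces to $C_{l} = (-1)^{l} C_{l}$, which either leaves $C_{l}$ free or forces $C_{l} = 0$ depending on the parity. Since the lemma only claims that the listed functions span ${\rm Hom}_{D^{*}_{3}}(V_{n}, \mathbb{C})$, this collapse at $j = m$ is harmless and no further case splitting is needed. This step is the main (and essentially only) bookkeeping obstacle; everything else is a direct translation of the two linear conditions.
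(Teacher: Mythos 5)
Your proposal is correct and follows essentially the same route as the paper: impose $L(\rho_n(k)v)=L(v)$ for the two generators $k_4,k_5$, obtaining the conditions $C_l=0$ unless $n-2l\equiv 0\pmod 6$ and $C_l=(-1)^l C_{n-l}$, and then read off the spanning set. You in fact supply more detail than the paper (which ends with ``we easily see the lemma''), including the correct treatment of the self-paired middle index.
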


\begin{proof}
Recall that $D^{*}_{3}$ is generated by $k_{4}, k_{5}$ in (\ref{generator D3}).
Take
$
L = \sum_{l = 0}^{n} C_{l} (v_{l}^{(n)})^{*} \otimes e_{i} 
\in 
{\rm Hom}_{\mathbb{C}} (V_{n}, \mathbb{C})$
where $C_{l i} \in \mathbb{C}$. 
Consider the condition 
(\ref{Ad_Z2A4}) for $k = k_{4}, k_{5}$, it is equivalent to 
\begin{align*}
(-1)^{n-l} C_{n-l} = C_{l},\qquad  
(e^{\frac{\pi i}{3}})^{n-2l} C_{l} = C_{l}. 
\end{align*}
Then we easily see Lemma \ref{D3_equiv}. 
\end{proof}

\begin{prop} \label{sol_lap_L4}
$\dim_{\mathbb{R}} \{ f \in C^{\infty}(L_{4}) ; \Delta_{+} f = 3f \} = 7$. 
\end{prop}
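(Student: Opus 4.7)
The plan is to mimic the proofs of Propositions \ref{sol_lap_L2} and \ref{sol_lap_L3}: combine the eigenvalue formula (\ref{gen_rot5}) with the $D^{*}_{3}$-equivariance description in Lemma \ref{D3_equiv}. With the orthonormal basis identified above, we have $p = 1/\sqrt{6}$ and $q = 1/6$, so $-p^{2}+q^{2} = -5/36$ and $p^{2} = 1/6$. Hence by (\ref{gen_rot5}),
\begin{align*}
\Delta_{+}\langle \rho_{n}(\cdot) v^{(n)}_{k}, u\rangle
= \left( -\tfrac{5}{36}(n-2k)^{2} + \tfrac{1}{6}(n^{2}+2n) \right) \langle \rho_{n}(\cdot) v^{(n)}_{k}, u\rangle.
\end{align*}

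The first step is to list all integer pairs $(n,k)$ with $0\le k\le n$ for which this eigenvalue equals $3$. Setting $m := n-2k$, the equation becomes $6(n+1)^{2} - 5m^{2} = 114$. Since $|m|\le n$ forces $n^{2}+12n-108\le 0$, one gets $n\le 6$, and then a direct case-check on $n=0,\dots,6$ leaves only $n=6$, $m=\pm 6$, i.e.\ $(n,k) = (6,0)$ and $(6,6)$. Thus the full $\Delta_{+}$-eigenspace on $\mathrm{SU}(2)$ for the eigenvalue $3$ is
\begin{align*}
\mathrm{span}_{\mathbb{C}}\left\{ \langle \rho_{6}(\cdot) v^{(6)}_{0}, u\rangle,\ \langle \rho_{6}(\cdot) v^{(6)}_{6}, u\rangle \ ;\ u\in V_{6} \right\}.
\end{align*}

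Next, one must restrict to functions that descend to $L_{4} = \mathrm{SU}(2)/D^{*}_{3}$, i.e.\ to linear combinations in the first slot whose dual element lies in $\mathrm{Hom}_{D^{*}_{3}}(V_{6},\mathbb{C})$. Applying Lemma \ref{D3_equiv} with $n=6=6\cdot 1$, the candidate basis vectors are $(v^{(6)}_{3j})^{*} + (-1)^{j}(v^{(6)}_{6-3j})^{*}$ for $j=0,1$; the $j=1$ term vanishes identically, so the $D^{*}_{3}$-invariants form a one-dimensional line spanned by $(v^{(6)}_{0})^{*} + (v^{(6)}_{6})^{*}$. Following the convention used in the proof of Proposition \ref{sol_lap_L3}, this yields the $D^{*}_{3}$-invariant eigenspace
\begin{align*}
\mathrm{span}_{\mathbb{C}}\left\{ \langle \rho_{6}(\cdot)(v^{(6)}_{0} + v^{(6)}_{6}), u\rangle \ ;\ u\in V_{6} \right\},
\end{align*}
which has complex dimension $\dim_{\mathbb{C}} V_{6} = 7$.

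Finally, imposing that $f$ be $\mathbb{R}$-valued cuts out a real form of the same real dimension; this uses the reality identity of Lemma \ref{calc_S3}, $\overline{\langle \rho_{n}(\cdot) v^{(n)}_{k}, u\rangle} = (-1)^{k}\langle \rho_{n}(\cdot) v^{(n)}_{n-k}, u^{*}\rangle$, which exchanges $v^{(6)}_{0}$ and $v^{(6)}_{6}$ and hence preserves the line $\mathbb{C}(v^{(6)}_{0}+v^{(6)}_{6})$. This yields $\dim_{\mathbb{R}} = 7$. The only place where care is needed is in writing down the $D^{*}_{3}$-invariants correctly; the $n=6$ case of Lemma \ref{D3_equiv} happens to collapse to a single surviving generator, and it is this collapse (together with the fact that exactly the indices $k=0,6$ solve the eigenvalue equation) that produces the clean answer $7$.
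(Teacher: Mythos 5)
Your proposal is correct and follows essentially the same route as the paper: restrict via the eigenvalue formula (\ref{gen_rot5}) to $(n,k)=(6,0),(6,6)$, apply Lemma \ref{D3_equiv} to find the single surviving $D^{*}_{3}$-invariant $(v^{(6)}_{0})^{*}+(v^{(6)}_{6})^{*}$, and impose real-valuedness to get $\dim_{\mathbb{R}}=7$. Your version just spells out the Diophantine case-check and the conjugation argument that the paper leaves implicit.
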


This implies that associative and non-Lagrangian deformations of $L_{4}$ are trivial 
since ${\rm Spin}(7) \setminus G_{2}$ induces 
$7$-dimensional 
associative deformations of $L_{4}$. 

\begin{proof}
From (\ref{gen_rot5}), 
the solution $f$ of $\Delta_{+} f = 3f$ is contained in 
the space spanned by 
$
\langle 
\rho_{6} (\cdot) 
v^{(6)}_{j}, 
v^{(6)}_{a}  
\rangle
$
where 
$
j=0,6 
$
and 
$0 \leq a \leq 6. 
$
From Lemma \ref{D3_equiv}, 
$D^{*}_{3}$ invariant solutions of $\Delta_{+} f = 3f$ are of the form 
$
f = \langle \rho_{6} (\cdot) (v^{(6)}_{0} + v^{(6)}_{6}), u 
\rangle 
$
for 
$
u \in V_{6}. 
$
Imposing that $f$ is $\mathbb{R}$-valued, we have 
$\dim_{\mathbb{R}} \{ f \in C^{\infty}(L_{3}) ; \Delta_{+} f = 3f \} 
= 7. 
$
\end{proof}

\section*{Appendices}
\appendix

\section{Proof of Proposition \ref{Dirac_Lap}}

We follow the proof of \cite{Gayet}. 
First, we show the following lemma. 

\begin{lem} \label{diff_cross}
For any vector fields $u, v, w, z, X \in \mathfrak{X}(Y)$, we have 
\begin{align*}
\nabla_{X} (u \times v) 
=&
(\nabla_{X} u) \times v + u \times (\nabla_{X} v) - \chi (X, u, v), \\
R(w, z) (u \times v) 
=&
(R(w, z) u) \times v + u \times (R(w, z) v) 
+ \varphi (z, u, v) w - \varphi (w, u, v)z \\
&- g(w, u) v \times z 
- g(w, v) z \times u 
+ g(z, u) v \times w 
+ g(z, v) w \times u.  
\end{align*}
When $M^{3} \subset Y$ is associative, we have 
$TM \times TM \subset TM$, $TM \times \nu \subset \nu$, and 
$\nu \times \nu \subset TM$. Thus for any 
$X, u, v \in C^{\infty} (M, TM), \eta \in C^{\infty} (M, \nu)$, we have 
\begin{align*}
\nabla_{X}^{\top} (u \times v) 
=&
(\nabla_{X}^{\top} u) \times v + u \times (\nabla_{X}^{\top} v) - (\chi (X, u, v))^{\top}, \\
\nabla_{X}^{\perp} (u \times \eta) 
=&
(\nabla_{X}^{\top} u) \times \eta + u \times (\nabla_{X}^{\perp} \eta) - (\chi (X, u, \eta))^{\perp}.
\end{align*}
\end{lem}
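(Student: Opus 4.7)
The plan is to derive the first identity directly from the defining equation $g(u\times v, w)=\varphi(u,v,w)$, then iterate it to obtain the curvature formula, and finally split into tangential/normal components using the cross-product inclusions induced by the associative condition.

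For the first formula I would differentiate $g(u\times v,w) = \varphi(u,v,w)$ in the direction $X$, apply metric compatibility on the left and the derivation rule for tensors on the right. Three of the resulting four terms cancel against $g(u\times v, \nabla_X w) = \varphi(u,v,\nabla_X w)$, leaving
\begin{equation*}
g(\nabla_X(u\times v),w) = (\nabla_X\varphi)(u,v,w) + \varphi(\nabla_X u,v,w) + \varphi(u,\nabla_X v,w).
\end{equation*}
The nearly parallel condition in Lemma \ref{cha_NP}(3) gives $(\nabla_X\varphi)(u,v,w)=(*\varphi)(X,u,v,w)$, and by the definition of $\chi$ together with the antisymmetry of $*\varphi$, this equals $-g(\chi(X,u,v),w)$. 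Since $w$ is arbitrary, the first formula follows.

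For the curvature identity I would apply the first formula twice inside $\nabla_w\nabla_z(u\times v) - \nabla_z\nabla_w(u\times v) - \nabla_{[w,z]}(u\times v)$. After expansion all cross-product terms with mixed derivatives cancel by symmetry, the pure curvature contributions give $R(w,z)u\times v + u\times R(w,z)v$, and the remainder is exactly $(\nabla_z\chi)(w,u,v) - (\nabla_w\chi)(z,u,v)$. To identify this remainder with the stated right-hand side, I would compute $g(a,(\nabla_w\chi)(z,u,v))=(\nabla_w*\varphi)(a,z,u,v)$ and use Lemma \ref{cha_NP}(4), $\nabla_w(*\varphi) = -g(w,\cdot)\wedge\varphi$, to expand the wedge as four terms each of the form $g(w,\text{arg})\varphi(\text{other three})$. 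Antisymmetrizing in $(w,z)$ produces six terms; rewriting each $\varphi(\cdot,\cdot,\cdot)$ as $g(\cdot,\cdot\times\cdot)$ recognizes them as $\varphi(z,u,v)w-\varphi(w,u,v)z - g(w,u)v\times z - g(w,v) z\times u + g(z,u)v\times w + g(z,v) w\times u$ after testing against arbitrary $a$.

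For the associative splittings I would use that at every point of an associative $M^3$ the cross product sends $TM\times TM\to TM$ (Remark after Definition \ref{def of cross product}), and consequently by orthogonality $TM\times\nu\subset\nu$ and $\nu\times\nu\subset TM$. Applied to the first formula with $X,u,v$ tangent to $M$, each term $(\nabla_X u)\times v$ and $u\times(\nabla_X v)$ decomposes into a tangent piece, coming from $\nabla^\top$, plus a normal piece coming from the second fundamental form crossed with a tangent vector; but the second fundamental form lies in $\nu$, so after crossing it lies in $TM$, and hence contributes to the tangential component — however the claimed formula asserts only $(\nabla^\top_X u)\times v + u\times(\nabla^\top_X v)$ appears, so the normal-part contributions must be absorbed into $(\chi(X,u,v))^\top$. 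I would then verify this by projecting both sides of the first formula tangentially (and analogously for the mixed case $u\in TM,\eta\in\nu$, projecting normally). The main obstacle is the bookkeeping in the curvature step: keeping the six antisymmetric terms aligned with the cross-product expression; everything else is a direct application of the definitions together with Lemma \ref{cha_NP}.
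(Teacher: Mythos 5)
Your treatment of the two global identities is correct and follows essentially the same route as the paper: differentiating the defining relation $g(u\times v,w)=\varphi(u,v,w)$ with $\nabla\varphi=*\varphi$ and the definition of $\chi$ gives the first formula (the paper does the same computation in an orthonormal frame), and iterating it in $R(w,z)=\nabla_w\nabla_z-\nabla_z\nabla_w-\nabla_{[w,z]}$ leaves the remainder $-(\nabla_w\chi)(z,u,v)+(\nabla_z\chi)(w,u,v)$, which you expand via $\nabla_w(*\varphi)=-g(w,\cdot)\wedge\varphi$ exactly as the paper does; your six-term identification checks out.

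There is, however, a concrete error in your handling of the associative projections. You assert that the second fundamental form terms, being normal, become \emph{tangential} after crossing with a tangent vector and therefore "must be absorbed into $(\chi(X,u,v))^{\top}$". This is backwards: by the inclusions you yourself invoke, $TM\times\nu\subset\nu$, and by antisymmetry of $\times$ also $\nu\times TM\subset\nu$, so $\mathrm{II}(X,u)\times v$ and $u\times \mathrm{II}(X,v)$ are \emph{normal} and simply disappear under the tangential projection; nothing needs to be absorbed, and $(\chi(X,u,v))^{\top}$ is just the honest tangential part of $\chi(X,u,v)$. If the terms really landed in $TM$ as you claim, the third formula would acquire extra second-fundamental-form contributions and your projection argument would not close. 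The fourth formula is analogous with the roles reversed: writing $\nabla_X\eta=(\nabla_X\eta)^{\top}+\nabla^{\perp}_X\eta$, the discarded terms $\mathrm{II}(X,u)\times\eta\in\nu\times\nu\subset TM$ and $u\times(\nabla_X\eta)^{\top}\in TM\times TM\subset TM$ are tangential and vanish under the normal projection. With this one-line correction your projection plan proves both of the last two identities; as written, that step is wrong.
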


\begin{proof}
Let $\{ f_{k} \}_{k = 1, \cdots, 7}$ be any local orthonormal frame of $TY$. Then 
\begin{align*}
\nabla_{X} (u \times v) 
&= \sum_{i = 1}^{7} 
\{
(\nabla_{X} \varphi) (u, v, f_{i}) f_{i} 
+ \varphi  (\nabla_{X} u, v, f_{i}) f_{i}
+ \varphi  (u, \nabla_{X} v, f_{i}) f_{i}
\}\\
&=
- \chi (X, u, v) + 
(\nabla_{X} u) \times v + u \times (\nabla_{X} v)
\end{align*}
since $\nabla g = 0$ and $\nabla \varphi = * \varphi$. 
For $R(w, z) = \nabla_{w} \nabla_{z} - \nabla_{z} \nabla_{w} - \nabla_{[w, z]}$ , 
we see the following by a direct computation. 
\begin{align*}
\hspace{-0.5cm}
R(w, z) (u \times v) 
=
(R(w, z) u) \times v + u \times (R(w, z) v) 
- (\nabla_{w} \chi)(z, u, v) + (\nabla_{z} \chi) (w, u, v). 
\end{align*}
Then, the equation $\nabla_{w} \chi = \sum_{k} i(f_{k}) (\nabla_{w} * \varphi) \otimes f_{k}
= - \sum_{k}  i(f_{k}) (g(w, \cdot) \wedge \varphi) \otimes f_{k}
= - \varphi \otimes w + \sum_{k}  (g(w, \cdot) \wedge i(f_{k}) \varphi) \otimes f_{k}
$
proves the lemma. 
\end{proof}

Next, we compute $D^{2}$. 
Let $\{ e_{i} \}_{i = 1, \cdots, 3}$ be any local orthonormal frame  
satisfying $e_{3} = e_{1} \times e_{2}$ and 
$\{ \eta_{k} \}_{k = 1, \cdots, 4}$ 
be any local orthonormal frame of $\nu$. 
Then by Lemma \ref{diff_cross}, it follows that  
\begin{align*}
D^{2} \psi 
= \sum_{i, j = 1}^{3} e_{i} \times \nabla_{e_{i}}^{\perp} (e_{j} \times \nabla_{e_{j}}^{\perp} \psi) 
= I_{1} + I_{2}, 
\end{align*}
where 
\begin{align*}
I_{1} &= \sum_{i, j = 1}^{3} e_{i} \times  ( \nabla_{e_{i}}^{\top} e_{j} \times \nabla_{e_{j}}^{\perp} \psi
          +e_{j} \times \nabla_{e_{i}}^{\perp} \nabla_{e_{j}}^{\perp} \psi ), \\
I_{2} &= - \sum_{i, j = 1}^{3} e_{i} \times (\chi (e_{i}, e_{j}, \nabla_{e_{j}}^{\perp} \psi) )^{\perp}.
\end{align*}
From (\ref{chi_cross}), the following holds: 
\begin{align*}
I_{2} &= \sum_{i, j} e_{i} \times ((e_{i} \times e_{j}) \times \nabla_{e_{j}}^{\perp} \psi) \\
      &= -\sum_{i, j} (e_{i} \times (e_{i} \times e_{j})) \times \nabla_{e_{j}}^{\perp} \psi 
      = 2 \sum_{j} e_{j} \times \nabla_{e_{j}}^{\perp} \psi
      = 2D \psi. 
\end{align*}
By the computation in \cite{Gayet}, we have 
$
I_{1} = 
\nabla^{\perp *} \nabla^{\perp} \psi 
+\pi_{\mathcal{V}} (I_{3}) + I_{4}, 
$
where 
\begin{align*}
I_{3} = -\frac{1}{2} \sum_{i, j} (e_{i} \times e_{j}) \times R(e_{i}, e_{j}) \psi, \qquad
I_{4} = \sum_{i, j, k} g (A_{(e_{i} \times e_{j}) \times \eta_{k}} e_{i}, A_{\psi} e_{j}) \eta_{k}. 
\end{align*}

From the next lemma, we obtain Proposition \ref{Dirac_Lap}. 

\begin{lem}
\begin{align*}
I_{3} =  \sum_{i = 1}^{3} R(e_{i}, \psi) e_{i} + 3 \psi, \qquad
I_{4} = - \mathcal{A} \psi.
\end{align*}
\end{lem}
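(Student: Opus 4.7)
The plan is to prove $I_3$ and $I_4$ separately, using two common tools. First, the associator identity derived from (\ref{chi_cross}) by rewriting it with $u\times v$ as the first factor and invoking the total skew symmetry $\chi(w,u,v) = \chi(u,v,w)$ (valid because $\chi$ is dual to the 4-form $*\varphi$):
\[
(u\times v)\times w \;=\; g(u,w)\,v - g(v,w)\,u - \chi(u,v,w).
\]
Second, the Clifford relation on the normal bundle obtained from (\ref{chi_cross}) by putting the last argument in $\nu$ and using skew symmetry of $\chi$:
\[
e_i\times(e_j\times \eta) + e_j\times(e_i\times \eta) \;=\; -2\delta_{ij}\,\eta, \qquad \eta\in\nu,
\]
so that $\nu$ is a Clifford module for $TM$.

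For $I_3$, substitute the associator identity with $u=e_i$, $v=e_j$, $w=R(e_i,e_j)\psi$ and sum. The resulting metric sum, after applying the antisymmetry $R(e_i,e_j)=-R(e_j,e_i)$ and the block-symmetry $g(R(X,Y)Z,W)=g(R(Z,W)X,Y)$ followed by $g(e_i,R(e_i,e_j)\psi)=-g(R(e_i,\psi)e_i,e_j)$, simplifies precisely to the tangential component $\sum_i(R(e_i,\psi)e_i)^{\top}$. The remaining $\chi$-sum $\tfrac12\sum_{i,j}\chi(e_i,e_j,R(e_i,e_j)\psi)$ must therefore supply both the normal part $\sum_i(R(e_i,\psi)e_i)^{\perp}$ and the ``nearly parallel correction'' $3\psi$. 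I would extract these by expanding $\chi$ back into double cross products via (\ref{chi_cross}), applying the first Bianchi identity $R(e_i,e_j)\psi+R(e_j,\psi)e_i+R(\psi,e_i)e_j=0$, and using the Clifford trace $\sum_i e_i\times(e_i\times \eta) = -3\eta$ for $\eta\in\nu$ (an immediate consequence of the Clifford relation, together with vanishing of the off-diagonal antisymmetric terms). This last identity is where the coefficient $3 = \dim M$ enters.

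For $I_4$, write $e_i\times e_j=\varepsilon_{ijl}e_l$ using $e_3 = e_1\times e_2$, so that $(e_i\times e_j)\times \eta_k = \varepsilon_{ijl}(e_l\times \eta_k)\in\nu$. Expanding $A_{e_l\times \eta_k} = \sum_m g(e_l\times \eta_k,\eta_m)\,A_{\eta_m}$ reduces $I_4$ to a sum over $i,j,k,l,m$. Contracting $\varepsilon_{ijl}$ against $g(A_{\eta_m}e_i, A_\psi e_j)$ picks out the commutator $[A_{\eta_m},A_\psi]$ (since each $A_\eta$ is symmetric), and the Clifford relations above collapse the $l,m$-sum to give $-{\rm tr}(A_{\eta_k}\circ A_\psi)\,\eta_k$ for each $k$. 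Since $g_\nu(\mathcal{A}\psi,\eta_k) = g_{SM}(A_\psi,A_{\eta_k}) = {\rm tr}(A_{\eta_k}\circ A_\psi)$ by the definition $\mathcal{A}={}^t\!A\circ A$, this is exactly $-\mathcal{A}\psi$.

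The main obstacle will be the $\chi$-sum in $I_3$: disentangling $\sum_i(R(e_i,\psi)e_i)^{\perp}$ from the correction $3\psi$ requires careful bookkeeping of how the nearly parallel defect of $\chi$ (namely $\nabla\chi\ne 0$, as computed in the proof of Lemma \ref{diff_cross}) interacts with the curvature. The $I_4$ calculation, by contrast, is essentially linear algebra on the Clifford module $\nu$ and should go through mechanically once the Clifford relation is in place.
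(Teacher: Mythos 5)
Your reduction of the metric terms of $I_{3}$ to $\sum_{i}(R(e_{i},\psi)e_{i})^{\top}$ is correct, and so is your observation that everything then hinges on the normal-valued sum $\tfrac{1}{2}\sum_{i,j}\chi(e_{i},e_{j},R(e_{i},e_{j})\psi)=-\sum_{i\in\mathbb{Z}/3}e_{i}\times(R(e_{i+1},e_{i+2})\psi)^{\perp}$. But the tools you list for that sum cannot close the argument. The identity to be proved equates two \emph{different} contractions of the ambient curvature tensor up to the constant term $3\psi$; the Bianchi identity, the pair symmetries of $R$, and the Clifford algebra of $\times$ on $\nu$ are all blind to how $R$ interacts with $\varphi$, so they can neither convert $e_{i}\times(R(e_{i+1},e_{i+2})\psi)^{\perp}$ into $(R(e_{i},\psi)e_{i})^{\perp}$ nor produce an $R$-independent summand. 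In particular the Clifford trace $\sum_{i}e_{i}\times(e_{i}\times\eta)=-3\eta$ cannot be ``where the $3$ enters'': that identity holds verbatim on a torsion-free $G_{2}$-manifold, where the analogous lemma carries no $+3\psi$. The missing ingredient is the \emph{second} identity of Lemma \ref{diff_cross}, the formula for $R(w,z)(u\times v)-(R(w,z)u)\times v-u\times(R(w,z)v)$ in terms of $\varphi$ and $g$; this is the one place the nearly parallel condition enters, and applying it with $(u,v)=(e_{i+2},e_{i})$, $(w,z)=(e_{i+1},\psi)$ and summing the three contributions $\varphi(e_{i+1},e_{i+2},e_{i})\psi=\psi$ over $i\in\mathbb{Z}/3$ is exactly what yields $3\psi$. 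That is how the paper argues; your plan never invokes this identity.

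The $I_{4}$ step has a gap of the same nature. Tensoriality of $A$ in the normal slot and the $\varepsilon$-contraction picking out commutators are fine, but the claim that ``the Clifford relations collapse the sum to $-{\rm tr}(A_{\eta_{k}}\circ A_{\psi})\,\eta_{k}$'' is false for arbitrary symmetric shape operators: take $A_{\eta_{1}}={\rm diag}(1,-1,0)$, $A_{\eta_{2}}=A_{\eta_{3}}=A_{\eta_{4}}=0$ and $\psi=\eta_{1}$; then every commutator $[A_{\eta_{m}},A_{\psi}]$ vanishes, so your expression is $0$, while $-\mathcal{A}\psi=-2\eta_{1}$. What excludes such configurations, and what any proof must use, is the linearized associativity constraint $\sum_{i}e_{i}\times\nabla^{\perp}_{e_{i}}e_{j}=0$ on the second fundamental form; the paper uses exactly this, together with minimality $\sum_{i}\nabla^{\perp}_{e_{i}}e_{i}=0$, to reduce $I_{4}$ to $I_{5}+I_{6}$ with $I_{6}=0$ and $I_{5}=-\mathcal{A}\psi$ (the last equality being the purely algebraic part). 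So neither half of the lemma is mechanical: $I_{3}$ needs the curvature identity for $\times$ and $I_{4}$ needs the associativity constraint on the second fundamental form, and your proposal supplies neither.
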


\begin{proof}
By using the relation 
$e_{i} \times e_{i + 1} = e_{i+2}$ for $i \in \mathbb{Z}/3$ 
and the Bianchi identity, 
we have 
\begin{align*}
I_{3} =& - \sum_{i \in \mathbb{Z}/3} e_{i} \times R(e_{i + 1}, e_{i +2})\psi\\
=& \sum_{i \in \mathbb{Z}/3} e_{i} \times 
         \left( R(\psi, e_{i + 1}) e_{i +2} + R(e_{i + 2}, \psi) e_{i + 1} \right), \\
e_{i + 2} \times R(e_{i + 1}, \psi) e_{i} 
=& R(e_{i + 1}, \psi) e_{i + 1} - (R(e_{i + 1}, \psi) e_{i + 2}) \times e_{i} \\
 &- \varphi (\psi, e_{i + 2}, e_{i}) e_{i + 1} + \varphi (e_{i + 1}, e_{i + 2}, e_{i}) \psi \\
=&  R(e_{i + 1}, \psi) e_{i + 1} + e_{i} \times (R(e_{i + 1}, \psi) e_{i + 2}) + \psi, 
\end{align*}
since 
$e_{i} \times e_{i + 1} = e_{i+2}$ for $i \in \mathbb{Z}/3$, 
$g(e_{i}, \psi) = 0$,  
and $\varphi (e_{i}, e_{i + 1}, e_{i + 2}) = 1$. Hence we obtain 
$
I_{3} =  \sum_{i = 1}^{3} R(e_{i}, \psi) e_{i} + 3 \psi. 
$
For $I_{4}$, we have by Lemma \ref{diff_cross}
\begin{align*}
A_{(e_{i} \times e_{j}) \times \eta_{k}} e_{i} 
=& - \nabla_{e_{i}}^{\top} ((e_{i} \times e_{j}) \times \eta_{k}) \\
=& - \nabla_{e_{i}}^{\perp} (e_{i} \times e_{j}) \times \eta_{k}
  -  (e_{i} \times e_{j}) \times (\nabla_{e_{i}}^{\top} \eta_{k})
  + \chi (e_{i}, e_{i} \times e_{j}, \eta_{k})^{\top} \\
=&
- \{ (\nabla_{e_{i}}^{\perp} e_{i}) \times e_{j} + e_{i} \times (\nabla_{e_{i}}^{\top} e_{j}) \} \times \eta_{k}
+ (e_{i} \times e_{j}) \times A_{\eta_{k}} e_{i} \\
&+ \chi (e_{i}, e_{i} \times e_{j}, \eta_{k})^{\top}. 
\end{align*}
Since an associative submanifold is minimal, it follows that 
$\sum_{i} \nabla_{e_{i}}^{\perp} e_{i} = 0$. 
Moreover, we see $\sum_{i} e_{i} \times \nabla_{e_{i}}^{\perp} e_{j} = 0$ for  $j = 1, 2, 3$
by the relation $e_{3} = e_{1} \times e_{2}$. 
Hence we obtain $I_{4} = I_{5} + I_{6}$, where
\begin{align*}
I_{5} = \sum_{i, j, k} g( (e_{i} \times e_{j}) \times A_{\eta_{k}} e_{i}, A_{\psi} e_{j}) \eta_{k},  \qquad
I_{6} = \sum_{i, j, k} g( \chi (e_{i}, e_{i} \times e_{j}, \eta_{k})^{\top}, A_{\psi} e_{j}) \eta_{k}. 
\end{align*}
It is shown that $I_{5} = - \mathcal{A} \psi$ in \cite{Gayet}. 
As for $I_{6}$, we compute 
$
\chi (e_{i}, e_{i} \times e_{j}, \eta_{k}) 
= \eta_{k} \times (e_{i} \times (e_{i} \times e_{j})) 
= \eta_{k} \times (- e_{j} + \delta_{i j} e_{i}), 
$
and obtain 
$\sum_{i} \eta_{k} \times (- e_{j} + \delta_{i j} e_{i}) = -2 \eta_{k} \times e_{j} \in C^{\infty}(M, \nu)$, 
which implies that $I_{6} = 0$. 
\end{proof}

\section{Harmonic analysis on a homogeneous vector bundle}

We give a summary of harmonic analysis on a homogeneous vector bundle
from \cite{Wallach}. 
\subsection{Homogeneous vector bundles}

\begin{definition} \label{def_homog vb}
Let $G$ be a Lie group and let $K$ be a closed subgroup of $G$. 
Set $M := G/K$. 
A vector bundle $E \rightarrow M$ is called a 
{\bf homogeneous vector bundle} 
if $G$ acts on $E$ on the left and the $G$-action satisfies: 
\begin{enumerate}
\item 
$g \cdot E_{x} = E_{g \cdot x}$ for $g \in G, x \in M$, 
\item 
$g \cdot : E_{x} \rightarrow E_{g \cdot x}$ is linear for $g \in G, x \in M$, 
\end{enumerate}
where $E_{x}$ is the fiber of $E$ at $x \in M$. 
\end{definition}

\begin{lem}
Let $(\tau, E_{0})$ be a finite dimensional representation of $K$. 
Then the associated vector bundle 
$E := G \times_{\tau} E_{0} = G \times E_{0} / \sim, $
where
$
(g, v) \sim (g \cdot k, \tau (k)^{-1} v)$,  is a homogeneous vector bundle over $M$. 
\end{lem}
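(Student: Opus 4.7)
The plan is to verify three things in order: that $E := G \times_{\tau} E_{0}$ carries a natural structure of smooth vector bundle over $M = G/K$, that $G$ acts smoothly on the total space $E$, and that this action satisfies the two conditions of Definition \ref{def_homog vb}.

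First I would set up the bundle projection $\pi : E \to M$ by $\pi([g,v]) := gK$. This is well-defined on equivalence classes since $(g,v) \sim (gk, \tau(k)^{-1}v)$ implies $gK = (gk)K$. The fiber $\pi^{-1}(gK)$ is identified with $E_{0}$ by $v \mapsto [g,v]$ after fixing the representative $g$, and this identification endows each fiber with a vector space structure; a different choice $g' = gk$ alters the identification only by the linear automorphism $\tau(k)^{-1}$, so the vector space structure on $\pi^{-1}(gK)$ is independent of the representative. To upgrade this to a smooth vector bundle, I would invoke the fact that $G \to G/K$ is a principal $K$-bundle, so each point of $M$ has an open neighborhood $U$ admitting a smooth local section $s : U \to G$. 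Then $\Phi_{U} : U \times E_{0} \to \pi^{-1}(U)$, $\Phi_{U}(x,v) := [s(x),v]$, is a diffeomorphism, linear on fibers, and transition functions between two such trivializations $\Phi_{U}, \Phi_{V}$ have the form $(x,v) \mapsto (x, \tau(k(x))\,v)$ where $k : U \cap V \to K$ is smooth, hence smooth.

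Next I would define the $G$-action on $E$ by $g' \cdot [g,v] := [g' g, v]$. Well-definedness is immediate: $g' \cdot [gk, \tau(k)^{-1}v] = [g'gk, \tau(k)^{-1}v] = [g'g, v]$. Smoothness follows because this action is induced from the smooth left action of $G$ on $G \times E_{0}$ given by $g' \cdot (g,v) = (g'g, v)$, which descends to the quotient by the free and proper right $K$-action defining $E$.

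Finally I would check the two axioms of Definition \ref{def_homog vb}. For (1), if $[g,v] \in E_{gK}$ then $g' \cdot [g,v] = [g'g,v] \in E_{g'gK} = E_{g' \cdot (gK)}$, giving $g' \cdot E_{gK} \subseteq E_{g' \cdot gK}$; applying the same with $(g')^{-1}$ yields equality. For (2), using the fiberwise identification $v \mapsto [g,v]$ on the source and $v \mapsto [g'g,v]$ on the target, the map $g' \cdot : E_{gK} \to E_{g' \cdot gK}$ becomes the identity on $E_{0}$, which is obviously linear; more directly, $g' \cdot ([g,v_{1}] + [g,v_{2}]) = g' \cdot [g, v_{1}+v_{2}] = [g'g, v_{1}+v_{2}] = g' \cdot [g,v_{1}] + g' \cdot [g,v_{2}]$ and similarly for scalar multiplication. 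There is no real obstacle here; the only step requiring care is the smoothness of the bundle structure, which rests entirely on the existence of smooth local sections of the principal $K$-bundle $G \to G/K$.
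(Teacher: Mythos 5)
Your proof is correct; the paper states this lemma without proof, as standard background summarized from Wallach, and your argument is exactly the standard verification (projection well-defined, fibre structure independent of representative, local trivializations from local sections of the principal $K$-bundle $G \to G/K$, and the left $G$-action $g'\cdot[g,v]=[g'g,v]$ checking the two axioms). The only cosmetic point is that with the convention $(g,v)\sim(gk,\tau(k)^{-1}v)$ one has $[gk,v]=[g,\tau(k)v]$, so changing the representative twists the fibre identification by $\tau(k)$ rather than $\tau(k)^{-1}$; either way it is a linear automorphism, so nothing in the argument is affected.
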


All homogeneous vector bundles are described as above 
by the following lemma. 

\begin{lem}
Let $E \rightarrow M$ be a homogeneous vector bundle. 
Let $E_{0} = E_{eK}$ and 
$\tau : K \rightarrow {\rm End}(E_{0})$ be the 
induced action from 2 of Definition \ref{def_homog vb}. 
Then we have $E \cong G \times_{\tau} E_{0}$. 
\end{lem}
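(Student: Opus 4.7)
The plan is to exhibit an explicit bundle isomorphism $\Phi : G \times_{\tau} E_{0} \to E$ and verify it has the required properties. Define
\begin{align*}
\Phi([g, v]) := g \cdot v,
\end{align*}
where the right-hand side uses the given $G$-action on $E$ and we view $v \in E_{0} = E_{eK}$, so that $g \cdot v \in E_{gK}$ by property (1) of Definition \ref{def_homog vb}.

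First I would check that $\Phi$ is well-defined on equivalence classes. If $(g, v) \sim (gk, \tau(k)^{-1} v)$ for some $k \in K$, then since $\tau(k)^{-1} v = k^{-1} \cdot v$ by definition of $\tau$, we have
\begin{align*}
(gk) \cdot (\tau(k)^{-1} v) = (gk) \cdot (k^{-1} \cdot v) = g \cdot v,
\end{align*}
so the value is independent of the representative. Next, smoothness follows because the composition $G \times E_{0} \to E$ given by $(g, v) \mapsto g \cdot v$ is smooth (it is the restriction of the smooth action of $G$ on $E$ to the submanifold $\{e\} \times E_{0}$ composed with left translation), and $\Phi$ is the induced map on the quotient by a free and proper $K$-action.

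Then I would verify that $\Phi$ is a bundle map covering the identity of $M$: the class $[g, v]$ lies in the fiber over $gK$ of the associated bundle, and $g \cdot v$ lies in $E_{gK}$, as already noted. To show $\Phi$ is a fiberwise linear isomorphism, it suffices to check this at $eK$ by $G$-equivariance of $\Phi$. At $eK$, the fiber of $G \times_{\tau} E_{0}$ is canonically identified with $E_{0}$ via $v \mapsto [e, v]$, and under this identification $\Phi$ restricts to $v \mapsto e \cdot v = v$, which is the identity on $E_{0}$. Linearity on each fiber holds by property (2) of Definition \ref{def_homog vb}. Hence $\Phi$ is a smooth $G$-equivariant bundle isomorphism, proving $E \cong G \times_{\tau} E_{0}$.

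There is no real obstacle here; the only delicate points are the well-definedness check (where one must recognize that $\tau(k)v = k \cdot v$ by construction) and the passage of smoothness through the quotient, both of which are standard.
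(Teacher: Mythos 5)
Your proof is correct: the map $[g,v]\mapsto g\cdot v$ is the canonical isomorphism, and your checks of well-definedness (using $\tau(k)v=k\cdot v$), smoothness via the quotient, and fiberwise linearity/bijectivity by $G$-equivariance are exactly the standard argument. The paper states this lemma without proof (it is quoted as standard material from Wallach's book), and your argument is precisely the proof that source supplies, so there is nothing to add.
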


\subsection{Fourier series expansion}

Let $G$ be a compact Lie group, $K$ be a closed subgroup of $G$, 
$(\tau, E_{0})$ be a finite dimensional unitary representation of $K$, 
and $E \rightarrow M$ be the homogeneous vector bundle 
associated with $(\tau, E_{0})$. 
Assume that $M = G/K$ is orientable. 
Setting  
\begin{align*}
C(G, E_{0})^{(K, \tau)} := \{ f \in C(G, E_{0}) ; f(g \cdot k) = \tau(k)^{-1} f(g) 
\mbox{ for any }
g \in G, k \in K)\}, 
\end{align*}
we have the following. 

\begin{lem}
For $f \in C(M, E)$, define $\tilde{f} \in C(G, E_{0})^{(K, \tau)}$ 
by 
$\tilde{f}(g) = g^{-1} f(gK) \in E_{eK} \cong E_{0}$. 
Then the map $f \mapsto \tilde{f}$ 
gives an isomorphism 
$C(M, E) \cong C(G, E_{0})^{(K, \tau)}$. 
The map $f \mapsto \tilde{f}$ extends to 
the isomorphism $A : L^{2}(M, E) \xrightarrow{\cong} L^{2}(G, E_{0})^{(K, \tau)}.$
\end{lem}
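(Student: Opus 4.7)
The plan is to produce an explicit inverse map and check that $f \mapsto \tilde f$ is a bijection of continuous sections, then upgrade to an $L^2$ isometry by a direct density/inner-product computation.

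First I would verify the equivariance. For $f \in C(M,E)$ and $k \in K$, the definition $\tilde f(g) = g^{-1} \cdot f(gK)$ gives
\begin{align*}
\tilde f(gk) &= (gk)^{-1} \cdot f(gkK) = k^{-1}\cdot \bigl(g^{-1}\cdot f(gK)\bigr) = k^{-1}\cdot \tilde f(g).
\end{align*}
Under the canonical identification $E_0 \cong E_{eK}$ from $E = G\times_\tau E_0$, the action of $k \in K$ on $E_{eK}$ corresponds to $\tau(k)$ on $E_0$, so $k^{-1}\cdot \tilde f(g) = \tau(k)^{-1}\tilde f(g)$. Hence $\tilde f \in C(G,E_0)^{(K,\tau)}$, and continuity is clear since the $G$-action on $E$ is continuous.

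Next I would build the inverse. Given $F \in C(G,E_0)^{(K,\tau)}$, define $\Psi(F)(gK) := [g,F(g)] \in E_{gK}$. Well-definedness follows from the equivalence relation on $G\times E_0$: for $k \in K$,
\begin{align*}
[gk,F(gk)] = [gk,\tau(k)^{-1}F(g)] = [g,F(g)].
\end{align*}
The map $\Psi(F)$ is continuous because it factors through the continuous projection $G\times E_0 \to E$. A direct check shows $\Psi(\tilde f) = f$ and $\widetilde{\Psi(F)} = F$, so $f \mapsto \tilde f$ is a bijection $C(M,E)\to C(G,E_0)^{(K,\tau)}$, and it is clearly $\mathbb{C}$-linear.

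For the $L^2$ extension, fix the bi-invariant Haar measure $dg$ on $G$ (normalized so that $K$ has unit volume) and the induced $G$-invariant measure on $M$; equip $E$ with the fiberwise Hermitian inner product coming from the unitary structure on $E_0$, which is well defined because $\tau$ is unitary. Since the $G$-action on $E$ preserves this inner product,
\begin{align*}
\langle \tilde f(g),\tilde h(g)\rangle_{E_0}
= \langle g^{-1}\cdot f(gK),g^{-1}\cdot h(gK)\rangle_{E_{eK}}
= \langle f(gK),h(gK)\rangle_{E_{gK}},
\end{align*}
and integration over $G$ via the fibration $G\to M$ with normalized $K$-volume yields $(\tilde f,\tilde h)_{L^2(G,E_0)} = (f,h)_{L^2(M,E)}$. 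Thus $f\mapsto \tilde f$ is an isometry on the dense subspace $C(M,E)\subset L^2(M,E)$, and it extends uniquely to an isometry $A:L^2(M,E)\to L^2(G,E_0)^{(K,\tau)}$. Surjectivity of $A$ follows because its range contains the image of $C(G,E_0)^{(K,\tau)}$, which is dense in $L^2(G,E_0)^{(K,\tau)}$ (e.g.\ by averaging a dense family of continuous $E_0$-valued functions on $G$ against $\tau$ on $K$).

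The only mild subtlety is the $L^2$-density of $C(G,E_0)^{(K,\tau)}$ in $L^2(G,E_0)^{(K,\tau)}$; this I would handle by noting that the orthogonal projection $P: L^2(G,E_0)\to L^2(G,E_0)^{(K,\tau)}$ given by $PF(g) = \int_K \tau(k)F(gk)\,dk$ preserves continuity and has dense image on continuous functions, so density is automatic.
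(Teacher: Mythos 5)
Your argument is correct. Note that the paper itself offers no proof of this lemma: Appendix B is an unproved summary quoted from Wallach's book, so there is nothing to compare against line by line; your write-up supplies exactly the standard argument one would find there — equivariance of $f \mapsto \tilde f$, the explicit inverse $F \mapsto \bigl(gK \mapsto [g,F(g)]\bigr)$ in the associated-bundle model $E \cong G \times_\tau E_0$ (legitimate here, since the preceding lemma of the paper identifies every homogeneous bundle with such an associated bundle), the pointwise isometry of fiber metrics from unitarity of $\tau$ combined with Weil's integration formula for $G \to G/K$, and density of $C(G,E_0)^{(K,\tau)}$ in $L^2(G,E_0)^{(K,\tau)}$ via the averaging projection $PF(g)=\int_K \tau(k)F(gk)\,dk$, which together with completeness gives surjectivity of the isometric extension $A$. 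The only blemish is the phrase ``Haar measure on $G$ normalized so that $K$ has unit volume'': since $K$ has measure zero in $G$, what you mean is that the Haar measure on $K$ is normalized to total mass one in the disintegration $\int_G = \int_{G/K}\int_K$; with that reading the computation is exactly right.
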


Let $\hat{G}$ be the set of all equivalence classes of finite dimensional irreducible unitary 
representations 
of $G$. 
For each $\gamma = [(\pi_{\gamma}, V_{\gamma})] \in \hat{G}$, 
we assign a map 
$A_{\gamma} : V_{\gamma} \otimes {\rm Hom}_{K} (V_{\gamma}, E_{0}) \rightarrow C(G, E_{0})^{(K, \tau)}$, 
where 
${\rm Hom}_{K} (V_{\gamma}, E_{0}) = \{ L \in {\rm Hom} (V_{\gamma}, E_{0}) ; 
L(k \cdot v) = \tau(k) L(v) \mbox{ for any }  k \in K, v \in V_{\gamma}) \}$, 
by 
$A_{\gamma}(v \otimes L) (g) = L(g^{-1} \cdot v)$.

\begin{prop}[Fourier expansion] \label{Fourier_general}
The algebraic direct sum 
\begin{align*}
\sum_{\gamma \in \hat{G}} A_{\gamma}(V_{\gamma} \otimes  {\rm Hom}_{K} (V_{\gamma}, E_{0}))
\end{align*}
is uniformly dense in $C(G, E_{0})^{(K, \tau)}$ relative to the uniform topology. 
\end{prop}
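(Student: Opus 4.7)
The plan is to deduce this statement from the classical Peter--Weyl theorem combined with an averaging projection onto $K$-equivariant sections. The only deep input is Peter--Weyl itself; everything else is a formal manipulation of matrix coefficients and Haar integrals.

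First I would recall Peter--Weyl in its uniform form: as $\gamma$ ranges over $\hat{G}$ and $v, w$ over $V_{\gamma}$, the matrix coefficients $g \mapsto \langle \pi_{\gamma}(g) v, w \rangle$ span a subspace of $C(G)$ that is dense with respect to the sup norm (via Stone--Weierstrass, using that these coefficients form a point-separating subalgebra closed under conjugation). Since $E_{0}$ is finite-dimensional, $C(G, E_{0}) \cong C(G) \otimes E_{0}$ canonically and the sup-norm topologies agree, so the algebraic span of functions of the form $g \mapsto \langle \pi_{\gamma}(g^{-1}) v, w \rangle \cdot e$ with $v, w \in V_{\gamma}$ and $e \in E_{0}$ is uniformly dense in $C(G, E_{0})$ (the replacement of $g$ by $g^{-1}$ is harmless because $\pi_{\gamma}$ is unitary and the resulting family again spans the same subspace).

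Next I would introduce the averaging operator
$$P \colon C(G, E_{0}) \longrightarrow C(G, E_{0})^{(K, \tau)}, \qquad (Pf)(g) := \int_{K} \tau(k) f(gk) \, dk,$$
where $dk$ is normalized Haar measure on the compact group $K$. One checks that $Pf$ satisfies the $(K, \tau)$-equivariance, that $\| Pf \|_{\infty} \leq \| f \|_{\infty}$, and that $P$ restricts to the identity on $C(G, E_{0})^{(K, \tau)}$. Consequently $P$ is continuous and surjective, so it sends any uniformly dense subspace of $C(G, E_{0})$ to a uniformly dense subspace of $C(G, E_{0})^{(K, \tau)}$.

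The remaining step is to identify $P$ applied to a Peter--Weyl generator with an element of the asserted sum. For $f(g) = \langle \pi_{\gamma}(g^{-1}) v, w \rangle \cdot e$ I would compute
$$(Pf)(g) = \int_{K} \langle \pi_{\gamma}(g^{-1}) v, \pi_{\gamma}(k) w \rangle \, \tau(k) e \, dk = L_{w, e}\bigl( \pi_{\gamma}(g^{-1}) v \bigr),$$
where $L_{w, e}(u) := \int_{K} \langle u, \pi_{\gamma}(k) w \rangle \, \tau(k) e \, dk$. A one-line substitution $k \mapsto k'k$ using left-invariance of Haar measure shows that $L_{w, e} \in {\rm Hom}_{K}(V_{\gamma}, E_{0})$, and by the defining formula $Pf = A_{\gamma}(v \otimes L_{w, e})$. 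Hence $P$ maps the Peter--Weyl span into $\sum_{\gamma} A_{\gamma}(V_{\gamma} \otimes {\rm Hom}_{K}(V_{\gamma}, E_{0}))$, yielding uniform density. The only point requiring genuine care is verifying $K$-equivariance of $L_{w, e}$, which is where compactness of $K$ is essentially used; I do not anticipate any further obstacle beyond invoking Peter--Weyl.
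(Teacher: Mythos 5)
Your argument is correct. The paper does not prove this proposition at all --- it is quoted from Wallach's book as part of a background appendix --- and your Peter--Weyl-plus-averaging argument (density of matrix coefficients in $C(G)\otimes E_{0}$, followed by the norm-one projection $P$ onto $C(G,E_{0})^{(K,\tau)}$ and the identification $Pf=A_{\gamma}(v\otimes L_{w,e})$) is exactly the standard proof that the citation points to; all the equivariance checks you flag go through with the paper's conventions for $A_{\gamma}$ and ${\rm Hom}_{K}(V_{\gamma},E_{0})$.
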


\begin{lem}[Schur orthogonality relations] \label{Schur_orthog}
Let $(\pi, V)$ and $(\pi', V')$ be  irreducible unitary representations 
of a compact group $G$. 
Let $(\cdot, \cdot)$ and $(\cdot, \cdot)'$ 
be inner products on $V$ and $V'$, respectively. 
Then for $u, v \in V$ and $u', v' \in V'$, we have 
\begin{align*}
\int_{G} (\pi(g)u, v) \overline{(\pi'(g)u', v')'} dg = 
\left\{
\begin{array}{ll}
0                                         & (\pi  \not\simeq  \pi') \\
(u, u')\overline{(v, v')}/\dim V    & (\pi \simeq \pi'). 
\end{array}
\right.
\end{align*}
\end{lem}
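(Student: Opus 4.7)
The plan is to use the standard averaging technique combined with Schur's lemma. For any linear map $L : V' \to V$, I introduce the averaged operator
$T_L := \int_G \pi(g) \circ L \circ \pi'(g)^{-1} \, dg$,
which makes sense because $G$ is compact and carries a normalized bi-invariant Haar measure. A direct calculation using left-invariance of the measure shows that $T_L$ intertwines the two representations, that is, $\pi(h) T_L = T_L \pi'(h)$ for every $h \in G$.

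With the intertwining property in hand, I invoke Schur's lemma. If $\pi \not\simeq \pi'$, then every intertwiner vanishes, so $T_L = 0$ regardless of $L$; pairing the trivial identity $(T_L u, v) = 0$ against a suitable rank-one choice of $L$ immediately gives the first case of the formula. If $\pi \simeq \pi'$, I identify $V$ with $V'$ via a unitary intertwiner (so the two inner products agree, since on an irreducible representation the $G$-invariant Hermitian form is unique up to positive scalar), and Schur forces $T_L = c(L) \cdot \mathrm{id}_V$ for some scalar $c(L)$. Taking the trace on both sides, and using the conjugation-invariance $\mathrm{tr}(\pi(g) L \pi(g)^{-1}) = \mathrm{tr}(L)$, pins down $c(L) = \mathrm{tr}(L)/\dim V$.

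The last step is to specialize $L$ to the rank-one operator whose matrix elements reproduce the integrand of the lemma. Concretely I take $L(x') := (x', u')' \cdot v$ for the fixed vectors $v \in V$ and $u' \in V'$, whose trace is $\mathrm{tr}(L) = (v, u')$ (computed by evaluating on an orthonormal basis). Pairing with $v' \in V'$ and using the unitarity $(\pi'(g)^{-1} x, y)' = (x, \pi'(g) y)'$ rewrites $(T_L u, v')$ as the desired integral $\int_G (\pi(g) u, v) \overline{(\pi'(g) u', v')'} \, dg$ in the inequivalent case (where it vanishes) and as $c(L)(u,v') = (u,u')\overline{(v,v')}/\dim V$ in the equivalent case.

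The only real obstacle I anticipate is the careful bookkeeping around complex conjugations arising from antilinearity of the Hermitian inner products in their second slot, and the precise identification of $(V, (\cdot,\cdot))$ with $(V', (\cdot,\cdot)')$ in the equivalent case so that the formula emerges with the correct conjugation pattern on the right-hand side. These points are mechanical and introduce no genuine difficulty; once they are handled the proof reduces entirely to the two ingredients above.
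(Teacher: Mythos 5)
The paper does not prove this lemma at all: Appendix B is an explicit summary quoted from Wallach's book, and the Schur orthogonality relations are stated there as a standard fact. Your argument is the canonical textbook proof (average a rank-one operator over the Haar measure, apply Schur's lemma to the resulting intertwiner, and fix the scalar in the equivalent case by a trace computation), and it is correct. The only thing to tighten is the bookkeeping you already flagged: with your choice $L(x') = (x',u')'\,v$ the quantity $(T_L u, v')$ unwinds to $\int_G (\pi(g)v,v')\,\overline{(\pi(g)u',u)}\,dg$, i.e.\ the orthogonality relation with the vectors relabeled; to reproduce the integrand exactly as stated you should take $L(x') = (x',u')'\,u$ and evaluate $(T_L v', v)$, which gives $\int_G (\pi(g)u,v)\,\overline{(\pi'(g)u',v')'}\,dg$ directly and, in the equivalent case, $\operatorname{tr}(L)=(u,u')$ and hence $(u,u')\overline{(v,v')}/\dim V$. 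Two further points worth a sentence each in a written-up version: finite-dimensionality of $V$ (needed for the trace and for $\dim V$) holds for continuous irreducible unitary representations of compact groups, and the unitary identification $V'\cong V$ is unique only up to a phase, but the right-hand side $(u,u')\overline{(v,v')}$ is invariant under that phase, so the formula is well posed. None of this affects the validity of your proof.
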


\subsection{Homogeneous differential operators}

\begin{definition}
Let $G$ be a Lie group and let $K$ be a closed subgroup of $G$. 
Set $M = G/K$. 
Let $E \rightarrow M$ and $F \rightarrow M$ be homogeneous vector bundles, 
and $(\tau, E_{0})$ and $(\sigma, F_{0})$ 
be the representations of $K$ associated with $E$ and $F$, respectively. 

A differential operator $D : C^{\infty}(M, E) \rightarrow C^{\infty}(M, F)$ is called 
a {\bf homogeneous differential operator} 
if 
$g \cdot Df = D(g \cdot f)$ 
for $g \in G, f \in C^{\infty}(M, E)$. 
Here, $(g \cdot f)(x) = g f(g^{-1} x)$ 
for $x \in M, g \in G, f \in C^{\infty}(M, E)$ or $C^{\infty}(M, F)$. 
\end{definition}

\begin{rem} \label{1pt_homog_diff}
The map $D$ is completely determined 
by its value at a point, 
i.e., 
given $(Df)_{eK}$ for any $f \in C^{\infty}(M, E)$, 
we can determine 
$(Df)_{gK}$ for each $g \in G, f \in C^{\infty}(M, E)$. 
\end{rem}

We give an explicit description of the homogeneous differential operators. 

Let $U(\mathfrak{g}) = \otimes^{*} \mathfrak{g}/ I(\mathfrak{g})$, 
where $I(\mathfrak{g})$ is the two-sided ideal in $\otimes^{*} \mathfrak{g}$ 
generated by 
$\{ X \otimes Y - Y \otimes X - [X, Y] ; X, Y \in \mathfrak{g} \}$. 
(In other words, $U(\mathfrak{g})$ is the universal enveloping algebra of $\mathfrak{g}$.) 
Let $\xi : \otimes^{*} \mathfrak{g} \rightarrow U(\mathfrak{g})$ be the canonical projection 
and $U^{i} (\mathfrak{g}) := \xi (\sum_{k \leq i} \otimes^{k} \mathfrak{g})$.

Set $D(G)$ be the space of 
all left invariant differential operators on $G$. 
For any $X \in \mathfrak{g}$ and $f \in C^{\infty}(G)$, 
define $Xf \in C^{\infty}(G)$ by 
$Xf(g) = (d/dt) f(g \cdot \exp(tX)) |_{t=0}$. 
The map $X \mapsto (f \mapsto Xf)$ gives the 
inclusion  $\mathfrak{g} \hookrightarrow D(G)$, 
from which 
an isomorphism $U(\mathfrak{g}) \xrightarrow{\cong} D(G)$
is induced. 
 
\begin{lem}
The algebra $U(\mathfrak{g})$ is isomorphic to $D(G)$. 
If $\{ X_{1}, \cdots, X_{n} \}$ is a basis of $\mathfrak{g}$, 
then $\{ X_{1}^{m_{1}} \cdots X_{n}^{m_{n}} ; m_{j} \geq 0 \}$ 
forms a basis of $U(\mathfrak{g})$. 
\end{lem}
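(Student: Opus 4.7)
The plan is to establish the two claims together by working in exponential coordinates near the identity and exploiting the fact that a left-invariant differential operator is determined by its value at $e$. Let $\Psi : U(\mathfrak{g}) \to D(G)$ be the algebra homomorphism induced by the Lie algebra map $\mathfrak{g} \hookrightarrow D(G)$ (well-defined by the universal property of $U(\mathfrak{g})$, once one checks $[X,Y]f = (XY - YX)f$ for left-invariant vector fields, which is immediate from the definition of the Lie bracket on vector fields). Write $P_{m_1 \cdots m_n} := X_1^{m_1} \cdots X_n^{m_n} \in U(\mathfrak{g})$. The goal is to show $\Psi$ is a bijection and that $\{P_{m_1\cdots m_n}\}$ is a basis of $U(\mathfrak{g})$.

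First I would set up coordinates. Choose a neighbourhood $U$ of $0 \in \mathfrak{g}$ on which $\exp$ is a diffeomorphism onto its image, and introduce local coordinates $(x_1, \ldots, x_n)$ on a neighbourhood of $e \in G$ by the chart $(x_1,\ldots,x_n) \mapsto \exp\bigl(\sum_i x_i X_i\bigr)$. A direct computation using right translation by $\exp(tX_j)$ shows that in these coordinates
\begin{equation*}
X_j = \frac{\partial}{\partial x_j} + \sum_{k=1}^n a_{jk}(x) \frac{\partial}{\partial x_k}, \qquad a_{jk}(0) = 0.
\end{equation*}
Consequently, for any multi-index $\alpha = (\alpha_1,\ldots,\alpha_n)$, when the operator $\Psi(P_{m_1\cdots m_n})$ is applied to the test function $x^\alpha$ and evaluated at $e$, the leading term (in the order $|\alpha|$) reproduces $\alpha!$ when $\alpha = (m_1,\ldots,m_n)$ and vanishes for $|\alpha| < \sum_i m_i$.

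The key technical step, and the main obstacle, is proving linear independence of the ordered monomials as differential operators. Suppose $\sum c_{m_1\cdots m_n}\, \Psi(P_{m_1\cdots m_n}) = 0$. Apply this identity to $x^\alpha$ and evaluate at $e$; by the leading order analysis above one can solve for $c_\alpha$ by induction on $|\alpha|$ (going from lowest degree upward and using that distinct monomials of the same total degree produce linearly independent leading coefficients). This proves both that $\Psi$ is injective on the span of the ordered monomials and, once spanning is established, the PBW basis claim.

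For the spanning statement, I would argue by induction on the filtration $U^{i}(\mathfrak{g})$: any product $X_{i_1} \cdots X_{i_k}$ can be put in the form $X_1^{m_1}\cdots X_n^{m_n}$ modulo lower-order terms, using the relation $X_i X_j = X_j X_i + [X_i, X_j]$ to swap adjacent factors (each swap produces a correction of strictly smaller filtration degree). Induction on the filtration then shows ordered monomials span $U(\mathfrak{g})$. Combined with the injectivity of $\Psi$, this yields the PBW basis assertion. Finally, to finish surjectivity of $\Psi$, let $D \in D(G)$. Since $D$ is left-invariant, it is determined by its value at $e$, which in the exponential coordinates is a polynomial in $\partial/\partial x_i$. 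Using that $\partial/\partial x_i = X_i$ at $e$ (from the displayed formula with $a_{jk}(0)=0$), induction on the order of $D$ shows $D = \Psi(P)$ for some $P$ obtained by successively subtracting $\Psi$-images of appropriate ordered monomials. This exhibits $\Psi$ as an isomorphism $U(\mathfrak{g}) \xrightarrow{\cong} D(G)$.
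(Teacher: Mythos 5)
The paper itself gives no proof of this lemma; it is quoted from Wallach's book (it is the Poincar\'e--Birkhoff--Witt theorem together with the standard identification of $U(\mathfrak{g})$ with the algebra of left-invariant operators), so there is no in-paper argument to compare against. Your overall strategy is the classical one: the straightening argument for spanning is correct, and the surjectivity induction on the order of $D$ (using that the principal symbol is left-invariant, so cancelling top-order terms at $e$ cancels them everywhere) is fine. The flaw is in the linear-independence step, which is the heart of the matter.

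You claim that $\Psi(P_{m})x^{\alpha}$ evaluated at $e$ ``vanishes for $|\alpha| < \sum_i m_i$,'' and you run the induction ``from lowest degree upward.'' Both are wrong. Writing $\Psi(P_m)=\sum_{|\beta|\le|m|}c^{(m)}_{\beta}(x)\partial^{\beta}$ in your exponential coordinates, one has $\Psi(P_m)x^{\alpha}|_{e}=\alpha!\,c^{(m)}_{\alpha}(0)$; for $|\alpha|<|m|$ this is a \emph{lower-order} coefficient of the composed operator at the identity, and these do not vanish in general --- they encode the brackets. Concretely, by Baker--Campbell--Hausdorff, $\Psi(X_1X_2)x_k|_{e}=\tfrac12 c^{k}_{12}$ where $[X_1,X_2]=\sum_k c^{k}_{12}X_k$, which is nonzero even though $1<2$. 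Consequently, testing $\sum_m c_m\Psi(P_m)=0$ against a coordinate function $x_j$ yields $c_{(j)}+\sum_{|m|\ge 2}c_m\,c^{(m)}_{(j)}(0)=0$ (here $(j)$ denotes the multi-index with a single $1$ in slot $j$), which does not isolate $c_{(j)}$, and your upward induction stalls already at degree one. The correct statement is the opposite inequality: $\Psi(P_m)x^{\alpha}|_{e}=0$ for $|\alpha|>|m|$ (an operator of order $|m|$ annihilates, at the origin, every monomial of strictly higher degree, since $\partial^{\beta}x^{\alpha}|_{0}=\alpha!\,\delta_{\alpha\beta}$) and $\Psi(P_m)x^{\alpha}|_{e}=\alpha!\,\delta_{m\alpha}$ for $|\alpha|=|m|$ (here $a_{jk}(0)=0$ is exactly what forces the principal part at $e$ to be $\partial^{m}$). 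The resulting linear system is triangular with respect to \emph{decreasing} total degree with identity diagonal blocks, so the induction must start at the top degree $N=\max\{|m| : c_m\ne 0\}$ and descend; equivalently, one inspects the order-$N$ principal symbol of $\sum_m c_m\Psi(P_m)$ at $e$. With this reversal the rest of your argument closes correctly.
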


Similarly, 
for $L \otimes X \in {\rm Hom}(E_{0}, F_{0}) \otimes U(\mathfrak{g})$ 
and $f \in C^{\infty}(G, E_{0})$, 
set $(L \otimes X)f = L \cdot Xf$. 
Thus 
the element of 
${\rm Hom}(E_{0}, F_{0}) \otimes U(\mathfrak{g})$
is considered as a differential operator 
$C^{\infty}(G, E_{0}) \rightarrow C^{\infty}(G, F_{0})$. 

Let $K$ act on ${\rm Hom}(E_{0}, F_{0}) \otimes U(\mathfrak{g})$ 
as 
$\mu(k) (L \otimes X) = \sigma(k) L \tau(k)^{-1} \otimes {\rm Ad}(k) X$ 
for $L \in {\rm Hom}(E_{0}, F_{0})$ and $X \in U(\mathfrak{g})$. 
Then 
$(\mu, {\rm Hom}(E_{0}, F_{0}) \otimes U^{j}(\mathfrak{g}))$ is 
a representation of $K$ for each $j$. Setting 
\begin{align*}
({\rm Hom}(E_{0}, F_{0}) \otimes U(\mathfrak{g}))^{K}
=
\{ D \in {\rm Hom}(E_{0}, F_{0}) \otimes U(\mathfrak{g}); 
\mu(k) D = D \mbox{ for any } k \in K
\}, 
\end{align*}
we have the following. 

\begin{lem}
For any $D \in ({\rm Hom}(E_{0}, F_{0}) \otimes U(\mathfrak{g}))^{K}$, we have 
$D C^{\infty}(G, E_{0})^{(K, \tau)} \subset C^{\infty}(G, F_{0})^{(K, \sigma)}$. 
Conversely, 
if $D \in {\rm Hom}(E_{0}, F_{0}) \otimes U(\mathfrak{g})$ satisfies
$D C^{\infty}(G, E_{0})^{(K, \tau)} \subset C^{\infty}(G, F_{0})^{(K, \sigma)}$, 
then $\mu(k)D|_{C^{\infty}(G, E_{0})^{(K, \tau)}} = D|_{C^{\infty}(G, E_{0})^{(K, \tau)}}$. 
\end{lem}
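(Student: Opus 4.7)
The plan is to reduce both statements to a single computational identity describing how a left-invariant differential operator transforms under right translation by an element of $K$. Specifically, I would first establish that for any $X \in U(\mathfrak{g})$ (viewed as a left-invariant differential operator on $G$) and any $f \in C^{\infty}(G, E_{0})^{(K, \tau)}$,
\[
(Xf)(gk) = \tau(k)^{-1} \bigl( {\rm Ad}(k) X \cdot f \bigr)(g)
\]
for all $g \in G$ and $k \in K$. For $X \in \mathfrak{g}$ this is a one-line computation: write $gk\exp(tX) = g \exp(t\,{\rm Ad}(k)X)\cdot k$, differentiate at $t=0$, and apply the equivariance $f(g'k) = \tau(k)^{-1}f(g')$. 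The extension from $\mathfrak{g}$ to all of $U(\mathfrak{g})$ is a routine induction on degree, using that ${\rm Ad}(k)$ is an algebra automorphism of $U(\mathfrak{g})$; equivalently, it is just the operator identity $R_{k}\circ X \circ R_{k}^{-1} = {\rm Ad}(k)X$ on $C^{\infty}(G)$, applied to the section $R_{k}f = \tau(k)^{-1}f$.

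Writing a general $D \in {\rm Hom}(E_{0}, F_{0}) \otimes U(\mathfrak{g})$ as $D = \sum_{i} L_{i} \otimes X_{i}$ and applying this identity termwise yields
\[
(Df)(gk) = \sum_{i} L_{i}\,\tau(k)^{-1}\bigl( {\rm Ad}(k) X_{i} \cdot f \bigr)(g).
\]
On the other hand, the condition $Df \in C^{\infty}(G, F_{0})^{(K, \sigma)}$ means exactly
\[
(Df)(gk) = \sigma(k)^{-1}(Df)(g) = \sum_{i} \sigma(k)^{-1} L_{i}\,(X_{i} f)(g).
\]
Both statements of the lemma will follow by comparing these two expressions.

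For the forward direction, I would unpack the hypothesis $D \in ({\rm Hom}(E_{0}, F_{0}) \otimes U(\mathfrak{g}))^{K}$ as the identity
\[
\sum_{i} L_{i}\,\tau(k)^{-1} \otimes {\rm Ad}(k) X_{i} \;=\; \sum_{i} \sigma(k)^{-1} L_{i} \otimes X_{i}
\]
in ${\rm Hom}(E_{0}, F_{0}) \otimes U(\mathfrak{g})$, and feed this directly into the first displayed formula to obtain the second, hence $Df \in C^{\infty}(G, F_{0})^{(K, \sigma)}$. For the converse, starting from the assumption that the two displayed formulas are equal for every $f \in C^{\infty}(G, E_{0})^{(K, \tau)}$ and every $g \in G$, rearrangement shows that this equality is precisely $((\mu(k) D)f)(g) = (Df)(g)$, which is what is claimed.

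The computations are essentially bookkeeping; the only mild obstacle is keeping careful track of the sides on which ${\rm Ad}(k)$, $\tau(k)^{-1}$, and $\sigma(k)^{-1}$ act, and ensuring that the basic identity for right-translating a left-invariant operator is extended correctly from $\mathfrak{g}$ to $U(\mathfrak{g})$. No further ingredients beyond the chain rule and the definition of $\mu$ are needed.
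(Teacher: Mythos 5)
Your argument is correct. The paper itself gives no proof of this lemma --- the appendix is a summary quoted from Wallach's book --- so there is nothing to compare against, but what you write is the standard argument and it goes through. The one point worth stating carefully in a write-up: the identity $(Xf)(gk)=\tau(k)^{-1}\bigl(({\rm Ad}(k)X)f\bigr)(g)$ for $X\in U(\mathfrak{g})$ should \emph{not} be proved by iterating the degree-one case on $f$, since $X_{2}f$ is in general no longer $(K,\tau)$-equivariant; the correct route is the one you mention parenthetically, namely the operator identity $R_{k}\circ X=({\rm Ad}(k)X)\circ R_{k}$ on all of $C^{\infty}(G,E_{0})$ (which extends multiplicatively because ${\rm Ad}(k)$ is an algebra automorphism of $U(\mathfrak{g})$), applied together with $R_{k}f=\tau(k)^{-1}f$ only at the very end. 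With that formulation, both directions reduce, exactly as you say, to comparing $\sum_{i}L_{i}\tau(k)^{-1}\otimes{\rm Ad}(k)X_{i}$ with $\sum_{i}\sigma(k)^{-1}L_{i}\otimes X_{i}$, and the converse yields only the equality of the restrictions to $C^{\infty}(G,E_{0})^{(K,\tau)}$, which is all the lemma claims.
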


\begin{definition}
Let $\mathfrak{k}$ be the Lie algebra of $K$. 
A homogeneous space $G/K$ is called {\bf reductive} 
if there exists an ${\rm Ad}(K)$-invariant vector subspace $\mathfrak{p} \subset \mathfrak{g}$
satisfying $\mathfrak{g} = \mathfrak{k} \oplus \mathfrak{p}$.  
\end{definition}

\begin{prop}
Suppose that $M = G/K$ is a reductive homogeneous space. 
Let $D : C^{\infty}(M, E) \rightarrow C^{\infty}(M, F)$ be a homogeneous differential operator 
of order $j$ and 
let $\tilde{D}$ be the corresponding map from $C^{\infty}(G, E_{0})^{(K, \tau)}$ 
to $C^{\infty}(G, F_{0})^{(K, \sigma)}$. 

Then there exists $\overline{D} \in {\rm Hom}(E_{0}, F_{0}) \otimes U(\mathfrak{g})$ 
so that 
$\overline{D}|_{C^{\infty}(G, E_{0})^{(K, \tau)}} = \tilde{D}$. 
If $K$ is compact, 
$\overline{D}$ may be taken to be in $({\rm Hom}(E_{0}, F_{0}) \otimes U(\mathfrak{g}))^{K}$. 
\end{prop}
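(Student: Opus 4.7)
The plan is to construct $\overline D$ by reading off the coefficients of the differential operator in exponential coordinates through $\mathfrak{p}$, then show that the resulting element of $\mathrm{Hom}(E_{0},F_{0})\otimes U(\mathfrak{g})$ implements $\tilde D$ globally because both sides commute with left translations on $G$, and finally obtain the $K$-invariant version by averaging.

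First, I would fix the reductive decomposition $\mathfrak{g}=\mathfrak{k}\oplus\mathfrak{p}$ and use it to produce local coordinates near $eK\in M$: the map $X\mapsto \exp(X)K$ is a diffeomorphism from a neighbourhood of $0$ in $\mathfrak{p}$ onto a neighbourhood of $eK$. Using the $G$-action, trivialize $E$ and $F$ near $eK$ so that sections are identified with $E_{0}$- (respectively $F_{0}$-)valued functions on this neighbourhood of $0\in\mathfrak{p}$. In these coordinates $D$ is a linear differential operator of order $j$, so its evaluation at $eK$ takes the form
\begin{equation*}
(Df)(eK)=\sum_{|I|\le j} L_{I}\,\partial^{I}\bar f(0), \qquad L_{I}\in\mathrm{Hom}(E_{0},F_{0}),
\end{equation*}
where $\partial^{I}$ is an iterated partial derivative with respect to a basis $\{X_{1},\dots,X_{n}\}$ of $\mathfrak{p}$. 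I then set
\begin{equation*}
\overline D := \sum_{|I|\le j} L_{I}\otimes X^{I}\in \mathrm{Hom}(E_{0},F_{0})\otimes U(\mathfrak{p})\subset \mathrm{Hom}(E_{0},F_{0})\otimes U(\mathfrak{g}),
\end{equation*}
with $X^{I}$ the corresponding PBW monomial, as the candidate operator.

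Next I would verify that $\overline D|_{C^{\infty}(G,E_{0})^{(K,\tau)}}=\tilde D$. At $g=e$ this amounts to identifying the partial derivatives $\partial^{I}\bar f(0)$ in exponential coordinates with the left-invariant derivatives $(X^{I}\tilde f)(e)$ applied to the associated equivariant function $\tilde f$; although monomials in $\mathfrak{g}$ and in $\mathfrak{p}$ differ in $U(\mathfrak{g})$, the discrepancy lies in the left ideal generated by $\mathfrak{k}$, and the $\mathfrak{k}$-derivatives of a $\tau$-equivariant function at $e$ are determined algebraically by its value there via $-d\tau$, so the two descriptions of the $j$-jet at $eK$ agree. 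To propagate the identity from $g=e$ to every $g\in G$, I invoke Remark \ref{1pt_homog_diff}: both $\tilde D$ (because $D$ is $G$-homogeneous) and $\overline D$ (because its building blocks in $U(\mathfrak{g})$ are left-invariant differential operators) commute with the left-translation action of $G$, so equality of values at one point forces equality of operators.

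Finally, for the $K$-invariant refinement when $K$ is compact, I set $\overline D':=\int_{K}\mu(k)\overline D\,dk$. By construction $\overline D'\in(\mathrm{Hom}(E_{0},F_{0})\otimes U(\mathfrak{g}))^{K}$, and using the $(K,\tau)$-equivariance of $f$ together with the transformation law relating $(\mathrm{Ad}(k)X)f$ at $g$ to $Xf$ at $gk$ through $\tau(k)$, one checks that $(\mu(k)\overline D)(f)=\overline D(f)$ for every $k\in K$ and every $f\in C^{\infty}(G,E_{0})^{(K,\tau)}$, so averaging does not change the restriction to the equivariant subspace. The main technical obstacle I anticipate lies in the middle step: carefully matching the coordinate partial derivatives $\partial^{I}\bar f(0)$ to the action of PBW monomials in $U(\mathfrak{p})$ on $K$-equivariant functions and controlling the remainder modulo the $\mathfrak{k}$-ideal; this requires the reductive decomposition in an essential way and is the only place where the order of composition of the $X_{i}$'s and the nonlinearity of $\exp$ on a non-abelian group could cause subtle bookkeeping difficulties.
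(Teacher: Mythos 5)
The paper does not prove this proposition: it is quoted verbatim from Wallach's book as part of a background summary in Appendix~B, so there is no in-paper argument to compare against. Your proof is essentially the standard one (and, as far as I can tell, the one in Wallach): realize the $j$-jet functional $(Df)(eK)=\sum_{|I|\le j}L_I\,\partial^I\bar f(0)$ in exponential coordinates through $\mathfrak p$, lift it to an element of ${\rm Hom}(E_0,F_0)\otimes U(\mathfrak g)$, propagate from $e$ to all of $G$ by left-invariance versus $G$-homogeneity, and average over $K$ using the preceding lemma (which guarantees that $\mu(k)\overline D$ already agrees with $\overline D$ on the equivariant subspace, so averaging does not disturb the restriction). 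The argument is correct.

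One small point worth tightening in the middle step: the ordered PBW monomial $X^I$ applied to $\tilde f$ at $e$ does \emph{not} equal $\partial^I\bar f(0)$ on the nose; by Baker--Campbell--Hausdorff the discrepancy consists of lower-order derivatives in directions $[X_i,X_j]$, whose $\mathfrak p$-components are lower-order $\partial$-derivatives and whose $\mathfrak k$-components act on $(K,\tau)$-equivariant functions as the zeroth-order operator $-d\tau$. The cleanest repair is either to replace $X^I$ by the symmetrized monomial $\lambda(X^I)\in U(\mathfrak g)$, for which $(\lambda(X^I)\tilde f)(e)=\partial^I(\tilde f\circ\exp|_{\mathfrak p})(0)=\partial^I\bar f(0)$ exactly, or to run a downward induction on the order, fixing the top-order coefficients first and absorbing the corrections into lower-order terms. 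You flag exactly this bookkeeping as the technical obstacle, so the gap is one of execution rather than of idea.
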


\address{Department of Mathematics, Gakushuin University, 1-5-1, Mejiro, Toshima, 
Tokyo, 171-8588, Japan}
{kkawai@math.gakushuin.ac.jp}

\end{document}